\documentclass[reqno]{amsart}

\usepackage[T1]{fontenc}
\usepackage[utf8]{inputenc}
\usepackage[english]{babel}

\usepackage{mathtools}
\usepackage{amssymb}
\usepackage{amsthm}
\usepackage{physics}
\usepackage{cancel}

\usepackage{graphicx}
\usepackage[a4paper,left=2.5cm,right=2.5cm,top=2.5cm,bottom=2.5cm]{geometry} 
\usepackage{float}
\usepackage{xcolor}
\usepackage{cancel}

\usepackage[square,numbers]{natbib}
\usepackage{hyperref}
\hypersetup{
  colorlinks=true,
  linkcolor=black,
  citecolor=black,
  urlcolor=blue
}

\numberwithin{equation}{section}

\theoremstyle{plain}
\newtheorem{theorem}{Theorem}[section]          
\newtheorem{proposition}[theorem]{Proposition}  

\newtheorem{innercor}{Corollary}[theorem]
\newenvironment{corollary}
  {%
   \innercor}
          {\endinnercor}

\newtheorem{innerlem}{Lemma}[theorem]
\newenvironment{lemma}
  {%
   \innerlem}
  {\endinnerlem}

\theoremstyle{definition}
\newtheorem{definition}[theorem]{Definition}
\newtheorem{hypothesis}[theorem]{Hypothesis}

\theoremstyle{remark}
\newtheorem{remark}[theorem]{Remark}

\newcommand{\N}{\mathbb{N}}
\newcommand{\R}{\mathbb{R}}

\newcommand{\F}{\mathcal{F}}
\newcommand{\diff}{\,\mathrm{d}}
\DeclareMathOperator*{\argmin}{arg\,min}
\newcommand{\Sch}{\mathrm{Sch}}
\newcommand{\Leb}{\mathrm{Leb}}

\title{A convergence rate for the entropic JKO scheme}
\author{Aymeric BARADAT}
\author{Sofiane CHERF}
\address{Universite Claude Bernard Lyon 1, CNRS, Centrale Lyon, INSA Lyon, Universit\'e Jean Monnet, ICJ UMR5208, 43 bd du 11 Novembre 1918, 69622
Villeurbanne, France}
\email{$\{$baradat,cherf$\}$@math.univ-lyon1.fr}

\begin{document}

\begin{abstract}
The so-called JKO scheme, named after Jordan, Kinderlehrer and Otto~\cite{jordan1998variational}, provides a variational way to construct discrete time approximations of certain partial differential equations (PDEs) appearing as gradient flows in the space of probability measures equipped with the Wasserstein metric. The method consists of an implicit Euler scheme, which can be implemented numerically.

Yet, in practice, evaluating the Wasserstein distance can be numerically expensive. To address this problem, a common strategy introduced in~\cite{Peyré2015} and which has been shown to produce faster computations, is to replace the Wasserstein distance with its entropic regularization, also known as the Schr\"odinger cost. In~\cite{baradat2025usingsinkhornjkoscheme}, the first author, Hraivoronska and Santambrogio, proved that if the regularization parameter $\varepsilon$ is proportional to the time step $\tau$, that is, $\varepsilon = \alpha \tau$ for some $\alpha > 0$, then as $\tau \to 0$, this change results in adding to the limiting PDE the additional linear diffusion term $\frac{\alpha}{2} \Delta \rho$. Our goal in this article is to provide a convergence rate under convexity assumptions between the entropic JKO scheme and the solution of the initial PDE as both $\alpha$ and $\tau$ tend to zero. This will appear as a consequence of a new bound between the classical and entropic JKO schemes.
\end{abstract}
\maketitle

\tableofcontents

\section{Introduction}
\subsection{Definition of JKO and Entropic JKO} Consider a functional~$\mathcal F : \mathcal{P}_2(\mathbb{R}^d) \to \mathbb{R} \cup \{ + \infty \}$, where~$\mathcal P_2(\R^d)$ is the set of probability measures with finite second moment. To fix the ideas, in this subsection, think of a functional of type
\begin{equation}
\label{eq:explicit_F}
\mathcal F : \rho \longmapsto \int_{\mathbb{R}^d} V(x)\, \rho(x)\diff x 
+ \frac{1}{2} \int_{\mathbb{R}^d} (W * \rho)(x)\, \rho(x)\diff x 
+ \int_{\mathbb{R}^d} f(\rho(x))\diff x,
\end{equation}
where $V : \R^d \to \R_+$, $W: \R^d \to \R_+$ and $f: \R_+ \to \R_+$ are given smooth nonnegative functions, and $\mathcal F$ is set to $+ \infty$ if $\rho$ is not absolutely continuous with respect to the Lebesgue measure. It is well known (see~\cite{jordan1998variational,ambrosio2005gradient,santambrogio2015optimal}) that the formal gradient flow of $\mathcal F$ in the Wasserstein space is the PDE
\begin{equation}
    \label{eq:gradient_flow}
         \begin{cases}
             \partial_t\rho -\operatorname{div}\left(\rho\nabla\left(\frac{\delta \F}{\delta\rho}(\rho)\right)\right)=0,\\
             \rho(0,\cdot)=\mu,
         \end{cases}
\end{equation}
where the function $\frac{\delta\F}{\delta\rho}(\rho)$ is the so-called first variation of $\F$, which equals
\begin{equation*}
    \frac{\delta \mathcal F}{\delta \rho} (\rho)= V + W \ast \rho + f'(\rho)
\end{equation*}
in the above case. This explains why the seminal work~\cite{jordan1998variational} suggested to construct solutions of equation~\eqref{eq:gradient_flow} using an implicit Euler scheme: this is the famous JKO scheme. 

In practice, the Wasserstein distance can be costly to evaluate numerically. For this reason and in a lot of applications, authors prefer to replace it by its \emph{entropic} counterpart studied in~\cite{leonard2014Survey}. Indeed, on the one hand, this entropic regularization converges towards the Wasserstein distance~\cite{leonard2012schrodinger,conforti2021formula,malamut2025convergence}. On the other hand, this change allows to use the very efficient Sinkhorn algorithm~\cite{Sinkhorn1967,Cuturi20213,benamou2015iterative}. We refer for instance to~\cite{carlier2021entropic} for an application of this idea to the computation of Wasserstein barycenters, to~\cite{benamou2019generalized} in the context of incompressible flows. In the context of the JKO scheme, this change was proposed by Peyré in~\cite{Peyré2015}.

In this work, we want to compare the classical JKO scheme and this perturbed scheme. To begin, let us introduce them. Given a measure $\mu \in \mathcal P_2(\R^d)$, a time-step parameter $\tau>0$, a regularization parameter $\alpha>0$, and an energy functional $\mathcal{F}$ as before, we define one step of the JKO and entropic JKO schemes as follows, when the formulas make sense:
\begin{align*}
    J_\tau^0(\mu) &:= \argmin_{\rho \in \mathcal{P}_2(\mathbb{R}^d)} \left\{ \frac{W_2^2(\mu, \rho)}{2\tau}  + \mathcal{F}(\rho) \right\}, \tag{JKO}\label{JKO} \\
    J_\tau^\alpha(\mu) &:= \argmin_{\rho \in \mathcal{P}_2(\mathbb{R}^d)} \left\{ \frac{\Sch^{\alpha\tau}(\mu, \rho)}{\tau}  + \mathcal{F}(\rho) \right\}, \tag{Ent JKO}\label{Ent JKO}
\end{align*}
where the Wasserstein distance $W_2$ and its entropic regularization of regularization level $\alpha \tau$ \emph{a.k.a.}\ the Schrodinger cost $\Sch^{\alpha\tau}$ are defined below in Section~\ref{notations}. The reason why the level of regularization is taken proportional to the time-step parameter $\tau$ will be clear in a few lines.
    When minimizers exist but are not unique, $J^0_\tau$ and $J^\alpha_\tau$ have to be understood as any choice among the minimizers.

We then define the iterates of the scheme as follows:
\begin{itemize}
    \item for all $k \in \N^*$,
    \begin{equation*}
        J_{k,\tau}^0(\mu) := (J_\tau^0)^{\circ k}(\mu)=\underbrace{J_\tau^0\circ \dots\circ J_\tau^0}_{k \text{ times}}(\mu),
    \end{equation*}
    is the measure obtained after $k$ steps of the classical JKO scheme with time step $\tau$;
    \item for all $k \in \N^*$, 
    \begin{equation*}
    J_{k,\tau}^\alpha(\mu) := (J_\tau^\alpha)^{\circ k}(\mu)=\underbrace{J_\tau^\alpha\circ \dots\circ J_\tau^\alpha}_{k \text{ times}}(\mu),
    \end{equation*}
    is the measure obtained after $k$ steps of the entropic JKO scheme with time step $\tau$ and regularization parameter $\alpha$.
\end{itemize}

Since \cite{jordan1998variational, Otto1999,ambrosio2005gradient}, it is known that under convexity or coercivity assumptions on the functional $\mathcal F$, the JKO scheme converges to solutions of the limiting equation~\eqref{eq:gradient_flow} in the sense that $J_{n, t/n}^0(\mu)$ converges towards a distributional solution of~\eqref{eq:gradient_flow} at time $t$ as $n \to \infty$. 

Concerning the entropic JKO scheme, the first convergence result is due to Carlier, Duval, Peyré, and Schmitzer in \cite{Carlier2017}. They studied the case when the regularization parameter $\alpha = \alpha(\tau)$ is itself a function of $\tau$, approaching zero with a rate such that $\alpha |\ln \alpha|= \mathcal O(\tau)$ (or equivalently, $\alpha = \mathcal O(\tau / |\ln(\tau)|)$). In this case, they show as before that $J_{n, t/n}^\alpha(\mu)$ converges towards a solution at time $t$ of~\eqref{eq:gradient_flow}. 

Building on asymptotics obtained in~\cite{adams2011large,duong2013wasserstein,erbar2015from}, the first author, Hraivoronska and Santambrogio, made an improvement in \cite{baradat2025usingsinkhornjkoscheme}, where they studied the case where $\alpha(\tau)$ is of order one. They show that provided $\alpha(\tau) \to \alpha_\infty$ as $\tau \to 0$, then $J_{n, t/n}^\alpha(\mu)$ converges towards a solution at time $t$ of~
\begin{equation}
    \label{eq:gradient_flow_reg}
         \begin{cases}
             \partial_t\rho -\operatorname{div}\left(\rho\nabla\left(\frac{\delta \F}{\delta\rho}(\rho)\right)\right)=\displaystyle{\frac{\alpha_\infty}{2}} \Delta \rho, \\
             \rho(0,\cdot)=\mu,
         \end{cases}
\end{equation}
 instead of~\eqref{eq:gradient_flow}, that is, with an additional term $\frac{\alpha_\infty}{2}\Delta\rho$ on the right hand side of the limiting PDE. In particular, the case $\alpha_\infty=0$, extends the result of~\cite{Carlier2017}. 

However, neither~\cite{Carlier2017} nor~\cite{baradat2025usingsinkhornjkoscheme} provide explicit bounds between the schemes or between the entropic scheme and the corresponding solutions of equation~\eqref{eq:gradient_flow_reg}. These are the questions that we want to address in the present paper. Our main contribution, stated at Theorem~\ref{the theorem}, is an explicit bound in $\alpha$ and $\tau$ on the Wasserstein distance between $J^0_{n,\tau}(\mu)$ and $J^\alpha_{n,\tau}(\mu)$. This result seems to us to be particularly interesting since combined with the known convergence rate of the JKO scheme, we easily deduce an explicit bound in $\alpha$ and $\tau$ on the Wasserstein distance between $J^0_{n,\tau}(\mu)$ and the solution of~\eqref{eq:gradient_flow} at time $t$, see Corollary~\ref{cv rate entropic JKO}. 

Another part of our work consisted in studying the optimality of our bound in $\alpha$ as $\tau \to 0$. To that aim, we compare our discrete result with a bound obtained in the continuous case, and show that they only differ from a factor $2$, but keep the orders of magnitude. Then, by extensively studying an example where everything can be computed, we identify precisely where the optimality of both the continuous and discrete bounds is lost.

Since the pioneering work~\cite{ambrosio2005gradient}, it is well understood that the stability of equation~\eqref{eq:gradient_flow} in the Wasserstein distance, and hence the question of convergence of the JKO scheme, is deeply linked to the so-called geodesic convexity of the functional $\mathcal F$ -- a property discovered by McCann in~\cite{mccann1997convexity} -- or more precisely, to convexity along generalized geodesics, see Definition~\ref{general cnx geo def}. Naturally, we have to work with this assumption as well. Unfortunately, no similar property has been discovered so far in the entropic setting, which explains why the convergence established in~\cite{baradat2025usingsinkhornjkoscheme} does not come with a rate. Therefore, we had to bypass this difficulty by exploiting the stability of the classical JKO scheme only, and not of its entropic counterpart.

\subsection{Main Results}

The following set of assumptions on $\mathcal F$ will play a crucial role in our study.
\begin{hypothesis}
    \label{hypothesis}
We assume that $\mathcal F$ satisfies:
\begin{enumerate}
    \item $\F$ is lower semicontinous (l.s.c.) with respect to the weak convergence in $\mathcal P_2(\R^d)$, where we say that $(\rho_n)_n$ converges weakly in $\mathcal P_2(\R^d)$ if it weak-$\star$ converges in duality with continuous bounded functions (later, we will say \emph{converges narrowly}), and has uniformly bounded second moments.
    \item $\mathcal{F}$ is $\lambda$-convex along generalized geodesics (see Definition~\ref{general cnx geo def}).
    \item There exists $K\in\R$ such that for all $\mu \in \mathcal P_2(\R^d)$ and $t \geq 0$, $\F(\mu*\sigma_{t}) \leq \F(\mu) + K\frac{t}{2}$, where $(\sigma_t)$ is the heat kernel, that is, the fundamental solution of $\partial_t\sigma_t=\frac{1}{2}\Delta\sigma_t$.    
\end{enumerate}
\end{hypothesis}
We will justify this set of assumptions and give examples of functionals satisfying them in Subsection~\ref{comments hyp}. For the moment, let us just notice that the points (1) and (2) allow Ambrosio, Gigli and Savaré~\cite{ambrosio2005gradient} to find, for all $t \geq 0$ and $\mu_0 \in \mathcal P_2(\R^d)$ such that $\mathcal F(\mu_0) < + \infty$, a limit
\begin{equation}
    \label{eq:def_rho0}
    \rho^0(t) := \lim_{n \to + \infty} J^0_{n,t/n}(\mu_0),
\end{equation}
with explicit convergence rates. We will recall them in Section~\ref{sec:proof_cor}, but let us already mention that the rate corresponding to the case when $\lambda = 0$ and $\mathcal F$ is below bounded writes:
\begin{equation}\label{eq:bound_AGS_simple}
W_2^2\!\left(\rho^0(t),\,J_{n,t/n}^0(\mu_0)\right)
\le
\frac{t}{n}\Bigl(\mathcal F(\mu_0)-\mathcal F(J^0_{t/n}(\mu_0))\Bigr)
\le
\frac{t}{n}\Bigl(\mathcal F(\mu_0)-\inf_{\mathcal P_2(\mathbb R^d)}\mathcal F\Bigr).
\end{equation}

Therefore, these assumptions are sufficient to give a meaning to the notion of gradient flow of $\mathcal F$ in $\mathcal P_2(\R^d)$, even in cases when the PDE~\eqref{eq:gradient_flow} cannot be written. Of course, in most practical cases such as~\eqref{eq:explicit_F}, equation~\eqref{eq:gradient_flow} can be written and $\rho^0$ is indeed its unique distributional solution.

With these assumptions at hand, we can provide our main bound in Theorem~\ref{the theorem} below. This bound involves the Boltzmann entropy, defined as follows.
\begin{definition}
For all $\rho\in\mathcal P_2(\R^d)$, the Boltzmann entropy is defined as
\[
H(\rho) := \left\{\begin{aligned}
    &\int \rho(x) \ln \rho(x) dx, &&\mbox{if }\rho \mbox{ has a density w.r.t.\ the Lebesgue measure},\\
    &+\infty, &&\mbox{else.}
\end{aligned}
\right.
\]
(Here, we used the same notation for a measure and its density with respect to the Lebesgue measure.) This quantity is always well defined in~$\R\cup\{+\infty\}$ in virtue of Proposition~\ref{below bound ent}.

\end{definition}

Our main result is:
\begin{theorem}[Convergence estimate]\label{the theorem}
Let $\mathcal{F}$ satisfy Hypothesis~\ref{hypothesis} with given $K$ and $\lambda$. Let $\tau,\alpha$ be some positive parameter, and $\mu_0 \in \mathcal P_2(\R^d)$ be an initial condition satisfying $\mathcal F(\mu_0)< + \infty$ and $H(\mu_0) < +\infty$. Finally, let us fix $n\geq0$. Then the iterates $J_{n,\tau}^0(\mu_0)$ and $J_{n,\tau}^\alpha(\mu_0)$ exist and satisfy:
\begin{itemize}
    \item If $\lambda=0$, then
    \begin{equation*}
W_2(J_{n,\tau}^0(\mu_0), J_{n,\tau}^\alpha(\mu_0))\leq \sqrt{2\tau(\F(\mu_0)-\F(J_{n,\tau}^0(\mu_0))}+\sqrt{n\tau\alpha\left(H(\mu_0)-H(J_{n,\tau}^\alpha(\mu_0))+K n\tau\right)}.
\end{equation*}
\item If $\lambda\neq0$ and $\tau \leq  \frac{1}{2\lambda_-} $ (where here and in the whole text, we use the convention $1/0 = + \infty$ so that there is no condition on $\tau$ when $\lambda \geq 0$), then 
\begin{multline*}
  W_2(J_{n,\tau}^0(\mu_0), J_{n,\tau}^\alpha(\mu_0))\leq \sqrt2(1+4\lambda_-\tau)^{\frac32} (1-\lambda_-\tau)^{-n}\sqrt{\tau}\sqrt{(\F(\mu_0)-\F(J_{n,\tau}^0(\mu_0))}\\
+(1+3\lambda_-\tau)\sqrt{\frac{1-(1+\lambda\tau)^{-2n}}{2\lambda}}\sqrt{\alpha\left(H(\mu_0)-H(J_{n,\tau}^\alpha(\mu_0))+K n\tau\right)}.
\end{multline*}
\end{itemize}
\end{theorem}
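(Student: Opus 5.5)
The plan is to treat the entropic JKO step as a perturbed classical JKO step, and to use only the stability of the classical scheme, never any convexity of the entropic one.

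\textbf{One-step comparison.} The key one-step estimate compares $J^\alpha_\tau(\nu)$ with $J^0_\tau(\nu)$ for a fixed $\nu$.

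Write $\rho^\alpha := J^\alpha_\tau(\nu)$ and $\rho^0 := J^0_\tau(\nu)$. I would use the classical bounds relating $\Sch^{\alpha\tau}$ to $W_2^2$. The lower bound is
\[
\Sch^{\alpha\tau}(\nu,\rho) \ge \tfrac12 W_2^2(\nu,\rho) + \tfrac{\alpha\tau}{2}\bigl(H(\rho)+H(\nu)\bigr) + c,
\]
with $c$ a dimensional constant, or a similar form involving $H$. The upper bound comes from convolving the optimal plan with the heat kernel, i.e.\ comparing against $\rho*\sigma_{\alpha\tau}$ through Hypothesis~\ref{hypothesis}(3), which yields $\frac{\alpha\tau}{2}K$.

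The competitor for the entropic problem will be $\rho^0$, possibly replaced by $\rho^0*\sigma_{\alpha\tau}$. The competitor for the classical problem will be $\rho^\alpha$.

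The classical JKO step satisfies the evolution variational inequality, or discrete EVI, from AGS, which follows from $\lambda$-convexity along generalized geodesics:
\[
\frac{(1+\lambda\tau)W_2^2(\rho^0,\sigma)-W_2^2(\nu,\sigma)}{2\tau} \le \F(\sigma)-\F(\rho^0) \quad\text{for all } \sigma.
\]
Applying this with $\sigma=\rho^\alpha$ and combining it with the minimality of $\rho^\alpha$ for the entropic functional, I expect to obtain
\[
(1+\lambda\tau)W_2^2(\rho^\alpha,\rho^0) \le \alpha\tau\bigl(H(\nu)-H(\rho^\alpha)+K\tau\bigr).
\]
This is a ``defect'' controlled by the entropy decrease along the entropic step. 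I expect this step to be the main obstacle. The difficulty is extracting exactly the entropy difference $H(\nu)-H(\rho^\alpha)$, which telescopes, with no residual constants. This requires the precise Schrödinger–Wasserstein comparison at the level of the same marginal $\nu$. The plan is to use the dynamical formulation of $\Sch$, or the Benamou–Brenier-type identity $\Sch^{\epsilon} = \inf \tfrac12\int|v|^2 + \tfrac{\epsilon^2}{8}\int|\nabla\log\rho|^2 + \tfrac{\epsilon}{2}(H(\rho_1)-H(\rho_0))$, so that the entropy difference appears exactly and the Fisher term is dropped.

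\textbf{Iterating.} I would iterate using stability of the classical step, namely the AGS contraction $W_2(J^0_\tau\mu,J^0_\tau\nu)\le (1-\lambda_-\tau)^{-1}W_2(\mu,\nu)$ up to error terms. In fact, the clean way to iterate is to apply the discrete EVI directly with $\sigma$ equal to the entropic iterate. This gives a recursion of the form
\[
(1+\lambda\tau)e_{k+1}^2 \le e_k^2 + \alpha\tau\,\delta_k + (\text{terms from } \F),
\]
where $e_k=W_2(J^0_{k,\tau}\mu_0,J^\alpha_{k,\tau}\mu_0)$ and $\delta_k = H(J^\alpha_{k,\tau}\mu_0)-H(J^\alpha_{k+1,\tau}\mu_0)+K\tau$.

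The first square-root term of the statement, $\sqrt{2\tau(\F(\mu_0)-\F(J^0_{n,\tau}\mu_0))}$, indicates that the comparison is really made with the classical step started from the \emph{entropic} iterate. This means a triangle inequality is used against the one-step displacement $W_2(\nu,J^0_\tau\nu)^2\le 2\tau(\F(\nu)-\F(J^0_\tau\nu))$, whose sum telescopes.

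\textbf{Summation.} Unrolling the recursion with weights $(1+\lambda\tau)^{-2(n-k)}$ bounds the sum of the $\delta_k$ by the geometric factor $\sum_k (1+\lambda\tau)^{-2k}\tau\approx \frac{1-(1+\lambda\tau)^{-2n}}{2\lambda}$. The weights only affect constants through the monotone entropy decrease. When $\lambda=0$ this factor is $n\tau$, and $\sum_k\delta_k = H(\mu_0)-H(J^\alpha_{n,\tau}\mu_0)+Kn\tau$ by telescoping.

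\textbf{Constants for $\lambda\ne 0$.} The factors $(1\pm c\lambda_-\tau)$ come from bounding $(1+\lambda\tau)^{-1}$ and $(1-\lambda_-\tau)^{-1}$ under $\tau\le 1/(2\lambda_-)$.

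\textbf{Existence.} Existence of the iterates follows from lower semicontinuity and $\lambda$-convexity (a coercivity argument). For the entropic steps one also needs that $\Sch$ is l.s.c.\ and that $H$ stays finite along the iterates, which the defect inequality itself propagates.
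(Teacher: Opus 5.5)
Your one-step estimate is correct and is exactly how the paper bounds its term (II). It combines three ingredients: the discrete EVI at base $\nu$ with $\sigma=J^\alpha_\tau(\nu)$; the lower bound $\Sch^{\alpha\tau}(\nu,\rho)/\tau\ge W_2^2(\nu,\rho)/(2\tau)+\frac{\alpha}{2}(H(\nu)+H(\rho))$, which has no additive constant in this normalization; and the competitor $J^0_\tau(\nu)*\sigma_{\alpha\tau}$, whose Schr\"odinger cost is at most $\alpha\tau H(\nu)+\frac12 W_2^2(\nu,J^0_\tau(\nu))$. The paper convolves the geodesic with $\sigma_{\alpha t}$, but convolving the optimal plan in its second variable works too. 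Together these give $(1+\lambda\tau)W_2^2(J^0_\tau(\nu),J^\alpha_\tau(\nu))\le\alpha\tau\delta$.

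The gap is in the iteration. The recursion $(1+\lambda\tau)e_{k+1}^2\le e_k^2+\alpha\tau\delta_k+(\F\text{-terms})$ does not follow from the EVI with $\sigma=J^\alpha_{k+1,\tau}(\mu_0)$. At base $J^0_{k,\tau}(\mu_0)$ it leaves $W_2^2(J^\alpha_{k+1,\tau}(\mu_0),J^0_{k,\tau}(\mu_0))$, which you could only compare with $e_k^2$ through an EVI for the entropic step, and that is exactly what is unavailable.

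Worse, the recursion is false. Take the paper's Gaussian model with $\lambda=0$ and $\mu_0=\mathcal N(a)$, so $\F\equiv0$ and $K=0$. Then $J^0_{k,\tau}(\mu_0)=\mathcal N(a)$ and $J^\alpha_{k,\tau}(\mu_0)=\mathcal N(a+k\alpha\tau)$. Hence $e_n=\sqrt{d}\,(\sqrt{a+n\alpha\tau}-\sqrt{a})$, while $\alpha\tau\sum_{k<n}\delta_k=\frac{d}{2}\alpha\tau\ln(1+n\alpha\tau/a)$. Your recursion would force $e_n\to0$ as $\tau\to0$ with $n\tau=t$ fixed, whereas $e_n=\sqrt{d}\,(\sqrt{a+\alpha t}-\sqrt{a})$. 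It already fails for $e_2$ at leading order in $\tau$.

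The culprit is the cross term $2e_k\sqrt{\alpha\tau\delta_k}$, which dominates $\alpha\tau\delta_k$: one-step defects accumulate in $W_2$, not in $W_2^2$. For the same reason your summation step cannot produce the stated bound. Unrolling a recursion that is additive in squares yields a weighted sum of the $\alpha\tau\delta_k$. It never yields the product of $\tau\sum_{j=1}^{n}(1+\lambda\tau)^{-2j}$ with $\alpha\sum_k\delta_k$. That product is what Cauchy--Schwarz gives on a \emph{linear} sum $\sum_k(1+\lambda\tau)^{k}\sqrt{\alpha\tau\delta_k}$.

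What is missing is the paper's two-speed bookkeeping, which your first idea (the AGS contraction) was heading towards. Use the triangle inequality through the intermediate measure $J^0_\tau(J^\alpha_{k,\tau}(\mu_0))$. Then $e_{k+1}$ is at most $W_2(J^0_\tau(J^0_{k,\tau}(\mu_0)),J^0_\tau(J^\alpha_{k,\tau}(\mu_0)))$ plus your one-step defect at $\nu=J^\alpha_{k,\tau}(\mu_0)$.

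The first term is bounded by the contraction estimate in squared form, $\sqrt{e_k^2/(1+\lambda\tau)^2+R_k/(1+\lambda\tau)}$ with $R_k=2\tau(\F(J^0_{k,\tau}(\mu_0))-\F(J^0_{k+1,\tau}(\mu_0)))$. It comes from summing the EVIs at the two base points and absorbing the $\lambda$-term by a squared triangle inequality. Two consequences follow:
\begin{itemize}
\item The $\F$-remainder lives along the \emph{classical} iterates, which is why it telescopes to $\F(\mu_0)-\F(J^0_{n,\tau}(\mu_0))$. The displacement of the classical step started from the entropic iterate, which you invoke, does not telescope.
\item The remainder must stay inside the square root, since adding $\sqrt{R_k}$ linearly would cost a factor $\sqrt{n}$ in the first term.
\end{itemize}

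The mixed recursion $u_{k+1}\le\sqrt{u_k^2/(1+\lambda\tau)^2+a_{k+1}}+b_{k+1}$ is then solved by a squared discrete Gr\"onwall lemma: $(1+\lambda\tau)^n u_n\le\sqrt{\sum_{k=1}^n(1+\lambda\tau)^{2k}a_k}+\sum_{k=1}^n(1+\lambda\tau)^k b_k$. To prove it, reduce to $\lambda=0$ and check $(v_{k+1}-B_{k+1})^2\le(v_k-B_k)^2+a_{k+1}$ with $B_k=\sum_{j\le k}b_j$. Cauchy--Schwarz on the $b$-sum then gives the factors $n\tau$ and $\frac{1-(1+\lambda\tau)^{-2n}}{2\lambda}$.

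Finally, your existence sketch for the entropic step is too thin. You need uniform bounds on second moments and entropies of minimizing sequences; the paper gets them by comparing with the JKO step for $\F+\frac{\alpha}{2}H$ through its EVI. You also need lower semicontinuity of $\Sch^{\alpha\tau}$ and of $\F$ in the sense of point (1) of Hypothesis~\ref{hypothesis}. The defect inequality cannot provide this, since it presupposes a minimizer.
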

To us, the main interest of this result is that, combined with bounds on the convergence of the JKO scheme such as~\eqref{eq:bound_AGS_simple}, it implies as a corollary a bound between the iterates of the entropic JKO scheme and the corresponding gradient flow $\rho^0$ (and hence, in practical cases, the unique weak solution of~\eqref{eq:gradient_flow}). 
\begin{corollary}
    \label{cv rate entropic JKO}
In the context of Theorem~\ref{the theorem}, given $t \geq 0$ and $\rho^0(t)$ defined as in~\eqref{eq:def_rho0}, for all $\alpha_0>0$, there exists a constant $C$, depending only on the second moment of $\mu_0$, $t$, $\mathcal{F}(\mu_0)$, $H(\mu_0)$, $\lambda$, $K$ and $\alpha_0$, such that for all $n \in \N^*$ and $\alpha \leq \alpha_0$, 
\[
W_2(J_{n,t/n}^\alpha(\mu_0),\rho^0(t))\leq C\left(\sqrt\alpha + \sqrt\frac{1}{n}\right).
\]
\end{corollary}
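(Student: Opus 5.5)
The statement to prove is Corollary~\ref{cv rate entropic JKO}. The plan is to split with the triangle inequality,
\[
W_2(J_{n,t/n}^\alpha(\mu_0),\rho^0(t))\leq W_2(J_{n,t/n}^\alpha(\mu_0),J_{n,t/n}^0(\mu_0))+W_2(J_{n,t/n}^0(\mu_0),\rho^0(t)),
\]
and to bound the two terms separately.

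\textbf{Second term.} This is the classical JKO convergence rate of Ambrosio--Gigli--Savaré. When $\lambda=0$ and $\mathcal F$ is bounded below, \eqref{eq:bound_AGS_simple} gives it directly. In general we use their rate for $\lambda$-convex functionals, which is of the form $C(t,\lambda)\sqrt{\tau}\sqrt{\mathcal F(\mu_0)-\inf_{\mathcal P_2}\mathcal F}$ or a slope-free variant, with $\tau=t/n$. This yields $C/\sqrt n$. If $\mathcal F$ is not bounded below, we replace $\inf\mathcal F$ by a lower bound along the scheme, obtained as described next.

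\textbf{First term.} We apply Theorem~\ref{the theorem} with $\tau=t/n$, so that $n\tau=t$. The condition $\tau\le 1/(2\lambda_-)$ holds for $n$ large. For the finitely many remaining $n$, the claim is trivial once the second moments of all iterates are bounded by a constant, because then $W_2\le C\le C\sqrt{1/n}$. Note that $(1-\lambda_-\tau)^{-n}\le e^{2\lambda_- t}$ for such $\tau$. Also $(1+\lambda\tau)^{-2n}$ is bounded, so $\sqrt{(1-(1+\lambda\tau)^{-2n})/(2\lambda)}$ is bounded by a constant depending on $t,\lambda$. It then remains to bound two quantities from above:
\begin{itemize}
\item[(a)] $\mathcal F(\mu_0)-\mathcal F(J^0_{n,\tau}(\mu_0))$;
\item[(b)] $H(\mu_0)-H(J^\alpha_{n,\tau}(\mu_0))+Kt$.
\end{itemize}
With these bounds, the first term is at most $C\sqrt{\tau}+C\sqrt{\alpha}\le C(\sqrt{1/n}+\sqrt\alpha)$.

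\textbf{Main obstacle: lower bounds on $\mathcal F$ and $H$ along the iterates.} For $H$ we use Proposition~\ref{below bound ent}, which bounds $H(\rho)$ from below by $-C(1+M_2(\rho))$. So it suffices to bound the second moments of $J^\alpha_{k,\tau}(\mu_0)$ uniformly in $k\le n$, $n$ and $\alpha\le\alpha_0$.

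\textbf{Step 1 (moment bound, classical scheme).} A $\lambda$-convex l.s.c.\ functional is bounded below by $-C(1+M_2)$. Each JKO step satisfies $W_2^2(\rho_k,\rho_{k+1})\le 2\tau(\mathcal F(\rho_k)-\mathcal F(\rho_{k+1}))$. Summing these inequalities and using a discrete Gronwall argument gives $M_2(J^0_{k,\tau}\mu_0)\le C(t,M_2(\mu_0),\mathcal F(\mu_0),\lambda)$.

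\textbf{Step 2 (moment bound, entropic scheme).} We use optimality against the competitor $\rho_k*\sigma_{\alpha\tau}$. By Hypothesis~\ref{hypothesis}(3) its energy is at most $\mathcal F(\rho_k)+K\alpha\tau/2$. Its Schrödinger cost relative to $\rho_k$ is controlled by $\alpha\tau(H(\rho_k)+c)$, with the normalization of Section~\ref{notations}. Combining this with $\mathrm{Sch}^{\alpha\tau}(\mu,\rho)\ge \frac12W_2^2(\mu,\rho)+\alpha\tau(H(\rho)-\ldots)$-type inequalities, we get a one-step inequality. It controls $W_2^2(\rho_k,\rho_{k+1})$ by $2\tau\,\Delta\mathcal F+\alpha\tau\,\Delta H+K\alpha\tau^2$, where $\Delta$ denotes the decrease over the step. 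Summing and using the $-C(1+M_2)$ lower bounds on both $\mathcal F$ and $H$, a Gronwall argument again bounds $M_2$ uniformly for $\alpha\le\alpha_0$. This is presumably the same mechanism that produces the entropy term in Theorem~\ref{the theorem}, so we reuse its estimates where available.

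Steps 1 and 2 control (a) and (b) by constants depending only on the listed data. Inserting these into the triangle inequality concludes the proof.
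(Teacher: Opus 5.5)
Your proposal follows essentially the same route as the paper. You use the triangle inequality, Theorem~\ref{the theorem} with $\tau=t/n$, and the Ambrosio--Gigli--Savar\'e rates. You then bound $\mathcal F$ and $H$ from below along the two schemes, uniformly in $n$ and $\alpha\le\alpha_0$, via bounds on $W_2(\cdot,\mu_0)$. For the entropic scheme you use the competitor $J^\alpha_{k,\tau}(\mu_0)*\sigma_{\alpha\tau}$ exactly as in Proposition~\ref{prop:upper wasserstein alon ent JKo scheme}. For the classical scheme the paper uses the discrete E.V.I.\ with $\rho=\mu_0$ rather than energy dissipation, but both work.

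The one step you only assert is replacing $\inf\mathcal F$ by a lower bound along the scheme in the $\lambda<0$ rate. The paper justifies this with the truncation $\max\{\mathcal F,M_t\}$ of Remark~\ref{rem:below_bound_F} and Proposition~\ref{prop:lower bound along JKO implies wlog F lower bounded}. Since $\lambda<0$, this truncation preserves $\lambda$-convexity. It also leaves every iterate $J^0_{k,t/n'}(\mu_0)$, and hence $\rho^0(t)$, unchanged. For that, $M_t$ must bound $\mathcal F$ from below uniformly over all step sizes $t/n'$, not only for the given $n$.
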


The bound of Theorem \ref{the theorem} can be compared with the best bound we know between the solutions of the limiting equations~\eqref{eq:gradient_flow} and~\eqref{eq:gradient_flow_reg}. This bound has been communicated to us by Fanch Coudreuse, and for the sake of completeness, we reproduce its proof in Appendix~\ref{app:fanch}. As we will see then, this proof relies on formal computations on the PDEs, and therefore becomes rigorous when equations~\eqref{eq:gradient_flow} and~\eqref{eq:gradient_flow_reg} admit sufficiently regular solutions. This is for instance the case when $\mathcal F$ is of the form~\eqref{eq:explicit_F} with $V$, $W$ and $f$ sufficiently regular.
\begin{theorem}\label{bound continuous case}
    Let $\mathcal F$ be of the form~\eqref{eq:explicit_F}. We assume that:
    \begin{itemize}
        \item The PDEs~\eqref{eq:gradient_flow} and~\eqref{eq:gradient_flow_reg} admit regular solutions $\rho^0 $ and $\rho^\alpha$ for all times $t\geq 0$;
        \item $\mathcal F$ is $\lambda$-geodesically convex for some $\lambda \in \R$.
    \end{itemize}
    Then, for all $t>0$, 
    \begin{itemize}
        \item if $\lambda=0$ the following inequalities hold:
        \begin{equation}\label{eq:bound continuous}            W_2(\rho^0(t),\rho^\alpha(t))\leq\frac{\alpha}{2}\int_0^t \sqrt{\int |\nabla\ln(\rho^\alpha)|^2\mathrm{d}\rho^\alpha}\mathrm{d}s\leq \sqrt{\frac{\alpha}{2}} \sqrt{t (H(\mu_0)-H(\rho^\alpha_t)+K t)}.
        \end{equation}
    \item if $\lambda\neq0$ the following inequalities hold:
    \begin{equation}\label{eq:bound continuous lambda}
        W_2(\rho^0(t),\rho^\alpha(t))\leq\frac{\alpha}{2}\int_0^t e^{\lambda (s-t)} \sqrt{\int |\nabla\ln(\rho^\alpha)|^2\mathrm{d}\rho^\alpha}\mathrm{d}s\leq \sqrt{ \frac{\alpha}{2}  \frac{1-e^{-2\lambda t}}{2\lambda}} \sqrt{(H(\mu_0)-H(\rho^\alpha_t)+K t)}.
    \end{equation} 
    \end{itemize}
\end{theorem}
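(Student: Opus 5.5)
The proof is a formal Eulerian computation along the two flows. Let $\rho^0_t$ and $\rho^\alpha_t$ be the regular solutions, with velocity fields
\[
v^0 = -\nabla \tfrac{\delta\F}{\delta\rho}(\rho^0), \qquad v^\alpha = -\nabla \tfrac{\delta\F}{\delta\rho}(\rho^\alpha) - \tfrac{\alpha}{2}\nabla\ln\rho^\alpha ,
\]
so that~\eqref{eq:gradient_flow_reg} reads $\partial_t\rho^\alpha + \operatorname{div}(\rho^\alpha v^\alpha)=0$. We differentiate $\frac12 W_2^2(\rho^0_t,\rho^\alpha_t)$ in time. Let $\varphi,\psi$ be Kantorovich potentials, so that $x-\nabla\varphi(x)$ is the optimal map from $\rho^0$ to $\rho^\alpha$ and $y-\nabla\psi(y)$ is its inverse. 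The standard formula (AGS, Theorem~8.4.7) gives
\[
\frac{d}{dt}\frac12 W_2^2 = \int \nabla\varphi\cdot v^0\,d\rho^0 + \int \nabla\psi\cdot v^\alpha\, d\rho^\alpha .
\]
We then split $v^\alpha$ into its gradient-flow part and its diffusion part.

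\textbf{Gradient-flow part.} The $\F$-parts together are exactly the pairing that appears in the contraction of two $\lambda$-convex gradient flows. The $\lambda$-convexity of $\F$, written through the above-tangent inequalities at both endpoints of the geodesic, bounds them by $-\lambda W_2^2$.

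\textbf{Diffusion part.} The remaining term is $-\frac{\alpha}{2}\int \nabla\psi\cdot\nabla\ln\rho^\alpha\,d\rho^\alpha$. By Cauchy--Schwarz it is at most
\[
\frac{\alpha}{2}\,W_2\,\sqrt{I(\rho^\alpha)}, \qquad I(\rho):=\int|\nabla\ln\rho|^2\,d\rho,
\]
using $\|\nabla\psi\|_{L^2(\rho^\alpha)}=W_2$.

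Setting $w=W_2(\rho^0_t,\rho^\alpha_t)$, these give
\[
w\,w' \le -\lambda w^2 + \tfrac{\alpha}{2}\, w\sqrt{I}, \qquad\text{hence}\qquad w'\le -\lambda w+\tfrac{\alpha}{2}\sqrt{I}
\]
wherever $w>0$. Grönwall with $w(0)=0$ then yields the first inequality in~\eqref{eq:bound continuous lambda}, and~\eqref{eq:bound continuous} when $\lambda=0$. The points where $w=0$ are handled by working with $\sqrt{w^2+\delta}$ and letting $\delta\to0$.

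\textbf{Second inequality.} Apply Cauchy--Schwarz in time:
\[
\int_0^t e^{\lambda(s-t)}\sqrt{I}\,ds \le \sqrt{\int_0^t e^{2\lambda(s-t)}\,ds}\;\sqrt{\int_0^t I\,ds}, \qquad \int_0^t e^{2\lambda(s-t)}\,ds=\frac{1-e^{-2\lambda t}}{2\lambda}.
\]
It remains to bound $\int_0^t I(\rho^\alpha_s)\,ds$ by an entropy balance. Along~\eqref{eq:gradient_flow_reg},
\[
\frac{d}{dt}H(\rho^\alpha) = -\int\nabla\ln\rho^\alpha\cdot v^\alpha\,d\rho^\alpha = -\frac{\alpha}{2}\, I + \int \nabla\ln\rho^\alpha\cdot\nabla\tfrac{\delta\F}{\delta\rho}\,d\rho^\alpha .
\]
The last term is the derivative of $\F$ along the heat flow, $\frac{d}{ds}\big|_{s=0}\F(\rho*\sigma_s) = \frac12\int\nabla\tfrac{\delta\F}{\delta\rho}\cdot\nabla\rho$. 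Hypothesis~\ref{hypothesis}(3) says this derivative is at most $K/2$, so the cross term is at most $K$. This gives
\[
\frac{\alpha}{2}\int_0^t I\,ds \le H(\mu_0)-H(\rho^\alpha_t)+Kt .
\]
Multiplying by $\frac{\alpha}{2}$ and combining with the time Cauchy--Schwarz gives the stated right-hand sides.

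\textbf{Main obstacle.} The delicate points are the justification of the $W_2^2$ derivative formula and of the convexity monotonicity inequality for the two different flows. Both are standard for regular solutions (AGS Theorem~8.4.7 and the subdifferential characterization of $\lambda$-convexity), and the problem is only formal under the regularity assumed in the statement. Checking that Hypothesis~\ref{hypothesis}(3) indeed yields the bound $K$ on $\int\nabla\ln\rho\cdot\nabla\frac{\delta\F}{\delta\rho}\,d\rho$, with exactly this constant, is the other point to verify carefully.
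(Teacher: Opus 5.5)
Your proof is correct. The first inequality follows the paper's argument essentially verbatim. The paper differentiates the dual formulation of $\frac12W_2^2(\rho^0_t,\rho^\alpha_t)$ via the envelope theorem rather than quoting AGS Theorem~8.4.7. It then bounds the $\F$-terms by $-\lambda W_2^2$ through the convexity inequality at both endpoints of the geodesic, applies Cauchy--Schwarz to the diffusion term, and concludes by Grönwall. Your $\sqrt{w^2+\delta}$ device handles the set $\{w=0\}$, which the paper passes over silently.

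For the second inequality you take a different route. You integrate the entropy-dissipation identity along the regular solution $\rho^\alpha$ and bound the cross term by the infinitesimal form of the third point of Hypothesis~\ref{hypothesis}. The paper instead works at the discrete level:
\begin{itemize}
\item it takes one step $\mathcal J^\alpha_\tau(\mu)$ of the JKO scheme for $\F+\frac\alpha2H$ and tests the competitor $\mathcal J^\alpha_\tau(\mu)*\sigma_s$;
\item it controls the change of the Wasserstein term with the EVI of the heat flow, and the change of $\F$ with the third point of Hypothesis~\ref{hypothesis} in its integrated form;
\item letting $s\to0$ gives $H(\mathcal J^\alpha_\tau(\mu))-H(\mu)+\frac{\alpha\tau}{2}I(\mathcal J^\alpha_\tau(\mu))\le K\tau$;
\item it sums over the iterates and passes to the limit using lower semicontinuity of $H$ and of the Fisher information.
\end{itemize}
Your version is shorter and fully adequate under the regularity the statement assumes. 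The paper's version needs no differentiability of $t\mapsto H(\rho^\alpha_t)$ or $s\mapsto\F(\rho*\sigma_s)$, and it uses the hypothesis exactly as stated. It also yields two by-products: at most linear growth of entropy along the JKO iterates, used to keep them absolutely continuous, and finiteness of $\int_0^t I(\rho^\alpha_s)\,\mathrm{d}s$, which the paper invokes to justify the formal computations of the first part.

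One correction: your entropy balance contains two sign slips that cancel. Since $\partial_t\rho^\alpha=-\operatorname{div}(\rho^\alpha v^\alpha)$, integration by parts gives
\[
\frac{\mathrm{d}}{\mathrm{d}t}H(\rho^\alpha)=\int\nabla\ln\rho^\alpha\cdot v^\alpha\,\mathrm{d}\rho^\alpha=-\frac{\alpha}{2}I(\rho^\alpha)-\int\nabla\rho^\alpha\cdot\nabla\frac{\delta\F}{\delta\rho}(\rho^\alpha),
\]
with a plus sign in the first equality, not the minus you wrote. Also, because $\sigma_s$ solves $\partial_s\sigma=\frac12\Delta\sigma$, one has $\frac{\mathrm{d}}{\mathrm{d}s}\big|_{s=0}\F(\rho*\sigma_s)=-\frac12\int\nabla\frac{\delta\F}{\delta\rho}(\rho)\cdot\nabla\rho$, again with the opposite sign to yours. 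With both corrected, the cross term still equals $2\frac{\mathrm{d}}{\mathrm{d}s}\big|_{s=0}\F(\rho^\alpha*\sigma_s)\le K$. This is exactly the formal reformulation of the hypothesis given in the paper, so your conclusion $\frac\alpha2\int_0^t I\,\mathrm{d}s\le H(\mu_0)-H(\rho^\alpha_t)+Kt$ stands.
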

\begin{remark}
In order to compare our bound of Theorem \ref{the theorem} and the bound of Theorem \ref{bound continuous case}, let us fix $t>0$, define $\tau=\frac{t}{n}$ and let $n$ go to $+\infty$; the following limits hold true:
 \begin{gather*}
 \lim_{n \to +\infty}\frac{1-(1+\lambda\tau)^{-2n}}{2\lambda}=\frac{1-e^{-2\lambda t}}{2\lambda},\\
 \lim_{n \to +\infty}(1-\lambda_-\tau)^{-n}=e^{\lambda_-t},\\
 \limsup_{n\to+\infty}-H(J_{n,\tau}^\alpha(\mu_0))\leq -H(\rho^\alpha_t).
 \end{gather*}
 The two first lines are direct, and the last one is a consequence of the lower semicontinuity of the entropy together with the convergence result of the entropic JKO scheme towards the solutions of~\eqref{eq:gradient_flow_reg} stated in~\cite{baradat2025usingsinkhornjkoscheme}. Now, taking the limsup in the bound of Theorem~\ref{the theorem}:
\[
\limsup_{n\to+\infty} W_2(J_{n,\tau}^0(\mu_0), J_{n,\tau}^\alpha(\mu_0))\leq 
\left\{
\begin{aligned}   &\sqrt{\alpha t \big(H(\mu_0)-H(\rho^\alpha(t))+Kt\big)}, &&\text{if $\lambda =0$},\\
    &\sqrt{\alpha \frac{1-e^{-2\lambda t}}{2\lambda} \big(H(\mu_0)-H(\rho^\alpha(t))+Kt\big)}, &&\text{if $\lambda \neq0$},
\end{aligned}
\right.
\]
which is twice bound between the limits stated at Theorem~\ref{bound continuous case}. This argument shows that our bound is close to being sharp in $\alpha$, see Theorem~\ref{optimality discrete cororlary} for a precise statement. However, our bound is far from being optimal in $\tau$, since it does not even converge to $0$ as $\alpha$ goes to $0$.
\end{remark} 
\subsection{Comments on the hypothesis}\label{comments hyp}
Let us explain why Hypothesis~\ref{hypothesis} is a natural set of assumptions and give some usual cases where it is verified.
\begin{itemize}
    \item The lower semicontinuity is a natural assumption to ensure the existence of minimizers along the schemes. The lower semicontinuity we require on $\F$ is weaker than the lower semicontinuity for the narrow convergence and stronger than the $W_2$ lower semicontinuity (for which we are not able to prove existence for the entropic scheme). 
    \item The convexity along generalized geodesics is a strengthen version of the geodesic convexity which is equivalent in all practical cases, see subsection \ref{general cnx geo explain} for the definition and some explanations. This hypothesis, which is already required in \cite{ambrosio2005gradient} to obtain the convergence and an explicit convergence rate of the JKO scheme toward its limit, will provide stability through the discrete Evolution Variational Inequality (discrete E.V.I); see Theorem \ref{discrete E.V.I}. 
    \item An important heuristic idea that guided us is that following the flow of $\mathcal{F} + \frac{\alpha}{2} H$ (i.e., solving equation~\eqref{eq:gradient_flow_reg}) for a time $\mathrm{d}t$ should be asymptotically equivalent to first following the flow of $\mathcal{F}$ for a time $\diff t$, and then following the flow of $\frac{\alpha}{2} H$ for a time $\diff t$. Indeed, if $\rho^0$ and $\rho^\alpha$ are regular solutions of~\eqref{eq:gradient_flow} and~\eqref{eq:gradient_flow_reg} respectively, starting from the same initial measure $\mu_0$, then:
    $$
    \partial_t \left(\rho^0*\sigma_{\alpha t}\right)\Big|_{t=0}=\operatorname{div}\left(\mu_0\nabla\frac{\delta\F}{\delta\rho}(\mu_0)\right)+\frac{\alpha}{2}\Delta \mu_0=\partial_t\rho^\alpha\Big|_{t=0}
    $$
With this in mind, in order to compare the flow of $\F$ and of $\F+\frac{\alpha}{2}H$, it is not surprising that we need a bound on the flow of $H$ (i.e., the heat flow) along our solution, which is our last hypothesis. Formally (that is, for sufficiently regular functionals and densities), this assumption is equivalent to the fact that for all sufficiently regular probability measure $\rho$,
\begin{equation*}
    - \int \nabla \frac{\delta \F}{\delta \rho}(\rho) \cdot \nabla \rho \leq K.
\end{equation*}
\end{itemize}

Let us provide some examples relying on the form~\eqref{eq:explicit_F}. Our Hypothesis \ref{hypothesis} covers the following classic cases (extended and proved in~Subsection \ref{proof classic verify}).
\begin{proposition}\label{classic functional light ver}
Let $\mathcal F$ be of the form~\eqref{eq:explicit_F} that is:
\begin{equation*}
\mathcal F : \rho \in \mathcal P_2(\R^d) \longmapsto \int_{\mathbb{R}^d} V(x)\, \rho(x)\diff x 
+ \frac{1}{2} \int_{\mathbb{R}^d} (W * \rho)(x)\, \rho(x)\diff x 
+ \int_{\mathbb{R}^d} f(\rho(x))\diff x,
\end{equation*}
for some functions $V$, $W$ and $f$, where $\mathcal F$ is set to $+\infty$ if $\rho$ is not absolutely continuous with respect to the Lebesgue measure. We assume that:
 \begin{itemize}
     \item $V,W$ are nonnegative, and $f$ is either nonnegative or positively proportional to $s \in \R_+\mapsto s \log s$.
        \item $V,W$ are of regularity $C^1$ with globally Lipchitz derivatives,
        \item $f$ is convex with superlinear growth  and verify the McCann condition, which means that the map $s \mapsto s^df(s^{-d})$ is convex and non increasing on $(0,+\infty)$.
    \end{itemize}
    Then $\mathcal{F}$ satisfies Hypothesis \ref{hypothesis} for some $\lambda$ and $K$.
\end{proposition}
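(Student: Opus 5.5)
The statement to prove is Proposition~\ref{classic functional light ver}: under the stated assumptions on $V$, $W$, $f$, the functional $\F$ satisfies the three points of Hypothesis~\ref{hypothesis}. I would check the points one at a time, treating the potential, interaction and internal energies separately. Points (1) and (2) are classical. Point (3), the heat-flow bound, is the only genuinely new ingredient.

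For (1), lower semicontinuity under narrow convergence with bounded second moments:
\begin{itemize}
\item The potential energy $\int V\diff\rho$ is l.s.c. under narrow convergence because $V$ is continuous and nonnegative; approximate $V$ from below by bounded continuous functions.
\item The interaction energy is handled the same way. Since $\rho_n\otimes\rho_n \to \rho\otimes\rho$ narrowly, one applies the same argument to $W(x-y)$ on the product space.
\item For the internal energy with $f\ge 0$ convex and superlinear, use the classical l.s.c. of integral functionals of convex superlinear integrands under narrow convergence. Superlinearity also forces the limit to be absolutely continuous when the energies are bounded.
\item For $f(s)=c\,s\log s$, the entropy is not l.s.c. under narrow convergence alone. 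This is exactly why the hypothesis requires bounded second moments: writing $H(\rho)=\mathrm{KL}(\rho\,|\,\gamma)-\int \ln\gamma\diff\rho$ with $\gamma$ a Gaussian gives l.s.c. of the relative entropy plus continuity of the moment term. Here I would use uniform integrability of $|x|^2$, or settle for l.s.c. of the sum when the second moments only converge up to a limsup.
\end{itemize}
The combination is l.s.c. since all terms are bounded below.

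For (2), I would invoke the results of~\cite{ambrosio2005gradient} (Section~9.3). Potential and interaction energies with $C^{1,1}$ kernels are $\lambda$-convex along generalized geodesics, with $\lambda=-\mathrm{Lip}(\nabla V)$, respectively $-\mathrm{Lip}(\nabla W)$ up to a factor $2$. Internal energies satisfying McCann's condition are convex along generalized geodesics. Adding these gives $\lambda$ as the sum of the three constants.

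For (3), I would compute $\frac{d}{dt}\F(\mu*\sigma_t)$ for $\mu$ regular and then extend by approximation; alternatively, use directly the inequalities below.
\begin{itemize}
\item The internal energy is nonincreasing along the heat flow by Jensen's inequality, since convolution with a probability kernel and convexity of $f$ give $\int f(\mu*\sigma_t)\le\int f(\mu)$. This holds for $f\ge0$ and also for the entropy, using the second-moment control.
\item For the potential term, $\int V\diff(\mu*\sigma_t)=\int (V*\sigma_t)\diff\mu$. Since $\nabla V$ is $L$-Lipschitz, a second-order Taylor bound gives $V*\sigma_t\le V+\frac{L d}{2}t$, because $\sigma_t$ is centered with covariance $t\,I$.
\item The interaction term is treated the same way: $\frac12\iint W(x-y)\diff(\mu*\sigma_t)^{\otimes2}=\frac12\iint (W*\sigma_{2t})(x-y)\diff\mu^{\otimes 2}$, which gives a constant $L_W d\, t$.
\end{itemize}
Summing gives $K=d(L_V+2L_W)$, or similar.

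The main obstacle is the l.s.c. of the entropy part in (1) and making (3) rigorous for the entropy case without regularity. I would handle both by exact convexity/Jensen arguments rather than derivatives, which avoids any smoothness assumption on $\mu$.
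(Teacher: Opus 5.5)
\textbf{Gap in point (1).} The entropy case $f(s)=c\,s\log s$, which you rightly single out, does not go through as written.

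First, the decomposition has the wrong sign. One has $H(\rho)=H(\rho\|\gamma)+\int\ln\gamma\,\mathrm{d}\rho$. So with $\gamma_s$ the centered Gaussian of variance $s$,
\[
H(\rho)=H(\rho\|\gamma_s)-\frac{1}{2s}\int|x|^2\,\mathrm{d}\rho-\frac d2\ln(2\pi s).
\]
The relative entropy is indeed l.s.c. However, the second moment now enters with a minus sign. Narrow convergence with $\sup_n\int|x|^2\,\mathrm{d}\rho_n\le S$ only gives $\int|x|^2\,\mathrm{d}\rho\le\liminf_n\int|x|^2\,\mathrm{d}\rho_n$, which is exactly the wrong inequality, and the moment term is not continuous.

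Your first fallback is not available. Point (1) of Hypothesis~\ref{hypothesis} only bounds second moments. Moving mass $1/n^2$ to distance $n$ keeps them bounded while destroying both uniform integrability of $|x|^2$ and continuity of the second moment. Your second fallback ("settle for l.s.c. of the sum") is not an argument.

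The missing observation is that the moment defect is at most $S$, so the free parameter $s$ can absorb it. For every $s>0$,
\[
\liminf_{n\to+\infty} H(\rho_n)\ \ge\ H(\rho\|\gamma_s)-\frac{S}{2s}-\frac d2\ln(2\pi s)\ =\ H(\rho)+\frac{1}{2s}\Big(\int|x|^2\,\mathrm{d}\rho-S\Big)\ \ge\ H(\rho)-\frac{S}{2s},
\]
and letting $s\to+\infty$ gives $\liminf_n H(\rho_n)\ge H(\rho)$.

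Alternatively, use a reference density proportional to $e^{-|x|^\beta}$ with $\beta<2$. The function $|x|^\beta$ \emph{is} uniformly integrable under bounded second moments, so that moment term converges. This sub-quadratic mechanism is what underlies the paper's use of the $W_{2-\varepsilon}$-lower semicontinuity from \cite[Remark 9.3.8]{ambrosio2005gradient}.

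\textbf{Point (3): correct, and a different route from the paper.} You use:
\begin{itemize}
\item the Tonelli identity $\int V\,\mathrm{d}(\mu*\sigma_t)=\int (V*\sigma_t)\,\mathrm{d}\mu$;
\item the reduction of the interaction term to $W*\sigma_{2t}$;
\item the pointwise bound $V(x+y)\le V(x)+\nabla V(x)\cdot y+\frac L2|y|^2$, integrated against a centered kernel.
\end{itemize}
The paper instead proceeds as follows:
\begin{itemize}
\item it proves $\int V\,\mathrm{d}\rho_t\le\int V\,\mathrm{d}\mu+\frac12\int_0^t\!\int\Delta V\,\mathrm{d}\rho_s\,\mathrm{d}s$ for compactly supported test functions;
\item it removes the compact support by truncating with $V\chi_R$, using quadratic growth bounds and Markov's inequality;
\item it uses $\Delta V\le K_1$;
\item it treats $W$ by differentiating $\frac12\int W*\rho_t\,\mathrm{d}\rho_t$ in time and applying the potential estimate to $W*\rho_t$.
\end{itemize}
Your version needs no approximation step. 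It relies on the one-sided Hessian bound $D^2V\le L\,I$, which a Lipschitz gradient provides directly. In the paper's extended setting this bound is recovered from $\lambda$-convexity together with $\Delta V\le K_1$, cf.\ Remark~\ref{rem:C11}.

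\textbf{Two small corrections.}
\begin{itemize}
\item Because of the prefactor $\frac12$, the interaction increment is $\frac12 dL_W t$ rather than $dL_W t$. This is harmless, since only some $K$ is required.
\item In point (2), \cite[Proposition 9.3.5]{ambrosio2005gradient} covers only convex $W$. For semiconvex $W$ you still need the short computation in the paper: expand $|(y-\tilde y)-(x-\tilde x)|^2$ against $\gamma\otimes\gamma$ and bound it by $2\int|x-y|^2\,\mathrm{d}\gamma$.
\end{itemize}
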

\begin{remark}
    Some classic examples of functions $f$ that verify the previous hypothesis are:
    $$
    f(s)=s\ln(s) \quad \text{and}\quad f(s)=\frac{s^m}{m-1}\quad \text{for }m>1.
    $$
\end{remark}
\subsection{Structure of the Article}

In Section~\ref{notations}, we gather the definitions of the objects appearing in our study: the Wasserstein distance in Subsection~\ref{Wasser}, the relative and Boltzmann entropy in Subsection~\ref{subsec:entropy} and the Schr\"odinger cost in Subsection~\ref{sch cost}. As we will see later, the main ingredient in the proofs is the convexity along the generalized geodesics, presented in Subsection~\ref{general cnx geo def}, and on of its consequences, the discrete Evolution Variational Inequality (E.V.I.). In fact, this central inequality arises very naturally when we adopt a lifting point of view. For completeness, we explain this lifting and how it implies the E.V.I in Subsection~\ref{subsec:EVI}. To our knowledge, this article is the first one to study systematically the entopic JKO scheme for $\lambda$-convex functionals. Therefore, one of the most natural questions is the existence of minimizers along the scheme. This is done in Subsection \ref{Welldefined scheme} for both schemes. Unfortunately, as presented in Subsection~\ref{subsec:uniqueness}, we are only able to show uniqueness of the minimizers of the entropic JKO scheme in some reduced classes of functionals. Finally, in Subsection \ref{proof classic verify}, we show that Hypothesis \ref{hypothesis} is satisfied for a large class of functionals of the form~\eqref{eq:explicit_F}.

Section~\ref{proof of theorem} contains the proof of the main convergence result stated in Theorem~\ref{the theorem}.

Section~\ref{sec:proof_cor} contains the proof of Corollary~\ref{cv rate entropic JKO}.

In section~\ref{Sec:optimality in alpha} we investigate the optimality in $\alpha$ of both the bound at the continuous level and the bound between the two schemes. We are able to find examples where the first inequality of \eqref{eq:bound continuous} and \eqref{eq:bound continuous lambda}  are equalities. At the discrete level, we are able to show an analog version of this sharp bound in the continuous case, and for the same examples, this bound is an equality up to adding a term that goes to~$0$ when $\tau$ goes to $0$.

\subsection*{Acknowledgments} The authors wish to express their gratitude to Fanch Coudreuse for explaining to them the bound presented in Theorem~\ref{bound continuous case}. They also want to thank Hugo Malamut, Maxime Sylvestre and Filippo Santambrogio for interesting discussions and remarks. They finally acknowledge the support of European union via the ERC AdG
101054420 EYAWKAJKOS.

\section{Notations and preliminaries}\label{notations}

\subsection{The Wasserstein distance}
The quadratic Wasserstein distance admits three classical equivalent formulations: 
(1) the primal (Kantorovich) formulation, see Subsection~\ref{par:primal}, 
(2) the dual formulation, see Subsection~\ref{subsec:wasserstein dual formulation}, 
and (3) the dynamic Benamou--Brenier formulation, see Subsection~\ref{subsec: Benamou brenier formulation of wasserstein}.  
Let us start by introducing the primal formulation.
\subsubsection{Primal formulation of the Wasserstein distance}
\label{par:primal}
\begin{definition}[Wasserstein Distance]\label{Wasser}
We denote by \(W_2\) the $2$-Wasserstein distance associated with the Euclidean distance, defined for every $\mu,\nu\in\mathcal P_2(\R^d)$ by:
\begin{equation*}
W_2(\mu, \nu) := \left( \inf_{\gamma \in \Pi(\mu, \nu)} \int_{\R^d \times \R^d} |x - y|^2 \diff\gamma(x, y) \right)^{1/2},
\end{equation*}
where $\Pi(\mu, \nu)$ is the set of all couplings between \(\mu\) and \(\nu\). These are the measures $\gamma\in \mathcal P(\R^d \times \R^d)$ that satisfy that for every $\varphi\in \mathcal{C}^0_c(\R^d,\R)$
\[
\int_{\R^d\times\R^d} \varphi(x)\diff\gamma(x, y)  = \int_{\R^d}\varphi(x)\diff\mu(x) \quad\mbox{and} \quad \int_{\R^d\times\R^d} \varphi(y)\diff\gamma(x, y)  = \int_{\R^d}\varphi(y)\diff\nu(y).
\]
\end{definition}
It is well known that minimizers always exist~\cite{santambrogio2015optimal}. In the following, we call these minimizers optimal transport plans.  
If moreover $\mu$ is absolutely continuous w.r.t.\ the Lebesgue measure, then by Brenier's theorem \cite{Brenier1991}, there exists a unique optimal plan, and this plan is concentrated on the graph of a map $T$, called the \emph{optimal transport map}, which is the gradient of a convex function. Moreover, $T$ is unique $\mu$-almost everywhere. In particular,
\[
\gamma = (I_d,T)_{{_\#}}\mu \qquad \mbox{and} \qquad T_{{_\#}}\mu=\nu,
\]
where if $p \in \mathcal P(\R^n)$, $n \in \N^*$ and $A: \R^n \to \R^m$, $m \in \N^*$ is a measurable map well defined $p$-almost everywhere, $A_{{_\#}}p$ is the push-forward of $p$ by $A$, defined for all Borelian subset $E$ of $\R^m$ by $A_{{_\#}}p(E) = p(A^{-1}(E))$.
Moreover,
\[
T=\argmin_{\substack{S \in L^2(\mu ; \R^d) \\ S{_\#}\mu=\nu}}\left\{\int |x-S(x)|^2\diff\mu(x)\right\}\quad \text{and}\quad W_2^2(\mu,\nu) = \int_{\R^d} |x-T(x)|^2 \, \diff\mu(x).
\]

\subsubsection{Dual formulation}\label{subsec:wasserstein dual formulation}
The minimization problem defining the Wasserstein distance comes with a dual problem which can be expressed as follows (see \cite{santambrogio2015optimal} for more details and a proof of the equality of the primal and dual optimal values):
\[
\frac{W_2^2(\mu, \nu)}{2} = \sup_{\substack{\varphi, \psi \\ \varphi \oplus \psi \leq \frac{|x - y|^2}{2}}} \left\{ \int \varphi \diff\mu + \int \psi \diff\nu \right\}
\]
where the supremum is taken over $(\varphi,\psi)\in L^1(\mu)\times L^1(\nu)$ that satisfies pointwise for all $x , y \in \R^d$ $\varphi(x) + \psi(y) \leq |x-y|^2/2$. Moreover, the previous supremum is achieved, and if the pair $(\varphi,\psi)$ achieves this maximum, $\varphi$ and $\psi$ are called Kantorovich potentials respectively from $\mu$ to $\nu$ and from $\nu$ to $\mu$.

When an optimal map exists, it can be recovered from Kantorovich potentials.
\begin{proposition}
\label{prop:link_primal_dual}
   Let $\mu,\nu\in\mathcal P_2(\R^d)$. Assume that $\mu$ is absolutely continuous with respect to the Lebesgue measure.  
Let $(\varphi,\psi)$ be an associated optimal Kantorovich pair.  
Then $\varphi$ is differentiable $\mu$-a.e. and the optimal transport map $T$ satisfies for $\mu$-almost every $x$:
\[
T(x) = x - \nabla \varphi(x).
\]
Consequently,
\[
W_2^2(\mu,\nu) = \int_{\R^d} |\nabla\varphi(x)|^2 \, \diff\mu(x).
\]
    \end{proposition}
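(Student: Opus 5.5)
The statement is Proposition~\ref{prop:link_primal_dual}: when $\mu$ is absolutely continuous, the optimal map is $T = I_d - \nabla\varphi$ for a Kantorovich potential $\varphi$. The plan is to rewrite the dual problem as a problem about convex functions and then identify the subdifferential of the convex function with the support of an optimal plan.

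\textbf{Step 1: convexity of $u$.} Set $u(x) := \frac{|x|^2}{2} - \varphi(x)$ and $v(y) := \frac{|y|^2}{2} - \psi(y)$. The constraint $\varphi \oplus \psi \le \frac{|x-y|^2}{2}$ becomes $u(x) + v(y) \ge x\cdot y$ for all $x,y$. We may replace $\varphi$ by its $c$-transform $\psi^c(x) = \inf_y \{|x-y|^2/2 - \psi(y)\}$. This keeps the pair admissible and can only increase $\int\varphi\,\diff\mu$, so the new pair is still optimal. After this replacement,
\[
u(x) = \sup_y \{x\cdot y - v(y)\},
\]
which is a supremum of affine functions, hence convex and l.s.c. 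Moreover $u$ is finite on a convex set containing $\mu$-almost all points, since $\varphi\in L^1(\mu)$.

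\textbf{Step 2: differentiability.} A convex function is locally Lipschitz in the interior of its domain, so by Rademacher's theorem it is differentiable Lebesgue-a.e. there. The boundary of a convex set is Lebesgue-negligible, so $u$ is differentiable Lebesgue-a.e. on the support region. Since $\mu \ll \mathrm{Leb}$, it follows that $u$, and hence $\varphi$, is differentiable $\mu$-a.e.

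\textbf{Step 3: support of an optimal plan.} Let $\gamma$ be an optimal plan. By equality of the primal and dual values,
\[
\int \left( \frac{|x-y|^2}{2} - \varphi(x) - \psi(y) \right) \diff\gamma = 0 .
\]
The integrand is nonnegative, so it vanishes $\gamma$-a.e. Equivalently, $u(x) + v(y) = x\cdot y$ for $\gamma$-a.e. $(x,y)$. Combined with $u(z) + v(y) \ge z\cdot y$ for every $z$, this gives $u(z) \ge u(x) + y\cdot(z-x)$, that is, $y \in \partial u(x)$.

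\textbf{Step 4: identification of $T$.} At points where $u$ is differentiable, $\partial u(x) = \{\nabla u(x)\}$. Hence $\gamma$ is concentrated on the graph of $\nabla u = I_d - \nabla \varphi$. By the uniqueness part of Brenier's theorem, $T = I_d - \nabla\varphi$ $\mu$-a.e. The formula for $W_2^2$ then follows by substituting $x - T(x) = \nabla\varphi(x)$ into $W_2^2(\mu,\nu) = \int |x - T(x)|^2 \diff\mu$.

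\textbf{Main obstacle.} The delicate point is the measure-theoretic part of Step 2. The function $u$ may take the value $+\infty$, so we must check that $\mu$ gives full mass to the interior of $\mathrm{dom}\, u$, using that $\partial(\mathrm{dom}\,u)$ is Lebesgue-null and $\mu\ll\mathrm{Leb}$. We must also handle the fact that an arbitrary optimal $\varphi$ is only defined $\mu$-a.e.: we replace it by $\psi^c$, note that the two agree $\mu$-a.e., and interpret $\nabla\varphi$ accordingly.
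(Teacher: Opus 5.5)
The paper does not prove this proposition; it quotes it as a standard consequence of Kantorovich duality and Brenier's theorem, with a pointer to \cite{santambrogio2015optimal}. Your argument is the standard textbook proof, and it is correct. Its three ingredients are: convexity of $u=\frac{|x|^2}{2}-\psi^c$; $\mu$-a.e.\ differentiability from Rademacher's theorem together with the Lebesgue-negligibility of $\partial(\mathrm{dom}\,u)$; and $y\in\partial u(x)$ for $\gamma$-a.e.\ $(x,y)$ from complementary slackness.

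One point should be sharpened rather than left as an ``interpretation''. Replacing $\varphi$ by $\psi^c$ in Step~1 is necessary, because for an arbitrary optimal pair the differentiability claim is false as literally stated. Take $\tilde\varphi:=\psi^c-\mathbf{1}_{\mathbb{Q}^d}$. It still satisfies $\tilde\varphi(x)+\psi(y)\le\frac{|x-y|^2}{2}$ everywhere and has the same $\mu$-integral, so $(\tilde\varphi,\psi)$ is an optimal pair. Yet $\tilde\varphi$ is discontinuous at every point where $\psi^c$ is continuous, in particular on the interior of $\mathrm{dom}\,u$, which carries full $\mu$-mass. So what you prove, and what is true, is the statement for the $c$-concave representative $\psi^c$, which equals $\varphi$ $\mu$-a.e. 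Also, in the paper's formulation $\varphi$ is not ``only defined $\mu$-a.e.''; it is defined everywhere, just not determined off a $\mu$-null set.

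Finally, Step~4 does not need the uniqueness part of Brenier's theorem. Applying Step~3 to the optimal plan $(I_d,T)_{\#}\mu$ gives $T=\nabla u$ $\mu$-a.e.\ directly. Applying it to an arbitrary optimal plan in fact re-proves Brenier's existence and uniqueness.
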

\subsubsection{The Benamou-Brenier formulation}\label{subsec: Benamou brenier formulation of wasserstein}
These two formulations of the Wasserstein distance are said to be \emph{static}: only the initial and final distribution of mass matter. Alternatively, the Benamou-Brenier formula \cite{Benamou2000} offers a dynamic viewpoint by determining a continuous trajectory followed by the distribution of mass during transport. It can stated as follows:
\begin{proposition}
\label{prop:BB}
For all $\mu,\nu \in \mathcal P_2(\R^d)$,
    $$
    \frac{W_2^2(\mu,\nu)}{2}=\inf_{\substack{(\rho, c) \\ \partial_t \rho + \operatorname{div}(\rho c) = 0}} \int_0^1 \int \frac{|c_t|^2}{2} \diff\rho_t \diff t,
    $$
    where the infimum is taken over curves $(\rho_t)$, $t \in [0,1]$, valued in $\mathcal P_2(\R^d)$, and vector fields $c = c_t(x)$, $t \in [0,1]$ and $x \in \R^d$, such that the PDE $\partial_t\rho +\operatorname{div}(\rho c)=0$ holds distributionally, with $\rho(0)=\mu$ and $\rho(1)=\nu$. The infimum is achieved, and if the pair $(\rho_t,c_t)$ achieves this minimum, the curve $(\rho_t)$ is called a geodesic between $\mu$ and $\nu$ and $(c_t)$ is called its associated velocity fields.

    Alternatively, rescaling the time according to a positive parameter $\tau>0$, we have
    for all $\mu,\nu \in \mathcal P_2(\R^d)$:
    $$
    \frac{W_2^2(\mu,\nu)}{2\tau}=\inf_{\substack{(\rho, c) \\ \partial_t \rho + \operatorname{div}(\rho c) = 0}} \int_0^\tau \int \frac{|c_t|^2}{2} \diff\rho_t \diff t
    $$
    where the infimum is taken over weak solutions $(\rho_t,c_t)$, $t \in [0,\tau]$ of $\partial_t\rho +\operatorname{div}(\rho c)=0$ such that $\rho(0)=\mu$ and $\rho(\tau)=\nu$.
\end{proposition}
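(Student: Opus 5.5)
The plan is to prove the two inequalities between $W_2^2(\mu,\nu)/2$ and the kinetic action separately. The construction giving the upper bound will also show that the infimum is attained. The rescaled statement then follows from a change of time variable.

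\emph{Upper bound and attainment.} Let $\gamma\in\Pi(\mu,\nu)$ be an optimal plan; it exists by the discussion after Definition~\ref{Wasser}. Set $\pi_t(x,y):=(1-t)x+ty$ and $\rho_t:=(\pi_t)_{\#}\gamma$, so that $\rho_0=\mu$ and $\rho_1=\nu$. Define the momentum $m_t:=(\pi_t)_{\#}\big((y-x)\gamma\big)$, a vector measure. Testing against $\varphi\in C^1_c(\R^d)$ gives
\[
\frac{d}{dt}\int\varphi\,d\rho_t=\int\nabla\varphi(\pi_t(x,y))\cdot(y-x)\,d\gamma=\int\nabla\varphi\cdot dm_t ,
\]
which is the weak form of $\partial_t\rho+\operatorname{div} m=0$. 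Moreover $m_t\ll\rho_t$, so we may set $c_t:=dm_t/d\rho_t$. This density is the conditional expectation of $y-x$ given $\pi_t(x,y)$ under $\gamma$. By Jensen's inequality (conditional expectation is an $L^2$ contraction),
\[
\int|c_t|^2\,d\rho_t\le\int|y-x|^2\,d\gamma=W_2^2(\mu,\nu)\quad\text{for every }t .
\]
Integrating in $t$ shows that the infimum is at most $W_2^2/2$. Once the lower bound is proved, this pair is therefore a minimizer.

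\emph{Lower bound.} This is the main obstacle: given an arbitrary weak solution $(\rho,c)$ with finite action, we must produce a coupling of $\mu$ and $\nu$ whose cost does not exceed twice the action.

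If $c$ is smooth and globally Lipschitz in $x$, this is easy. Let $X_t$ be the flow of $c$. Uniqueness for the continuity equation gives $\rho_t=(X_t)_{\#}\mu$. The plan $(X_0,X_1)_{\#}\mu$ is then a coupling, and Cauchy--Schwarz in time gives
\[
|X_1(x)-x|^2\le\int_0^1|c_t(X_t(x))|^2\,dt .
\]
Integrating in $\mu$ yields $W_2^2(\mu,\nu)\le\int_0^1\int|c_t|^2\,d\rho_t\,dt$.

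For general $(\rho,c)$ I would first try Ambrosio's superposition principle. It represents $\rho_t=(e_t)_{\#}\eta$ for a probability measure $\eta$ on absolutely continuous curves solving $\dot\omega=c_t(\omega)$. The same Cauchy--Schwarz argument applied to $(e_0,e_1)_{\#}\eta$ then concludes directly.

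A self-contained alternative is regularization. Replace $\rho_t$ and $m_t=\rho_tc_t$ by $\rho_t*\eta_\delta$ and $m_t*\eta_\delta$, where $\eta_\delta$ is a Gaussian mollifier; this preserves the continuity equation. The map $(\rho,m)\mapsto|m|^2/\rho$ is jointly convex and $1$-homogeneous, so by Jensen the action does not increase under this convolution. The regularized velocity $c^\delta=(m*\eta_\delta)/(\rho*\eta_\delta)$ is smooth and locally Lipschitz, and we apply the smooth case to it. Finally, let $\delta\to0$, using $W_2(\mu*\eta_\delta,\mu)\to0$ and the triangle inequality.

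\emph{Rescaled statement.} Given $(\rho,c)$ on $[0,\tau]$, set $\tilde\rho_s:=\rho_{\tau s}$ and $\tilde c_s:=\tau c_{\tau s}$ for $s\in[0,1]$. This pair solves the continuity equation on $[0,1]$, and
\[
\int_0^1\int\frac{|\tilde c_s|^2}{2}\,d\tilde\rho_s\,ds=\tau\int_0^\tau\int\frac{|c_t|^2}{2}\,d\rho_t\,dt .
\]
This correspondence is a bijection between admissible pairs on the two time intervals. Hence the infimum over $[0,\tau]$ equals $W_2^2(\mu,\nu)/(2\tau)$.
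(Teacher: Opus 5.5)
Your proof is correct. The paper itself gives no argument for this proposition: it is quoted from~\cite{Benamou2000}, where the identity is derived for regular densities and velocity fields and then exploited through its convex reformulation in the variables $(\rho,m)=(\rho,\rho c)$. What you wrote is instead the standard rigorous proof for arbitrary $\mu,\nu\in\mathcal P_2(\R^d)$, as in \cite[Chapter~8]{ambrosio2005gradient} or \cite[Chapter~5]{santambrogio2015optimal}. For the upper bound and attainment, you use displacement interpolation with the barycentric velocity $c_t$ (the conditional expectation under $\gamma$ of $y-x$ given $(1-t)x+ty$); this needs no assumption such as $\mu\ll\Leb$. For the lower bound, the superposition principle removes every smoothness requirement on the competitors. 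It applies because finite action means $\int_0^1\int|c_t|^2\diff\rho_t\diff t<\infty$, once you choose the narrowly continuous representative of $t\mapsto\rho_t$. The cost of this route is reliance on that nontrivial representation theorem. Note also that your regularization step is exactly the Jensen inequality for $(\rho,m)\mapsto\int|m|^2/(2\rho)$ that the paper uses when bounding term (II).

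One point needs repair in your ``self-contained alternative''. With a Gaussian kernel, $c^\delta_t=(m_t*\eta_\delta)/(\rho_t*\eta_\delta)$ is in general only locally bounded and locally Lipschitz, not globally Lipschitz. For instance, in dimension one, taking $\rho_t$ the standard Gaussian and $c_t(y)=y^3$ already gives a cubic polynomial $c^\delta_t$. So your smooth case, stated for globally Lipschitz fields, does not apply verbatim.

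You need its local version. Suppose the local sup norms and local Lipschitz constants of $c^\delta_t$ are integrable in time; this holds here, using uniform tightness of $(\rho_t)$. Suppose also that $\int_0^1\int|c^\delta_t|\diff(\rho_t*\eta_\delta)\diff t<\infty$. Then the characteristics are defined on all of $[0,1]$ for $(\mu*\eta_\delta)$-a.e.\ starting point, and $\rho_t*\eta_\delta$ is the push-forward of $\mu*\eta_\delta$ by the flow; see \cite[Section~8.1]{ambrosio2005gradient}. Since the superposition route already closes the argument, this does not affect the correctness of your proof.
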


Let us consider $\mu,\nu \in \mathcal P_2(\R^d)$, and a curve $(\rho_t)$, $t \in [0,1]$, valued in $\mathcal P_2(\R^d)$, connecting $\mu$ to $\nu$. Let us call $\pi_1$ and $\pi_2$ the canonical projections from $(\R^d)^2$ to $\R^d$. It can be proved (see~\cite[Section~5.4]{santambrogio2015optimal}) that $(\rho_t)$ is a geodesic in the sense of Proposition~\ref{prop:BB} if and only if there exists $\gamma \in \Pi(\mu,\nu)$ an optimal transport plan such that for all $t \in [0,1]$, 
\begin{equation}
\label{eq:link_geod_gamma}
\rho_t = (t \pi_2 + (1-t) \pi_1){{_\#}} \gamma.
\end{equation}

Therefore, geodesics can be related to Kantorovich potentials. In fact, if $\mu$ and $\nu$ are absolutely continuous with respect to the Lebesgue measure, in virtue of the Brenier theorem, the optimal transport plan $\gamma$ is unique, and then the Wasserstein geodesic $(\rho_t)$ from $\mu$ to $\nu$ is unique as well. In this case, formula~\eqref{eq:link_geod_gamma} implies the following proposition.

\begin{proposition}
Let $\mu,\nu \in \mathcal P_2(\R^d)$ be absolutely continuous with respect to the Lebesgue measure, let $(\rho_t)$ be the Wasserstein geodesic between $\mu$ and $\nu$, and let $(\varphi,\psi)$ be an associated pair of Kantorovich potentials. Let $a \in C^1_b(\R^d)$. We have
\begin{equation*}
\left. \frac{\mathrm d}{\mathrm d t} \int a\, \diff\rho_t\right|_{t=0}
= - \int \nabla a \cdot \nabla \varphi \, \mathrm d\mu,
\qquad\text{and}\qquad
\left. \frac{\mathrm d}{\mathrm d t} \int a\, \diff\rho_t \right|_{t=1}
= \int \nabla a \cdot \nabla \psi \, \mathrm d\nu .
\end{equation*}
\end{proposition}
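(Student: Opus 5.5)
The statement is the last proposition: for $\mu,\nu$ absolutely continuous, $(\rho_t)$ the geodesic, and $(\varphi,\psi)$ Kantorovich potentials, we must compute the derivatives of $t\mapsto\int a\,\diff\rho_t$ at $t=0$ and $t=1$. I will use the representation~\eqref{eq:link_geod_gamma} together with Brenier's theorem and Proposition~\ref{prop:link_primal_dual}. Since $\mu$ is absolutely continuous, the unique optimal plan is $\gamma=(I_d,T)_{\#}\mu$ with $T=I_d-\nabla\varphi$ $\mu$-a.e. Hence
\[
\int a\,\diff\rho_t=\int a\big((1-t)x+tT(x)\big)\diff\mu(x)=\int a\big(x-t\nabla\varphi(x)\big)\diff\mu(x).
\]

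\emph{Derivative at $t=0$.} Differentiate under the integral sign. The integrand is $C^1$ in $t$, with derivative $-\nabla a(x-t\nabla\varphi(x))\cdot\nabla\varphi(x)$. This derivative is dominated by $\|\nabla a\|_\infty|\nabla\varphi(x)|$, which lies in $L^1(\mu)$ because $\int|\nabla\varphi|^2\diff\mu=W_2^2(\mu,\nu)<\infty$ and $\mu$ is a probability measure. Dominated convergence (or the mean value theorem) then justifies the differentiation for every $t\in[0,1]$. Evaluating at $t=0$ gives $-\int\nabla a\cdot\nabla\varphi\,\diff\mu$.

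\emph{Derivative at $t=1$.} The argument is symmetric. Since $\nu$ is also absolutely continuous, Proposition~\ref{prop:link_primal_dual} applied with the roles of $\mu$ and $\nu$ exchanged gives an optimal map $S=I_d-\nabla\psi$ from $\nu$ to $\mu$; here $\psi$ is a Kantorovich potential from $\nu$ to $\mu$, since the dual problem is symmetric. By uniqueness of the optimal plan, $\gamma=(S,I_d)_{\#}\nu$. Therefore
\[
\int a\,\diff\rho_t=\int a\big((1-t)S(y)+ty\big)\diff\nu(y)=\int a\big(y-(1-t)\nabla\psi(y)\big)\diff\nu(y).
\]
Differentiating in $t$ gives $\int\nabla a\big(y-(1-t)\nabla\psi(y)\big)\cdot\nabla\psi(y)\,\diff\nu(y)$, justified by the same domination argument. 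At $t=1$ this equals $\int\nabla a\cdot\nabla\psi\,\diff\nu$.

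\emph{Main obstacle.} The only delicate point is identifying the plan $\gamma$ through both maps at once. This requires checking that $\psi$, taken from the same optimal pair $(\varphi,\psi)$, yields the inverse transport, that is, $\gamma=(I_d,T)_{\#}\mu=(S,I_d)_{\#}\nu$. This follows from uniqueness of the optimal plan (Brenier). One also needs Proposition~\ref{prop:link_primal_dual} to apply to the pair in the reversed order, which holds because the constraint $\varphi(x)+\psi(y)\le|x-y|^2/2$ and the dual value are symmetric in $(\mu,\varphi)\leftrightarrow(\nu,\psi)$. The other ingredients, differentiability of $\varphi$ $\mu$-a.e. and of $\psi$ $\nu$-a.e., are supplied by that proposition, and the differentiation itself is routine.
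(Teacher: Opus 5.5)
Your proposal is correct and follows essentially the same route as the paper: you write $\int a\,\diff\rho_t=\int a(x-t\nabla\varphi(x))\,\diff\mu(x)$ via $\gamma=(I_d,T)_{\#}\mu$, differentiate under the integral using $\nabla\varphi\in L^2(\mu)$ for domination, and treat $t=1$ by exchanging the roles of $\mu$ and $\nu$. You spell out the details the paper leaves implicit, namely the symmetric use of Proposition~\ref{prop:link_primal_dual} and uniqueness of the optimal plan to get $\gamma=(I_d-\nabla\psi,I_d)_{\#}\nu$.
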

\begin{proof}
Let $\gamma$ be the unique transport optimal plan, $(\rho_t)$ the unique geodesic from $\mu$ to $\nu$, and $(\varphi,\psi)$ a pair of Kantorovic potentials. Let $a \in C^1_b(\R^d)$. By formula~\eqref{eq:link_geod_gamma}, we have for all $t \in [0,1]$
\begin{equation*}
    \int a \, \diff \rho_t = \int a(ty + (1-t)x) \, \diff \gamma(x,y).
\end{equation*}
Moreover, as $\mu$ is absolutely continuous with respect to the Lebesgue measure, we already saw in paragraph~\ref{par:primal} and Proposition~\ref{prop:link_primal_dual} that
\begin{equation*}
    \gamma = (I_d, T){_\#} \mu \qquad \mbox{with} \qquad T = I_d - \nabla \varphi,
\end{equation*}
where the second equality holds $\mu$-almost everywhere. Therefore, for all $t \in [0,1]$,
\begin{equation*}
    \int a \, \diff \rho_t = \int a\Big(x - t \nabla \varphi(x)\Big) \, \diff \mu(x).
\end{equation*}
The first part of the statement follows easily using the fact that in virtue of Proposition~\ref{prop:link_primal_dual}, $\nabla \varphi \in L^2(\mu)$. The second part of the statement is obtained in the same way, interverting the roles of $\mu$ and $\nu$.
\end{proof}
\begin{remark}\label{rem:derivative at time 0 and 1 along geodesics}
    Let $\mu,\nu \in \mathcal P_2(\R^d)$ be absolutely continuous with respect to the Lebesgue measure, let $(\rho_t)$ be the Wasserstein geodesic between $\mu$ and $\nu$, and let $(\varphi,\psi)$ be an associated pair of Kantorovich potentials. The previous proposition stated that the following equalities hold in the distributional sense:
    \[
    \left.\partial_t\rho_t\right|_{t=0}=-\operatorname{div}(\mu\nabla\varphi)\quad\text{and}\quad \left.\partial_t\rho_t\right|_{t=1}=-\operatorname{div}(\nu\nabla\psi).
    \]
\end{remark}
\subsection{The entropy functional}\label{subsec:entropy}
The Schrödinger cost is defined by adding an entropy penalization term in the primal formulation of the Wasserstein distance. Hence, before defining the Schrödinger cost, we have to introduce our notion of entropy and its key properties.
\begin{definition}[Relative entropy]
     Let $P, R$ two Borel probality  measures
on $\R^n$, $n \in \N^*$ (in what follows, we will consider the cases $n = d$ and $n = 2d$). We denote by $H(P\|R)$ the relative entropy defined as:

\begin{equation*}
    H(P\|R)=\left\{\begin{aligned}
        &\int\ln\left(\frac{\diff P}{\diff R}\right)\diff P \quad\text{if $P\ll R$},\\
        &+\infty \quad\text{otherwise}.
    \end{aligned}\right.
\end{equation*}
\end{definition}
The next proposition, which is an easy consequence of the Jensen inequality, insures that the relative entropy is defined in $\R_+\cup\{+\infty\}$.

\begin{proposition}\label{prop: positivity entropie}
    Let $P,R$ two Borel probability measures. Whenever $P\ll R$, then $\ln\left(\frac{\diff P}{\diff R}\right)_-\in L^1(P)$. Thus, the relative entropy is well defined. Moreover $$H(P\|R)\in\R_+\cup\{+\infty\}.$$
\end{proposition}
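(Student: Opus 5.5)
The statement to prove is Proposition~\ref{prop: positivity entropie}: for Borel probability measures $P \ll R$, the negative part of $\ln\frac{\diff P}{\diff R}$ is $P$-integrable, so $H(P\|R)$ is well defined and lies in $\R_+\cup\{+\infty\}$. The plan is to pass to the density $g := \frac{\diff P}{\diff R}$. Then $g \ge 0$ $R$-a.e., $\int g \diff R = 1$, and integrals against $P$ become integrals against $R$ through $\int h \diff P = \int h g \diff R$ for nonnegative measurable $h$.

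\emph{Integrability of the negative part.} On the set $\{g = 0\}$ the logarithm is $-\infty$, but this set has $P$-measure $\int_{\{g=0\}} g \diff R = 0$, so it can be ignored. On $\{g>0\}$ we have
\[
\int (\ln g)_- \diff P = \int_{\{0<g<1\}} g \,|\ln g| \diff R \le \frac{1}{e}\, R(\{0<g<1\}) \le \frac1e ,
\]
because $s \mapsto s|\ln s|$ is bounded by $1/e$ on $(0,1)$. Hence $(\ln g)_- \in L^1(P)$. Consequently $\int \ln g \diff P$ is well defined in $\R\cup\{+\infty\}$: the positive part may have infinite integral, but the negative part has finite integral.

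\emph{Nonnegativity.} Set $\Phi(s) = s\ln s$ with $\Phi(0)=0$; it is convex on $\R_+$. The integral $\int \Phi(g)\diff R$ is well defined because $\Phi \ge -1/e$ and $R$ is a probability measure, and it equals $\int \ln g \diff P$. Jensen's inequality for the probability measure $R$ then gives
\[
H(P\|R) = \int \Phi(g) \diff R \ge \Phi\!\left(\int g \diff R\right) = \Phi(1) = 0 .
\]

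The only delicate point is applying Jensen's inequality when the integral may be $+\infty$. If the integral is $+\infty$ the inequality is trivial. Otherwise $\Phi(g) \in L^1(R)$, and the classical Jensen inequality applies since $g \in L^1(R)$. A cleaner alternative that avoids Jensen is the pointwise bound $\Phi(s) \ge s-1$, which follows from convexity (it is the tangent line at $s=1$). Integrating it against $R$ gives $H(P\|R) \ge \int (g-1)\diff R = 0$ directly. I would use this tangent-line argument in the write-up.
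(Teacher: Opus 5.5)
Your proof is correct and follows the paper's approach. The paper only says the result is an easy consequence of Jensen's inequality. Your tangent-line bound $s\ln s\ge s-1$ is the convexity fact behind that, and your estimate $\int(\ln g)_-\diff P\le 1/e$ supplies the integrability detail the paper leaves implicit.
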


The Boltzmann entropy is defined as the relative entropy with respect to the Lebesgue measure (denoted by $\Leb$). We give a separate definition because $\Leb$ is not a probability measure.
\begin{definition}
    Let $\rho\in\mathcal P_2(\R^n)$ then the Boltzmann entropy (simply called entropy in the following) is defined by:
    \begin{equation*}
    H(\rho)=\left\{\begin{aligned}
        &\int\ln\left(\frac{\diff \rho}{\diff \Leb}\right)\diff \rho \quad\text{if $\rho\ll \Leb$},\\
        &+\infty \quad\text{otherwise}.
    \end{aligned}\right.
\end{equation*}
    where $\Leb$ is the Lebesgue measure on $\R^n$.
\end{definition}
Since the Lebesgue is not a probability measure, Proposition~\ref{prop: positivity entropie} does not apply. However, the next proposition insures that for all $\rho\in\mathcal P_2(\R^d)$, the entropy is always defined as an element of $\R\cup\{+\infty\}$.

\begin{proposition}\label{below bound ent}
    Let $\rho \in \mathcal{P}_2(\mathbb{R}^n)$
then the entropy $H(\rho)$ is always well defined in $\mathbb{R} \cup \{+\infty\}$, and the following bound holds:
\[
H(\rho)\geq -\frac n2\ln\left(\frac 2n\pi e\right)-\frac n 2\ln\left(\int|x|^2\diff\rho(x)\right).
\]
\end{proposition}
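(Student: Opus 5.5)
The plan is to compare $\rho$ with the centered Gaussian $g$ that has the same second moment, and to use Proposition~\ref{prop: positivity entropie} for the relative entropy of $\rho$ with respect to $g$.

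First, the degenerate and trivial cases. Set $m_2:=\int|x|^2\diff\rho$; this is finite because $\rho\in\mathcal P_2(\R^n)$. If $\rho$ is not absolutely continuous with respect to $\Leb$, then $H(\rho)=+\infty$ and there is nothing to prove. Otherwise $m_2>0$, since an absolutely continuous measure cannot be $\delta_0$.

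Next, define the Gaussian
\[
g(x)=(2\pi s)^{-n/2}e^{-|x|^2/(2s)}, \qquad s:=m_2/n,
\]
which is a probability density. On $\R^n$ we then split
\[
\ln\rho=\ln\frac{\rho}{g}+\ln g,
\qquad\text{where}\qquad
\ln g(x)=-\frac n2\ln(2\pi s)-\frac{|x|^2}{2s}.
\]
The function $\ln g$ is integrable against $\rho$ because $\rho$ has a finite second moment. Moreover,
\[
\int\ln g\,\diff\rho=-\frac n2\ln(2\pi s)-\frac{m_2}{2s}=-\frac n2\ln\!\Big(\frac{2\pi e}{n}m_2\Big).
\]

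Now turn to the relative-entropy term. Since $g>0$ everywhere, we have $\rho\ll g\,\Leb$. Proposition~\ref{prop: positivity entropie} then shows that the negative part of $\ln(\rho/g)$ lies in $L^1(\rho)$, and that
\[
\int\ln\frac{\rho}{g}\,\diff\rho=H(\rho\|g)\in\R_+\cup\{+\infty\}.
\]

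Combining the two pieces, the negative part of $\ln\rho$ is dominated by the negative part of $\ln(\rho/g)$ plus $|\ln g|$, and both belong to $L^1(\rho)$. Hence $H(\rho)=\int\ln\rho\,\diff\rho$ is well defined in $\R\cup\{+\infty\}$, and
\[
H(\rho)=H(\rho\|g)+\int\ln g\,\diff\rho\ \ge\ -\frac n2\ln\!\Big(\frac{2\pi e}{n}\Big)-\frac n2\ln m_2 .
\]
This is exactly the claimed bound, since $\frac 2n\pi e=\frac{2\pi e}{n}$.

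The only delicate point is the integrability bookkeeping: the decomposition of $\ln\rho$ must be justified without already knowing that $H(\rho)$ is finite. This is handled by applying Proposition~\ref{prop: positivity entropie} to the negative part only. The choice $s=m_2/n$ is the optimal Gaussian variance; any other $s>0$ also gives a valid, though weaker, bound.
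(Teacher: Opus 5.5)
Your proof is correct and follows the same route as the paper: compare $\rho$ with the centered Gaussian (the heat kernel $\sigma_s$), use nonnegativity of $H(\rho\|\sigma_s)$ together with $\ln\sigma_s\in L^1(\rho)$, and take $s=\frac1n\int|x|^2\diff\rho$. The paper keeps $s$ free and optimizes at the end, whereas you plug in the optimal variance directly. You also spell out the integrability bookkeeping and why $m_2>0$, which tidies the argument but does not change it.
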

\begin{proof}
     If $\rho$ is not absolutely continuous with respect to the Lebesgue measures then $H(\rho)=+\infty$ and the proposition is obvious. Otherwise, if $\rho \ll \mathrm{Leb}$, by Proposition~\ref{prop: positivity entropie},
     \[
\int_{\mathbb{R}^n} 
\ln\!\left( \frac{\mathrm d\rho}{\mathrm d\sigma_t} \right)
\, \mathrm d\rho 
\;\ge 0 ,
\]
for every $t>0$, where $\sigma_t$ is the heat kernel at time $t$ defined in Hypothesis~\ref{hypothesis}. Moreover, we have for all $x \in \R^d$ 
\begin{equation*}
    \ln(\sigma_t(x))=-\frac{|x|^2}{2t}-\frac{n}{2}\ln(2\pi t). 
\end{equation*}
Since $\rho\in\mathcal P_2(\R^n)$, then $\ln(\sigma_t)\in L^1(\rho)$, and in particular:
\[
H(\rho)
\;\ge\;
-\int_{\mathbb{R}^d} \frac{|x|^{2}}{2t}\, \mathrm d\rho(x)
\;-\; \frac{n}{2}\ln(2\pi t),
\]
Optimizing the quantity in the right hand side with respect to $t$ by taking $t=\frac{1}{n}\int|x|^2\diff\rho(x)$ we obtain the desired inequality.
\end{proof}
An important property of the entropy is its behavior corresponding to disintegration of measures, that we state here without a proof. We refer to \cite{leonard2014Survey} for more details, where this property is called additivity of the entropy. It implies for instance that 
pushing forward measures reduces the value of the entropy.

\begin{proposition}\label{prop:addivity of entropy}
Let $n,m\in\N^*$, $P, R$ two Borel probability measures
on $\R^n$ and $T : \R^n \to  \R^m$ a measurable map.

Let $(P^y)_{y\in\R^m}$ and $(R^y)_{y\in\R^m}$ be the measurable families of probability measures obtained by disintegrating $P$ and $R$ with respect to $T$. In other words, $(P^y)_{y\in\R^m}$ and $(R^y)_{y\in\R^m}$ are families respectively well defined for
$T{_\#} P$ and $T{_\#} R$ almost all $y\in \R^m$, that are such that for all $y \in \R^m$ where they are well defined, $P^y$ and $R^y$ are concentrated on the set
$\{x \in \R^n \mbox{ such that } T(x)=y\}$, and such that for all measurable function
$\varphi$ nonnegative or bounded
\[
\int \varphi \, \diff P
=
\int_{\R^m} \left( \int \varphi \, \diff P^y \right) \diff T{_\#} P(y)
\quad \text{and} \quad
\int \varphi \, \diff R
=
\int_{\R^m} \left( \int \varphi \, \diff R^y \right) \diff T{_\#} R(y).
\]
Then, we have
\begin{equation*}
H(P \| R)
=
H(T{_\#} P \| T{_\#} R)
+
\int H(P^y \| R^y)\, \diff T{_\#} P(y).
\end{equation*}
In particular,
\[
H(T{_\#} P \| T{_\#} R) \le H(P \| R).
\]
\end{proposition}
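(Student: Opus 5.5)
The statement to prove is the additivity (chain rule) of relative entropy under disintegration, together with its corollary, the data-processing inequality $H(T_\# P\|T_\# R)\le H(P\|R)$. The plan is to reduce everything to a factorization of Radon--Nikodym densities and then integrate the logarithm of that factorization.

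\textbf{Setup and factorization of densities.} Write $p := T_\# P$ and $r := T_\# R$. First treat the case $P \ll R$ and set $f := \diff P/\diff R$. Since $T_\# P \ll T_\# R$, we may define $g := \diff p/\diff r$. Concretely, $g(y) = \int f \, \diff R^y$ for $r$-a.e.\ $y$, which one checks by testing against functions of the form $\varphi\circ T$.

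For $p$-a.e.\ $y$ we have $g(y) > 0$, and we define the kernel
\[
P^y := \frac{f}{g(y)}\, R^y .
\]
This is a probability measure concentrated on $\{T=y\}$. A direct computation with the defining identity for $R$ shows that
\[
\int \varphi\,\diff P = \int_{\R^m}\left(\int \varphi\,\diff P^y\right)\diff p(y)
\]
for all nonnegative measurable $\varphi$. By essential uniqueness of disintegrations, this $P^y$ agrees with the given $P^y$ for $p$-a.e.\ $y$. Hence
\[
\frac{\diff P^y}{\diff R^y}(x) = \frac{f(x)}{g(T(x))}\quad\text{on }\{T=y\},
\]
that is, $f = (g\circ T)\cdot \dfrac{\diff P^y}{\diff R^y}$ $P$-a.e.

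\textbf{Integrating the logarithm.} Take logarithms in the factorization and integrate against $P$, disintegrating along $T$:
\[
\int \ln f\,\diff P = \int \ln g\,\diff p + \int_{\R^m}\left(\int \ln\frac{\diff P^y}{\diff R^y}\,\diff P^y\right)\diff p(y).
\]
The splitting is legitimate because the negative parts of the two logarithms are controlled. By Proposition~\ref{prop: positivity entropie}, $(\ln g)_-\in L^1(p)$. By the same Jensen argument, applied fiberwise and then integrated, the negative part of $\ln(\diff P^y/\diff R^y)$ is integrable against $P$. Each term on the right is therefore in $\R_+\cup\{+\infty\}$, and additivity holds in the extended sense, including when some term is infinite.

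\textbf{The degenerate case $P\not\ll R$.} The left-hand side is then $+\infty$, and we must show the right-hand side is too. Suppose $p\ll r$ and $P^y\ll R^y$ for $p$-a.e.\ $y$. Then the measure
\[
\int_{\R^m} (\diff P^y/\diff R^y)\,(\diff p/\diff r)(y)\; R^y\,\diff r(y)
\]
is absolutely continuous with respect to $R$ and, by the disintegration identities, equals $P$. This contradicts $P\not\ll R$, so the right-hand side is $+\infty$.

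\textbf{Corollary.} The inequality $H(T_\# P\|T_\# R)\le H(P\|R)$ follows immediately, since the fiber term $\int H(P^y\|R^y)\,\diff p(y)$ is nonnegative by Proposition~\ref{prop: positivity entropie}.

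\textbf{Main obstacle.} The delicate step is the measure-theoretic identification of the given disintegration kernel $P^y$ with $f R^y/g(y)$. This requires the essential uniqueness of disintegration, which holds on Polish spaces. It also requires handling the null sets where $g=0$ or $g=\infty$, and making sure the log-integrals are split only when their negative parts are integrable. The rest is bookkeeping.
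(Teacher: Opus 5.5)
Your proof is correct. The paper gives no proof of this proposition and defers to Léonard's survey; what you wrote is the standard proof of this chain rule: factorize $\frac{\diff P}{\diff R}=\bigl(\frac{\diff T_\# P}{\diff T_\# R}\circ T\bigr)\cdot\frac{\diff P^{T(\cdot)}}{\diff R^{T(\cdot)}}$, integrate the logarithm against $P$ and split it using integrability of the negative parts, and treat $P\not\ll R$ separately. The only points left implicit are routine measurability issues on $\R^n$ (a jointly measurable version of the fibre densities in the degenerate case, and measurability of $y\mapsto H(P^y\|R^y)$), and they do not affect the argument.
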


\subsection{The Schrodinger cost}\label{sch cost}
The Schrödinger cost can be seen as a regularized version of the optimal transport obtained by adding a term of entropy in the infimum. 
\begin{definition}[Schr\"odinger cost]\label{def:sch cost}  For $\mu,\nu\in\mathcal P_2(\R^d)$, $\alpha >0$ and $\tau>0$, we denote by $\Sch^{\alpha \tau}$ the Schrödinger cost with parameter $\alpha \tau$ defined as:
     \[\Sch^{\alpha\tau}(\mu,\nu)=\inf_{\gamma\in\Pi(\mu,\nu)}\alpha \tau H(\gamma\|R_{\alpha\tau})=\inf_{\gamma\in\Pi(\mu,\nu)} \left\{ \int \frac{|x-y|^2}{2}\diff\gamma(x,y)+\alpha \tau H(\gamma)+\frac{\alpha\tau d}{2} \ln(2\pi\alpha\tau)\right\}, \]
     where $\Pi(\mu, \nu)$ is the set of all couplings between \(\mu\) and \(\nu\)  and $R_{\alpha\tau}$ is the measure on $\R^d\times\R^d$ with density 
     \begin{equation*}
     R_{\alpha\tau}(x,y)=\frac{1}{\sqrt{2\pi\alpha\tau}^d}\exp\left(-\frac{|x-y|^2}{2\alpha\tau}\right), \qquad x,y \in \R^d.
     \end{equation*}
\end{definition}
     The Shr\"odinger cost $\Sch^{\alpha\tau}(\mu,\nu)$ is finite if and only if $H(\mu)<+\infty$ and $H(\nu)<+\infty$, see~\cite{leonard2014Survey}. When $\Sch^{\alpha\tau}(\mu,\nu)$ is finite, in view of the strict convexity of $H$, there exists a unique minimizer $\gamma$.

Just like the Wasserstein distance, the Schrödinger cost has a dynamic formulation of Benamou-Brenier type, here written for the rescaled time $t \in [0,\tau]$ (see~\cite{Gentil2017} for a proof). 
\begin{proposition}[Dynamic formulation of the Schrödinger c
ost]\label{Sch Benamou Brenier}
 Let $\mu,\nu\in\mathcal P_2(\R^d)$ with $H(\mu)<+\infty$ and $H(\nu)<+\infty$. The Schrödinger cost can be expressed by one of the following equivalent formulations:
\begin{align*}
\frac{\Sch^{\alpha\tau}(\mu,\nu)}{\tau} &= \alpha H(\mu) + \min \left\{ \frac{1}{2} \int_0^\tau \int |\vec{v_t}|^2 \, \, \mathrm{d}\rho_t \, \mathrm{d}t\ \middle|\ 
\begin{aligned}
    &\rho(0) = \mu, \quad \rho(\tau) = \nu, \\
    &\partial_t \rho + \operatorname{div}(\rho \vec{v}) = \frac{\alpha}{2} \Delta \rho
\end{aligned}
\right\},\\
&= \frac{\alpha}{2} (H(\mu) + H(\nu)) + \min \left\{ \frac{1}{2} \int_0^\tau \int |c_t|^2 + \left| \frac{\alpha}{2} \nabla \ln(\rho_t) \right|^2 \, \mathrm{d}\rho_t \, \mathrm{d}t \ \middle|\ 
\begin{aligned}
    &\rho(0) = \mu, \quad \rho(\tau) = \nu, \\
    &\partial_t \rho + \operatorname{div}(\rho c) = 0
\end{aligned}
\right\}.
\end{align*}
Moreover, the infimum in each case is achieved for the same $\rho$ and $c = \vec{v} - \frac{\alpha}{2} \ln(\rho)$.
\end{proposition}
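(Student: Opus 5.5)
The proof will go in two stages. First we establish the first (Fokker--Planck) formulation through the probabilistic representation of the Schr\"odinger problem. Then we derive the second formulation from the first by a change of velocity field and an entropy identity.

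\emph{Stage 1: the first formulation.} Let $R$ be the law on path space $C([0,\tau];\R^d)$ of a Brownian motion with diffusion coefficient $\alpha$, i.e.\ generator $\frac{\alpha}{2}\Delta$, started from Lebesgue measure. Its two-time marginal $(X_0,X_\tau){_\#}R$ has density $R_{\alpha\tau}(x,y)$ from Definition~\ref{def:sch cost}. This measure is not a probability measure, but Proposition~\ref{prop:addivity of entropy} extends to it with the usual care, and the finiteness of $H(\mu)$ is exactly what makes this legitimate.
\begin{itemize}
\item Applying Proposition~\ref{prop:addivity of entropy} to the endpoint map $(X_0,X_\tau)$, every path measure $P$ with $P_0=\mu$ and $P_\tau=\nu$ satisfies $H(P\|R)\ge H(\gamma\|R_{\alpha\tau})$, where $\gamma=(X_0,X_\tau){_\#}P$.
\item Equality holds for the bridge mixture $P=\int R^{xy}\,\diff\gamma(x,y)$. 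Hence
\[
\inf_{\gamma\in\Pi(\mu,\nu)}H(\gamma\|R_{\alpha\tau})=\inf\{H(P\|R):\ P_0=\mu,\ P_\tau=\nu\}.
\]
\item Apply additivity again, this time to $X_0$, and then use Girsanov's theorem. Every $P$ of finite entropy is the law of $\diff X_t=\vec v_t(X_t)\,\diff t+\sqrt\alpha\,\diff B_t$ (after projecting the drift onto Markov drifts, which only lowers the cost by Jensen). This gives
\[
H(P\|R)=H(\mu)+\frac{1}{2\alpha}\,E_P\int_0^\tau|\vec v_t|^2\,\diff t .
\]
\item The marginals $\rho_t$ of such a $P$ solve $\partial_t\rho+\operatorname{div}(\rho\vec v)=\frac\alpha2\Delta\rho$.
\end{itemize}
Multiplying by $\alpha\tau$ and dividing by $\tau$ then yields the first formula.

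I expect the main obstacle to be the converse direction. Given a weak solution $(\rho,\vec v)$ of the Fokker--Planck equation with finite action, we must produce a path measure whose marginals are $\rho_t$ and whose entropy is at most the action. This is a superposition principle for Fokker--Planck equations (Figalli/Trevisan type). I would invoke it directly, together with a regularization argument, citing~\cite{Gentil2017} for the technical details. Attainment of the minimum follows from the uniqueness of the static minimizer $\gamma$ and the Markov projection above.

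\emph{Stage 2: the second formulation.} Set $c=\vec v-\frac\alpha2\nabla\ln\rho$. This turns the Fokker--Planck equation into the continuity equation $\partial_t\rho+\operatorname{div}(\rho c)=0$, and the map $\vec v\leftrightarrow c$ is a bijection between admissible pairs. Expanding the square gives
\[
\frac12|\vec v|^2=\frac12|c|^2+\frac12\Big|\frac\alpha2\nabla\ln\rho\Big|^2+\frac\alpha2\,c\cdot\nabla\ln\rho .
\]
Along a solution of the continuity equation,
\[
\frac{\diff}{\diff t}H(\rho_t)=\int\nabla\ln\rho_t\cdot c_t\,\diff\rho_t ,
\]
so the cross term integrates to $\frac\alpha2\big(H(\nu)-H(\mu)\big)$. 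Adding $\alpha H(\mu)$ produces $\frac\alpha2\big(H(\mu)+H(\nu)\big)$, which is the second formula, and shows that both minima are achieved by the same $\rho$.

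Justifying the entropy derivative along curves with finite Fisher information $\int_0^\tau\!\!\int|\nabla\ln\rho_t|^2\,\diff\rho_t\,\diff t<\infty$ is a secondary technical point. I would handle it by mollifying in space and passing to the limit using lower semicontinuity. Note that every finite-action competitor has finite Fisher information, since the second action bounds it.
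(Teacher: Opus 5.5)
The paper gives no proof of this proposition and defers to \cite{Gentil2017}. Your Stage~1 is the standard path-space argument for it: additivity of entropy under the endpoint map, Girsanov for finite-entropy path measures, Markov projection for the lower bound, and superposition plus Girsanov for the upper bound. It is fine, modulo the usual care with the $\sigma$-finite reference measure. Two harmless imprecisions: projecting the drift does not make $P$ itself the law of a Markov SDE, since only its marginals solve the Fokker--Planck equation with the projected drift (which is all you use); and attainment needs existence, not uniqueness, of the static optimizer.

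\textbf{The gap is in Stage~2.} Sending a Fokker--Planck competitor $(\rho,\vec v)$ to $(\rho,c)$ with $c=\vec v-\frac\alpha2\nabla\ln\rho$ requires $\int_0^\tau\!\int|\nabla\ln\rho_t|^2\diff\rho_t\diff t<\infty$ for that competitor. This is needed both for $c$ to lie in $L^2(\diff\rho_t\diff t)$ and for $\rho\nabla\ln\rho=\nabla\rho$ to make sense. Your remark that ``the second action bounds it'' only covers curves that are already admissible for the second problem, so it is circular in exactly this direction. The claimed bijection between admissible pairs is therefore unproved. What your argument does give is the continuity-equation-to-Fokker--Planck direction, i.e.\ $\alpha H(\mu)+m_1\le\frac\alpha2(H(\mu)+H(\nu))+m_2$, where $m_1,m_2$ are the two minima. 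The reverse inequality, and the claim that the Fokker--Planck minimizer also minimizes the second problem, both need the Fisher information of the optimal Fokker--Planck curve (the marginal flow of the Schr\"odinger bridge) to be time-integrable.

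This fact is true but must be proved. One way, using the tools you already plan to use, is the entropy-dissipation estimate
\[
\frac\alpha4\int_0^\tau\!\!\int|\nabla\ln\rho_t|^2\diff\rho_t\diff t\le H(\mu)-H(\nu)+\frac1\alpha\int_0^\tau\!\!\int|\vec v_t|^2\diff\rho_t\diff t .
\]
It follows from $\frac{\diff}{\diff t}H(\rho_t)=\int\vec v_t\cdot\nabla\ln\rho_t\diff\rho_t-\frac\alpha2\int|\nabla\ln\rho_t|^2\diff\rho_t$ together with Young's inequality. To justify it, first run the computation on $\rho_t*\sigma_\varepsilon$. This curve:
\begin{itemize}
\item solves the same equation with a drift of no larger action, by joint convexity of $(\rho,m)\mapsto|m|^2/\rho$;
\item has initial entropy at most $H(\mu)$;
\item has final entropy bounded below uniformly in $\varepsilon$ by Proposition~\ref{below bound ent}.
\end{itemize}
Then let $\varepsilon\to0$ using lower semicontinuity of the Fisher information.

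Alternatively, you can stay on path space. Since $R$ is reversible, the time reversal of the bridge has the same finite entropy, hence a backward drift $\overleftarrow\beta$ of finite energy. F\"ollmer's duality relation $\vec\beta_t+\overleftarrow\beta_{\tau-t}=\alpha\nabla\ln\rho_t$ then gives the Fisher bound. Averaging the two identities $H(P\|R)=H(\mu)+\frac1{2\alpha}E_P\int_0^\tau|\vec\beta_t|^2\diff t=H(\nu)+\frac1{2\alpha}E_P\int_0^\tau|\overleftarrow\beta_{\tau-t}|^2\diff t$ yields the second formula directly, with $c_t=\frac12(\vec\beta_t-\overleftarrow\beta_{\tau-t})$. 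This route avoids differentiating the entropy along curves altogether.
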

Since our goal will be to compare schemes involving respectively the Wasserstein distance and the Schrödinger cost, it will be useful to be able to compare these quantities. Although elementary, the next proposition is the first step in comparing our schemes. 
\begin{proposition}\label{below bound sch by W-2}
    For all $\mu,\nu\in\mathcal P_2(\R^d)$ with $H(\mu)<+\infty$ and $H(\nu)<+\infty$, there holds:
    \begin{align*}
\frac{\Sch^{\alpha\tau}(\mu,\nu)}{\tau}&\geq \frac{\alpha}{2} (H(\mu) + H(\nu))+\frac{W_2^2(\mu,\nu)}{2\tau}+\frac{\alpha^2}{8}\int_0^\tau|\nabla\ln(\rho^\alpha_t)|^2\diff\rho^\alpha_t\diff t \\
&\geq \frac{\alpha}{2} (H(\mu) + H(\nu))+\frac{W_2^2(\mu,\nu)}{2\tau},
\end{align*}
where $t\mapsto\rho^\alpha_t$ is the interpolation given by the Benamou-Brenier formulation of the Schrodinger cost defined in Definition~\ref{Sch Benamou Brenier}.
\begin{proof}

This bound can be easily deduced from the the second characterization of Proposition~\ref{Sch Benamou Brenier}: 
calling $(\rho^\alpha,c^\alpha)$ the corresponding minimizer, 
\begin{equation}\label{eq: sch benamou brenier eval au min}    
\frac{\Sch^{\alpha\tau}(\mu, \nu)}{\tau} = \alpha \frac{H(\mu) + H(\nu)}{2} +  \int_0^\tau \int \frac{|c_t^\alpha|^2}{2} \diff\rho_t \diff t + \frac{\alpha^2}{8} \int_0^\tau \int |\nabla \ln \rho_t^\alpha|^2 \diff\rho_t^\alpha \, \diff t.
\end{equation}
But the Benamou-Brenier formulation of the Wasserstein distance implies that:
\[
\frac{W_2^2(\mu,\nu)}{2} =\inf_{\substack{(\rho, c) \\ \partial_t \rho + \operatorname{div}(\rho c) = 0}} \int_0^\tau \int \frac{|c_t|^2}{2} \, \diff\rho_t\diff t\leq \int_0^\tau \int \frac{|c_t^\alpha|^2}{2} \diff\rho_t^\alpha \diff t.
\]
Plugging this inequality in formula~\eqref{eq: sch benamou brenier eval au min}, we obtain:
\[
\frac{\Sch^{\alpha\tau}(\mu, \nu)}{\tau}\geq \alpha \frac{H(\mu) + H(\nu)}{2} + \frac{W_2^2(\mu, \nu)}{2\tau}+ \frac{\alpha^2}{8} \int_0^\tau \int |\nabla \ln \rho^\alpha_t|^2 \rho^\alpha_t \, \diff t.
\]
We conclude by positivity of $\int_0^\tau \int |\nabla \ln \rho^\alpha_t|^2 \rho^\alpha_t \, \diff t$.
\end{proof}
\end{proposition}

\subsection{Generalized geodesics convexity}\label{general cnx geo def}
A crucial assumption for our study is the convexity of the functional $\F$ along generalized geodesics. This subsection follows the definitions and properties from~\cite{ambrosio2005gradient}. The starting point of this notion is the following proposition, necessary to define what is a generalized geodesic. 
\begin{proposition}\label{3 plan optimal}
    Let $\mu, \nu, \rho \in  \mathcal{P}_2(\mathbb{R}^d)$. There exists $\Pi\in\mathcal P_2((\R^d)^3)$ such that $(\pi_1,\pi_2){_\#}\Pi$ is an optimal transport plan between $\mu$ and $\nu$ and $(\pi_1,\pi_3){_\#}\Pi$ is an optimal transport plan between $\mu$ and $\rho$, where $\pi_1,\pi_2,\pi_3$ denote the canonical projections of $(\mathbb{R}^d)^3$ onto $\mathbb{R}^d$.
\end{proposition}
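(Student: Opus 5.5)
The plan is to prove this by the classical gluing construction. First choose optimal plans for the two pairs, then glue them along their common first marginal $\mu$ using the disintegration theorem.

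\textbf{Step 1: choice of optimal plans.} As recalled after Definition~\ref{Wasser}, minimizers in the definition of $W_2$ always exist. Fix an optimal transport plan $\gamma_{12}\in\Pi(\mu,\nu)$ and an optimal transport plan $\gamma_{13}\in\Pi(\mu,\rho)$. Both have first marginal $\mu$.

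\textbf{Step 2: disintegration and gluing.} Since $\R^d$ is Polish, the disintegration theorem applies to both plans with respect to the projection $\pi_1$. It gives two families of probability measures on $\R^d$, $(\gamma_{12}^x)_{x}$ and $(\gamma_{13}^x)_{x}$, each defined for $\mu$-a.e.\ $x$ and Borel measurable in $x$, such that
\[
\gamma_{12}=\int \delta_x\otimes\gamma_{12}^x\,\diff\mu(x),\qquad \gamma_{13}=\int \delta_x\otimes\gamma_{13}^x\,\diff\mu(x).
\]
We then define $\Pi\in\mathcal P((\R^d)^3)$ by
\[
\int \varphi\,\diff\Pi := \int\!\!\int\!\!\int \varphi(x,y,z)\,\diff\gamma_{12}^x(y)\,\diff\gamma_{13}^x(z)\,\diff\mu(x)
\]
for all nonnegative Borel $\varphi$.

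\textbf{Step 3: checks.} Three things need to be verified.
\begin{itemize}
\item \emph{Well-posedness.} The map $x\mapsto \gamma_{12}^x\otimes\gamma_{13}^x(A\times B)=\gamma_{12}^x(A)\,\gamma_{13}^x(B)$ is measurable for every pair of Borel sets $A,B$. A monotone class argument then extends measurability to all Borel sets of $(\R^d)^2$, so $\Pi$ is a well-defined probability measure.
\item \emph{Marginals.} Take $\varphi$ depending only on $(x,y)$. Integrating out $z$ against the probability $\gamma_{13}^x$ gives $(\pi_1,\pi_2){_\#}\Pi=\gamma_{12}$. By symmetry, $(\pi_1,\pi_3){_\#}\Pi=\gamma_{13}$. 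These are the two optimal plans, as required.
\item \emph{Finite second moment.} We have
\[
\int (|x|^2+|y|^2+|z|^2)\,\diff\Pi=\int|x|^2\diff\mu+\int|y|^2\diff\nu+\int|z|^2\diff\rho<\infty,
\]
so $\Pi\in\mathcal P_2((\R^d)^3)$.
\end{itemize}

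The only delicate point is the measurability of the glued kernel and the validity of the disintegration theorem. Both are standard for Borel probability measures on Polish spaces, which is the case here. The rest is bookkeeping on marginals.
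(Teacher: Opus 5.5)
Your proof is correct and follows the same route as the paper: the paper does not prove this proposition itself but refers to \cite[Lemma~2.1]{ambrosio_users_2013}, whose proof is exactly this gluing construction (disintegrate both optimal plans with respect to the common first marginal $\mu$ and integrate the product kernels $\gamma_{12}^x\otimes\gamma_{13}^x$ against $\mu$). Once $x\mapsto\gamma_{12}^x\otimes\gamma_{13}^x$ is known to be a measurable kernel, which your monotone class argument shows, the marginal and second-moment checks you give are all that remains.
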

The proof can be found in \cite[Lemma~2.1]{ambrosio_users_2013}.
\begin{remark}
    If $\mu$ is absolutely continuous with respect to the Lebesgue measure and $T,R$ are the optimal maps given by the Brenier theorem (see paragraph~\ref{par:primal}) then the measure $\Pi$ given by Proposition~\ref{3 plan optimal} is unique, and $\Pi=(I_d,T,R){_\#}\mu$.
\end{remark} 

We can now introduce the notion of generalized geodesics.
\begin{definition}
    Let $\mu,\nu,\rho \in \mathcal P_2(\R^d)$. The curve $(\rho_t)_{t \in [0,1]}$ is called a generalized geodesic between $\nu$ and $\rho$ based on $\mu$, if there exists a measure $\Pi \in \mathcal P((\R^d)^3)$ such that $(\pi_1,\pi_2){_\#}\Pi$ is an optimal transport plan between $\mu$ and $\nu$ and $(\pi_1,\pi_3){_\#}\Pi$ is an optimal transport plan between $\mu$ and $\rho$, and such that for all $t \in [0,1]$, $\rho_t = ((1-t) \pi_2 + t \pi_3){_\#} \Pi$.
\end{definition}

\begin{remark}
As a consequence of Proposition~\ref{3 plan optimal}, for all $\mu,\nu,\rho \in \mathcal P_2(\R^d)$, there exists a generalized geodesic between $\nu$ and $\rho$ based on $\mu$. Also, due to formula~\eqref{eq:link_geod_gamma}, when $\nu$ or $\rho$ equals $\mu$, any generalized geodesic between $\nu$ and $\rho$ based on $\mu$ is simply a geodesic between $\nu$ and $\rho$.
\end{remark}
Now that we have defined generalized geodesics, we can define the notion of $\lambda$-convexity along generalized geodesics of a given functional $\F$.
\begin{definition}\label{def:convexity along generalized geodesic}
The functional $\mathcal{F}$ is said to be $\lambda$-convex along generalized geodesics if for all measures $\mu, \nu, \rho \in  \mathcal{P}_2(\mathbb{R}^d)$ and for every $\Pi \in  \mathcal{P}_2((\mathbb{R}^d)^3)$ such that $(\pi_1,\pi_2){_\#}\Pi$ is an optimal transport plan between $\mu$ and $\nu$ and $(\pi_1,\pi_3){_\#}\Pi$ is an optimal transport plan between $\mu$ and $\rho$, for all $t \in [0,1]$, we have
\[
\mathcal{F}((t\pi_3 + (1-t)\pi_2){_\#} \Pi) \leq t\, \mathcal{F}(\rho) + (1-t)\, \mathcal{F}(\nu) - \frac{\lambda}{2} t(1-t) \int |y - z|^2 \, d(\pi_2,\pi_3){_\#}\Pi(y,z).
\]
\end{definition}

\begin{remark}
    If $\mathcal{F}$ is convex along generalized geodesics, then it is also convex along geodesics in the Wasserstein space.
\end{remark}
At first sight, convexity along generalized geodesics may seem involved. Yet, on the one hand, most classical functionals known to be convex along geodesics are also convex along generalized geodesics. On the other hand, by considering a slightly unusual perspective on the JKO scheme, we show that this hypothesis naturally arises. This is the purpose of paragraph~\ref{general cnx geo explain} below, which is independent of the rest of the article, and whose aim is to help the reader get acquainted with this notion. 
\subsection{A useful Hilbertian interpretation of the JKO scheme}\label{subsec:EVI}
\subsubsection{JKO as a gradient flow in a Hilbert space.}\label{general cnx geo explain}
The purpose of this paragraph is to explain why it is natural to assume $\mathcal F$ to be geodesically convex along generalized geodesics when computing the iterates of the JKO scheme. In fact, the distance $W_2$ is not just any distance. It is specifically related to the $L^2$-norm, which is Hilbertian. Geodesic convexity along generalized geodesics is connected to convexity in $L^2$ through this connection. Let $(\Omega,\mathbb P)=([0,1]^d,\Leb)$ be seen as a probability space. To our functional $\F$, we associate the following functional $F$:
\begin{equation}  
\label{F}
F:\left\{\begin{aligned}
     \mathcal{H}:=L^2(\Omega,\mathbb{P};\mathbb{R}^d) &\longrightarrow \mathbb{R} \\
   X &\longmapsto \mathcal{F}(X{_\#}\mathbb{P}).
\end{aligned}\right.
\end{equation}
When possible, given $X \in \mathcal H$, we define the proximal operator:
\[
E_\tau(X) := \arg\min_{Y \in \mathcal H} \left\{ \frac{\|X - Y\|^2_{L^2}}{2\tau} + F(Y) \right\}.
\tag{Prox Op}\label{Prox OP}
\]
As usual, when minimizers are not unique, $E_\tau(X)$ denotes any minimizer.

\begin{theorem}[Equivalence between schemes]\label{equiv scheme}
Let $X_0\in L^2(\Omega,\mathbb P,\R^d)$ be such that $\mu_0:=X_0{_\#}\mathbb P\ll\Leb$. There exists a minimizer $E_\tau(X_0)$ in \eqref{Prox OP} if and only if there exist a minimizer $J_\tau^0(\mu_0)$ in \eqref{JKO}. More precisely, the following holds:
\[
\left\{ Y{_\#} \mathbb P, \, Y \in \argmin_{Y \in \mathcal H} \left\{\frac{\|X_0 - Y\|^2_{L^2}}{2\tau} + F(Y) \right\}\right\}=\argmin_{\rho \in \mathcal{P}_2(\mathbb{R}^d)} \left\{ \frac{W_2^2(\mu, \rho)}{2\tau}  + \mathcal{F}(\rho) \right\}.
\]
In particular, if $X_0{_\#}\mathbb{P} = Y_0{_\#}\mathbb{P}$, then $$  \left\{ Y{_\#} \mathbb P, \,Y\in\argmin_{Y \in \mathcal H} \left\{ \frac{\|X_0 - Y\|^2_{L^2}}{2\tau} + F(Y) \right\}\right\} =\left\{ Y{_\#} \mathbb P, \,Y\in\argmin_{Y \in \mathcal H} \left\{ \frac{\|Y_0 - Y\|^2_{L^2}}{2\tau} + F(Y) \right\}\right\}.$$
\end{theorem}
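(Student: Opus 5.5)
The statement to prove is Theorem~\ref{equiv scheme}. The plan is to show that the two minimization problems have the same optimal value and that minimizers correspond through push-forward by $Y\mapsto Y{_\#}\mathbb P$. Two ingredients are needed.

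\textbf{Lower bound.} For every $Y\in\mathcal H$, the law $(X_0,Y){_\#}\mathbb P$ is a coupling of $\mu_0$ and $Y{_\#}\mathbb P$. Hence
\[
\|X_0-Y\|_{L^2}^2\ge W_2^2(\mu_0,Y{_\#}\mathbb P),
\]
and since $F(Y)=\mathcal F(Y{_\#}\mathbb P)$,
\[
\frac{\|X_0-Y\|^2_{L^2}}{2\tau}+F(Y)\ \ge\ \frac{W_2^2(\mu_0,Y{_\#}\mathbb P)}{2\tau}+\mathcal F(Y{_\#}\mathbb P).
\]

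\textbf{Matching upper bound.} Let $\rho\in\mathcal P_2(\R^d)$. Because $\mu_0\ll\Leb$, Brenier's theorem (paragraph~\ref{par:primal}) gives an optimal map $T$ with $T{_\#}\mu_0=\rho$ and $W_2^2(\mu_0,\rho)=\int|x-T(x)|^2\diff\mu_0$. Set $Y:=T\circ X_0$. Then $Y{_\#}\mathbb P=T{_\#}\mu_0=\rho$, and $Y\in\mathcal H$ because
\[
\int|T\circ X_0|^2\diff\mathbb P=\int|T|^2\diff\mu_0<\infty.
\]
Moreover $\|X_0-Y\|^2_{L^2}=\int|x-T(x)|^2\diff\mu_0=W_2^2(\mu_0,\rho)$. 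So every value of the JKO functional at $\rho$ is attained by the Hilbertian functional at some $Y$ with law $\rho$. Consequently the two infima coincide.

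\textbf{Correspondence of minimizers.} Suppose $Y^*$ minimizes \eqref{Prox OP}. By the lower bound, $Y^*{_\#}\mathbb P$ has JKO value at most the common infimum, so it is a JKO minimizer. Conversely, suppose $\rho^*$ minimizes \eqref{JKO}. The construction above gives $Y^*=T\circ X_0$ with Hilbertian value equal to the infimum, so $Y^*$ is a minimizer and $Y^*{_\#}\mathbb P=\rho^*$. This establishes the set equality, including the ``if and only if'' on existence.

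\textbf{The ``in particular'' claim.} The right-hand side of the set equality depends only on $\mu_0=X_0{_\#}\mathbb P$. If $Y_0{_\#}\mathbb P=\mu_0$ as well, applying the result to $Y_0$ yields the same set.

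\textbf{Main obstacle.} The only delicate point is the upper bound: we need an element of $L^2$ realizing the optimal coupling jointly with the given $X_0$. On a general probability space an optimal plan need not be induced by a map, and the infimum over $Y$ could fail to be attained. The hypothesis $\mu_0\ll\Leb$ resolves this, since Brenier's map lets us define $Y=T\circ X_0$ directly. A secondary point is measurability: $T$ is only defined $\mu_0$-a.e., but $X_0{_\#}\mathbb P=\mu_0$, so $T\circ X_0$ is defined $\mathbb P$-a.e. and is measurable.
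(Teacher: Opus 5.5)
Your proof is correct and follows the paper's argument: the coupling $(X_0,Y){_\#}\mathbb P$ gives the lower bound, composing the Brenier map with $X_0$ attains it (this is exactly the paper's identity $\inf_{Y{_\#}\mathbb P=\rho}\|X_0-Y\|^2_{L^2}=W_2^2(\mu_0,\rho)$), and the correspondence of minimizers then follows from the equality of the two infima. You also spell out points the paper leaves implicit, namely the $L^2$-integrability of $T\circ X_0$ and the ``in particular'' claim (the right-hand set depends only on $\mu_0$).
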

Under Hypothesis~\ref{hypothesis}, Proposition~\ref{prop:linear increasing of entropy along JKO alpha neq0} of the Appendix implies that if the starting point of the JKO scheme has finite entropy, and hence is absolutely continuous with respect to the Lebesgue measure, then this property remains true for all of its iterates. This allows to use Theorem~\ref{equiv scheme} iteratively. In fact, more involved proofs relying on an adaptation of \cite[Proposition 6.13]{kallenberg2002} would imply the same result without assuming that $X_0{_\#}\mathbb P$ is absolutely continuous, but we do not want to enter these details as we do not need them. 
\begin{proof}

By definition of $F$,
\begin{align*}
    \inf_{Y \in \mathcal H} \left\{ \frac{\|X_0 - Y\|^2_{L^2}}{2\tau} + F(Y) \right\} 
    &= \inf_{\rho \in \mathcal{P}(\mathbb{R}^d)} \inf_{\substack{Y \in \mathcal H \\ Y{_\#}\mathbb{P} = \rho}} \left\{ \frac{\|X_0 - Y\|^2_{L^2}}{2\tau} + \mathcal{F}(\rho) \right\} \\
    &= \inf_{\rho \in \mathcal{P}_2(\mathbb{R}^d)} \left\{ \frac{1}{2\tau}\inf_{\substack{Y \in \mathcal H \\ Y{_\#}\mathbb{P} = \rho}} \|X_0 - Y\|^2_{L^2} + \mathcal{F}(\rho) \right\}.
\end{align*}
Let us show that given $\rho \in \mathcal P_2(\R^d)$,
\begin{equation}
\label{eq:W_link_L2}
    \inf_{\substack{Y \in \mathcal H \\ Y{_\#}\mathbb{P} = \rho}} \|X_0 - Y\|^2_{L^2} = W_2^2(\mu_0, \rho).
\end{equation}
First, for all $Y \in \mathcal H$ such that $Y{_\#} \mathbb P = \rho$, calling $\gamma := (X_0,Y){_\#} \mathbb P$, we have $\gamma \in \Pi(\mu_0,\rho)$, and so
\begin{equation*}
    \| X_0 - Y \|^2_{L^2} = \int |x-y|^2 \diff \gamma(x,y) \geq W_2^2(\mu_0,\rho).
\end{equation*}
On the other hand, since $X_0{_\#}\mathbb P=\mu_0\ll\Leb$, Brenier's theorem provides a map $T_\rho \in L^2(\mu_0; \R^d)$ such that $T_\rho{_\#}\mu_0=\rho$ and $W_2^2(\mu_0,\rho)=\int |x-T_\rho(x)|^2\diff\mu_0(x)$. Therefore, considering $Y := T_\rho(X_0)$, we have
\begin{equation*}
    \| X_0 - Y \|_2^2 = \int |X_0 - T_\rho(X_0)|^2 \diff \mathbb P = \int |x - T_\rho(x)|^2 \diff \mu_0(x) = W_2^2(\mu_0,\rho),
\end{equation*}
and~\eqref{eq:W_link_L2} follows. Therefore,
\[
\inf_{Y \in \mathcal H} \left\{ \frac{\|X_0 - Y\|^2_{L^2}}{2\tau} + F(Y) \right\} = \inf_{\rho \in \mathcal{P}_2(\mathbb{R}^d)} \left\{ \frac{W_2^2(\mu_0, \rho)}{2\tau} + \mathcal{F}(\rho) \right\}.
\]

Moreover, our proof shows that there is a one-to-one correspondence between the minimizers: 
If $Y$ is a minimizer on the left-hand side, then $\rho = Y{_\#}\mathbb{P}$ is a minimizer on the right-hand side. Conversely, if $\rho$ minimizes the right-hand side, then $Y = T_\rho(X_0)$ is the unique minimizer of the left-hand side such that $Y{_\#}\mathbb{P} = \rho$. 
\end{proof}

Therefore, provided $\mu_0=X_0{_\#}\mathbb P\ll \Leb$, finding the minimizer in~\eqref{JKO} is equivalent to solving the minimization problem~\eqref{Prox OP} in the Hilbert space $\mathcal H$. In view of \eqref{Prox OP}, it would be convenient to assume $F$ to be convex. However this assumption is very restrictive in terms of $\mathcal F$, as it fails for all functionals $\mathcal F$ of type $\rho \mapsto \int f(\rho)$, for any $f$ convex and superlinear. Yet, the fact that $F$ is of the form $X \mapsto \mathcal{F}(X{_\#}\mathbb{P})$ allows us to find a weaker assumption guaranteeing good properties for~\eqref{Prox OP}. Indeed, the Brenier theorem together with the proof of Theorem~\ref{equiv scheme} implies
\[
\inf_{Y \in \mathcal H} \left\{ \frac{\|X_0 - Y\|^2_{L^2}}{2\tau} + F(Y) \right\}
= \inf_{\substack{Y = \nabla\psi(X_0) \\ \psi \text{ convex}}} \left\{ \frac{\|X_0 - Y\|^2_{L^2}}{2\tau} + F(Y) \right\}.
\]
Hence, to ensure existence along the scheme, it suffices that the function 
\[
F_\tau^{X_0}:Y \mapsto \frac{\|X_0 - Y\|^2_{L^2}}{2\tau} + F(Y)
\]
admits a minimizer in the set
\[
C(X_0) = \left\{ T(X_0) \mid T \text{ is the gradient of a convex function} \right\}.
\]
Let us take a closer look at the structure of this set. 
\begin{proposition}[Properties of $C(X)$]
For every $X \in \mathcal{H}$ such that $X{_\#}\mathbb{P} \ll \text{Leb}$, the set $C(X)$ satisfies:
\begin{itemize}
    \item $C(X)$ is convex.
    \item $C(X)$ is closed for the strong topology of $L^2$.
    \item $C(X)$ is stable under multiplication by a positive constant.
    \item For every $\rho \in \mathcal{P}_2(\mathbb{R}^d)$, there exists a unique $Y_\rho^X \in C(X)$ such that $Y_\rho^X{_\#}\mathbb{P} = \rho$ and $W_2(\rho, X{_\#}\mathbb{P}) = \|X - Y_\rho^X\|_{L^2}$.
\end{itemize}
\end{proposition}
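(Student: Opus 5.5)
The plan is to describe $C(X)$ through a single parametrisation. I set $\mu := X_{\#}\mathbb P \ll \Leb$ and work with the set of maps $S$ that are gradients of convex functions. Here I identify $T(X)$ with $T$ viewed as an element of $L^2(\mu;\R^d)$; this is an isometry onto its image because $\int |T(X)|^2 \diff\mathbb P = \int |T|^2 \diff\mu$, so it is enough to argue in $L^2(\mu)$. Since only $\mu$-a.e.\ values matter, I read ``gradient of a convex function'' as ``$\mu$-a.e.\ equal to $\nabla\varphi$ for some proper l.s.c.\ convex $\varphi$ finite on a neighbourhood of $\mathrm{supp}\,\mu$ up to a null set''. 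Then $C(X) = \{T\circ X : T \in \mathcal K\}$, where $\mathcal K$ is the set of such maps lying in $L^2(\mu)$.

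\textbf{Convexity and cones.} Take $T_0 = \nabla\varphi_0$, $T_1 = \nabla\varphi_1$ and $s \in [0,1]$. Then $(1-s)T_0 + sT_1 = \nabla\big((1-s)\varphi_0 + s\varphi_1\big)$ $\mu$-a.e. The convex function $(1-s)\varphi_0 + s\varphi_1$ is differentiable wherever both $\varphi_0$ and $\varphi_1$ are, and by Rademacher/Alexandrov this holds Lebesgue-a.e.\ in the interior of the domains, hence $\mu$-a.e. Composing with $X$ gives convexity of $C(X)$. In the same way, $cT = \nabla(c\varphi)$ with $c\varphi$ convex for every $c > 0$, which gives stability under positive dilations.

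\textbf{Closedness (the main obstacle).} Let $Y_n = T_n(X) \to Y$ in $L^2(\mathbb P)$. I first show that $Y$ is $\sigma(X)$-measurable, so $Y = T(X)$ for some $T \in L^2(\mu)$. This holds because $L^2(\Omega,\sigma(X),\mathbb P)$ is a closed subspace, or alternatively by extracting an a.e.-convergent subsequence. After a further extraction, $T_n \to T$ $\mu$-a.e. I then use the characterisation that a map is $\mu$-a.e.\ the gradient of a convex function if and only if its graph (restricted to a full-measure set) is cyclically monotone, which is Rockafellar's theorem. Cyclical monotonicity, i.e.\ $\sum_i \langle T(x_i), x_{i+1} - x_i\rangle \le 0$ over finite cycles, passes to pointwise limits on the full-measure set where convergence holds. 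Rockafellar's theorem then gives a convex $\varphi$ with $T(x) \in \partial\varphi(x)$ on that set. Since $\mu \ll \Leb$ and $\varphi$ is differentiable a.e.\ in the interior of its domain, we get $T = \nabla\varphi$ $\mu$-a.e. The delicate point is that the support of $\mu$ may touch the boundary of $\mathrm{dom}\,\varphi$. I would handle this by noting that the boundary of a convex set is Lebesgue-null, hence $\mu$-null.

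\textbf{Last item.} For existence, Brenier's theorem gives an optimal map $T_\rho = \nabla\psi$ from $\mu$ to $\rho$. As in the proof of Theorem~\ref{equiv scheme}, $Y_\rho^X := T_\rho(X)$ satisfies $Y_\rho^X{}_{\#}\mathbb P = \rho$ and $\|X - Y_\rho^X\|_{L^2} = W_2(\mu,\rho)$. For uniqueness, suppose $T(X) \in C(X)$ pushes $\mathbb P$ to $\rho$. Then $T_{\#}\mu = \rho$ and $T$ is $\mu$-a.e.\ the gradient of a convex function. The uniqueness part of Brenier's theorem (any transport map that is a convex gradient is the optimal one, and it is unique $\mu$-a.e.) gives $T = T_\rho$ $\mu$-a.e., hence $T(X) = Y_\rho^X$ $\mathbb P$-a.s. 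Note that uniqueness holds even without the distance condition.
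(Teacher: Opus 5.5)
The paper states this proposition without proof. Only the last item is handled, implicitly, in the proof of Theorem~\ref{equiv scheme}, and it is handled essentially as you do it: existence through $T_\rho(X)$ with $T_\rho$ the Brenier map, and uniqueness from the uniqueness of the optimal map. Your proof of all four items is correct and fills in what the paper leaves to the reader. Convexity and positive dilations follow from the gradient structure. For closedness, your chain is sound: Doob--Dynkin, an a.e.\ convergent subsequence, passing cyclical monotonicity to the limit on a full-measure set, Rockafellar's theorem, and the Lebesgue-negligibility of $\partial(\mathrm{dom}\,\varphi)$.

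First, a remark on your convention. State it as ``$\mu(\mathrm{int\,dom}\,\varphi)=1$ and $T=\nabla\varphi$ $\mu$-a.e.'' This is exactly what your closedness argument produces, and it is what the Brenier potential satisfies. Read literally, ``$\varphi$ finite on a neighbourhood of $\mathrm{supp}\,\mu$'' is too restrictive, and under that reading both item 2 and item 4 fail. For instance, take $\mu$ uniform on the unit ball and $\varphi(x)=-(1-|x|^2)^{3/4}$ on the closed ball ($+\infty$ outside). Then $\nabla\varphi\in L^2(\mu)$ is the Brenier map onto its image. It is also the $L^2(\mu)$-limit of the gradients of $-(1+\tfrac1n-|x|^2)^{3/4}$, which are finite near the closed ball. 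Yet $\nabla\varphi$ is unbounded near the sphere, so it agrees $\mu$-a.e.\ with no gradient of a convex function finite near the closed ball, since such a gradient is bounded there.

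Second, closedness has a shorter route that uses only the ingredients of your item 4. A plan carried by a convex gradient is optimal, and for $\mu\ll\Leb$ the optimal plan is unique and induced by the Brenier map. Hence $C(X)=\{Y\in\mathcal H:\ \|X-Y\|_{L^2}=W_2(X_{\#}\mathbb P,Y_{\#}\mathbb P)\}$. Since $W_2(Y_{\#}\mathbb P,Z_{\#}\mathbb P)\le\|Y-Z\|_{L^2}$, this set is the zero set of a $2$-Lipschitz function on $\mathcal H$, hence closed. This variant is in the spirit of the proof of Theorem~\ref{equiv scheme}: it absorbs Rockafellar's theorem and the domain-boundary issue into Brenier's theorem. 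Your route is more self-contained, since it proves directly that convex gradients are stable under $L^2$ convergence.
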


Thus, a natural assumption on $\F$ is that $F$ is convex on $C(X)$ for every $X$. This assumption is equivalent to convexity along generalized geodesics.
\begin{proposition}\label{equiv cnx}
The functional $\mathcal{F}$ is {$\lambda$-}convex along generalized geodesics in the sense of Definition~\ref{def:convexity along generalized geodesic} for all absolutely continuous base point if and only if for every $X$ such that $X{_\#}\mathbb P\ll\Leb$, $F$ defined by formula~\ref{F} is {$\lambda$-}convex on $C(X)$.
\end{proposition}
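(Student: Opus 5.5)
Both directions reduce to checking one inequality on a single segment, because every generalized geodesic based at an absolutely continuous $\mu$ is the image of a straight segment in $C(X)$, and vice versa. I would first fix $X\in\mathcal H$ with $\mu:=X{_\#}\mathbb P\ll\Leb$ and prove a dictionary between the two settings.

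\textbf{Dictionary.} Take $Y_0,Y_1\in C(X)$, say $Y_i=T_i(X)$ with $T_i=\nabla\psi_i$ and $\psi_i$ convex. Put $\nu:=Y_0{_\#}\mathbb P$ and $\rho:=Y_1{_\#}\mathbb P$.
\begin{itemize}
\item By Brenier's theorem, a gradient of a convex function pushing $\mu$ to $\nu$ is the optimal map. Hence $T_0$ is the optimal map from $\mu$ to $\nu$, and likewise $T_1$ from $\mu$ to $\rho$.
\item By the remark following Proposition~\ref{3 plan optimal}, the only admissible three-plan is then $\Pi=(I_d,T_0,T_1){_\#}\mu=(X,Y_0,Y_1){_\#}\mathbb P$.
\item For $t\in[0,1]$ this gives $((1-t)Y_0+tY_1){_\#}\mathbb P=((1-t)\pi_2+t\pi_3){_\#}\Pi$, so the segment pushes forward to the generalized geodesic based at $\mu$.
\item The distance term matches: $\int|y-z|^2\,\diff(\pi_2,\pi_3){_\#}\Pi=\|Y_0-Y_1\|_{L^2}^2$.
\end{itemize}
Conversely, for any $\nu,\rho\in\mathcal P_2(\R^d)$, the last property of $C(X)$ provides $Y^X_\nu,Y^X_\rho\in C(X)$ with these laws. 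Their three-plan $(X,Y^X_\nu,Y^X_\rho){_\#}\mathbb P$ is the unique admissible $\Pi$ for the base point $\mu$.

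\textbf{($\Rightarrow$)} Assume $\mathcal F$ is $\lambda$-convex along generalized geodesics with absolutely continuous base points. Since $C(X)$ is convex, $(1-t)Y_0+tY_1\in C(X)$, and by definition $F((1-t)Y_0+tY_1)=\mathcal F\big(((1-t)Y_0+tY_1){_\#}\mathbb P\big)$. By the dictionary, the defining inequality of Definition~\ref{def:convexity along generalized geodesic} reads
\[
F((1-t)Y_0+tY_1)\le (1-t)F(Y_0)+tF(Y_1)-\frac{\lambda}{2}t(1-t)\|Y_0-Y_1\|^2_{L^2}.
\]
This is exactly $\lambda$-convexity of $F$ on $C(X)$.

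\textbf{($\Leftarrow$)} Assume $F$ is $\lambda$-convex on every such $C(X)$. Let $\mu\ll\Leb$ and $\nu,\rho\in\mathcal P_2(\R^d)$, and let $\Pi$ be an admissible three-plan.
\begin{itemize}
\item To realize $\mu$ as a law on $(\Omega,\mathbb P)=([0,1]^d,\Leb)$, I need some $X\in\mathcal H$ with $X{_\#}\mathbb P=\mu$. Any probability on $\R^d$ is the image of Lebesgue measure on $[0,1]^d$, for instance through a Knothe-type or measurable-isomorphism construction; square integrability follows from $\mu\in\mathcal P_2$.
\item Since $\mu\ll\Leb$, the remark after Proposition~\ref{3 plan optimal} gives uniqueness, $\Pi=(I_d,T,R){_\#}\mu$, with $T,R$ the Brenier maps from $\mu$ to $\nu$ and to $\rho$.
\item Set $Y_0:=T(X)$ and $Y_1:=R(X)$, both in $C(X)$. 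The $\lambda$-convexity inequality of $F$ on $C(X)$, translated back through the dictionary, is the required inequality for $\mathcal F$ along this $\Pi$.
\end{itemize}

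\textbf{Main obstacle.} The only delicate points are the identifications themselves. First, the gradient of a convex function is automatically optimal; this is the uniqueness part of Brenier's theorem, which I take from the paper's preliminaries. Second, one must check that the three-plan built from $C(X)$ is admissible and unique, which is where absolute continuity of the base point is used. Finally, the $\mathcal P_2$ integrability must be verified, so that every quantity above is finite or properly valued in $\R\cup\{+\infty\}$.
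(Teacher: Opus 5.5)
Your proof is correct and follows essentially the same argument as the paper. In both directions, Brenier's theorem and its converse, together with the absolute continuity of the base point, give the identification $\Pi=(X,Y,Z)_{\#}\mathbb P=(I_d,T,R)_{\#}\mu$ with $Y=T(X)$ and $Z=R(X)$. The only difference is the choice of $X$ in the converse direction: the paper takes the Brenier map from the Lebesgue measure on $[0,1]^d$ to $\mu$, which avoids needing a Knothe-type or measurable-isomorphism construction.
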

\begin{proof}
    Let us assume that $\F$ is {$\lambda$-}convex along generalized geodesics of absolutely continuous base point, and show that for every $X$ such that $X{_\#}\mathbb P\ll\Leb$, $F$ is {$\lambda$-}convex on $C(X)$. Let us fix $X\in \mathcal H$ and consider $Y,Z\in C(X)$. Since $Y$ and $Z$ are in $C(X)$, by the converse of Brenier's theorem, calling $\Pi=(X,Y,Z){_\#}\mathbb{P}$ and $\pi_1,\pi_2,\pi_3$ the canonical projections from $(\R^d)^3$ to $\R^d$, the plans $(\pi_1,\pi_2){_\#}\Pi$ and $(\pi_1,\pi_3){_\#}\Pi$ are optimal between their marginals. By Definition~\ref{general cnx geo def} of $\lambda$-convexity along generalized geodesics, for all $t \in [0,1]$,
    \[
\mathcal{F}((t\pi_3 + (1-t)\pi_2){_\#} \Pi) \leq t\, \mathcal{F}(Z{_\#}\mathbb{P}) + (1-t)\, \mathcal{F}(Y{_\#}\mathbb{P}) - \frac{\lambda}{2} t(1-t) \int |y - z|^2 \, d(\pi_2,\pi_3){_\#}\Pi(y,z).
\]
or equivalently
\[
 F(tZ+(1-t)Y) \leq t\, F(Z) + (1-t)\, F(Y) - \frac{\lambda}{2} t(1-t) \|Z-Y\|^2.
\]
This exactly means that $F$ is $\lambda$-convex on $C(X)$.

Now, let us assume that for every $X$ such that $X{_\#}\mathbb P\ll\Leb$, $F$ is {$\lambda$-}convex on $C(X)$ and show that $\F$ is {$\lambda$-}convex along generalized geodesics of absolutely continuous base point. Fix $\mu\in\mathcal P_2(\R^d)$ such that $\mu\ll\Leb$. Take $X$ the optimal map from the Lebesgue measure on $[0,1]^d$ to $\mu$. We have $X\in\mathcal{H}$ and $X{_\#}\mathbb{P}=\mu$. Let $\Pi\in \mathcal P_2((\R^d)^3)$ a three-plan such that $\pi_1{_\#}\Pi=\mu$ and such that $(\pi_1,\pi_2){_\#}\Pi$ and $(\pi_1,\pi_3){_\#}\Pi$ are optimal between their marginals. Since $\mu\ll\Leb$ the Brenier theorem ensures that the plans $(\pi_1,\pi_2){_\#}\Pi$ and $(\pi_1,\pi_3){_\#}\Pi$ are concentrated on the graphs of $T,R$, which are gradients of convex functions. Letting $Y=T(X)$ and $Z=R(X)$, $Y,Z\in C(X)$ and $\Pi=(X,Y,Z){_\#}\mathbb{P}$. Then the $\lambda$-convexity of $F$ on $C(X)$ implies that for all $t \in [0,1]$,
\[
F(tZ+(1-t)Y) \leq t\, F(Z) + (1-t)\, F(Y) - \frac{\lambda}{2} t(1-t) \|Z-Y\|^2,
\]
or equivalently
\[
\mathcal{F}((t\pi_3 + (1-t)\pi_2){_\#} \Pi) \leq t\, \mathcal{F}(\pi_3{_\#}\Pi) + (1-t)\, \mathcal{F}(\pi_2{_\#}\Pi) - \frac{\lambda}{2} t(1-t) \int |y - z|^2 \, d(\pi_2,\pi_3){_\#}\Pi(y,z).
\]
Hence, $\mathcal F$ is $\lambda$-convex along generalized geodesics of base point $\mu$.
\end{proof}
\subsubsection{Discrete EVI}
One of the most important consequences of the convexity of $\mathcal F$ along generalized geodesics is the discrete Evolution Variational Inequality (EVI), a stability inequality that will play a crucial role in our work.
\begin{theorem}[Discrete EVI, Ambrosio, Gigli, Savaré \cite{ambrosio2005gradient}] \label{discrete E.V.I}
Let $\tau>0$. Let $\F$ be $\lambda$-convex along generalized geodesics and $W_2$-l.s.c. For every $\mu \in \mathcal{P}_2(\mathbb{R}^d)$ such that~\eqref{JKO} admits a minimizer $J^0_\tau(\mu)$ and every $\rho \in \mathcal P_2(\R^d)$, we have:
\[
\frac{1}{2\tau} \left( W_2^2(\rho, J_\tau^0(\mu)) - W_2^2(\rho, \mu) \right)
\leq \mathcal{F}(\rho) - \mathcal{F}(J_\tau^0(\mu)) - \frac{1}{2\tau} W_2^2(J_\tau^0(\mu), \mu)
{- \frac{\lambda}{2} W_2^2(\rho, J_\tau^0(\mu))}.
\]
\end{theorem}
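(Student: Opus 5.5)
The approach is the Hilbertian lifting of Subsection~\ref{general cnx geo explain}, with the inequality derived from the $\lambda$-convexity of $F$ on a cone $C(X)$. This is the standard Ambrosio--Gigli--Savaré argument.

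Set $\mu_\tau := J^0_\tau(\mu)$. First assume $\mu_\tau \ll \Leb$; this holds in our applications, where all iterates have finite entropy. The general case will follow from the same argument carried out on three-plans.

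\textbf{Step 1: the lift.} Take $X$ with $X{_\#}\mathbb P=\mu_\tau$. Let $Y:=T(X)\in C(X)$ be the optimal image towards $\mu$ and $Z:=R(X)\in C(X)$ the optimal image towards $\rho$. Then $\Pi=(X,Y,Z){_\#}\mathbb P$ is a three-plan whose first two-marginals are optimal, so the curve $\rho_t=((1-t)X+tZ){_\#}\mathbb P$ is a generalized geodesic from $\mu_\tau$ to $\rho$ based at $\mu_\tau$. Since the base point is an endpoint, it is simply the Wasserstein geodesic. The coupling $(Y,(1-t)X+tZ){_\#}\mathbb P$ between $\mu$ and $\rho_t$ is admissible but not necessarily optimal, and the key point is that this goes in the right direction:
\[
W_2^2(\mu,\rho_t)\le \|Y-(1-t)X-tZ\|^2 = (1-t)\|Y-X\|^2+t\|Y-Z\|^2-t(1-t)\|X-Z\|^2 .
\]
Here $\|Y-X\|^2=W_2^2(\mu,\mu_\tau)$ and $\|X-Z\|^2=W_2^2(\mu_\tau,\rho)$. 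Also $\|Y-Z\|^2\ge W_2^2(\mu,\rho)$, and this inequality has the wrong sign for us.

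\textbf{Fixing the sign.} To repair it, I instead base the generalized geodesic at $\mu$, in the spirit of \cite[Lemma 4.1.2 and Theorem 4.1.2]{ambrosio2005gradient}. Choose a three-plan $\Pi$ with base point $\mu$, optimal from $\mu$ to $\mu_\tau$ and from $\mu$ to $\rho$, and set $\rho_t=((1-t)\pi_2+t\pi_3){_\#}\Pi$. The key identity is then exact for the coupling term, since the plans from the base point are optimal:
\[
\int|x-((1-t)y+tz)|^2\,\diff\Pi=(1-t)W_2^2(\mu,\mu_\tau)+tW_2^2(\mu,\rho)-t(1-t)\int|y-z|^2\,\diff\Pi ,
\]
and it is an upper bound for $W_2^2(\mu,\rho_t)$. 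With this basing, generalized convexity of $\mathcal F$ applies directly by Definition~\ref{def:convexity along generalized geodesic}:
\[
\mathcal F(\rho_t)\le(1-t)\mathcal F(\mu_\tau)+t\mathcal F(\rho)-\tfrac\lambda2 t(1-t)\int|y-z|^2\,\diff\Pi .
\]
We also have $\int|y-z|^2\,\diff\Pi\ge W_2^2(\mu_\tau,\rho)$. So the functional $\Phi(\nu)=\frac{W_2^2(\mu,\nu)}{2\tau}+\mathcal F(\nu)$ is $(\frac1\tau+\lambda)$-convex along this curve, with modulus measured by $D^2:=\int|y-z|^2\,\diff\Pi$.

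\textbf{Step 2: conclude by minimality.} Since $\mu_\tau$ minimizes $\Phi$, we have $\Phi(\mu_\tau)\le\Phi(\rho_t)$. Combining this with the two displays, subtracting $\Phi(\mu_\tau)$ from both sides and dividing by $t$ gives
\[
0\le \Phi(\rho)-\Phi(\mu_\tau)-\tfrac12\left(\tfrac1\tau+\lambda\right)(1-t)D^2 .
\]
Let $t\to0$ and use $D^2\ge W_2^2(\mu_\tau,\rho)$. When $\frac1\tau+\lambda\ge0$, which is the relevant regime $\tau\le 1/\lambda_-$, this becomes
\[
\frac{W_2^2(\mu,\mu_\tau)}{2\tau}+\mathcal F(\mu_\tau)+\frac{1+\lambda\tau}{2\tau}W_2^2(\mu_\tau,\rho)\le\frac{W_2^2(\mu,\rho)}{2\tau}+\mathcal F(\rho),
\]
which rearranges exactly to the claimed inequality.

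\textbf{Main obstacle.} The delicate point is the sign issue that prompted the change of basing. One must base the generalized geodesic at $\mu$ (not at $\mu_\tau$), so that the quadratic term is computed exactly while convexity of $\mathcal F$ is still available. Everything else is algebra. Lower semicontinuity is only used to ensure $\mu_\tau$ exists, which the statement already assumes.
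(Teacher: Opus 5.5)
Your final argument, the one with the generalized geodesic based at $\mu$, is correct. It uses the same mechanism as the paper: a generalized geodesic based at the \emph{starting} measure $\mu$, along which $\Phi=\frac{W_2^2(\mu,\cdot)}{2\tau}+\mathcal F$ is $(\frac1\tau+\lambda)$-convex, combined with minimality of $\mu_\tau$.

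The paper phrases this in the lifted space $\mathcal H$, with $X_\#\mathbb P=\mu\ll\Leb$:
\begin{itemize}
\item Both $E_\tau(X)$ and $V=R(X)$ lie in the convex cone $C(X)$, where $R$ is the Brenier map from $\mu$ to $\rho$.
\item On $C(X)$, $F^X_\tau$ is $\frac{1+\lambda\tau}{\tau}$-convex.
\item The growth bound $F^X_\tau(V)\ge F^X_\tau(E_\tau(X))+\frac{1+\lambda\tau}{2\tau}\|V-E_\tau(X)\|^2$ is derived from the optimality condition $0\in\partial F^X_\tau(E_\tau(X))+N_{C(X)}(E_\tau(X))$.
\item One concludes with $\|V-E_\tau(X)\|\ge W_2(\rho,\mu_\tau)$.
\end{itemize}

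You instead work directly on a three-plan. The lift gives $\|X-((1-t)E_\tau(X)+tV)\|^2=W_2^2(\mu,\rho_t)$ exactly, since a convex combination of Brenier maps is again a Brenier map. You only use the coupling upper bound, which is all minimality requires. Your step ``compare with $\rho_t$, divide by $t$, let $t\to0$'' replaces the subdifferential and normal-cone calculus.

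Your route is therefore a bit more general and more elementary. It needs no absolute continuity of $\mu$, which the paper uses to identify $W_2$ with the $L^2$ distance on $C(X)$, and it needs no subdifferential sum rule. The lift, in turn, makes transparent why the base point must be $\mu$ rather than $\mu_\tau$, which is exactly the sign problem you hit in Step~1.

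Two small clean-ups:
\begin{itemize}
\item Step~1 and the assumption $\mu_\tau\ll\Leb$ play no role in the final argument and can be deleted.
\item Don't restrict to ``the relevant regime $\tau\le 1/\lambda_-$'' (the paper's own proof makes the same restriction), since the statement is for every $\tau>0$. The claimed inequality is equivalent to $\frac{1+\lambda\tau}{2\tau}W_2^2(\rho,\mu_\tau)\le\Phi(\rho)-\Phi(\mu_\tau)$. When $1+\lambda\tau\le0$ the left-hand side is nonpositive, while the right-hand side is nonnegative by minimality, so that case is immediate.
\end{itemize}
Likewise, the case $\mathcal F(\rho)=+\infty$ is trivial. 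Otherwise $\Phi(\mu_\tau)\le\Phi(\rho)<+\infty$, which justifies subtracting $\Phi(\mu_\tau)$ and dividing by $t$.
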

\begin{remark}
    In Theorem~\ref{welldefined JKO}, we provide conditions for the existence of $J_\tau^0(\mu).$
\end{remark}
This inequality can be seen as an easy consequence of the same inequality at the level of $\mathcal H$, which can be stated as follows. As usual, we only state it in the case of absolutely continuous measures.
\begin{proposition}\label{prop:e.V.i at level of H}
Let $\tau>0$. Let $\F$ be $\lambda$-convex along generalized geodesics and $W_2$-l.s.c. Let $X\in\mathcal{H}$, be such that $X{_\#}\mathbb P\ll \Leb$ and such that~\eqref{Prox OP} admits a minimizer $E_\tau(X)$. Then for every $V \in C(X)$, the following inequality holds:
\[\frac{1}{2\tau} \left( \|V - E_\tau(X)\|^2_{L^2} - \|V - X\|^2_{L^2} \right)
\leq F(V) - F(E_\tau(X)) - \frac{1}{2\tau} \|X - E_\tau(X)\|^2_{L^2} {-\frac{\lambda}{2} \|V - E_\tau(X)\|^2_{L^2}}.
\]
\end{proposition}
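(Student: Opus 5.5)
The plan is to use the $\lambda$-convexity of $F$ along the segment from $E_\tau(X)$ to $V$ inside $C(X)$, together with the minimality of $E_\tau(X)$ for $F_\tau^{X}$. Set $Y:=E_\tau(X)$. First I will check that $Y\in C(X)$. Since $X_{\#}\mathbb P\ll\Leb$, the proof of Theorem~\ref{equiv scheme} shows that any minimizer of~\eqref{Prox OP} is the unique minimizer with its law, namely $T(X)$ with $T$ the Brenier map. If not, replacing $Y$ by $Y^X_{Y_{\#}\mathbb P}$ would strictly decrease $\|X-Y\|$ and keep $F(Y)$ fixed. Convexity of $C(X)$ then gives $Y_t:=(1-t)Y+tV\in C(X)$ for every $t\in[0,1]$, and Proposition~\ref{equiv cnx} yields
\[
F(Y_t)\le (1-t)F(Y)+tF(V)-\frac{\lambda}{2}t(1-t)\|V-Y\|^2_{L^2}.
\]

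Next, the squared distance to $X$ is exactly quadratic along the segment, since $\mathcal H$ is Hilbert:
\[
\|Y_t-X\|^2=(1-t)\|Y-X\|^2+t\|V-X\|^2-t(1-t)\|V-Y\|^2 .
\]
Combining the two displays, the functional $F_\tau^X$ satisfies
\[
F_\tau^X(Y_t)\le (1-t)F_\tau^X(Y)+t\Big(F(V)+\tfrac{\|V-X\|^2}{2\tau}\Big)-t(1-t)\Big(\tfrac{1}{2\tau}+\tfrac{\lambda}{2}\Big)\|V-Y\|^2 .
\]

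Minimality gives $F_\tau^X(Y)\le F_\tau^X(Y_t)$. Substituting, subtracting $(1-t)F_\tau^X(Y)$, dividing by $t>0$ and letting $t\to0^+$ gives
\[
F_\tau^X(Y)\le F(V)+\frac{\|V-X\|^2}{2\tau}-\Big(\frac1{2\tau}+\frac\lambda2\Big)\|V-Y\|^2 .
\]
Expanding $F_\tau^X(Y)=F(Y)+\|X-Y\|^2/(2\tau)$ and rearranging gives exactly the claimed inequality. Note that $\lambda$ may be negative; this causes no problem, since no sign condition was used.

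The only real obstacle is the first step, justifying that the minimizer lies in $C(X)$ so that convexity on $C(X)$ applies along the segment. Finiteness of $F(V)$ is harmless: if $F(V)=+\infty$ the inequality is trivial, and otherwise all quantities are finite because $F(Y)<+\infty$. The $W_2$-l.s.c.\ assumption is not needed in this argument beyond guaranteeing the setting.
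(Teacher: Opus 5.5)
Your proof is correct, and it takes a more elementary route than the paper's.

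\textbf{The paper's route.} It argues through convex analysis. Having noted that $F_\tau^X$ is $\frac{1+\lambda\tau}{\tau}$-convex on $C(X)$, it writes the optimality condition $0\in\partial F_\tau^X(E_\tau(X))+N_{C(X)}(E_\tau(X))$, citing Bauschke--Combettes. It then selects a subgradient $U$ with $\langle U,V-E_\tau(X)\rangle\ge 0$ for all $V\in C(X)$. It concludes with the strongly convex subgradient inequality $F_\tau^X(V)\ge F_\tau^X(E_\tau(X))+\langle U,V-E_\tau(X)\rangle+\frac{1+\lambda\tau}{2\tau}\|V-E_\tau(X)\|^2_{L^2}$.

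\textbf{Your route.} You test minimality directly against the points $Y_t$ of the segment towards $V$. You use the exact Hilbertian identity for $\|Y_t-X\|^2_{L^2}$ and let $t\to0^+$. This is the classical variational argument behind the discrete EVI in \cite{ambrosio2005gradient}. It is also exactly the argument that proves the strongly convex subgradient inequality itself, so you never need the subdifferential sum rule or the normal cone.

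\textbf{What this buys.} First, you never use the sign of $\frac1\tau+\lambda$. Your argument therefore covers every $\tau>0$ for which a minimizer exists, as in the statement. The paper's proof, by contrast, adds the assumption $\tau<\frac{1}{\lambda_-}$ so that $F_\tau^X$ is convex on $C(X)$. Second, you explicitly check that $E_\tau(X)\in C(X)$, using uniqueness of the optimal plan when $X_{\#}\mathbb P\ll\Leb$. The paper uses this fact only implicitly, through the proof of Theorem~\ref{equiv scheme}. The paper's version is shorter only because it delegates these steps to standard convex-analysis results.
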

\begin{proof}[Proof of Proposition \ref{prop:e.V.i at level of H}]
Let $X\in\mathcal H$ be such that $X{_\#}\mathbb P\ll \Leb$. Assume that $\mathcal{F}$ is $\lambda$-convex along generalized geodesics and $\tau<\frac{1}{\lambda_-}$. Then, by Proposition \ref{equiv cnx}, the penalized functional 
\[
F_\tau^X(Y) := \frac{\|X - Y\|^2_{L^2}}{2\tau} + F(Y)
\tag{Prox map}\label{Prox map}\]
is $\frac{1 + \lambda\tau}{\tau}$-convex on $C(X)$. If $\F$ is $W_2$-l.s.c,  then $E_\tau(X)$ exists, see Theorem \ref{welldefined JKO}. If $F$ is differentiable, then for every $V \in C(X)$, we have
\[
F_\tau^X(V) \geq F_\tau^X(E_\tau(X)) + \underbrace{\left\langle \nabla F_\tau^X(E_\tau(X)), V - E_\tau(X) \right\rangle}_{= 0} + \frac{{1 + \lambda\tau}}{2\tau} \|E_\tau(X) - V\|^2_{L^2}.
\]
In fact the convexity of $F_\tau^X$ on $C(X)$ is enough to establish this inequality. By standard results in convex analysis (see \cite{BauschkeCombettes2017} Definition 6.38, Theorem 16.3 and Example 16.13), the point $E_\tau(X)$ is a minimizer of $F_\tau^X$ on the convex set $C(X)$ if and only if 
$$0\in\partial (F_\tau^X+\iota_{C(X)})(E_\tau(X))=\partial F_\tau^X(E_\tau(X))+N_{C(x)}(E_\tau(X))$$ 
where  the subdifferential of $F_\tau^X$ is defined by 
\begin{equation*}
\partial F_\tau^X(Y):=\left\{U \in \mathcal H  \, | \, \forall V\in C(X),\,F_\tau^X(V)\geq\left\langle U,V-Y\right\rangle+F_\tau^X(Y)\right\},
\end{equation*}
and the convex indicator function $\iota_{C(X)}$ and the normal cone $N_{C(X)}$ are defined by 
\[\iota_{C(X)}(Y):=\left\{\begin{aligned}
    &+\infty &&\text{if $Y\notin C(X)$},\\
   &0 &&\text{if $Y\in C(X)$},
\end{aligned} 
\qquad\text{and}\qquad N_{C(X)}(Y):=\{U \in \mathcal H \, | \, \forall V\in C(X), \, \left\langle U,V-Y\right\rangle\leq0\}.\right.\] 
This means that there is a point $U$ in the subdifferential of $F_\tau^X$ at point $E_\tau(X)$ such that for every $V\in C(X)$ the following holds:
\[
\left\langle U, V - E_\tau(X) \right\rangle\geq 0.
\]
Then by definition of the subdifferential we obtain:
\begin{align*}
F_\tau^X(V) &\geq F_\tau^X(E_\tau(X)) + \left\langle U, V - E_\tau(X) \right\rangle + \frac{{1 + \lambda\tau}}{2\tau} \|E_\tau(X) - V\|^2_{L^2}\\
&\geq F_\tau^X(E_\tau(X))  + \frac{{1 + \lambda\tau}}{2\tau} \|E_\tau(X) - V\|^2_{L^2},
\end{align*}
 which is the desired result.
\end{proof}

This discrete-time inequality has a continuous counterpart obtained in the limit $\tau \to 0$. When the solution of the PDE~\eqref{eq:gradient_flow} is well defined, being a solution is equivalent to verifying this continuous-time inequality. However, this inequality still makes sense when the solution of the PDE \eqref{eq:gradient_flow} is not well-defined. Therefore it can be used to define what is a gradient flow of a functional that is $\lambda$-convex along generalized geodesics, see \cite{ambrosio2005gradient}.
\begin{definition}
    Let $\F$ be $\lambda$-convex along generalized geodesics and $W_2$-l.s.c. A curve in $\mathcal P_2(\R^d)$ is called a gradient flow of $\F$ in the Wasserstein space if for all $\rho\in\mathcal P_2(\R^d)$ and all $t\geq0$, the following inequality holds:
\[
\frac{1}{2} \frac{\mathrm{d}}{\mathrm{d}t} W_2^2(\rho_t, \rho)
\leq \mathcal{F}(\rho) - \mathcal{F}(\rho_t)
{- \frac{\lambda}{2} W_2^2(\rho_t, \rho)}.
\]
\end{definition}
\begin{theorem}[EVI Characterization of Gradient Flows]\label{E.V.I}
Let $\F$ be $\lambda$-convex along generalized geodesics and $W_2$-l.s.c. The limiting curve $(\rho_t)_{t \geq 0}$ defined by equation~\eqref{eq:def_rho0} is the only gradient flow of $\mathcal{F}$ in the Wasserstein space starting from $\mu_0$.
\end{theorem}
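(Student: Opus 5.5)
The proof has two parts. \emph{Existence}: the limit curve $\rho^0$ of~\eqref{eq:def_rho0} satisfies the EVI, which I get by summing the discrete EVI of Theorem~\ref{discrete E.V.I} and letting $\tau\to0$. \emph{Uniqueness}: any two gradient flows starting from $\mu_0$ coincide, which I get from the $\lambda$-contraction estimate forced by the EVI.

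\textbf{Existence.} Fix $\rho\in\mathcal P_2(\R^d)$ and set $\rho_k:=J^0_{k,\tau}(\mu_0)$.
\begin{itemize}
\item Theorem~\ref{discrete E.V.I}, applied at step $k$ with $\mu=\rho_k$, gives (after discarding the nonpositive term $-\frac1{2\tau}W_2^2(\rho_{k+1},\rho_k)$)
\[
\tfrac12\bigl(W_2^2(\rho,\rho_{k+1})-W_2^2(\rho,\rho_k)\bigr)\le \tau\Bigl(\F(\rho)-\F(\rho_{k+1})-\tfrac\lambda2 W_2^2(\rho,\rho_{k+1})\Bigr).
\]
\item Summing over $k$ between $\lfloor s/\tau\rfloor$ and $\lfloor t/\tau\rfloor$ yields an integrated inequality for the piecewise constant interpolant $\bar\rho_\tau$.
\item Let $\tau\to0$. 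The convergence $\bar\rho_\tau(r)\to\rho^0(r)$ in $W_2$ is locally uniform in $r$, by the AGS error estimate recalled in~\eqref{eq:bound_AGS_simple} and its $\lambda\neq0$ analogue. The terms $W_2^2(\rho,\cdot)$ therefore pass to the limit.
\item For the term $-\F(\bar\rho_\tau(r))$, I use lower semicontinuity of $\F$, which gives $\liminf\F(\bar\rho_\tau(r))\ge\F(\rho^0(r))$, together with Fatou's lemma. Fatou needs a lower bound on $\F$ along the scheme. I get it from $\lambda$-convexity, which implies $\F(\nu)\ge -A-BW_2^2(\nu,\mu_0)$, combined with the uniform second-moment bounds on the scheme.
\end{itemize}
The result is the integrated EVI
\[
\tfrac12\bigl(W_2^2(\rho^0_t,\rho)-W_2^2(\rho^0_s,\rho)\bigr)\le\int_s^t\Bigl(\F(\rho)-\F(\rho^0_r)-\tfrac\lambda2W_2^2(\rho^0_r,\rho)\Bigr)\diff r .
\]
Dividing by $t-s$ and using that $r\mapsto\F(\rho^0_r)$ is nonincreasing and lower semicontinuous, this gives the differential form of the EVI (as an upper right derivative, hence a.e.).

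\textbf{Uniqueness.} Let $(\rho_t)$ and $(\tilde\rho_t)$ be two gradient flows starting from $\mu_0$.
\begin{itemize}
\item Write the EVI of $\rho$ with test measure $\tilde\rho_s$, and the EVI of $\tilde\rho$ with test measure $\rho_t$.
\item Consider $\Phi(s,t):=W_2^2(\rho_t,\tilde\rho_s)$ and differentiate along the diagonal $s=t$. Adding the two inequalities, the energy terms cancel, since $\F(\tilde\rho_t)-\F(\rho_t)+\F(\rho_t)-\F(\tilde\rho_t)=0$. This leaves
\[
\frac{\mathrm d}{\mathrm dt}W_2^2(\rho_t,\tilde\rho_t)\le-2\lambda W_2^2(\rho_t,\tilde\rho_t).
\]
\item Gronwall's lemma gives $W_2(\rho_t,\tilde\rho_t)\le e^{-\lambda t}W_2(\rho_0,\tilde\rho_0)=0$, so $\rho=\tilde\rho$.
\end{itemize}

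\textbf{Main obstacle.} The delicate step is justifying the diagonal differentiation. The derivative of $t\mapsto\Phi(t,t)$ must be bounded by the sum of the two partial upper derivatives, and this requires some joint regularity of $\Phi$. I would first show that EVI solutions are locally absolutely continuous (even locally Lipschitz on $(0,\infty)$). This follows by taking $\rho=\rho_s$ in the EVI, which bounds $W_2(\rho_t,\rho_s)$ in terms of $\F(\rho_s)-\inf\F$ along the curve, using again the quadratic lower bound on $\F$. Then $\Phi$ is locally absolutely continuous in each variable, uniformly in the other.

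To avoid pointwise derivatives altogether, I would instead work with the integrated forms, in the spirit of \cite[Lemma~4.3.4]{ambrosio2005gradient}. Take the integrated EVI of each curve over $[t,t+h]$, test them against each other's values, add them, and let $h\to0$ using the continuity of $t\mapsto\F(\rho_t)$ away from a countable set.
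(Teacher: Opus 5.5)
The paper does not prove this statement; it quotes it from \cite{ambrosio2005gradient}. Your outline follows the AGS structure: summed discrete EVI for existence, doubling of variables for uniqueness. The existence half is fine. One aside: you have not justified that $r\mapsto\F(\rho^0_r)$ is nonincreasing, since lower semicontinuity transfers the discrete monotonicity in the wrong direction. You do not need it, though. Lower semicontinuity of $r\mapsto\F(\rho^0_r)$ along the continuous curve already gives $\limsup_{t\downarrow s}\frac1{t-s}\int_s^t-\F(\rho^0_r)\diff r\le-\F(\rho^0_s)$, hence the Dini form of the EVI at every $s$.

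The genuine gap is in the uniqueness half, exactly at the step you flag. Testing the EVI with $\rho=\rho_s$ gives
\[
\tfrac12W_2^2(\rho_t,\rho_s)\le\int_s^t\bigl(\F(\rho_s)-\F(\rho_r)\bigr)\diff r+\tfrac{\lambda_-}{2}\int_s^tW_2^2(\rho_r,\rho_s)\diff r .
\]
The energy gap $\F(\rho_s)-\F(\rho_r)$ is only known to be bounded, not $O(r-s)$. So this yields $W_2(\rho_t,\rho_s)\le C\sqrt{t-s}$, which is H\"older-$\frac12$, neither Lipschitz nor absolutely continuous. With only that control, $t\mapsto W_2^2(\rho_t,\tilde\rho_t)$ need not be absolutely continuous, so an a.e.\ bound on its derivative cannot be integrated into Gronwall. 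The doubling lemma you cite requires the increments of $\zeta(s,t)=\frac12W_2^2(\rho_s,\tilde\rho_t)$ in each variable to be controlled by an absolutely continuous modulus, and H\"older-$\frac12$ control does not provide one. Your fallback (``continuity of $t\mapsto\F(\rho_t)$ away from a countable set'') assumes that $\F$ is monotone along an \emph{arbitrary} gradient flow. That is never established, and as noted above, not even for $\rho^0$.

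The standard fix is to put local absolute continuity into the definition of gradient flow. AGS do this: EVI solutions are locally absolutely continuous curves, with the inequality holding for a.e.\ $t$. The paper's definition tacitly assumes it too, since it differentiates $t\mapsto W_2^2(\rho_t,\rho)$. With this:
\begin{itemize}
\item Locally, $|\zeta(s,t)-\zeta(r,t)|\le C\int_r^s|\dot\rho|$, and symmetrically in $t$, so the lemma applies.
\item Your cancellation then gives $\frac{\diff}{\diff t}W_2^2(\rho_t,\tilde\rho_t)\le-2\lambda W_2^2(\rho_t,\tilde\rho_t)$ a.e.\ for an absolutely continuous function, and Gronwall concludes.
\item The price is that the existence half must also show $\rho^0$ is locally absolutely continuous. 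This follows from $\sum_k W_2^2(\rho_{k+1},\rho_k)/(2\tau)\le\F(\mu_0)-\F(\rho_N)$, which is bounded thanks to the quadratic lower bound on $\F$. Passed to the limit, it gives an $L^2_{\mathrm{loc}}$ bound on the metric speed.
\end{itemize}
Alternatively, you could first prove that $\F$ is nonincreasing along every EVI solution. Then summing your two integrated inequalities over a partition of mesh $h$ bounds the energy terms by the telescoping quantity $h\bigl(\F(\mu_0)-\F(\rho_T)\bigr)\to0$, and mere continuity of the curves suffices.
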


\subsection{Existence of minimizers along the schemes}\label{Welldefined scheme}
\subsubsection{Existence and uniqueness along the JKO scheme}
The well-posedness of the JKO scheme for functionals that are convex along generalized geodesics has been established in \cite{ambrosio2005gradient}. For the sake of completeness, we briefly revisit the arguments, using the framework introduced in the previous section. 

Surprinsingly, in terms of topology, this framework allows us to establish the result for functionals \(\mathcal{F}\) that are only lower semicontinuous with respect to $W_2$. This is weaker than requiring \(\mathcal{F}\) to be lower semicontinuous in the sense of Hypothesis~\ref{hypothesis}, and hence the direct method does not apply straightforwardly. Since in this work, we only deal with measures that are absolutely continuous with respect to the Lebesgue measure, we restrict ourselves to proving the existence of the JKO scheme in this situation.
\begin{theorem}\label{welldefined JKO}
     If $\F$ is $W_2$-l.s.c and $\lambda$-convex along generalized geodesics, if $\tau<\frac{1}{\lambda_-}$, and if there exists $\nu \in \mathcal P_2(\R^d)$ such that $\F(\nu)<+\infty$, then for all $\mu\in\mathcal P_2(\R^d)$ such that $\mu\ll\Leb$ and for all $X\in\mathcal H$ such that $X{_\#}\mathbb P=\mu$, $J_\tau^0(\mu)$ and $E_\tau(X)$ are well defined.
     \end{theorem}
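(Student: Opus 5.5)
The plan is to work entirely at the Hilbertian level of Subsection~\ref{general cnx geo explain} and then transfer the result back with Theorem~\ref{equiv scheme}. Fix $\mu\ll\Leb$ and $X\in\mathcal H$ with $X{_\#}\mathbb P=\mu$. By the reduction following Theorem~\ref{equiv scheme}, the infimum of $F_\tau^X$ over $\mathcal H$ equals its infimum over $C(X)$. Hence it suffices to show that $F_\tau^X$ attains its minimum on $C(X)$. Theorem~\ref{equiv scheme} then turns this minimizer $E_\tau(X)$ into a minimizer $J^0_\tau(\mu)=E_\tau(X){_\#}\mathbb P$ of \eqref{JKO}.

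I would set $G:=F_\tau^X+\iota_{C(X)}$ on $\mathcal H$ and check three properties.
\begin{itemize}
\item[(i)] \emph{$G$ is proper.} Let $\nu$ satisfy $\F(\nu)<+\infty$. By the properties of $C(X)$ there is $Y^X_\nu\in C(X)$ with $Y^X_\nu{_\#}\mathbb P=\nu$. Then $G(Y^X_\nu)=\frac{1}{2\tau}W_2^2(\mu,\nu)+\F(\nu)<+\infty$.
\item[(ii)] \emph{$G$ is strongly l.s.c.} If $Y_n\to Y$ in $L^2$, then $W_2(Y_n{_\#}\mathbb P,Y{_\#}\mathbb P)\le\|Y_n-Y\|_{L^2}\to0$. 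The $W_2$-lower semicontinuity of $\F$ therefore makes $F=\F(\cdot{_\#}\mathbb P)$ l.s.c.\ on $\mathcal H$. The quadratic term is continuous, and $\iota_{C(X)}$ is l.s.c.\ because $C(X)$ is closed.
\item[(iii)] \emph{$G$ is $\kappa$-strongly convex with $\kappa:=\frac{1+\lambda\tau}{\tau}>0$.} This follows from Proposition~\ref{equiv cnx} together with the convexity of $C(X)$ and the assumption $\tau<1/\lambda_-$.
\end{itemize}

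Next I would show that $m:=\inf G>-\infty$. Since $G$ is proper, convex and l.s.c., its epigraph is closed and convex. Hahn--Banach then gives an affine minorant $G(Y)\ge \langle A,Y\rangle+b$. This alone is not enough, so I would strengthen it using strong convexity. The function $G-\frac{\kappa}{2}\|\cdot\|^2$ is still convex, proper and l.s.c., so it also admits an affine minorant. Hence $G(Y)\ge\frac\kappa2\|Y\|^2+\langle A',Y\rangle+b'$, which is bounded below on $\mathcal H$, and $m$ is finite.

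Then I would take a minimizing sequence $(Y_n)$. Applying strong convexity at the midpoint gives
\[
\frac{\kappa}{8}\|Y_n-Y_m\|^2\le \frac{G(Y_n)+G(Y_m)}{2}-G\Big(\frac{Y_n+Y_m}{2}\Big)\le \frac{G(Y_n)+G(Y_m)}{2}-m\longrightarrow0 .
\]
So $(Y_n)$ is Cauchy and converges strongly to some $Y$. This limit lies in $C(X)$ because $C(X)$ is closed. By lower semicontinuity, $G(Y)\le m$, so $Y$ is a minimizer, which we take as $E_\tau(X)$.

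I expect the main obstacle to be the finiteness of $m$. No lower bound on $\F$ is assumed, so it has to come from the affine-minorant argument combined with strong convexity, as above. A secondary point to verify is the properness of $G$. For this I need $Y^X_\nu$ to exist with finite cost, which holds because $\nu$ and $\mu$ are in $\mathcal P_2$ and $\mu\ll\Leb$.
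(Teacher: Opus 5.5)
Your proof is correct, but it reaches existence by a different mechanism than the paper.

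The paper proves existence in three steps:
\begin{itemize}
\item It first shows that $F_\tau^X$ is coercive on $C(X)$ (Proposition~\ref{level set bounded}) by an elementary contradiction argument. Along $Z_n=t_nY_n+(1-t_n)Y_0$ with $t_n=\|Y_0-Y_n\|^{-3/2}$, the $\frac{1+\lambda\tau}{\tau}$-convexity would force $F_\tau^X(Z_n)\to-\infty$ while $Z_n\to Y_0$ strongly, which contradicts lower semicontinuity at $Y_0$.
\item It then extracts a minimizer using weak compactness of bounded sets in $L^2$, together with weak lower semicontinuity of a convex, strongly l.s.c.\ functional (Proposition~\ref{prop:strong_to_weak}, a Mazur-type argument).
\item Finally it invokes strict convexity to get uniqueness.
\end{itemize}

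You stay in the strong topology throughout. Hahn--Banach applied to $G-\frac{\kappa}{2}\|\cdot\|^2$ gives a quadratic minorant of $G$. This yields $\inf G>-\infty$ and, incidentally, the coercivity of Proposition~\ref{level set bounded} at once. The midpoint form of $\kappa$-convexity then makes every minimizing sequence Cauchy.

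Your route replaces weak compactness and the Mazur step by a single affine-minorant argument, and it gives more: every minimizing sequence converges strongly. Uniqueness also comes for free, since your midpoint inequality applied to two minimizers forces them to coincide. You should state this explicitly, because the paper's proof includes uniqueness and Corollary~\ref{welldefined iterative JKO} relies on it.

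Your treatment of properness is also slightly more careful than the paper's. You choose the finite-energy competitor $Y^X_\nu$ inside $C(X)$, whereas the paper's competitor $X_0$ is not said to lie in $C(X)$.
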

     Due to this theorem, the JKO scheme can be defined iteratively.
\begin{corollary}\label{welldefined iterative JKO}
     If $\F$ is $\lambda$-convex along generalized geodesics, $W_2$-lower semicontinuous and $\tau<\frac{1}{\lambda_-}$, then there exists a unique sequence $(J_{k,\tau}^0(\mu))_{k\geq0}$ satisfying the induction relation $J_{k,\tau}^0(\mu)=J_\tau^0(J_{k-1,\tau}^0(\mu))$, for all $k \in \N^*$.
\end{corollary}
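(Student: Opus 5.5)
The argument is a direct induction on $k$, with Theorem~\ref{welldefined JKO} supplying each step. Set $\mu_k := J^0_{k,\tau}(\mu)$. The claim is that each $\mu_k$ exists, is uniquely determined, and is again absolutely continuous. Absolute continuity is exactly what lets Theorem~\ref{welldefined JKO} be applied once more.

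For the base case, $\mu_0=\mu$. Here $\mu$ is taken, as in the context where the scheme is used, with $\mu\ll\Leb$, and indeed with $H(\mu)<+\infty$. We also need some $\nu$ with $\F(\nu)<+\infty$; otherwise $\F\equiv+\infty$ and every point is a minimizer.

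For the inductive step, assume $\mu_{k-1}\ll\Leb$. Pick $X\in\mathcal H$ with $X{_\#}\mathbb P=\mu_{k-1}$, for instance the Brenier map from $\Leb_{[0,1]^d}$ to $\mu_{k-1}$. Theorem~\ref{welldefined JKO} then says that $J^0_\tau(\mu_{k-1})$ exists.

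Uniqueness comes from strong convexity. By Theorem~\ref{equiv scheme}, the minimizers of \eqref{JKO} are the push-forwards of the minimizers of $F^X_\tau$ on $C(X)$. By Proposition~\ref{equiv cnx}, $F^X_\tau$ is $\frac{1+\lambda\tau}{\tau}$-convex on the convex set $C(X)$, and this constant is positive because $\tau<1/\lambda_-$. So $F^X_\tau$ has at most one minimizer on $C(X)$. Concretely, if $Y_1,Y_2$ are two minimizers, strong convexity gives
\[
F^X_\tau\Big(\tfrac{Y_1+Y_2}{2}\Big)\le \min F^X_\tau-\tfrac{1+\lambda\tau}{8\tau}\|Y_1-Y_2\|^2,
\]
which forces $Y_1=Y_2$. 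Hence the minimizing measure $\mu_k$ is unique. The same conclusion also follows from the discrete EVI (Theorem~\ref{discrete E.V.I}): apply it with $\rho$ equal to another minimizer $\rho'$. Using the minimality of $\rho'$, the inequality reduces to $\frac{1+\lambda\tau}{2\tau}W_2^2(\rho',J^0_\tau(\mu))\le 0$, so $\rho'=J^0_\tau(\mu)$.

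The step I expect to be the main obstacle is propagating absolute continuity, i.e.\ showing $\mu_k\ll\Leb$ so the induction can continue. I would not reprove this; I would invoke the result the paper cites, Proposition~\ref{prop:linear increasing of entropy along JKO alpha neq0} of the Appendix. Under Hypothesis~\ref{hypothesis}, it bounds $H(\mu_k)$ by $H(\mu_{k-1})$ plus a term of order $\tau$, so $H(\mu_k)<+\infty$ and therefore $\mu_k\ll\Leb$.

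If one instead wanted a self-contained route, I would first try the flow-interchange argument. Perturb the minimizer $\mu_k$ by the heat flow, $\mu_k*\sigma_s$. The third point of Hypothesis~\ref{hypothesis} controls the change in $\F$ by $Ks/2$. The EVI of the heat flow, i.e.\ the geodesic convexity of $H$, controls the change in $W_2^2$. Comparing with the minimality of $\mu_k$ as $s\to0$ gives $H(\mu_k)\le H(\mu_{k-1})+K\tau$. This is exactly where finiteness of the initial entropy is needed.

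With existence, uniqueness and absolute continuity in hand at every step, the induction closes and the sequence $(J^0_{k,\tau}(\mu))_{k\ge 0}$ is uniquely defined by the recursion $J^0_{k,\tau}(\mu)=J^0_\tau(J^0_{k-1,\tau}(\mu))$.
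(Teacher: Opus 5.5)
Your proposal is correct and is the paper's own argument: you iterate Theorem~\ref{welldefined JKO}, whose proof already gets uniqueness from the strict convexity of $F^X_\tau$ on $C(X)$ as you do, and you keep the iterates absolutely continuous through the bound $H(J^0_{k,\tau}(\mu))\le H(J^0_{k-1,\tau}(\mu))+K\tau$ from Proposition~\ref{prop:linear increasing of entropy along JKO alpha neq0} with $\alpha=0$. As the remark following the corollary concedes, this only covers $\mu$ with $H(\mu)<+\infty$, under the third point of Hypothesis~\ref{hypothesis}, and also with $\F(\mu)<+\infty$, which that proposition formally requires at the first step; the fully general statement is taken from \cite{ambrosio2005gradient}.
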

\begin{remark}
    The proof of Corollary~\ref{welldefined iterative JKO} is done in \cite{ambrosio2005gradient}. Here Theorem~\ref{welldefined JKO} only implies it in the case when $J_{k,\tau}^0(\mu)\ll\Leb$ for all $k \in \N$. This will be verified is the rest of the article, as a consequence of Hypothesis~\ref{hypothesis}, see Proposition~\ref{prop:linear increasing of entropy along JKO alpha neq0}.
\end{remark}
 In order to prove Theorem~\ref{welldefined JKO}, we will need the following preliminary results.
\begin{proposition}
\label{prop:equiv_lsc}
    The functional $\F:\mathcal{P}_2(\R^d)\mapsto\R$ is l.s.c with respect to $W_2$ if and only if $F$ defined in~\eqref{F} is l.s.c with respect to the strong topology of $L^2$.
\end{proposition}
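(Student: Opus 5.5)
We prove both implications, using the identification of $W_2$ with $L^2$ distances from the proof of Theorem~\ref{equiv scheme} together with the pushforward estimate $W_2(X{_\#}\mathbb P, Y{_\#}\mathbb P)\le \|X-Y\|_{L^2}$.

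\emph{($\F$ $W_2$-l.s.c.\ $\Rightarrow$ $F$ strongly l.s.c.)} Let $X_n\to X$ in $L^2(\Omega,\mathbb P;\R^d)$. The pair $(X_n,X){_\#}\mathbb P$ is a coupling of $X_n{_\#}\mathbb P$ and $X{_\#}\mathbb P$, so
\[
W_2(X_n{_\#}\mathbb P, X{_\#}\mathbb P)\le \|X_n-X\|_{L^2}\to 0 .
\]
By $W_2$-lower semicontinuity of $\F$,
\[
F(X)=\F(X{_\#}\mathbb P)\le \liminf_n \F(X_n{_\#}\mathbb P)=\liminf_n F(X_n).
\]

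\emph{($F$ strongly l.s.c.\ $\Rightarrow$ $\F$ $W_2$-l.s.c.)} Let $\rho_n\to\rho$ in $W_2$. We will realize this sequence by random variables converging in $L^2$. Fix $X\in\mathcal H$ with $X{_\#}\mathbb P=\rho$; this exists because $([0,1]^d,\Leb)$ is a standard atomless space. We may first pass to a subsequence realizing $\liminf_n \F(\rho_n)$, so it suffices to find a further subsequence along which $F(X_n)$ has liminf at least $F(X)$.

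\emph{Case $\rho\ll\Leb$.} Take Brenier maps $T_n$ from $\rho$ to $\rho_n$ and set $X_n:=T_n(X)$. Then $X_n{_\#}\mathbb P=\rho_n$, and as in~\eqref{eq:W_link_L2},
\[
\|X_n-X\|_{L^2}=W_2(\rho,\rho_n)\to0 .
\]
Lower semicontinuity of $F$ then gives $\F(\rho)=F(X)\le\liminf_n F(X_n)=\liminf_n \F(\rho_n)$.

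\emph{General case.} This is the main obstacle: an optimal plan from $\rho$ to $\rho_n$ need not be induced by a map, so we cannot simply compose with a transport map. Instead we build each $X_n$ from the plan directly.
\begin{itemize}
\item Let $\gamma_n$ be an optimal plan from $\rho$ to $\rho_n$, and disintegrate it with respect to its first marginal as $\gamma_n=\int \gamma_n^x\,\diff\rho(x)$.
\item Split $\Omega$ as a product of two atomless standard spaces, $\Omega\simeq\Omega_1\times\Omega_2$ (for instance the first coordinate versus the rest, or via a measure isomorphism if $d=1$).
\item Choose $X$ depending only on $\omega_1$, with $X{_\#}\mathbb P=\rho$.
\item Given $X=x$, choose $X_n$ as a measurable function of $(x,\omega_2)$ with law $\gamma_n^x$. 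This is possible by the standard randomization lemma \cite[Lemma~3.22]{kallenberg2002}.
\end{itemize}
Then $(X,X_n){_\#}\mathbb P=\gamma_n$, so $X_n{_\#}\mathbb P=\rho_n$ and $\|X_n-X\|_{L^2}=W_2(\rho,\rho_n)\to0$. We conclude exactly as in the absolutely continuous case.

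Within this paper only absolutely continuous limits are needed, so the first case already suffices for Theorem~\ref{welldefined JKO}. We nonetheless include the randomization argument so that the proposition holds as stated.
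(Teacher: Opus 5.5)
Your proof is correct, and your forward direction is the same as the paper's. For the converse you take a different route.

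The paper argues as follows. $W_2$-convergence means narrow convergence together with convergence of second moments. The Skorokhod representation theorem then gives $X_n\to X$ $\mathbb P$-a.e. with $X_n{_\#}\mathbb P=\rho_n$ and $X{_\#}\mathbb P=\rho$. The moment convergence reads $\|X_n\|_{L^2}\to\|X\|_{L^2}$, and the Brezis--Lieb lemma upgrades a.e. convergence to strong $L^2$ convergence.

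You instead realize an optimal plan as the joint law of $(X,X_n)$, so that $\|X_n-X\|_{L^2}=W_2(\rho,\rho_n)$ holds exactly. This is in the spirit of~\eqref{eq:W_link_L2}, and it needs neither Skorokhod, nor the characterization of $W_2$-convergence, nor Brezis--Lieb. The cost is a case distinction. When $\rho\ll\Leb$ you use Brenier maps. Otherwise you need a product splitting of $\Omega$, disintegration of $\gamma_n$, and the randomization lemma. Your construction is checked correctly: $(X,X_n){_\#}\mathbb P=\gamma_n$ follows from Fubini on $\Omega_1\times\Omega_2$.

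What each buys: the paper's argument treats every limit $\rho$ uniformly with classical tools. Yours yields an isometric coupling at the level of $\mathcal H$, at the price of heavier measure-theoretic machinery in the singular case.

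Two minor remarks.
\begin{itemize}
\item Your preliminary passage to a subsequence realizing the $\liminf$ is unnecessary, since you construct $X_n$ for every $n$.
\item Your closing comment slightly misplaces what is used. Theorem~\ref{welldefined JKO}, through Proposition~\ref{prop:strong_to_weak}, only invokes the direction ``$\F$ $W_2$-l.s.c.\ $\Rightarrow$ $F$ strongly l.s.c.'' So the converse, absolutely continuous or not, is not needed there at all.
\end{itemize}
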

\begin{proof}
    We start by showing that if $\F$ is l.s.c with respect to $W_2$, then $F$ is l.s.c with respect to the strong topology of $L^2$. Let $(X_n)_n\in \mathcal H^\N$ be a sequence strongly converging in $L^2$ to $X$. Then 
    \begin{equation*}
        W_2(X_n{_\#}\mathbb{P},X{_\#}\mathbb{P})\leq\|X_n -X\|_{L^2}\xrightarrow[n\to+\infty]{} 0.
    \end{equation*}
     But $\F$ is l.s.c with respect to $W_2$, so 
     \begin{equation*}
         F(X) = \F(X_\#\mathbb{P}) \leq \liminf\limits_{n\to+\infty}\F(X_n{_\#}\mathbb{P}) = \liminf\limits_{n\to+\infty}F(X_n),
     \end{equation*}
   and $F$ is l.s.c for the strong topology of $L^2$.
    
    Now, we show the inverse implication i.e that if $F$ is l.s.c with respect to the strong topology of $L^2$, then $\F$ is l.s.c with respect to $W_2$. Let $(\rho_n)_n\in\mathcal P_2(\R^d)^\N$ and $\rho\in\mathcal P_2(\R^d)$ be such that $(\rho_n)_n$ is converging to $\rho$ in $W_2$. Then  $(\rho_n)_n$ is converging to $\rho$ for the narrow topology and the second moment converges i.e 
    \begin{equation}
    \label{eq:cv_moments}
    \int |x|^2\mathrm{d}\rho_n\xrightarrow[n\to+\infty]{}\int |x|^2\diff\rho,
    \end{equation}
    see~\cite{santambrogio2015optimal}. By the Skorokhod theorem, there exists a sequence $(X_n)_n$ and a limit point $X$ such that for all $n \in \N$, $X_n{_\#}\mathbb{P}=\rho_n$, $X{_\#}\mathbb{P=\rho}$ and $X_n$ converges almost everywhere to $X$. Moreover, the convergence of the moments~\eqref{eq:cv_moments} can be read as 
    \begin{equation*}
    \|X_n\|_2\xrightarrow[n\to+\infty]{}\|X\|_2.
    \end{equation*}
    Therefore, the Brezis-Lieb lemma implies that $(X_n)$ converges to $X$ in the strong topology of $L^2$. But $F$ is l.s.c with respect to the strong topology of $L^2$, so 
    \begin{equation*}
        \F(\rho) = F(X) \leq \liminf\limits_{n\to+\infty} F(X_n) = \liminf\limits_{n\to+\infty} \F(\rho_n),
    \end{equation*}
  and $\F$ is l.s.c with respect to $W_2$.
\end{proof}
Let $\mu\in\mathcal P_2(\R^d)$ with $\mu\ll\Leb$ and $X\in\mathcal H$ such that $X{_\#}\mathbb P=\mu$. We recall that one step of the JKO scheme $J^0_\tau(\mu)$ is well defined if and only if the functional $F_\tau^X$ defined by~\eqref{Prox map} has a unique minimizer, see Theorem \ref{equiv scheme}. Therefore, the questions of existence and uniqueness of $J^0_\tau(\mu)$ reduce to proving the existence of a unique minimizer of a strictly convex, lower semicontinuous functional on a Hilbert space. First, let us use strict convexity to guarantee coercivity of $F^X_\tau$.

\begin{proposition}\label{level set bounded}
   Let $X$ be such that $X{_\#}\mathbb P\ll\Leb$. If the restriction of $F$ to $C(X)$ is $\lambda$-convex and $\tau<\frac{1}{\lambda_-}$, then the sub-levels of the restriction of $F_\tau^X$ to $C(X)$ are bounded in $C(X)$.
\end{proposition}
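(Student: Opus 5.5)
We prove the statement by showing that $F_\tau^X$ is bounded below by a quadratic in $\|Y\|_{L^2}$ with positive leading coefficient on $C(X)$. The idea is to get an affine lower bound on $F$ along rays in $C(X)$, using $\lambda$-convexity and a fixed reference point in $C(X)$.

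\emph{Choice of reference points.} Take $\nu$ with $\F(\nu)<+\infty$ (assumed in the surrounding context, as in Theorem~\ref{welldefined JKO}). Let $Y_0:=Y^X_\nu\in C(X)$, so that $F(Y_0)<+\infty$. If $F\equiv+\infty$ on $C(X)$, the sub-levels are empty and there is nothing to prove. To control $F$ from below we need a second point. Set $Y_{1/2}:=\tfrac12 Y_0+\tfrac12 Y$, which lies in $C(X)$ by convexity of $C(X)$.

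\emph{Main step: lower bound on $F$.} For $Y\in C(X)$, apply the $\lambda$-convexity of $F$ on $C(X)$ between $Y_0$ and $Y$ at parameter $t\in(0,1)$. This gives
\[
F(Y)\ \ge\ \frac{F(Y_0+t(Y-Y_0)) - (1-t)F(Y_0)}{t} + \frac{\lambda}{2}(1-t)\|Y-Y_0\|^2_{L^2}.
\]
The obstacle is a lower bound on $F$ at the intermediate point $Y_0+t(Y-Y_0)$. The cleanest route is the standard argument of \cite{ambrosio2005gradient}. First, one shows that $F_\sigma^X$ is bounded below on $C(X)$ for some small $\sigma$. Convexity of $F_\sigma^{X}+\tfrac{\lambda_-}{2}\|\cdot\|^2$, which is a convex function on the convex set $C(X)$, being proper and l.s.c. (Proposition~\ref{prop:equiv_lsc}), implies that it admits a continuous affine minorant on $C(X)$ (Hahn–Banach / Fenchel–Moreau, since $C(X)$ is closed). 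Hence there exist $A\in\mathcal H$ and $b\in\R$ such that
\[
F(Y)\ \ge\ \langle A,Y\rangle + b - \frac{\lambda_-}{2}\|Y\|^2_{L^2}\qquad\text{for all }Y\in C(X).
\]
To check this, extend $G:=F+\tfrac{\lambda_-}{2}\|\cdot\|^2$ by $+\infty$ outside $C(X)$. By assumption $G$ is convex, and it is l.s.c. and proper, so it has an affine minorant.

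\emph{Conclusion.} Plugging the minorant into $F_\tau^X$ gives
\[
F_\tau^X(Y)\ \ge\ \frac{\|Y\|^2}{2\tau}-\frac{\langle X,Y\rangle}{\tau}+\frac{\|X\|^2}{2\tau}-\|A\|\|Y\| + b - \frac{\lambda_-}{2}\|Y\|^2 .
\]
The quadratic coefficient is $\frac{1-\lambda_-\tau}{2\tau}>0$, because $\tau<1/\lambda_-$. So $F_\tau^X(Y)\le c$ forces $\|Y\|_{L^2}$ to be bounded, which proves that the sub-levels are bounded.

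The delicate point is the existence of the affine minorant. It requires $G$ to be l.s.c. on the closed convex set $C(X)$; this holds under the $W_2$-lower semicontinuity of $\F$ via Proposition~\ref{prop:equiv_lsc}, which is the standing assumption in Theorem~\ref{welldefined JKO}.
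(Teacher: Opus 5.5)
Your proof is correct, but it takes a different route from the paper's.

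\textbf{The paper's route.} The paper argues by contradiction. Take a sequence $(Y_n)$ in the sub-level $\{F_\tau^X\le M\}$ with $\|Y_n\|\to+\infty$. On $C(X)$, $F_\tau^X$ is $\frac{1+\lambda\tau}{\tau}$-convex; apply this along the segment from $Y_0$ to $Y_n$ at $t_n=\|Y_0-Y_n\|^{-3/2}$. This produces points $Z_n\to Y_0$ strongly with $F_\tau^X(Z_n)\to-\infty$, contradicting strong lower semicontinuity at $Y_0$.

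\textbf{Your route.} You note that $G=F+\frac{\lambda_-}{2}\|\cdot\|^2$, extended by $+\infty$ off $C(X)$, is proper, convex and l.s.c. This uses the closedness of $C(X)$ and Proposition~\ref{prop:equiv_lsc}. You take a continuous affine minorant by Hahn--Banach. You then read off a quadratic lower bound on $F_\tau^X$ with leading coefficient $\frac{1-\lambda_-\tau}{2\tau}>0$.

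\textbf{What each buys.} Both arguments need strong lower semicontinuity of $F$. It is not in the statement, but it is the standing assumption of Theorem~\ref{welldefined JKO}, and you were right to flag it. The paper's argument is more elementary: it needs no separation theorem, and it uses lower semicontinuity only along one sequence converging to a point of $C(X)$, so closedness of $C(X)$ is never invoked. However, it gives no explicit bound. Yours is quantitative. It gives an explicit radius for the sub-levels and a global minorant $F(Y)\ge\langle A,Y\rangle+b-\frac{\lambda_-}{2}\|Y\|^2$ on $C(X)$. This is the same Hahn--Banach mechanism the paper alludes to, without proof, for Proposition~\ref{prop:lower bound on F by the wasserstein distance}.

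\textbf{Clean-up.} Several pieces of your write-up are never used: the point $Y_{1/2}$, the first convexity inequality, and the claim that $F_\sigma^X$ is bounded below for some small $\sigma$. The last one is also not justified. The affine minorant of $G$ follows directly from Hahn--Banach with no prior lower bound, so all three can be removed.
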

\begin{proof}
    Let $M\in\R$. Let us show that $\{Y\in C(X) \mbox{ such that }F_\tau^X(Y)\leq M\}$ is bounded in $L^2$. Assume by contradiction that there exists $(Y_n)_n$ a sequence in this set such that $\|Y_n\| \rightarrow+\infty$. For a given $n \in \N^*$, the convexity inequality given by Proposition \ref{equiv cnx} leads for all $t \in [0,1]$ to:
    \begin{align}
    \notag F_\tau^X(tY_n+(1-t)Y_0)&\leq tF_\tau^X(Y_n)+(1-t)F_\tau^X(Y_0)-\frac{1+\lambda\tau}{2\tau}t(1-t)\|Y_0-Y_n\|^2\\
    \label{eq:convexity F}&\leq M -\frac{1+\lambda\tau}{2\tau}t(1-t)\|Y_0-Y_n\|^2.
    \end{align}
    Let $t_n:=\|Y_0-Y_n\|^{-\frac{3}{2}}$ and $Z_n:=t_nY_n+(1-t_n)Y_0$. First, $\|Y_0-Y_n\|\geq\|Y_n\|-\|Y_0\|\rightarrow+\infty$, and $t_n\to 0$. Therefore, 
    \begin{equation*}
    t_n(1-t_n)\|Y_0-Y_n\|^2=\|Y_0-Y_n\|^{\frac{1}{2}}(1-t_n)\xrightarrow[n\to+\infty]{}+\infty,
    \end{equation*}
    so that plugged in~\eqref{eq:convexity F}, we find
    \[
    F_\tau^X(Z_n)\xrightarrow[n\to+\infty]{}-\infty.
    \]
    Also, as $t_n \to 0$, $(Z_n)$ converges to $Y_0$ in the strong topology of $L^2$. Since $F_\tau^X$ is lower semicontinuous for the strong topology of $L^2$, we find:
\[
F_\tau^X(Y_0)\leq \liminf_{n \to + \infty} F_\tau^X (Z_n)=-\infty.
\]
This is absurd, and the claim follows.
\end{proof}
Finally, convexity allows to deduce weak lower semicontinuity out of strong lower semicontinuity.
\begin{proposition}
\label{prop:strong_to_weak}
    If $\F$ is $W_2$-l.s.c, $\lambda$-convex along generalized geodesics, and if $\tau<\frac{1}{\lambda_-}$, then for all $X\in\mathcal{ H}$ such that $X{_\#}\mathbb P\ll\Leb$, $F_\tau^{X}$ defined by \eqref{Prox map} is l.s.c on $C(X)$ for the weak topology of $L^2$.
\end{proposition}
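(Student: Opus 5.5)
The plan is the classical argument that a convex, strongly lower semicontinuous function on a Hilbert space is weakly lower semicontinuous, via Mazur's lemma. We must be careful, though, because $F_\tau^X$ is only known to be $\frac{1+\lambda\tau}{\tau}$-convex on the convex set $C(X)$, not on all of $\mathcal H$.

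Fix $X$ with $X{_\#}\mathbb P\ll\Leb$, and let $(Y_n)\subset C(X)$ converge weakly in $L^2$ to some $Y$. Since $C(X)$ is convex and strongly closed, it is weakly closed, so $Y\in C(X)$. Set $\ell:=\liminf_n F_\tau^X(Y_n)$; we may assume $\ell<+\infty$, and after extracting a subsequence that $F_\tau^X(Y_n)\to\ell$.

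\textbf{Step 1: convexity on $C(X)$.} By Proposition~\ref{equiv cnx}, $F$ is $\lambda$-convex on $C(X)$. The map $Y\mapsto \frac{\|X-Y\|^2}{2\tau}$ is $\frac1\tau$-convex on all of $\mathcal H$. Hence $F_\tau^X$ is $\frac{1+\lambda\tau}{\tau}$-convex on $C(X)$, and since $\tau<\frac1{\lambda_-}$ this modulus is positive, so in particular $F_\tau^X$ is convex on $C(X)$. Iterating the two-point inequality gives Jensen's inequality for finite convex combinations of points of $C(X)$, which stay in $C(X)$.

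\textbf{Step 2: Mazur's lemma.} Fix $\varepsilon>0$ and $N$ with $F_\tau^X(Y_n)\le \ell+\varepsilon$ for $n\ge N$. The tail $(Y_n)_{n\ge N}$ still converges weakly to $Y$. Mazur's lemma then provides finite convex combinations $Z_k=\sum_{n\ge N} a^k_n Y_n$ converging strongly to $Y$, and each $Z_k$ lies in $C(X)$. By Step 1, $F_\tau^X(Z_k)\le \sum a_n^k F_\tau^X(Y_n)\le \ell+\varepsilon$.

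\textbf{Step 3: strong l.s.c.} By Proposition~\ref{prop:equiv_lsc}, $F$ is strongly l.s.c. on $\mathcal H$, and the quadratic term is continuous, so $F_\tau^X$ is strongly l.s.c. Therefore $F_\tau^X(Y)\le\liminf_k F_\tau^X(Z_k)\le \ell+\varepsilon$. Letting $\varepsilon\to0$ gives the claim.

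\textbf{Main obstacle.} The delicate point is making sure that convexity is only ever applied inside $C(X)$, since $F$ is not convex on $\mathcal H$. This is where convexity of $C(X)$ (so that the Mazur combinations remain admissible) and its strong closedness (so that the weak limit $Y$ is admissible) are essential. A minor additional check is that Jensen with possibly $+\infty$ values is harmless, because all the $Y_n$ used have finite energy.
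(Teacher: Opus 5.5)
Your argument is correct and is essentially the paper's. The paper combines strong l.s.c.\ (Proposition~\ref{prop:equiv_lsc}) with convexity of $F_\tau^X$ on the convex set $C(X)$ (Proposition~\ref{equiv cnx}), then invokes Brezis' Corollary 3.9 (convex and strongly l.s.c.\ implies weakly l.s.c.), whose proof is the same Mazur-type argument you write out; your care in using convexity only inside $C(X)$ amounts to applying that corollary to $F_\tau^X+\iota_{C(X)}$, which is convex on all of $\mathcal H$ and strongly l.s.c.\ because $C(X)$ is closed.
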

\begin{proof}
 By Proposition~\ref{prop:equiv_lsc}, $F^X_\tau$ is l.s.c for the strong topology of $L^2$. But according to the assumptions of this proposition and Proposition~\ref{equiv cnx}, the restriction of $F^X_\tau$ is convex in the convex set $C(X)$. The result follows from \cite[corollary 3.9]{brezis2011functional}
\end{proof}
Now, we have all the requirements to attack the proof of Theorem \ref{welldefined JKO}.
\begin{proof}[Proof of Theorem \ref{welldefined JKO}]
    Let $X \in \mathcal H$. Because of Proposition \ref{equiv scheme}, we just need to show that $E_\tau(X)$ is well defined. By assumption, there exists a competitor $X_0$ such that $F(X_0)<+\infty$, so we can restrict the search for a competitor to the set 
    \begin{equation*}
    \left\{Y\in C(X) \mbox{ such that } \frac{\|X-Y\|^2_2}{2\tau}+F(Y)\leq \frac{\|X-X_0\|^2_2}{2\tau}+F(X_0)\right\}.
    \end{equation*}
    By Proposition \ref{level set bounded}, this set is bounded. Therefore, it is compact for the weak topology of $L^2$. But in virtue of Proposition~\ref{prop:strong_to_weak}, $F_\tau^X$ is l.s.c for the weak topology of $L^2$ on $C(X)$, and hence $F_\tau^X$ admits a minimizer. Moreover the strict convexity of $F_\tau^X$ implies uniqueness of this minimizer. So $E_\tau(X)$ is well defined and so is $J_\tau^0(X{_\#}\mathbb P)$.
\end{proof}

\subsubsection{Existence along the Entropic JKO scheme}

In this paragraph, we show that the entropic JKO scheme has minimizers. However, we will not be able to prove uniqueness in general, since there is no analogue of the discrete E.V.I inequality of Theorem~\ref{discrete E.V.I} in the entropic setting. A list of cases where we are able to prove uniqueness is given in Proposition \ref{uniqueness Ent JKO}. 
\begin{theorem}\label{existence Ent JKO}
    Let $\F$ be $\lambda$-convex along generalized geodesics, lower semicontinuous in the sense of Hypothesis \ref{hypothesis}, $\tau<\frac{1}{\lambda_-}$. We assume that there exists $\nu_0 \in \mathcal P_2(\R^d)$ with $\F(\nu_0)$ and $H(\nu_0)$ finite. Then, for all $\mu \in \mathcal P_2(\R^d)$ with finite entropy there exists a minimizer $J_\tau^\alpha(\mu) \in \mathcal P_2(\R^d)$ in~\eqref{Ent JKO}, and it has finite entropy.
\end{theorem}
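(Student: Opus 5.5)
We argue by the direct method in $\mathcal P_2(\R^d)$ with the narrow topology plus bounded second moments, which is the topology in which $\F$ is l.s.c.\ by Hypothesis~\ref{hypothesis}. Fix $\mu$ with $H(\mu)<+\infty$ and set $G(\rho):=\Sch^{\alpha\tau}(\mu,\rho)/\tau+\F(\rho)$. The competitor $\nu_0$ gives $\inf G<+\infty$: since $H(\mu)$ and $H(\nu_0)$ are finite, $\Sch^{\alpha\tau}(\mu,\nu_0)<+\infty$, and $\F(\nu_0)<+\infty$. The first task is to check that $\inf G>-\infty$ and that minimizing sequences are tight with bounded second moments.

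\textbf{Lower bound on $\F$.} We use $\lambda$-convexity along generalized geodesics. By Theorem~\ref{welldefined JKO}, applied with an absolutely continuous base point such as $\nu_0$ (or $\mu$ if $\F(\mu)$ is infinite, since finite entropy already gives absolute continuity), the step $J^0_\tau(\mu)$ exists for $\tau<1/\lambda_-$. A standard consequence of this existence, or equivalently of the discrete EVI (Theorem~\ref{discrete E.V.I}), is the quadratic lower bound
\[
\F(\rho)\geq -A-\frac{1}{2\tau'}W_2^2(\rho,\mu)
\]
for some $\tau'$ slightly larger than $\tau$ but still below $1/\lambda_-$. Its proof: $\F(\rho)+W_2^2(\rho,\mu)/(2\tau')\geq$ the value of the JKO problem at step $\tau'$, which is finite.

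\textbf{Coercivity.} Proposition~\ref{below bound sch by W-2} gives
\[
\Sch^{\alpha\tau}(\mu,\rho)/\tau\geq \tfrac\alpha2(H(\mu)+H(\rho))+W_2^2(\mu,\rho)/(2\tau).
\]
Combined with the lower bound on $\F$ and with $\tau<\tau'$, this yields
\[
G(\rho)\geq c\,W_2^2(\rho,\mu)+\tfrac\alpha2 H(\rho)-C.
\]
By Proposition~\ref{below bound ent}, $H(\rho)\geq -C-\tfrac d2\ln(1+m_2(\rho))$, which grows only logarithmically in the second moment $m_2(\rho)$. Since $m_2(\rho)\lesssim 1+W_2^2(\rho,\mu)$, the quadratic term dominates. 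Hence along a minimizing sequence $(\rho_n)$ the second moments are bounded, so the sequence is tight. The entropies are bounded above as well, because $H(\rho_n)\le (G(\rho_n)+C-cW_2^2)/(\alpha/2)$ while $W_2^2$ is bounded below by $0$.

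\textbf{Compactness and lower semicontinuity.} Extract a narrowly convergent subsequence $\rho_n\to\rho$. Then $\F(\rho)\le\liminf\F(\rho_n)$ by Hypothesis~\ref{hypothesis}(1). For the Schrödinger term, take the optimal plans $\gamma_n\in\Pi(\mu,\rho_n)$. They are tight, so a subsequence converges narrowly to some $\gamma\in\Pi(\mu,\rho)$. The functional $\gamma\mapsto H(\gamma\|R_{\alpha\tau})$ is narrowly l.s.c. as a relative entropy with respect to the fixed measure $R_{\alpha\tau}$; positivity of the density and the second-moment bound handle the fact that $R_{\alpha\tau}$ is not a probability measure, for instance by rewriting it as the quadratic cost plus entropy with respect to a Gaussian. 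This gives $\Sch^{\alpha\tau}(\mu,\rho)\le\liminf\Sch^{\alpha\tau}(\mu,\rho_n)$, so $\rho$ is a minimizer. Finally $G(\rho)<+\infty$ forces $\Sch^{\alpha\tau}(\mu,\rho)<+\infty$, and therefore $H(\rho)<+\infty$.

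\textbf{Main obstacle.} The delicate step is the lower bound on $\F$. One must check that the JKO value at step $\tau'$ is finite and uniform, which needs $J^0_{\tau'}(\mu)$ to exist with $\tau'<1/\lambda_-$. The lower semicontinuity of the Schrödinger cost under narrow convergence with bounded moments is the second technical point, but it is standard.
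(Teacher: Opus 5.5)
Your proof is correct, but the coercivity step takes a different route from the paper's.

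\textbf{Comparison of the coercivity step.} The paper's argument (Proposition~\ref{sous niv moment borne}) has three steps.
\begin{itemize}
\item It bounds $H(\nu)$ from above using the JKO value at the \emph{same} step $\tau$.
\item It controls second moments by applying the discrete EVI (Theorem~\ref{discrete E.V.I}) to the augmented functional $\F+\frac\alpha2 H$ at its own minimizer $\mathcal J^\alpha_\tau(\mu)$. This needs the convexity of $H$ along generalized geodesics and the existence of that augmented step.
\item It gets the lower entropy bound from Proposition~\ref{below bound ent}.
\end{itemize}
You instead use the strictness of $\tau<1/\lambda_-$. The JKO value $m_{\tau'}$ at a larger step $\tau'<1/\lambda_-$ gives $\F(\rho)\geq m_{\tau'}-W_2^2(\rho,\mu)/(2\tau')$. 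Proposition~\ref{below bound sch by W-2} then leaves a surplus $\bigl(\frac{1}{2\tau}-\frac{1}{2\tau'}\bigr)W_2^2(\rho,\mu)$, which beats the merely logarithmic lower bound on $H$. This gives the moment and entropy bounds at once.

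Your route is more elementary. It uses only the existence of one classical JKO step (Theorem~\ref{welldefined JKO}) and needs neither the EVI nor the convexity of $H$. That theorem should be applied with base point $\mu$ itself, which is absolutely continuous because $H(\mu)<+\infty$. The measure $\nu_0$ only serves as the finite-energy competitor, so your parenthetical about choosing the base point is unnecessary.

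\textbf{A caveat on the lower semicontinuity step.} The paper simply cites \cite{Carlier2017} here, and your sketch needs more care. Since $R_{\alpha\tau}$ has infinite mass, $\gamma\mapsto H(\gamma\|R_{\alpha\tau})$ is not automatically narrowly l.s.c. Rewriting it literally as quadratic cost plus entropy relative to a Gaussian of variance $s$ produces the term $-\frac{\alpha\tau}{2s}\int(|x|^2+|y|^2)\diff\gamma$. This has the wrong sign and is not controlled by bounded second moments alone.

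The fix uses the fact that the first marginal is frozen at $\mu$. One option: for $s>\alpha\tau$, absorb $-\frac{\alpha\tau}{2s}|y|^2$ into $\frac{|x-y|^2}{2}$, up to a multiple of $|x|^2$ whose integral is the constant $\int|x|^2\diff\mu$. A cleaner option: disintegrate to get $H(\gamma\|R_{\alpha\tau})=H(\mu)+H(\gamma\|P_\mu)$. Here $P_\mu$ is the \emph{probability} measure $\mu(\diff x)$ times the Gaussian kernel of variance $\alpha\tau$ centred at $x$, and $H(\cdot\|P_\mu)$ is narrowly l.s.c.\ with no moment assumption.
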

An easy induction shows that:
\begin{corollary}
     If $\F$ is $\lambda$-convex along generalized geodesics, lower semicontinuous in the sense of Hypothesis \ref{hypothesis}, $\tau<\frac{1}{\lambda_-}$ and $\mu \in \mathcal P_2(\R^d)$ has finite entropy and satisfies $\F(\mu)<+\infty$, then there exists a sequence $(J_{k,\tau}^\alpha(\mu))_{k\geq0}$ satisfying the induction relation $J_{k,\tau}^\alpha(\mu)=J_\tau^\alpha(J_{k-1,\tau}^\alpha(\mu))$, for all $k \in \N^*$.
\end{corollary}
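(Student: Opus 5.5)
The statement to prove is the corollary: if $\mu$ has finite entropy and $\F(\mu)<+\infty$, the entropic iterates $(J_{k,\tau}^\alpha(\mu))_{k\geq0}$ exist. The plan is an induction on $k$ using Theorem~\ref{existence Ent JKO} at each step. That theorem needs two inputs: a reference measure $\nu_0$ with $\F(\nu_0)$ and $H(\nu_0)$ both finite, and a current point with finite entropy. It returns a minimizer that again has finite entropy.

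First, I will secure the reference measure once and for all by taking $\nu_0:=\mu$. This is allowed since $\F(\mu)<+\infty$ and $H(\mu)<+\infty$ by assumption, and $\mu\in\mathcal P_2(\R^d)$. The standing hypotheses on $\F$ in the corollary ($\lambda$-convexity along generalized geodesics, lower semicontinuity in the sense of Hypothesis~\ref{hypothesis}, and $\tau<1/\lambda_-$) are exactly those of Theorem~\ref{existence Ent JKO}. They do not depend on the step, so the theorem can be reused at every step with the same $\nu_0$.

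Next comes the induction. Set $J_{0,\tau}^\alpha(\mu):=\mu$, which has finite entropy and lies in $\mathcal P_2(\R^d)$. Suppose $J_{k-1,\tau}^\alpha(\mu)\in\mathcal P_2(\R^d)$ has finite entropy. Theorem~\ref{existence Ent JKO}, applied with starting point $J_{k-1,\tau}^\alpha(\mu)$ and reference $\nu_0=\mu$, gives a minimizer in~\eqref{Ent JKO}. We choose one and call it $J_{k,\tau}^\alpha(\mu)=J_\tau^\alpha(J_{k-1,\tau}^\alpha(\mu))$. By the theorem, this minimizer lies in $\mathcal P_2(\R^d)$ and has finite entropy, which closes the induction. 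Choosing a minimizer at each of countably many steps only needs dependent choice, consistent with the paper's convention that $J_\tau^\alpha$ denotes any minimizer.

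The only point I expect to need care is to avoid requiring $\F(J_{k-1,\tau}^\alpha(\mu))<+\infty$, which is not part of the induction hypothesis. Using the fixed reference $\nu_0=\mu$ sidesteps this entirely. The bound is in any case automatic: comparing with the competitor $\rho=J_{k-1,\tau}^\alpha(\mu)$ shows the minimal value is finite, because $\Sch^{\alpha\tau}(\rho,\rho)$ is finite by the finiteness of entropies. So there is no real obstacle, and no uniqueness claim is needed for the corollary.
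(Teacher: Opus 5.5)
Your proof is correct and is exactly the ``easy induction'' the paper has in mind: apply Theorem~\ref{existence Ent JKO} at every step with the fixed reference measure $\nu_0=\mu$, and use its conclusion that the minimizer lies in $\mathcal P_2(\R^d)$ with finite entropy to propagate the induction hypothesis. Your closing aside is muddled, but it is not needed for the argument. Testing step $k$ with $\rho=J_{k-1,\tau}^\alpha(\mu)$ itself already presupposes $\F(\rho)<+\infty$. The right reason $\F(J_{k-1,\tau}^\alpha(\mu))<+\infty$ holds is that this measure minimizes the previous step. That step's optimal value is at most $\Sch^{\alpha\tau}(J_{k-2,\tau}^\alpha(\mu),\mu)/\tau+\F(\mu)<+\infty$, and $\Sch^{\alpha\tau}$ is bounded below by Proposition~\ref{below bound sch by W-2}.
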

The main ingredient in the proof of Theorem~\ref{existence Ent JKO} is the following proposition.
\begin{proposition}\label{sous niv moment borne}
    If $\F$ is $\lambda$-convex along generalized geodesics and $\tau<\frac{1}{\lambda_-}$, then for every $\mu\in\mathcal{P}_2(\R^d)$ with finite entropy, the sublevels of $\frac{\Sch^{\alpha\tau}(\mu,\cdot)}{\tau}+\F(\cdot)$ have uniformly bounded second moment and entropy.
\end{proposition}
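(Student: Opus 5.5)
The idea is to lower bound $G(\rho):=\frac{\Sch^{\alpha\tau}(\mu,\rho)}{\tau}+\F(\rho)$ by a quantity that is coercive in the second moment of $\rho$, and then to read off the entropy bound from the same inequality. First, Proposition~\ref{below bound sch by W-2} gives
\[
G(\rho)\geq \frac{\alpha}{2}\big(H(\mu)+H(\rho)\big)+\frac{W_2^2(\mu,\rho)}{2\tau}+\F(\rho).
\]
So it is enough to control $\frac{W_2^2(\mu,\rho)}{2\tau}+\F(\rho)$ from below and $H(\rho)$ from below. For the entropy we use Proposition~\ref{below bound ent}, which gives $H(\rho)\geq -C_d-\frac d2\ln M_2(\rho)$, where $M_2(\rho)=\int|x|^2\diff\rho$. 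This lower bound decays only logarithmically in the second moment.

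For the transport part, we use the $\lambda$-convexity of $\F$ along generalized geodesics. Since $H(\mu)<+\infty$, the measure $\mu$ is absolutely continuous, so we may work in the lifted setting with $X$ satisfying $X{_\#}\mathbb P=\mu$ and $\rho=Y{_\#}\mathbb P$, where $Y\in C(X)$ is the Brenier lift. By Proposition~\ref{equiv cnx}, $F_\tau^X$ is $\frac{1+\lambda\tau}{\tau}$-convex on $C(X)$, and $1+\lambda\tau>0$ because $\tau<1/\lambda_-$. A strongly convex lower semicontinuous function bounded below on a ray grows at least quadratically. Concretely, we fix a reference point $Y_0\in C(X)$ with $F(Y_0)<+\infty$, for instance the lift of the JKO step $J^0_\tau(\mu)$, which exists by Theorem~\ref{welldefined JKO}. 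The minimizer $E_\tau(X)$ then exists, and the discrete EVI (Proposition~\ref{prop:e.V.i at level of H}) with $V=Y$ yields
\[
F_\tau^X(Y)\geq F_\tau^X(E_\tau(X))+\frac{1+\lambda\tau}{2\tau}\|Y-E_\tau(X)\|^2_{L^2}.
\]
This is exactly
\[
\frac{W_2^2(\mu,\rho)}{2\tau}+\F(\rho)\geq m+\frac{1+\lambda\tau}{2\tau}W_2^2\big(\rho,J^0_\tau(\mu)\big)-\text{(nothing)},
\]
where $m$ is the finite JKO minimum. It uses $\|Y-E_\tau(X)\|\geq W_2(\rho,J^0_\tau(\mu))$.

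Combining the two bounds, on the sublevel $\{G\leq M\}$ we obtain
\[
M\geq \frac{\alpha}{2}H(\mu)+m+c\,W_2^2\big(\rho,J_\tau^0(\mu)\big)-\frac{\alpha}{2}\Big(C_d+\frac d2\ln M_2(\rho)\Big),
\]
with $c>0$. Since $M_2(\rho)\leq 2W_2^2(\rho,J^0_\tau(\mu))+2M_2(J^0_\tau(\mu))$, the quadratic term dominates the logarithm, which gives a uniform bound on $M_2(\rho)$. Feeding this moment bound back in, and using the same inequality with $H(\rho)$ kept instead of being bounded below, we get $\frac{\alpha}{2}H(\rho)\leq M-\frac{\alpha}{2}H(\mu)-m$. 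This is an upper bound on the entropy. The lower bound on the entropy follows from Proposition~\ref{below bound ent} and the moment bound.

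The main obstacle is the existence of the reference minimizer $J^0_\tau(\mu)$, together with the lift argument. Theorem~\ref{welldefined JKO} requires $W_2$-lower semicontinuity and some $\nu$ with $\F(\nu)<\infty$. We would take the lower semicontinuity from Hypothesis~\ref{hypothesis}(1), which implies $W_2$-lower semicontinuity. If there is no $\nu$ with $\F(\nu)<\infty$, then $G\equiv+\infty$ and the claim is trivial. If one prefers to avoid the EVI, the fallback is to use the convexity inequality directly, as in the proof of Proposition~\ref{level set bounded}. One applies it along the segment between $Y_0$ and $Y$ and uses $F(tY+(1-t)Y_0)\geq\inf$ on a fixed small ball, which is finite by lower semicontinuity. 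This gives an explicit lower bound of order $\|Y-Y_0\|^2$ minus a linear term.
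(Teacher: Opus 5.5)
Your proof is correct. It differs from the paper's in where the entropy of $\rho$ is handled.

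\textbf{The paper's route.} The paper applies the discrete EVI (Theorem~\ref{discrete E.V.I}) not to $\F$ alone but to $\F+\frac{\alpha}{2}H$. The reference point is $\mathcal J^\alpha_\tau(\mu)=\argmin_\rho\{\frac{W_2^2(\mu,\rho)}{2\tau}+\F(\rho)+\frac{\alpha}{2}H(\rho)\}$. Since $\frac{\alpha}{2}H(\nu)$ then sits inside the functional, the inequality $\frac{W_2^2(\mu,\nu)}{2\tau}+\frac{\alpha}{2}H(\nu)\le\frac{\Sch^{\alpha\tau}(\mu,\nu)}{\tau}-\frac{\alpha}{2}H(\mu)$ turns the sublevel condition directly into a bound on $W_2(\nu,\mathcal J^\alpha_\tau(\mu))$. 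No lower control of $H(\nu)$ is needed. The entropy upper bound is obtained exactly as you do it, by comparison with the JKO minimum for $\F$.

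\textbf{Your route.} You apply the EVI to $\F$ alone, with reference point $J^0_\tau(\mu)$. You then pay for the leftover $\frac{\alpha}{2}H(\rho)$, which can tend to $-\infty$ as mass spreads out, with the logarithmic lower bound of Proposition~\ref{below bound ent}. That logarithm is absorbed by the quadratic coercivity coming from the EVI.

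\textbf{What each buys.} Your argument avoids two ingredients the paper needs:
\begin{enumerate}
\item convexity of $H$ along generalized geodesics;
\item existence of $\mathcal J^\alpha_\tau(\mu)$, which requires a second application of Theorem~\ref{welldefined JKO}, this time to $\F+\frac{\alpha}{2}H$.
\end{enumerate}
You only use the single JKO step for $\F$, which is needed anyway for the entropy upper bound. In exchange, the paper obtains a linear, fully explicit bound on $W_2^2(\nu,\mathcal J^\alpha_\tau(\mu))$. Your moment bound is instead defined implicitly through a quadratic-versus-logarithm inequality.

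Both proofs silently use $W_2$-lower semicontinuity of $\F$ to invoke Theorem~\ref{welldefined JKO}, and this is not among the statement's hypotheses. You were right to flag it.
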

\begin{proof}
    Let us consider $\mu \in \mathcal P_2(\R^d)$ with $H(\mu)$ and $\F(\mu)$ finite, and let $M\in\R$. Let us show that the set
    \begin{equation*}
        \left\{\nu\in\mathcal P_2(\R^d) \mbox{ such that }\frac{\Sch^{\alpha\tau}(\mu,\nu)}{\tau}+\F(\nu)\leq M\right\}
    \end{equation*} 
    has uniformly bounded second moment and entropy. This will be done in three steps. First, we derive a bound from below for the entropy, then a bound for the second moment, and finally we deduce an upper bound for the entropy using Proposition~\ref{below bound ent}. So let us consider $\nu$ in the sublevel above. Proposition \ref{below bound sch by W-2}, implies:
    \begin{equation}
    \frac{\Sch^{\alpha\tau}(\mu,\nu)}{\tau}+\F(\nu)\geq \frac{W_2^2(\mu,\nu)}{2\tau}+\alpha\frac{H(\mu)+H(\nu)}{2}+\F(\nu).\label{eq:cout Ent JKO > cout JKO + ent}
    \end{equation}
    As $\F$ is $\lambda$-convex along generalized geodesics and $\tau<\frac{1}{\lambda_-}$, by Theorem \ref{welldefined JKO}, the minimizer $J^0_\tau(\mu)$ of the classical JKO scheme~\eqref{JKO} is well defined and:
    \[
    M\geq\frac{\Sch^{\alpha\tau}(\mu,\nu)}{\tau}+\F(\nu)\geq \min_\rho\left\{\frac{W_2^2(\mu,\rho)}{2\tau}+\F(\rho)\right\}+\alpha\frac{H(\nu)}{2}+\alpha\frac{H(\mu)}{2},
    \]
    which provides the following uniform upper bound for the entropy:
    \[
    H(\nu)\leq \frac{2}{\alpha}\left[M-\min_\rho\left\{\frac{W_2^2(\mu,\rho)}{2\tau}+\F(\rho)\right\}\right] - H(\mu).
    \]
   
     Now, let us derive a uniform bound for the second moment of $\nu$. As this second moment equals the distance $W_2^2(\delta_0,\nu)$ to $\delta_0$, the Dirac mass at $0$, by the triangle inequality, we just have to show a uniform bound for the distance from $\nu$ to some measure in $\mathcal P_2(\R^d)$, independent of $\nu$. We will bound the distance from $\nu$ to
    \[
    \mathcal{J}_\tau^\alpha(\mu):=\argmin_\rho \frac{W_2^2(\mu,\rho)}{2\tau}+\alpha\frac{H(\rho)}{2}+\F(\rho).
    \]
    This measure is well defined in virtue of Theorem~\ref{welldefined JKO}, because $H$ is convex along generalized geodesics (Proposition~\ref{classic functional light ver}), and so $\F + \frac{\alpha}{2} H$ is $\lambda$-convex along generalized geodesics. Also, we can apply the discrete E.V.I of Theorem \ref{discrete E.V.I} to $\F + \frac{\alpha}{2} H$ and obtain:
    \[(1+\lambda\tau)\frac{W_2^2(\mathcal{J}_\tau^\alpha(\mu),\nu)}{2\tau}\leq \frac{W_2^2(\mu,\nu)}{2\tau}+\F(\nu)+\frac{\alpha}{2}H(\nu)-\left(\frac{W_2^2(\mu,\mathcal{J}_\tau^\alpha(\mu))}{2\tau}+\F(\mathcal{J}_\tau^\alpha(\mu))+\frac{\alpha}{2}H(\mathcal{J}_\tau^\alpha(\mu))\right).
    \]
    Using the bound~\eqref{eq:cout Ent JKO > cout JKO + ent}, we obtain:
    \begin{align*}
     (1+\lambda\tau)\frac{W_2^2(\mathcal{J}_\tau^\alpha(\mu),\nu)}{2\tau}&\leq \frac{\Sch(\mu,\nu)}{\tau}+\F(\nu)-\frac{\alpha}{2}H(\mu)-\left(\frac{W_2^2(\mu,\mathcal{J}_\tau^\alpha(\mu))}{2\tau}+\F(\mathcal{J}_\tau^\alpha(\mu))+\frac{\alpha}{2}H(\mathcal{J}_\tau^\alpha(\mu))\right)\\
     &\leq M-\frac{\alpha}{2}H(\mu)-\left(\frac{W_2^2(\mu,\mathcal{J}_\tau^\alpha(\mu))}{2\tau}+\F(\mathcal{J}_\tau^\alpha(\mu))+\frac{\alpha}{2}H(\mathcal{J}_\tau^\alpha(\mu))\right),
    \end{align*}
    and the claim follows.
    
Finally, a uniform bound for the entropy is directly obtained using Proposition~\ref{below bound ent}.
    \end{proof}
Now, we can start the demonstration of Theorem \ref{existence Ent JKO}.
\begin{proof}[Proof of Theorem \ref{existence Ent JKO}]
Let $\nu_0 \in \mathcal P_2(\R^d)$ be such that both $\F(\nu_0)$ and $H(\nu_0)$ are finite. Then, as explained below Definition~\ref{def:sch cost}, $\frac{\Sch^{\alpha\tau}(\mu,\nu_0)}{\tau}+\F(\nu_0)<+\infty$, so we can consider a minimizing sequence $(\rho_n)_n$ such that for all $n \in \N$,
\begin{equation*}
\frac{\Sch^{\alpha\tau}(\mu,\rho_n)}{\tau}+\F(\rho_n)\leq\frac{\Sch^{\alpha\tau}(\mu,\nu_0)}{\tau}+\F(\nu_0).
\end{equation*}
Then, by Proposition \ref{sous niv moment borne}, the second moment and the entropy of the sequence $(\rho_n)_n$ are uniformly bounded. In particular, the sequence is tight, so we can extract a subsequence converging narrowly to some $\rho \in \mathcal{P}(\R^d)$. Because of the uniform bound of the second moments of $(\rho_n)$, and since $\F$ is l.s.c in sense of Hypothesis~\ref{hypothesis}, we have
\begin{equation*}
    \F(\rho)\leq\liminf_{n\to +\infty}\F(\rho_n).
\end{equation*}
Because of the uniform bounds on the second moments and entropy of $(\rho_n)$, and by the lower semicontinuity of $\Sch$ stated in~\cite[Lemma~2.4]{Carlier2017}, we also have 
\begin{equation*}
    \frac{\Sch^{\alpha\tau}(\mu,\rho)}{\tau}\leq\liminf\limits_{n\to +\infty}\frac{\Sch^{\alpha\tau}(\mu,\rho_n)}{\tau}.
\end{equation*}
All in all
\begin{equation*}
\frac{\Sch^{\alpha\tau}(\mu,\rho)}{\tau}+\F(\rho)\leq\liminf\limits_{n\to +\infty}\frac{\Sch^{\alpha\tau}(\mu,\rho_n)}{\tau}+\F(\rho_n),
\end{equation*}
and so $\rho$ is a minimizer.
\end{proof}
\subsection{Cases of uniqueness in the entropic JKO scheme}
\label{subsec:uniqueness}
Once again, uniqueness is not as straightforward as in the classical case. We can only prove it in the following cases:
\begin{proposition}\label{uniqueness Ent JKO}
   Let us assume that $\F$ is of the form \(\F(\rho)=\int V\rho+\frac{1}{2}\int W*\rho\rho+\int f(\rho)\) and that one of the next statements holds true: 
    \begin{itemize}
        \item \(\widehat{W}\geq 0\) and \(f\) is convex,  where $\widehat{W}$ is the Fourier transform of $W$;
        \item $V$ and $W$ are $\lambda$-convex and {$f=0$}. 
    \end{itemize}
   Then, there is at most one minimizer in~\eqref{Ent JKO}.
\end{proposition}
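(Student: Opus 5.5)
The plan is to show that the minimized functional $\Phi(\rho):=\frac{\Sch^{\alpha\tau}(\mu,\rho)}{\tau}+\F(\rho)$, or a lift of it to couplings, is strictly convex along a well-chosen interpolation between two minimizers. Strict convexity then forces the two minimizers to coincide. I would use a different interpolation in each case. In the first case the interpolation is the linear (mixture) one. In the second case it is a "conditional displacement" interpolation at the level of couplings. In both cases, assume $\rho_0\neq\rho_1$ are two minimizers. They have finite entropy by Theorem~\ref{existence Ent JKO}, and finite $\Phi$. Let $\gamma_0,\gamma_1\in\Pi(\mu,\cdot)$ be the corresponding unique optimal Schrödinger plans.

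\emph{First case ($\widehat W\ge 0$, $f$ convex): linear interpolation.} Set $\rho_t=(1-t)\rho_0+t\rho_1$. The mixture $(1-t)\gamma_0+t\gamma_1$ lies in $\Pi(\mu,\rho_t)$. Since $P\mapsto H(P\|R_{\alpha\tau})$ is strictly convex and $\gamma_0\neq\gamma_1$ (their second marginals differ), we get $\Sch^{\alpha\tau}(\mu,\rho_t)<(1-t)\Sch^{\alpha\tau}(\mu,\rho_0)+t\Sch^{\alpha\tau}(\mu,\rho_1)$. On the functional side, $\rho\mapsto\int V\rho$ is affine and $\rho\mapsto\int f(\rho)$ is convex because $f$ is convex. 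For the interaction term, the defect of convexity equals $\frac{t(1-t)}{2}\int (W*\eta)\eta$ with $\eta=\rho_1-\rho_0$. By Plancherel/Bochner this is $\frac{t(1-t)}{2}\int\widehat W|\widehat\eta|^2\ge0$. Justifying this formula is routine: one can mollify $\eta$, or use that $\eta$ has zero mass and finite second moments. Hence $\Phi(\rho_{1/2})<\min\Phi$, which is a contradiction.

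\emph{Second case ($V,W$ $\lambda$-convex, $f=0$): conditional displacement interpolation.} Linear convexity fails here; for instance $W=|x|^2$ makes the interaction term linearly concave. So I would lift to couplings with first marginal $\mu$. Write $\Phi(\rho)=\min_{\gamma\in\Pi(\mu,\rho)}G(\gamma)$, where
\[
G(\gamma)=\frac1\tau\int\frac{|x-y|^2}{2}\diff\gamma+\alpha H(\gamma)+\int V(y)\diff\gamma+\frac12\iint W(y-y')\diff\gamma(x,y)\diff\gamma(x',y')+\mathrm{const}.
\]
Disintegrate $\gamma_i=\mu\otimes k_i^x$. Each $k_i^x$ is absolutely continuous for $\mu$-a.e.\ $x$, since $H(\gamma_i)<\infty$ and Proposition~\ref{prop:addivity of entropy} give $H(\gamma_i)=H(\mu)+\int H(k_i^x)\diff\mu(x)$. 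Let $T^x$ be the Brenier map from $k_0^x$ to $k_1^x$, chosen measurably in $x$. Set $T_t^x=(1-t)\mathrm{Id}+tT^x$ and $\gamma_t=\mu\otimes (T^x_t{}_{\#}k_0^x)$, which still has first marginal $\mu$.

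Along this curve I would check each term in turn.
\begin{itemize}
\item The entropy $\int H(T^x_t{}_\#k_0^x)\diff\mu$ is convex, by McCann's displacement convexity of $H$ applied for each $x$.
\item The quadratic cost $\frac1\tau\int|x-T_t^x(y)|^2/2$ is $\frac1\tau$-convex in $t$, with modulus $\int\!\!\int|T^x(y)-y|^2\diff k_0^x\diff\mu=:D^2$.
\item The potential term $\int V(T^x_t(y))$ is $\lambda$-convex with the same modulus $D^2$.
\item For the interaction term, $T_t^x(y)-T_t^{x'}(y')$ is affine in $t$, so $t\mapsto W(\cdot)$ is $\lambda$-convex pointwise. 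Its modulus is $|(T^x(y)-y)-(T^{x'}(y')-y')|^2$, whose double integral is at most $2D^2$. This bound matters only when $\lambda<0$; more carefully, the cross term integrates to $|\int(T-\mathrm{Id})|^2\ge0$ with the right sign, so it gives at least $\lambda D^2$ in the worst case, up to a factor of $1$ or $2$.
\end{itemize}
Altogether $t\mapsto G(\gamma_t)$ is $(\frac1\tau+c\lambda)$-convex with modulus $D^2$. For $\tau$ small enough, namely $\tau<\frac1{\lambda_-}$ or $\frac1{2\lambda_-}$ depending on the constant, this convexity is strict whenever $D>0$. If $D=0$, then $\gamma_0=\gamma_1$ and so $\rho_0=\rho_1$. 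Otherwise $\Phi(\pi_2{}_\#\gamma_{1/2})\le G(\gamma_{1/2})<\min\Phi$, which is a contradiction.

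\emph{Main obstacle.} I expect the delicate step to be the bookkeeping of the interaction term when $\lambda<0$: I need its modulus controlled by $D^2$, with a constant good enough for the available restriction on $\tau$. Measurable selection of $x\mapsto T^x$ is the other technical point, and I would handle it via the measurable dependence of optimal plans on their marginals.
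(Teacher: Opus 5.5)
Your proposal is correct and follows the paper's proof. In the first case you use the linear interpolation with the mixed Schrödinger plan as competitor, Plancherel with $\widehat W\ge 0$ for the interaction term, and strict convexity of the entropy. In the second case you use the same conditional displacement interpolation $\Pi=\mu\otimes\gamma^x$ (your fiberwise Brenier maps $T^x$), with fiberwise McCann convexity of $H$ via additivity and the exact $\frac1\tau$-convexity of the quadratic cost. The one point where you go beyond the paper is the explicit restriction $\tau<\frac{1}{2\lambda_-}$ when $\lambda<0$. That restriction is genuinely needed for this strict-convexity argument to close, whereas the paper glosses over it by simply asserting that $\F$ is convex along the interpolation.
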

\begin{proof}
    In the first case, uniqueness is a direct consequence of the strict convexity of $\frac{\Sch^{\alpha\tau}}{\tau}(\mu,\cdot)+\F(\cdot)$ along interpolations of the form $t\mapsto t\nu_1+(1-t)\nu_0$. Let us prove separately the convexity and strict convexity of $\F$ and $\Sch^{\alpha \tau}(\mu, \cdot)$ respectively along these interpolations in the two following lemmas.
   
    \begin{lemma}\label{Lem: cnv F hor}
        Let $\mathcal{F}$ be of the form $ \F(\rho) = \int V\diff\rho +\int W*\rho\diff\rho+\int f(\rho(x))\diff x$, with $f$ convex and $\widehat{W}\geq0$. Let $\nu_0,\nu_1\in\mathcal{P}_2(\R^d)$, such that $\F(\nu_1)<+\infty$ and $\F(\nu_2)<+\infty$, then for all $t\in[0,1]$, 
        \begin{equation*}
        \F(t\nu_1+(1-t)\nu_0)\leq t\F(\nu_1)+(1-t)\F(\nu_0).
        \end{equation*}
    \end{lemma}
     \begin{lemma}\label{Lem: cnv Sch hor}
        Let $\mu\in\mathcal{P}_2(\R^d)$ be fixed. Consider $\nu_0,\nu_1\in\mathcal{P}_2(\R^d)$ such that $H(\nu_0)<+\infty$, $H(\nu_1)<+\infty$ and $\nu_0\neq\nu_1$. Then for all $t\in(0,1)$, $\Sch(\mu,t\nu_1+(1-t)\nu_0)< t\Sch(\mu,\nu_1)+(1-t)\Sch(\mu,\nu_0)$.
    \end{lemma}
    \begin{proof}[Proof of Lemma \ref{Lem: cnv F hor}]
    Recall that there are three parts in $\F$, as for all $t \in [0,1]$,
        \[
        \F(t\nu_1+(1-t)\nu_0)=\int V \diff(t\nu_1+(1-t)\nu_0)+\int W*(t\nu_1+(1-t)\nu_0)\diff(t\nu_1+(1-t)\nu_0)+\int f(t\nu_1+(1-t)\nu_0).
        \]
        The part concerning $V$ can be managed as follows
        \begin{equation}\label{eq: cnv V part hor}
        \int V \diff(t\nu_1+(1-t)\nu_0)=t\int V\diff \nu_1+(1-t)\int V\diff\nu_0.
        \end{equation}
        For the part involving $W$, given $\rho \in \mathcal P_2(\R^d)$, let us start by rewriting the functional using the Plancherel identity:
        \begin{equation}\label{eq: formule W fourier}
            \int W*\rho\diff \rho=\int\widehat{W*\rho} \Bar{\widehat{\rho}}=\int\widehat{W}|\widehat\rho|^2.
        \end{equation}
        Hence, for all $t \in [0,1]$,
        \[
        \int W*(t\nu_1+(1-t)\nu_0)\diff(t\nu_1+(1-t)\nu_0)=\int \widehat{W}|t\widehat\nu_1+(1-t)\widehat\nu_0|^2.
        \]
        As $|t\widehat\nu_1+(1-t)\widehat\nu_0|^2=t|\widehat\nu_1|^2+(1-t)|\widehat\nu_0|^2-t(1-t)|\widehat\nu_1-\widehat\nu_0|^2$, we get
        \[
        \int W*(t\nu_1+(1-t)\nu_0)\diff(t\nu_1+(1-t)\nu_0)=\int \left(t\widehat{W}|\widehat\nu_1|^2+(1-t) \widehat{W}|\widehat\nu_0|^2-t(1-t)\widehat{W}|\widehat\nu_1-\widehat\nu_0|^2\right).
        \]
       But $\widehat{W}\geq0$ and so $-t(1-t)\widehat{W}|\widehat\nu_1-\widehat\nu_0|^2\leq0$, so that
        \[
        \int W*(t\nu_1+(1-t)\nu_0)\diff(t\nu_1+(1-t)\nu_0)\leq\int \left(t\widehat{W}|\widehat\nu_1|^2+(1-t) \widehat{W}|\widehat\nu_0|^2\right).
        \]
        Finally, using equation~\eqref{eq: formule W fourier} backwards, we obtain:
        \begin{equation}\label{eq: cnv W part hor}
            \int W*(t\nu_1+(1-t)\nu_0)\diff(t\nu_1+(1-t)\nu_0)\leq t\int W*\nu_1\diff\nu_1+(1-t)\int W*\nu_0\diff\nu_0.
        \end{equation}
        Since $\F(\nu_0)<+\infty$ and $\F(\nu_1)<+\infty$, either $f=0$ and there nothing more to show or $\nu_0,\nu_1$ have densities $\nu_0(x),\nu_1(x)$. Then, by convexity of $f$,
        \[
        f(t\nu_1(x)+(1-t)\nu_0(x))\leq t f(\nu_1(x))+(1-t) f(\nu_0(x)),
        \]
        and hence
        \begin{equation}\label{eq: cnv f part hor}
        \int f(t\nu_1(x)+(1-t)\nu_0(x))\diff x\leq t\int f(\nu_1(x))\diff x+(1-t) \int f(\nu_0(x))\diff x.
        \end{equation}
        Adding equation~\eqref{eq: cnv V part hor}, equation~\eqref{eq: cnv W part hor} and equation~\eqref{eq: cnv f part hor}, we obtain the Lemma \ref{Lem: cnv F hor}.
    \end{proof}
   Let us proceed to the proof of the second lemma.
    \begin{proof}[Proof of Lemma \ref{Lem: cnv Sch hor}]
        By Definition \ref{def:sch cost} the Schrodinger cost can be expressed as:
        \[\Sch^{\alpha\tau}(\mu,\nu_i)=\min_{\gamma\in\Pi(\mu,\nu_i)} \left\{ \int \frac{|x-y|^2}{2}\diff\gamma(x,y)+\alpha \tau H(\gamma)+\frac{\alpha\tau d}{2} \ln(2\pi\alpha\tau)\right\}, \quad i = 0,1. \]
        Let $\gamma_i$, $i=0,1$ realize this minimum, and $t \in (0,1)$. Then, choosing $\gamma_t=t\gamma_1+(1-t)\gamma_0$ as a competitor for the Schrödinger cost between $\mu$ and $t\nu_1+(1-t)\nu_0$, we obtain:
        \begin{equation}\label{eq: cnv comp sch hor}
            \Sch(\mu,t\nu_1+(1-t)\nu_0)\leq  \int \frac{|x-y|^2}{2}\diff\gamma_t(x,y)+\alpha \tau H(\gamma_t)+\frac{\alpha\tau d}{2} \ln(2\pi\alpha\tau).
        \end{equation}
        The first term verifies:
        \begin{equation}\label{eq: cnv cout hor}
        \int \frac{|x-y|^2}{2}\diff\gamma_1(x,y)=t\int \frac{|x-y|^2}{2}\diff\gamma_1(x,y)+(1-t)\int \frac{|x-y|^2}{2}\diff\gamma_0(x,y),
        \end{equation}
        and since the function $h:s\mapsto s\ln(s)$ is strictly convex, then for every $(x,y) \in (\R^d)^2$ such that $\gamma_0(x,y)\neq\gamma_1(x,y)$, we obtain:
        \begin{equation}\label{eq: str cnv xlnx}
        \gamma_t(x,y)\ln(\gamma_t(x,y))<t\gamma_1(x,y)\ln(\gamma_1(x,y))+(1-t)\gamma_0(x,y)\ln(\gamma_0(x,y)),
        \end{equation}
        while for every $(x,y)$ such that $\gamma_0(x,y)=\gamma_1(x,y)$, we have:
        \[
        \gamma_t(x,y)\ln(\gamma_t(x,y))\leq t\gamma_1(x,y)\ln(\gamma_1(x,y))+(1-t)\gamma_0(x,y)\ln(\gamma_0(x,y)).
        \]
        If we assume by contradiction that $\gamma_0(x,y)=\gamma_1(x,y)$ almost everywhere, then $\pi_2{_\#}\gamma_0=\pi_2{_\#}\gamma_1$, so $\nu_0=\nu_1$ and we have excluded this case. Hence, there is a non negligible set such that the previous inequality~\eqref{eq: str cnv xlnx} holds, and then, by integration,
         \begin{equation}\label{eq: cnv H hor}
        H(\gamma_t)=\int\ln(\gamma_t)\diff\gamma_t< t\int \ln(\gamma_1)\diff\gamma_1+(1-t)\int\ln(\gamma_0)\diff\gamma_0
        = t H(\gamma_1)+(1-t) H(\gamma_0).
        \end{equation}
        The result follows from plugging \eqref{eq: cnv cout hor} and \eqref{eq: cnv H hor} into \eqref{eq: cnv comp sch hor}.
    \end{proof}
    Uniqueness in the second case is also a consequence of the convexity of $\frac{\Sch^{\alpha\tau}}{\tau}(\mu,\cdot)+\F(\cdot)$ along a well-chosen interpolation. We construct the interpolation as follows; let $\nu_0$ and $\nu_1$ be two measures, consider the Schrodinger plans $\gamma_0,\gamma_1$  from $\mu$ to $\nu_0$ and from $\mu$ to $\nu_1$ respectively. Now, let us disintegrate our plans with respect to their first marginal, in order to get two collections of measures defined for $\mu$-almost every $x$, $(\nu_0^x)_x$ and $(\nu_1^x)_x$. For a fixed $x$ such that $\nu_0^x$ and $\nu_1^x$ are defined, let us consider the optimal transport plan between these two measures, and call it $\gamma^x$. The interpolation between $\nu_0$ and $\nu_1$ that we will consider is defined for all $t \in [0,1]$ by $\nu_t:=\left(t\pi_3+(1-t)\pi_2\right){_\#}\Pi$, where $\Pi$ is a the three plan  of marginals $\mu$, $\nu_0$ and $\nu_1$ defined by $\Pi=\mu\otimes\gamma^x$, that is, the unique measure such that for all $\varphi\in C_c(\R^{3d})$, 
    \begin{equation*}
    \int \varphi(x,y,z)\diff \Pi(x,y,z)=\int\varphi(x,y,z)\diff\gamma^x(y,z)\diff\mu(x).
    \end{equation*}
    Since $f=0$, $\F$ is convex along this interpolation. This is a consequence of~\cite[Proposition 9.3.2, Proposition 9.3.5]{ambrosio2005gradient} whose proof we have reproduced to prove the second point of Proposition~\ref{Prop:proof classic verify} for the part of the functional concerning $W$. Thus, it is enough to show that $t \mapsto\frac{\Sch^{\alpha\tau}(\mu,\nu_t)}{\tau}$ is strictly convex. This is the purpose of the following lemma, which easily concludes the proof. 
    \end{proof}
    \begin{lemma} With the notations of the proof of Proposition~\ref{uniqueness Ent JKO}, for all $t \in [0,1]$, we have:
        \[
        \Sch^{\alpha\tau}(\mu,\nu_t)\leq t\Sch^{\alpha\tau}(\mu,\nu_0)+(1-t)\Sch^{\alpha\tau}(\mu,\nu_1)-t(1-t)\int\frac{|x-y|^2}{2}\diff (\pi_2,\pi_3){_\#}\Pi.
        \]
    \end{lemma}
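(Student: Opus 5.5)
The plan is to build an explicit competitor coupling between $\mu$ and $\nu_t$ out of the two Schr\"odinger plans and the fibrewise optimal plans $\gamma^x$. The transport part is then an exact Hilbertian identity, and the entropy part comes from McCann's displacement convexity of the Boltzmann entropy, applied fibre by fibre. Throughout, $\nu_t=(t\pi_3+(1-t)\pi_2){_\#}\Pi$, so the weight $1-t$ goes with $\nu_0$ (second variable) and the weight $t$ with $\nu_1$ (third variable). I read the lemma's coefficients with this matching; for $\Sch^{\alpha\tau}(\mu,\nu_0)$ and $\Sch^{\alpha\tau}(\mu,\nu_1)$ it is the same statement after $t\leftrightarrow 1-t$.

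\emph{Step 1 (competitor).} Set $\gamma_t:=(\pi_1,(1-t)\pi_2+t\pi_3){_\#}\Pi$. Its first marginal is $\mu$ and its second is $\nu_t$, so $\gamma_t\in\Pi(\mu,\nu_t)$. By Definition~\ref{def:sch cost}, $\Sch^{\alpha\tau}(\mu,\nu_t)$ is bounded by the cost of $\gamma_t$. For the quadratic part I use the pointwise identity
\[
\bigl|x-((1-t)y+tz)\bigr|^2=(1-t)|x-y|^2+t|x-z|^2-t(1-t)|y-z|^2 .
\]
Integrating it against $\Pi$, and using $(\pi_1,\pi_2){_\#}\Pi=\gamma_0$ and $(\pi_1,\pi_3){_\#}\Pi=\gamma_1$, gives exactly the convex combination of the transport costs of $\gamma_0,\gamma_1$ minus $t(1-t)\int\frac{|y-z|^2}{2}\diff(\pi_2,\pi_3){_\#}\Pi$. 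The constant $\frac{\alpha\tau d}{2}\ln(2\pi\alpha\tau)$ splits trivially.

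\emph{Step 2 (entropy).} It remains to show $H(\gamma_t)\le(1-t)H(\gamma_0)+tH(\gamma_1)$. I disintegrate with respect to $\pi_1$:
\[
H(\gamma)=H(\mu)+\int H(\gamma^{(x)})\diff\mu(x)
\]
for any $\gamma$ with first marginal $\mu$ and finite entropy, where $\gamma^{(x)}$ is the conditional law and all entropies are Boltzmann entropies. This follows from Proposition~\ref{prop:addivity of entropy}, applied with $T=\pi_1$ and a Gaussian reference $R=g\otimes g$ on $\R^{2d}$, after adding back the finite second-moment terms. For $\gamma_t$ the conditional at $x$ is
\[
\nu_t^x:=((1-t)\pi_1+t\pi_2){_\#}\gamma^x ,
\]
the Wasserstein geodesic between $\nu_0^x$ and $\nu_1^x$, because $\gamma^x$ is optimal. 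Since $H(\gamma_0)$ and $H(\gamma_1)$ are finite, $H(\nu_0^x)$ and $H(\nu_1^x)$ are finite for $\mu$-a.e.\ $x$. McCann's displacement convexity of $H$ (the case $f(s)=s\ln s$, covered by Proposition~\ref{classic functional light ver}) then gives
\[
H(\nu_t^x)\le(1-t)H(\nu_0^x)+tH(\nu_1^x).
\]
Integrating in $\mu$ and adding $H(\mu)$ yields the claim. Multiplying by $\alpha\tau$ and summing with Step~1 concludes.

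\emph{Main obstacle.} The delicate points are measure-theoretic rather than computational. First, $x\mapsto\gamma^x$ must be chosen measurably so that $\Pi=\mu\otimes\gamma^x$ is well defined; I would invoke the standard measurable selection of optimal plans (e.g.\ Villani, \emph{Optimal transport: old and new}, Cor.~5.22). Second, the additivity formula must be justified for the Lebesgue reference, which is not a probability measure. I would handle this through the Gaussian reference as above, checking integrability via the finite second moments and Proposition~\ref{below bound ent}, and also checking that $x\mapsto H(\nu_i^x)$ has integrable negative part so that the integrated inequality makes sense.
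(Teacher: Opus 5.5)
Your proposal is correct and follows the paper's proof essentially step by step: the same competitor $(\pi_1,(1-t)\pi_2+t\pi_3){_\#}\Pi$, the same pointwise identity for the quadratic cost, and the same fibrewise argument (additivity of entropy with respect to $\pi_1$, then McCann's displacement convexity along the geodesics $((1-t)\pi_1+t\pi_2){_\#}\gamma^x$). Your weights ($1-t$ on $\nu_0$, $t$ on $\nu_1$) are also what the paper's computation actually yields, since the displayed statement has the two coefficients swapped; your extra care about a measurable choice of $\gamma^x$ and about justifying additivity for the Lebesgue reference via a Gaussian reference fills in details the paper does not address.
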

    \begin{proof}
        Let $t \in [0,1]$. Using $(\pi_1,t\pi_3+(1-t)\pi_2){_\#}\Pi$ as competitor in Definition~\ref{def:sch cost} of the Schr\"odinger cost, we find
\begin{multline}
\label{eq: maj sch lat}
\Sch^{\alpha\tau}(\mu,\nu_t)\leq  \int \frac{|x-y|^2}{2}\,\diff(\pi_1,t\pi_3+(1-t)\pi_2){_\#}\Pi(x,y)\\
+\alpha \tau\,H((\pi_1,t\pi_3+(1-t)\pi_2){_\#}\Pi)+\frac{\alpha\tau d}{2} \ln(2\pi\alpha\tau).
\end{multline}
Concerning the distance part, we have
        \begin{align}
                 \notag \int &\frac{|x-y|^2}{2}\diff(\pi_1,t\pi_3+(1-t)\pi_2){_\#}\Pi(x,y)=\int \frac{|x-(tz+(1-t)y)|^2}{2}\diff \Pi(x,y,z)\\
                 \notag &=t\int \frac{|x-z|^2}{2}\diff\Pi(x,y,z)+(1-t)\int \frac{|x-y|^2}{2}\diff\Pi(x,y,z)-t(1-t)\int \frac{|z-y|^2}{2}\diff\Pi(x,y,z)\\
                 &=t\int \frac{|x-z|^2}{2}\diff\gamma_1(x,z)+(1-t)\int \frac{|x-y|^2}{2}\diff\gamma_0(x,y)-t(1-t)\int \frac{|z-y|^2}{2}\diff(\pi_2,\pi_3){_\#}\Pi(y,z).  \label{eq: cnv cout lat}
        \end{align}
        Now let us take care about the entropy term. First, let us remind that $\Pi=\mu\otimes\gamma^x$ where $\gamma^x$ is the optimal transport plan between $\nu_0^x$ and $\nu_1^x$, so using Proposition~\ref{prop:addivity of entropy}, we find:
        \[
        H((\pi_1,t\pi_3+(1-t)\pi_2){_\#}\Pi)=H(\pi_1{_\#}\Pi)+\int H((t\pi_3+(1-t)\pi_2){_\#}\gamma^x)\diff\mu(x).
        \]
        But since for $\mu$-almost all $x$, $\gamma^x$ is an optimal transort plan, $t\mapsto(t\pi_3+(1-t)\pi_2){_\#}\gamma^x$ is a Wasserstein geodesic. As the entropy is convex along Wasserstein geodesics (for example, it verifies the McCann criterion, see Proposition \ref{classic functional light ver}), we have
        \[
        H((t\pi_3+(1-t)\pi_2){_\#}\gamma^x)\leq t H(\pi_3{_\#}\gamma^x)+(1-t)H(\pi_2{_\#}\gamma^x)=tH(\nu_1^x)+(1-t)H(\nu_0^x).
        \]
        Hence,
        \[
        H((\pi_1,t\pi_3+(1-t)\pi_2){_\#}\Pi)\leq t\left(H(\mu)+\int H(\nu_1^x))\diff\mu(x)\right)+(1-t)\left(H(\mu)+\int H(\nu_0^x)\diff\mu(x)\right).
        \]
        Using once again the additivity of the entropy, we end up with:
        \begin{equation}\label{eq: cnv H plan lat}
        H((\pi_1,t\pi_3+(1-t)\pi_2){_\#}\Pi)\leq tH(\gamma_1)+(1-t)H(\gamma_0).
        \end{equation}
       The result follows from gathering formulas~\eqref{eq: maj sch lat}, \eqref{eq: cnv cout lat} and \eqref{eq: cnv H plan lat}.
    \end{proof}

\subsection{Validity of Hypothesis \ref{hypothesis} in usual cases}\label{proof classic verify}
Hypothesis~\ref{hypothesis} covers a large variety of functionals~$\F$ that are commonly used in the literature. In particular, the following proposition holds. The reader can find a lighter version of this proposition in Proposition~\ref{classic functional light ver}.
\begin{proposition}\label{Prop:proof classic verify}
Let $\mathcal F$ be of the form~\eqref{eq:explicit_F}, that is,
\begin{equation*}
\mathcal F : \rho \in \mathcal P_2(\R^d) \longmapsto \int_{\mathbb{R}^d} V(x)\, \rho(x)\diff x 
+ \frac{1}{2} \int_{\mathbb{R}^d} (W * \rho)(x)\, \rho(x)\diff x 
+ \int_{\mathbb{R}^d} f(\rho(x))\diff x,
\end{equation*}
for some functions $V,W\in C^0(\R^d,\R)$ and $f\in C^0(\R_+,\R)$, where $\mathcal F$ is set to $+\infty$ if $\rho$ is not absolutely continuous with respect to the Lebesgue measure.
\begin{enumerate}
    \item  Let us assume that:
    \begin{itemize}
     \item  $\frac{V_-(x)}{|x|^2}\xrightarrow[|x|\rightarrow +\infty]{}0,$
     \item $\frac{W_-(x)}{|x|^2}\xrightarrow[|x|\rightarrow +\infty]{}0,$
     \item $f$ is convex and superlinear and there exist $q>\frac{d}{d+2}$ and two positive constants $c_1,c_2$ such that $f(0)=0$ and for all $ s \in [0,+\infty) \quad f_{-}(s) \leq c_{1}s + c_{2}s^{q} $.
 \end{itemize}
 Then $\F$ is well defined and l.s.c in the sense of Hypothesis \ref{hypothesis}. In other terms, it verifies the first point of Hypothesis~\ref{hypothesis}.
    \item Let $\F$ satisfy (1), and let us further assume that:
    \begin{itemize}
        \item $V$ is $\lambda_1$-convex,
        \item $W$ is symmetric and $\lambda_2$-convex, with $\lambda_2\leq0$,
        \item $s \mapsto s^df(s^{-d})$ is convex and non increasing on $(0,+\infty)$.
    \end{itemize}
    Then $\mathcal{F}$ is $\lambda$-convex along generalized geodesics for $\lambda=\lambda_1 + \lambda_2$. In other terms, it verifies the second point of Hypothesis~\ref{hypothesis}.
    \item Let $\F$ satisfy (1), and let us further assume:
     \begin{itemize}
        \item $V$ is $\lambda_1$-convex and $\Delta V \leq K_1$ in the distributional sense,
        \item W is $\lambda_2$-convex, symmetric and $\Delta W\leq K_2$ in the distributional sense,
        \item $f$ is convex.
    \end{itemize}
    Then for all $\mu \in \mathcal P_2(\R^d)$ and $t \geq 0$, $\F(\mu*\sigma_{t})-\F(\mu)\leq K\frac{t}{2}$ with $K:=K_1+K_2$. In other terms, it verifies the third point of Hypothesis~\ref{hypothesis}.
\end{enumerate}
In particular, if $\F$ verifies the point $(1),(2)$ and $(3)$ right above, then $\mathcal{F}$ satisfies Hypothesis \ref{hypothesis} for $\lambda:=\lambda_1+ \lambda_2$ and $K:=K_1+K_2$.
\end{proposition}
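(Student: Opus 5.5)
The plan is to prove the three points separately, each by a fairly classical argument.

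\emph{Point (1).} Well-definedness and lower semicontinuity along sequences converging narrowly with bounded second moments. For the potential terms, write $V=V_+-V_-$. The map $\rho\mapsto\int V_+\diff\rho$ is narrowly l.s.c.\ because $V_+$ is continuous and nonnegative; it is the supremum of the integrals of bounded continuous truncations. For $V_-$, the growth condition $V_-(x)=o(|x|^2)$ together with the uniform bound on second moments gives uniform integrability, so $\int V_-\diff\rho_n\to\int V_-\diff\rho$. The interaction term is handled the same way on $\R^{2d}$ with the product measures $\rho_n\otimes\rho_n$. These converge narrowly with bounded second moments, and $W_-(x-y)=o(|x-y|^2)$.

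For the internal energy, split $f=f_+-f_-$. The part $\int f_+(\rho)$ is convex and superlinear, hence l.s.c.\ under narrow convergence: this is the standard Ambrosio--Gigli--Savaré result on integral functionals of the density with a superlinear convex integrand. The part $\int f_-(\rho)\le c_1+c_2\int\rho^q$ is controlled by the classical estimate
\[
\int\rho^q\le C\Bigl(1+\int|x|^2\diff\rho\Bigr)^{\theta}, \qquad \theta<1,
\]
valid for $q>d/(d+2)$. This estimate follows from Hölder's inequality with the weight $(1+|x|^2)^{-s}$, and the condition $q>d/(d+2)$ is exactly what makes that weight integrable. The same estimate provides tightness of $\rho_n^q$ at infinity, so $\int\rho_n^q\to\int\rho^q$ along the sequence, or at least the negative part behaves well. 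This point is the main technical obstacle. I would first try to establish \emph{continuity} of $\rho\mapsto\int f_-(\rho)$ via Vitali-type arguments. If that fails, I would fall back on citing \cite[Remark 9.3.8]{ambrosio2005gradient}.

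\emph{Point (2).} Convexity along generalized geodesics. Fix $\Pi$ with base $\mu$. By Proposition~\ref{equiv cnx} it is enough to treat absolutely continuous base points, and then work at the level of $\mathcal H$ with $Y,Z\in C(X)$.

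1. The potential energy is $F(Y)=\mathbb E[V(Y)]$. Since $V$ is $\lambda_1$-convex pointwise, $F$ is $\lambda_1$-convex in $L^2$.
2. The interaction energy is $\tfrac12\mathbb E[W(Y-Y')]$, where $Y'$ is an independent copy. Pointwise $\lambda_2$-convexity of $W$ gives $\lambda_2$-convexity with respect to $\mathbb E|(Z-Z')-(Y-Y')|^2=2\|Z-Y\|^2-2|\mathbb E(Z-Y)|^2$. Since $\lambda_2\le0$, this is bounded by the term with $2\|Z-Y\|^2$, which produces the constant $\lambda_2$ after the factor $\tfrac12$.
3. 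The internal energy uses McCann's argument. Write $Y=\nabla\phi_1(X)$ and $Z=\nabla\phi_2(X)$, and push everything to the base measure. Then
\[
\int f(\rho_t)=\int f\!\left(\frac{\mu}{\det(tD^2\phi_2+(1-t)D^2\phi_1)}\right)\det(\cdot)\,\diff x .
\]
Now use concavity of $\det^{1/d}$ on symmetric positive matrices together with the fact that $s\mapsto s^df(s^{-d})$ is convex and nonincreasing. For this step I would invoke the standard AGS proof, Proposition 9.3.9 of \cite{ambrosio2005gradient}.

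\emph{Point (3).} Heat flow bound. Set $\mu_t=\mu*\sigma_t$.

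1. The internal energy satisfies $\int f(\mu_t)\le\int f(\mu)$ by Jensen's inequality, since convolution with a probability kernel is an averaging operator and $f$ is convex.
2. For the potential energy,
\[
\int V\diff\mu_t=\int (V*\sigma_t)\diff\mu, \qquad V*\sigma_t-V=\int_0^t\tfrac12\Delta V*\sigma_s\,\diff s\le \tfrac{K_1t}{2}.
\]
The last inequality uses $\Delta V\le K_1$ in the distributional sense; convolving with a nonnegative kernel keeps the sign. The convexity assumption ensures the growth control needed to justify these computations.
3. The interaction energy follows the same pattern. One computes $\int W*\mu_t\diff\mu_t=\int (W*\sigma_{2t})*\mu\diff\mu$, which gives an increase of at most $\tfrac12\cdot\tfrac{K_2\cdot 2t}{2}=\tfrac{K_2t}{2}$.

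Summing the three contributions gives $K=K_1+K_2$.
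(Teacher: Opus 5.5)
Your plan follows the paper's proof piece by piece and is essentially sound. However, the route you propose first for the internal energy in point (1) would fail, and the reduction you invoke in point (2) is misattributed.

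\noindent\textbf{Point (1), the term $\int f_-(\rho)$.} Tightness of $(\rho_n^q)$ at infinity does not give $\int\rho_n^q\to\int\rho^q$. Likewise, $\rho\mapsto\int f_-(\rho)\,\diff x$ is neither continuous nor even upper semicontinuous along narrowly converging sequences with bounded second moments. The reasons are that narrow convergence gives no local strong convergence of densities, and $f_-$ is not concave.

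Here is a counterexample. Take $d=1$ and $f(s)=s\ln s$, which satisfies all assumptions of (1). Let $\rho=2\cdot\mathbf 1_{[0,1/2]}$, and let $\rho_n$ be supported in $[0,1/2]$, oscillating at scale $1/n$ between the values $1/e$ and $4-1/e$ on sets of equal measure. Then $\rho_n\to\rho$ narrowly with bounded second moments, yet $\int f_-(\rho_n)=\frac1{4e}$ for every $n$ while $\int f_-(\rho)=0$.

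So no Vitali-type argument can close this step: decomposing the integrand as $f=f_+-f_-$ is the wrong move. You have to split space instead:
\begin{itemize}
\item On each ball, $\rho\mapsto\int_{B_R}f(\rho)\,\diff x$ is narrowly l.s.c.\ by convexity of $f$ itself. Write it as the supremum over $\varphi\in C_c(B_R)$ of $\int\varphi\,\diff\rho-\int_{B_R}f^*(\varphi)\,\diff x$, where $f^*$ is finite by superlinearity; this also returns $+\infty$ when $\rho$ has a singular part in $B_R$.
\item Your H\"older estimate gives $\sup_n\int_{|x|>R}f_-(\rho_n)\,\diff x\to0$ as $R\to\infty$, which controls the tails.
\end{itemize}
Combining the two and letting $R\to\infty$ yields the l.s.c.

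Your fallback, citing Remark~9.3.8 of~\cite{ambrosio2005gradient}, is exactly what the paper does: it obtains $W_{2-\varepsilon}$-l.s.c., which implies l.s.c.\ in the sense of Hypothesis~\ref{hypothesis}. With that fallback the point is established. Your treatment of $V_\pm$ by uniform integrability, and of $W_\pm$ via $\rho_n\otimes\rho_n\to\rho\otimes\rho$ on $\R^{2d}$, is correct and shorter than the paper's cutoff and Ascoli arguments.

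\noindent\textbf{Point (2).} Proposition~\ref{equiv cnx} does not reduce arbitrary base points to absolutely continuous ones; it is an equivalence stated only among absolutely continuous base points.
\begin{itemize}
\item For $V$ and $W$ no reduction is needed. Your pointwise computations, including the subtraction of $|\mathbb E(Z-Y)|^2$ when $\lambda_2\le0$, match the paper's and hold for any three-plan $\Pi$.
\item For the internal energy the restriction is genuinely needed. With base point $\delta_0$, every coupling of $\nu$ and $\rho$ is admissible. Interpolating the uniform measure on $[0,1]$ with itself along $y\mapsto1-y$ gives $\delta_{1/2}$ at $t=1/2$, where the energy is $+\infty$.
\end{itemize}
So what you obtain from Proposition~9.3.9 of~\cite{ambrosio2005gradient}, as the paper does, is convexity for absolutely continuous base points. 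That is all that is used afterwards.

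\noindent\textbf{Point (3).} This is correct, by a slightly different mechanism than the paper's.
\begin{itemize}
\item For $V$, you put the heat semigroup on the potential: $\int V\,\diff\mu_t=\int V*\sigma_t\,\diff\mu$, with $V*\sigma_t\le V+K_1t/2$ pointwise. You then only need the quadratic growth from Remark~\ref{rem:C11} to justify Fubini. The paper instead keeps the semigroup on the measure. It differentiates $\int\bar V\,\diff\rho_t$ for compactly supported $\bar V$ and removes a cutoff $V\chi_R$ using growth bounds and Markov's inequality.
\item For $W$, your identity $\iint W(x-y)\,\diff\mu_t(x)\,\diff\mu_t(y)=\iint (W*\sigma_{2t})(x-y)\,\diff\mu(x)\,\diff\mu(y)$ reduces directly to the potential case at time $2t$. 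The paper instead differentiates the quadratic form in time and applies the $V$-argument to $W*\rho_t$.
\end{itemize}
Both routes give $K=K_1+K_2$, and yours avoids any differentiation in $t$. The Jensen argument for $f$ is the same in both.
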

\begin{remark}
\label{rem:C11}
    If $V$ is $\lambda_1$-convex and $\Delta V \leq K_1$, then $V$ has to be $C^1(\R^d,\R)$ with globally Lipchitz derivatives. The same holds for $W$.
\end{remark}

We will do the proof of each point separately.
\begin{proof}[Proof of Proposition~\ref{Prop:proof classic verify} point $(1)$]
    The proof is based on the fact that each functional: $\rho\mapsto\int V\diff\rho$, $\rho\mapsto\frac{1}{2}\int W*\rho\diff\rho$ and $\rho\mapsto\int f(\rho)$ are lower semicontinuous in sense of Hypothesis~\ref{hypothesis}. For the part $\rho\mapsto\int f(\rho)$, taking $\varepsilon$ small enough such that $q>\frac{d}{d+(2-\varepsilon)}$ then following \cite[Remark 9.3.8]{ambrosio2005gradient}, we obtain that $\rho\mapsto\int f(\rho)$ is $W_{2-\varepsilon}$- l.s.c which is stronger than the lower semicontinuity in the sense of Hypothesis~\ref{hypothesis}. Here, we will only do the proof for the functional associated to $V$ and $W$ in the two next lemmas for which the semicontinuity in the sense of Hypothesis~\ref{hypothesis} is not standard.
    \end{proof}
    \begin{lemma}\label{lower semicontinuity V}
        If $\frac{V_-(x)}{|x|^2}\xrightarrow[|x|\to+\infty]{}0$,
        then $\rho\mapsto\int V\diff\rho$ is l.s.c in the sense of Hypothesis \ref{hypothesis}.
    \end{lemma}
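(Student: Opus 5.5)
We must show: if $\rho_n\to\rho$ narrowly with $\sup_n\int|x|^2\diff\rho_n<+\infty$, then $\int V\diff\rho\le\liminf_n\int V\diff\rho_n$. The plan is to split $V=V_+-V_-$ and treat the two parts separately, using narrow convergence for the nonnegative part and the second moment bound to control the negative part. Throughout, $V$ is continuous, as in Proposition~\ref{Prop:proof classic verify}, and we set $M:=\sup_n\int|x|^2\diff\rho_n$.

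\emph{Step 1: the positive part.} Since $V_+$ is continuous, nonnegative and hence lower semicontinuous and bounded below, the standard characterization of narrow convergence (test against $\min(V_+,k)\in C_b$ and let $k\to+\infty$ by monotone convergence) gives
\[
\int V_+\diff\rho\le\liminf_{n\to+\infty}\int V_+\diff\rho_n .
\]

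\emph{Step 2: the negative part.} Here we show $\int V_-\diff\rho_n\to\int V_-\diff\rho$, using uniform integrability. Fix $\varepsilon>0$. The hypothesis $V_-(x)/|x|^2\to0$ provides $R>0$ with $V_-(x)\le\varepsilon|x|^2$ for $|x|\ge R$. Take a continuous cutoff $\chi_R$ with $0\le\chi_R\le1$, equal to $1$ on $B_R$ and to $0$ outside $B_{2R}$.
\begin{itemize}
\item The function $V_-\chi_R$ is continuous and bounded, so $\int V_-\chi_R\diff\rho_n\to\int V_-\chi_R\diff\rho$.
\item The remainder satisfies $\int V_-(1-\chi_R)\diff\rho_n\le\varepsilon\int|x|^2\diff\rho_n\le\varepsilon M$.
\item The second moment is lower semicontinuous under narrow convergence, so $\int|x|^2\diff\rho\le M$, and hence $\int V_-(1-\chi_R)\diff\rho\le\varepsilon M$ as well.
\end{itemize}
Therefore $\limsup_n\bigl|\int V_-\diff\rho_n-\int V_-\diff\rho\bigr|\le2\varepsilon M$. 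Since $\varepsilon$ is arbitrary, the negative parts converge, and in particular all the integrals $\int V_-\diff\rho_n$ and $\int V_-\diff\rho$ are finite.

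\emph{Step 3: conclusion.} Because the negative parts are finite, the decomposition $\int V\diff\rho_n=\int V_+\diff\rho_n-\int V_-\diff\rho_n$ is well defined. Combining the liminf inequality of Step 1 with the convergence of Step 2 yields $\int V\diff\rho\le\liminf_n\int V\diff\rho_n$.

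The only delicate point is Step 2. Narrow convergence alone does not control the unbounded function $V_-$, and the subquadratic growth assumption on $V_-$ is exactly what, together with the uniform second moment bound, provides the needed tightness at infinity.
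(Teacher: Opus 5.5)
Your proof is correct and follows essentially the paper's argument: split $V=V_+-V_-$, handle $V_+$ by truncation and monotone convergence, and control $V_-$ with a cutoff $\chi_R$, the uniform second-moment bound, and the subquadratic growth of $V_-$. The only difference is that you prove full convergence of $\int V_-\diff\rho_n$, which needs lower semicontinuity of the second moment to bound the tail of the limit $\rho$. The paper only proves $\limsup_n\int V_-\diff\rho_n\le\int V_-\diff\rho$, which is all that is needed and follows directly from $\int V_-\chi_R\diff\rho\le\int V_-\diff\rho$.
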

    \begin{proof}
        Consider $(\rho_n)_n\in\mathcal{P}_2(\R^d)$ such that $(\rho_n)_n$ has uniformly bounded second moment and converges for the narrow topology to $\rho$. We introduce $S:=\sup_n \int|x|^2\diff\rho_n(x)$. Let us quickly treat the positive part which is well known (see \cite{ambrosio2005gradient} for instance), and does not require any assumption on the moments of $(\rho_n)_n$. Let $M\in\R_+$, and $V_+^M: x \mapsto \max\{V_+(x),M\}$. We have 
        \begin{equation*}
            \liminf_{n \to + \infty} \int V_+ \diff \rho_n \geq \lim_{n \to + \infty} \int V_+^M \diff \rho_n = \int V_+^M \diff \rho,
        \end{equation*}
        where the second equality follows from the fact that $V_+^M$ is continuous and bounded. We get the desired lower semicontinuity by letting $M$ tend to $+ \infty$ and using the monotone convergence theorem.        
        
        Now, we have to prove that
        \begin{equation*}
            \limsup_{n \to + \infty} \int V_- \diff \rho_n \leq \int V_- \diff \rho.
        \end{equation*}
        Let $\chi_R \in C_c(\R^d)$ be a function taking values in $[0,1]$, uniformly equal to $1$ on the ball of center $0$ and radius $R$, and of support inside the ball of center $0$ and radius $R+1$. For all $R \in \R_+^*$ and $n \in \N$,
        \begin{align*}
            \int V_- \diff \rho_n(x) &= \int V_- \chi_R \diff \rho_n + \int  V_- (1 - \chi_R) \diff \rho_n \\
            & \leq \int V_-  \chi_R \diff \rho_n + \int |x|^2 \diff \rho_n(x) \sup_{|x| \geq R} \frac{V_-(x)}{|x|^2}.
        \end{align*}
        In the last line, the first term converges because $V_- \chi_R$ is continuous with compact support, and its limit is smaller or equal to $\int V_- \diff \rho$. By assumption, the second term converges to $0$ uniformly in $n$ as $R \to + \infty$. The result follows easily.
        \end{proof}

\begin{lemma}
    If $\frac{W_-(x)}{|x|^2}\xrightarrow[|x|\to+\infty]{}0$ then $\rho\mapsto\int W*\rho\diff\rho$ is l.s.c in the sense of Hypothesis \ref{hypothesis}.
\end{lemma}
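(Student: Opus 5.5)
We mimic the proof of Lemma~\ref{lower semicontinuity V}, splitting $W = W_+ - W_-$ and working on the product space. Let $(\rho_n)_n$ have second moments bounded by $S:=\sup_n\int|x|^2\diff\rho_n$ and converge narrowly to $\rho$. We use the identity
\[
\int W*\rho_n\diff\rho_n=\int_{\R^d\times\R^d} W(x-y)\diff(\rho_n\otimes\rho_n)(x,y).
\]
The first observation is that $\rho_n\otimes\rho_n$ converges narrowly to $\rho\otimes\rho$ on $\R^{2d}$. Convergence of integrals of tensor products $\varphi(x)\psi(y)$ with $\varphi,\psi\in C_b$ is immediate. Tightness of $(\rho_n\otimes\rho_n)$ follows from tightness of $(\rho_n)$. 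By Stone--Weierstrass on compacts, these two facts give narrow convergence. Moreover, the second moments of $\rho_n\otimes\rho_n$ are bounded by $2S$.

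For the positive part, we truncate exactly as before. The function $(x,y)\mapsto\min\{W_+(x-y),M\}$ is continuous and bounded, so
\[
\liminf_n\int W_+(x-y)\diff\rho_n\otimes\rho_n\ \ge\ \int\min\{W_+(x-y),M\}\diff\rho\otimes\rho .
\]
Letting $M\to+\infty$ and applying monotone convergence gives the lower semicontinuity of the $W_+$ part. Note that the paper's definition with $\max$ should be read as a truncation by $\min$.

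For the negative part, we show $\limsup_n\int W_-(x-y)\diff\rho_n\otimes\rho_n\le\int W_-(x-y)\diff\rho\otimes\rho$. Take a cutoff $\chi_R\in C_c(\R^{2d})$ equal to $1$ on the ball of radius $R$. The compactly supported piece $\int W_-(x-y)\chi_R\diff\rho_n\otimes\rho_n$ converges to its counterpart for $\rho\otimes\rho$, which is at most $\int W_-(x-y)\diff\rho\otimes\rho$.

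The tail piece is the only real obstacle: on $\{|(x,y)|\ge R\}$, the argument $|x-y|$ need not be large, so the smallness assumption on $W_-$ does not apply directly. We control it with the pointwise bound: for every $\varepsilon>0$ there is $C_\varepsilon$ with
\[
W_-(z)\le C_\varepsilon+\varepsilon|z|^2 \quad\text{for all } z,
\]
which follows from the hypothesis and the continuity of $W$. Since $|x-y|^2\le 2|x|^2+2|y|^2$, the tail is then bounded by
\[
C_\varepsilon\,(\rho_n\otimes\rho_n)\big(\{|(x,y)|\ge R\}\big)+4\varepsilon S .
\]
The first term tends to $0$ as $R\to\infty$ uniformly in $n$, by Chebyshev's inequality, because it is at most $2C_\varepsilon S/R^2$.

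To conclude, take the limsup in $n$, then let $R\to\infty$, then let $\varepsilon\to0$. This gives the upper semicontinuity of the $W_-$ part, and combined with the $W_+$ part, the lower semicontinuity of $\rho\mapsto\int W*\rho\diff\rho$.
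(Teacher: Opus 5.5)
Your proof is correct, but it passes to the limit differently from the paper.

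\textbf{The paper's route.} The paper stays in convolution form and truncates in the difference variable: it sets $W_\pm^R=W_\pm\chi^R$ with $\chi^R$ a cutoff on $\R^d$. It then proves $\int W_\pm^R*\rho_n\diff\rho_n\to\int W_\pm^R*\rho\diff\rho$ by an Ascoli argument. The functions $W_\pm^R*\rho_n$ are uniformly equicontinuous, so they converge locally uniformly, and tightness upgrades this to uniform convergence; this is then paired with the narrow convergence of $\rho_n$. Because the cutoff is in $z=x-y$, the remainder lives exactly on $\{|x-y|>R\}$. There $W_-(x-y)\le\varepsilon|x-y|^2$ applies directly and gives the bound $4\varepsilon S$.

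\textbf{Your route.} You lift everything to $\R^{2d}$ and use the narrow convergence of $\rho_n\otimes\rho_n$. Both the positive part (level truncation by $\min\{W_+,M\}$, which also fixes the $\max$ typo you spotted) and the truncated negative part then become tests against bounded continuous functions, with no equicontinuity argument needed. The price is that your cutoff in $(x,y)$ does not force $|x-y|$ to be large. That is why you need the global bound $W_-\le C_\varepsilon+\varepsilon|z|^2$ together with Chebyshev on the product measure.

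Once product convergence is established, you could also truncate in $z$ as the paper does. The function $W_-(x-y)\chi^R(x-y)$ is bounded and continuous on $\R^{2d}$, so this would let you skip the $C_\varepsilon$ step.

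\textbf{One point to make explicit.} To subtract the negative part in the limit, you need $\int W_-(x-y)\diff\rho\otimes\rho<+\infty$. This follows from the same bound $C_\varepsilon+\varepsilon|z|^2$, since $\rho$ has second moment at most $S$ by lower semicontinuity of the second moment under narrow convergence.
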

\begin{proof}
    Consider $(\rho_n)_n\in\mathcal{P}_2(\R^d)$ such that $(\rho_n)_n$ have uniformly bounded second moment and converges for the narrow topology to $\rho$. As in the previous Lemma \ref{lower semicontinuity V}, the proof for the positive part is already known in the literature, see for instance \cite{ambrosio2005gradient}, and does not require any assumption on the second moment. 
     Consider once again $\chi_R \in C_c(\R^d)$ a function taking values in $[0,1]$, uniformly equal to $1$ on the ball of center $0$ and radius $R$, and of support inside the ball of center $0$ and radius $R+1$.

Let $W_+^R:=W_+\chi^R$. This function is continuous and compactly supported, hence, uniformly continuous. Therefore, $(W_+^R * \rho_n)$ is uniformly equicontinuous, hence relatively compact for the topology of locally uniform convergence thanks to Ascoli's Theorem, and its limit clearly appears to be $W_+^R * \rho$. Finally, as $(\rho_n)_{n \in \N}$ is tight, $W_+^R * \rho_n(x)$ converges to $0$ as $x \to + \infty$, uniformly in $n \in \N$. So the locally uniform convergence is actually a uniform convergence.

 Moreover, $(\rho_n)_n$ is converging for the narrow topology, which is the weak-* topology on $\mathcal P(\R^d)$, seen as a subset of the dual of continuous and bounded functions endowed with the topology of uniform convergence. It follows that,
\begin{equation}\label{eq:sci W conv tronc}
\lim_{n\to+\infty}\int W_+^R*\rho_n\diff\rho_n=\int W_+^R*\rho\diff\rho.
\end{equation}
   Therefore,
    \[
    \int W_+^R*\rho\diff\rho=\lim_{n\to+\infty}\int W_+^R*\rho_n\diff\rho_n\leq \liminf_{n\to+\infty}\int W_+*\rho_n\diff\rho_n,
    \]
    and we get the result by letting $R$ tend to $ + \infty$ on the left hand side.
    
    Let us now treat the negative part. We need to show that 
    \begin{equation*}
        \limsup_{n \to + \infty} \int W_- * \rho_n \diff \rho_n \leq \int W_- * \rho \diff \rho.
    \end{equation*}
    For all $R>0$, let us define $W_-^R=W_-\chi^R$, where $\chi^R$ is the truncation function defined in the first part of the proof. Up to replacing $W_+$ by $W_-$, the proof made to obtain equation~\eqref{eq:sci W conv tronc} provides
\begin{equation*}
\lim_{n\to+\infty}\int W_-^R*\rho_n\diff\rho_n=\int W_-^R*\rho\diff\rho \leq \int W_- * \rho \diff \rho.
\end{equation*}
To conclude, it remains to show that for all $\varepsilon >0$, there exists $R>0$ such that for all $n \in \N$,
\begin{equation*}
    \int W_- * \rho_n \diff \rho_n \leq  \int W_-^R * \rho_n \diff \rho_n + \varepsilon.
\end{equation*}
Let $\varepsilon>0$. By assumption, there exists $R>0$ such that for all $x$, if $|x|>R$, then $W_-(x)\leq\varepsilon |x|^2$. We have
\begin{align*}
\int (W_--W_-^R)*\rho_n\diff\rho_n&=\int_{x,y:|x-y|>R}(W_-(x-y)-W_-^R(x-y))\diff\rho_n(x)\diff\rho_n(y) \\
&\leq \varepsilon\int_{x,y:|x-y|>R}|x-y|^2\diff\rho_n(x)\diff\rho_n(y) \\
&\leq 2\varepsilon \int\left( |x|^2+|y|^2\right)\diff\rho_n(x)\diff\rho_n(y).
\end{align*}
Calling $S:=\sup\limits_n\int |x|^2\diff\rho_n(x)$, which is finite by assumption, we obtain:
\begin{equation*}
  \int (W_--W_-^R)*\rho_n\diff\rho_n \leq 4\varepsilon S,
\end{equation*}
and the result follows replacing $\varepsilon$ by $4\varepsilon S$ in our claim.
\end{proof}

\begin{proof}[Proof of Proposition~\ref{Prop:proof classic verify} point (2)]
For $V$, see \cite[Proposition 9.3.2]{ambrosio2005gradient}, for $f$ see \cite[Proposition 9.3.9]{ambrosio2005gradient}, for $W$ and $\lambda=0$ see \cite[Proposition 9.3.5]{ambrosio2005gradient}. The only case left is for $W$ and $\lambda<0$.
    Consider $\gamma\in\mathcal{P}_2((\R^d)^2$ and for $t\in[0,1]$, let $\rho_t=(t\pi_2+(1-t)\pi_1){_\#}\gamma$, where $\pi_i$ are the canonical projection. We start by writing the formula in term of $\gamma$
    \[
        \frac12\int W(z-\tilde z) \diff \rho_t(z)\diff\rho_t(\tilde z)=\frac12\int W(t(y-\tilde y)+(1-t)(x-\tilde x))\diff \gamma(x,y)\diff\gamma(\tilde x,\tilde y).
    \]  
Using the $\lambda$-convexity of $W$, we obtain
\begin{align*}
\frac{1}{2} \int W(z-\tilde{z}) \, \mathrm{d}\rho_t(z)\, \mathrm{d}\rho_t(\tilde{z})
\le t\,\frac{1}{2}\int W(y-\tilde{y}) \, \mathrm{d}\gamma(x,y)\, \mathrm{d}\gamma(\tilde{x},\tilde{y})
+(1-t)\,\frac{1}{2}\int W(x-\tilde{x}) \, \mathrm{d}\gamma(x,y)\, \mathrm{d}\gamma(\tilde{x},\tilde{y}) \\
\quad - t(1-t)\,\frac{\lambda}{2}\int \frac{\lvert y-\tilde{y}-x+\tilde{x}\rvert^{2}}{2}\,
\mathrm{d}\gamma(x,y)\, \mathrm{d}\gamma(\tilde{x},\tilde{y}) .
\end{align*}
But
    \begin{align*}       
    \int\frac{|y-\tilde y-x+\tilde x|^2}{2}&\diff \gamma(x,y)\diff\gamma(\tilde x,\tilde y)\\
    &=\int\frac{|y-x|^2}{2}\diff \gamma(x,y) +\int\frac{|\tilde y-\tilde x|^2}{2}\diff \gamma(\tilde x,\tilde y)-2\int(y-x)\cdot(\tilde y-\tilde x)\diff \gamma(x,y)\diff\gamma(\tilde x,\tilde y)
    \\&=2\int\frac{|y-x|^2}{2}\diff \gamma(x,y)-2\left(\int (y-x) \diff \gamma(x,y)\right)^2\\
    &\leq 2\int\frac{|y-x|^2}{2}\diff \gamma(x,y).
    \end{align*}
    Since $\lambda<0$,
  \begin{align*}
\frac{1}{2} \int W(z-\tilde{z}) \, \mathrm{d}\rho_t(z)\, \mathrm{d}\rho_t(\tilde{z})
\le t\,\frac{1}{2}\int W(y-\tilde{y}) \, \mathrm{d}\gamma(x,y)\, \mathrm{d}\gamma(\tilde{x},\tilde{y})
+(1-t)\,\frac{1}{2}\int W(x-\tilde{x}) \, \mathrm{d}\gamma(x,y)\, \mathrm{d}\gamma(\tilde{x},\tilde{y}) \\
\quad - t(1-t)\,\lambda \int \frac{\lvert y-x\rvert^{2}}{2}\, \mathrm{d}\gamma(x,y) .
\end{align*}
    which concludes the proof.
    \end{proof}
\begin{proof}[Proof of Proposition~\ref{Prop:proof classic verify} point $(3)$]
In the whole proof, we fix $\mu \in \mathcal P_2(\R^d)$, and for all $t \geq 0$, we define $\rho_t := \mu * \sigma_t$. We will use the following classical property of the heat flow: for all $t\geq 0$,
\begin{equation}
\label{eq:second_moment_rhot}
    \frac{1}{2}\int |x|^2 \diff \rho_t \leq \frac{1}{2}\int |x|^2 \diff \mu + dt.
\end{equation}
First, let us show that if $V$ is $\lambda_1$-convex and $\Delta V \leq K_1$, then for all $t \geq 0$,
\begin{equation}
\label{eq:bound_V_heat_flow}
    \int V \diff \rho_t \leq \int V \diff \mu + \frac12K_1 t. 
\end{equation}
First, if $\bar V \in C^\infty_c(\R^d)$, then $t \mapsto \int \bar V \rho_t$ is clearly continuous, and its distributional derivative is $ t \mapsto
    \frac{1}{2} \int \Delta \bar V \rho_t$. Therefore, 
\begin{equation}
\label{eq:ineg_approx}
    \int \bar V \diff \rho_t = \int \bar V \diff \mu + \frac{1}{2} \int_0^t \hspace{-5pt} \int \Delta \bar V \diff \rho_t \diff t.
\end{equation}
We need to replace $\bar V \in C^{\infty}_c(\R^d)$ in~\eqref{eq:ineg_approx} by the potential $V$ given in the statement of Proposition~\ref{Prop:proof classic verify}. Notice that by Remark~\ref{rem:C11}, $V$ is necessarily in $C^{1,1}$, and so it grows at most quadratically and its gradient grows at most linearly. In other words, there exists $C>0$ such that for all $x \in \R^d$, 
\begin{equation}
\label{eq:growth_V}
|V(x)| \leq C(1 + |x|^2) \qquad \mbox{and} \qquad |\nabla V(x)| \leq C(1 + |x|).
\end{equation}
By convolution, we can replace the condition $\bar V \in C_c^\infty(\R^d)$ in~\eqref{eq:ineg_approx} by $\bar V \in C^{1,1}_c(\R^d)$, and we just need to relax the fact that $\bar V$ has compact support. 

Given $R>0$, let $\chi_R := \chi(\cdot / R)$, where $\chi$ is a smooth function with value in $[0,1]$, uniformly equal to~$1$ in the ball of center $0$ and radius $1$, and with support in the ball of center $0$ and radius $2$. We will apply~\eqref{eq:ineg_approx} to $\bar V_R:= V \chi_R$. For all $R\geq 1$ and $x \in \R^d$, 
\begin{align*}
\left|\Delta V_R(x) - (\Delta V(x) \chi_R(x)) \right| &= \left| \nabla V(x) \cdot \frac{\nabla \chi(x/R)}{R}\right| + \left|  V(x) \frac{\Delta \chi(x/R)}{R^2}\right| \\
&\leq C 1_{R\leq |x| \leq 2R} \left( \| \nabla \chi \|_\infty  \frac{1 + 2R}{R} + \| \Delta \chi \|_\infty\frac{1 + 4R^2}{R^2} \right) \\
&\leq A 1_{R\leq |x|},
\end{align*}
where $A$ is chosen sufficiently large, and where to get the second line, we used~\eqref{eq:growth_V} and the fact that the supports of $\nabla \chi_R$ and $\Delta \chi_R$ are included in the annulus of center $0$ and radiuses $R$ and $2R$. Therefore, writing \eqref{eq:ineg_approx} to $\bar V_R$, we find
\begin{equation*}
    \int  V \chi_R \diff \rho_t \leq \int V \chi_R \diff \mu + \frac{1}{2} \int_0^t \int(\Delta V)\chi_R \diff \rho_s \diff s + A \int_0^t \rho_s(\{ x \in \R^d \mbox{ s.t.\ }|x| \geq R \} ) \diff s.
\end{equation*}
Letting $R$ tend to $+\infty$ with the help of~\eqref{eq:second_moment_rhot},~\eqref{eq:growth_V} and the Markov inequality, we deduce
\begin{equation*}
    \int  V  \diff \rho_t \leq \int V  \diff \mu + \frac{1}{2} \int_0^t \int (\Delta V)\diff \rho_s \diff s,
\end{equation*}
and~\eqref{eq:bound_V_heat_flow} follows bounding $\Delta V$ by $K_1$ from above.

Now, let us proceed to the proof of the bound for the $W$ part.
\begin{align*}
\frac{\diff }{\diff t}\frac{1}{2}\int W*\rho_t\diff\rho_t&=\left.\frac{\diff }{\diff s}\frac{1}{2}\int W*\rho_t\diff\rho_s\right|_{s=t}+\left.\frac{\diff }{\diff t}\frac{1}{2}\int W*\rho_s\diff\rho_t\right|_{s=t}\\
&=\left.\frac{\diff }{\diff s}\int W*\rho_t\diff\rho_s\right|_{s=t}.
\end{align*}
But if $W$ is $\lambda_2$-convex, then so does $W*\rho_t$, and if $\Delta W\leq K_2$, then $\Delta(W*\rho_t)\leq K_2$. So applying the the previous computation for $V=W*\rho_t$ we obtain 
\begin{equation}\label{eq: borne heat W}
    \frac{\diff}{\diff t}\frac{1}{2}\int W*\rho_t\diff\rho_t\leq \frac{1}{2} K_2.
\end{equation}

The only part remaining in the one on $f$. By the Jensen inequality, for all $t>0$ and $x \in \R^d$,
\[
f(\rho_t(x))=f\left(\int\mu(x-y)\diff\sigma_t(y))\right)\leq\int f(\mu(x-y))\diff\sigma_t(y),
\]
where the last integral is well defined in $\R \cup\{ + \infty \}$ thanks to the hypothesis made on $f_-$, which are made to ensure that for all absolutely continuous measure $\mu\in\mathcal P_2(\R^d)$, $\int f_-(\mu)<+\infty$ see~\cite{ambrosio2005gradient}. Integrating this inequality, using the Fubini theorem for the negative part and the Fubini-Tonelli theorem for the positive part, and then making the change of variable $x=x-y$ in the second integral, we obtain
\begin{equation}\label{eq: borne heat f}
    \int f(\rho_t)\leq\int f(\mu).
\end{equation}
The result follows from equations~\eqref{eq:bound_V_heat_flow}, \eqref{eq: borne heat W} and \eqref{eq: borne heat f}.
\end{proof}

\section{Proof of the Main Result}\label{proof of theorem}
The purpose of this Section is to prove Theorem \ref{the theorem}. We first provide a sketch of the proof.
\subsection{Sketch of proof}
The proof proceeds iteratively, i.e.\ for all $k\geq 0$, by comparing the distance at stage $k+1$, that is $W_2^2(J_{k+1,\tau}^0(\mu_0), J_{k+1,\tau}^\alpha(\mu_0))$, with the one of stage $k$, $W_2^2(J_{k,\tau}^0(\mu_0), J_{k,\tau}^\alpha(\mu_0))$. Rewriting $W_2^2(J_{k+1,\tau}^0(\mu_0), J_{k+1,\tau}^\alpha(\mu_0))$ as $W_2^2(J_\tau^0(J_{k,\tau}^0(\mu_0)), J_\tau^\alpha(J_{k,\tau}^\alpha(\mu_0)))$, we need to compare one increment of two different schemes starting from two different measures. Our strategy is to use the following decomposition to treat the facts that the starting measures are different and that the schemes are different separately:
    \[
W_2^2(J_{k+1,\tau}^0(\mu_0), J_{k+1,\tau}^\alpha(\mu_0)) \leq 
\left(\underbrace{W_2(J_\tau^0(J_{k,\tau}^0(\mu_0)), J_\tau^0(J_{k,\tau}^\alpha(\mu_0)))}_{\text{(I)}}
+ \underbrace{W_2(J_\tau^0(J_{k,\tau}^\alpha(\mu_0)), J_\tau^\alpha(J_{k,\tau}^\alpha(\mu_0)))}_{\text{(II)}}\right)^2.
\]
The term (I) is the distance between two iterates of the classic JKO scheme starting from different measures, and the term (II) is the distance between two increments of different schemes starting from the same measure. Notably, the first one is already estimated in \cite{ambrosio2005gradient}, see Theorem~\ref{contraction AGS}. Our main contribution is an estimate of the second part. Here we will see the entropic JKO scheme as a perturbation of the classic JKO scheme, thus reframing our question as a stability question: \textit{Why does this perturbation yield to a close solution?} In fact, the stability of the JKO scheme is contained in the discrete E.V.I. Indeed, under our $\lambda$-convexity assumption, Theorem~\ref{discrete E.V.I} implies for all $\mu \in \mathcal P_2(\R^d)$ admissible for both schemes and $\tau>0$:
   \[
\frac{1}{2\tau}W_2^2(J_\tau^0(\mu), J_\tau^\alpha(\mu)) \leq \frac{1}{1 + \lambda \tau} \left( \frac{W_2^2(\mu, J_\tau^\alpha(\mu))}{2\tau} + \mathcal{F}(J_\tau^\alpha(\mu)) - \left( \frac{W_2^2(\mu, J_\tau^0(\mu))}{2\tau} + \mathcal{F}(J_\tau^0(\mu)) \right) \right)
\]
Hence, to show that $J_\tau^\alpha(\mu)$ is close to $J_\tau^0(\mu)$, it suffices to show that it is a good competitor for the problem of which $J_\tau^0(\mu)$ is a minimizer. Since the Schrödinger cost is a perturbation of the Wasserstein distance, standard inequalities allow to replace the Wasserstein distance with the Schrödinger cost up to error terms. Up to this change, estimating the distance between $J_\tau^0(\mu)$ and $J_\tau^\alpha(\mu)$ reduces to estimating the difference between the optimal values of the classic and entropic problems. We estimate this difference by constructing a good competitor for the entropic JKO scheme by perturbing the minimizer of the classic JKO scheme. In order to do so, we will follow the heuristic idea that, for short times, the flow of \(\mathcal{F} + \frac{\alpha}{2}H\) can be obtained by following the flow of \(\mathcal{F}\) and then following the flow of \(\frac{\alpha}{2}H\). In other words, we will take as a competitor \(J_\tau(\mu) * \sigma_{\alpha\tau}\) and obtain a sharp bound.

Let us now enter the details of the proof.
\subsection{Beginning of the proof}

As already said, with the notations of the statement of the theorem, given $k \in \N$, we aim at estimating
\[
W_2^2(J_{k+1,\tau}^0(\mu_0), J_{k+1,\tau}^\alpha(\mu_0)).
\]
Because of the triangle inequality,
\begin{equation}\label{eq:splitting (I) and (II)}
W_2^2(J_{k+1,\tau}^0(\mu_0), J_{k+1,\tau}^\alpha(\mu_0)) \leq 
\left(\underbrace{W_2(J_\tau^0(J_{k,\tau}^0(\mu_0)), J_\tau^0(J_{k,\tau}^\alpha(\mu_0)))}_{\text{(I)}}
+ \underbrace{ W_2(J_\tau^0(J_{k,\tau}^\alpha(\mu_0)), J_\tau^\alpha(J_{k,\tau}^\alpha(\mu_0)))}_{\text{(II)}}\right)^2.
\end{equation}
We will estimate the terms (I) and (II) separately. Indeed, (I) is related to the contraction property of the classic JKO scheme. The term (II) is related to the stability of the scheme through perturbation.

\subsection{Bounding term (I)}

Let us start by considering the following contraction property of the classic JKO scheme, proven in~\cite{ambrosio2005gradient}.
\begin{theorem}[Contraction property of the JKO scheme \cite{ambrosio2005gradient}]\label{contraction AGS}
    Let $\F$ be $\lambda$-convex along generalized geodesics and $\tau < \frac{1}{\lambda_-}$. Then, for all $\mu,\nu\in\mathcal{P}_2(\R^d)$ such that~\eqref{JKO} admit minimizers $J^0_\tau(\mu)$ and $J^0_\tau(\nu)$, 
    \[
    W_2^2(J_\tau^0(\mu),J_\tau^0(\nu))\leq \left(\frac{1}{1+\lambda\tau} \right)^2W_2^2(\mu, \nu) + \frac{R(\tau)}{1+\lambda\tau}
\]
where $R(\tau)=2\tau\left(\mathcal{F}\left(\mu\right)-\mathcal{F}\left(J_{\tau}^0\left(\mu
\right)\right)\right)$. 
\end{theorem}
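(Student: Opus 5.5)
The plan is to derive the contraction estimate from the discrete EVI of Theorem~\ref{discrete E.V.I}, applied twice with the roles of the two trajectories exchanged. Set $\mu' := J^0_\tau(\mu)$ and $\nu' := J^0_\tau(\nu)$.

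\emph{First application.} Apply the EVI at the step starting from $\nu$, with test measure $\rho=\mu'$:
\[
\frac{1+\lambda\tau}{2\tau}W_2^2(\mu',\nu') \le \frac{1}{2\tau}W_2^2(\mu',\nu) + \F(\mu') - \F(\nu') - \frac{1}{2\tau}W_2^2(\nu',\nu).
\]
The right-hand side contains $W_2^2(\mu',\nu)$, and this must be related back to $W_2^2(\mu,\nu)$.

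\emph{Second application.} Apply the EVI at the step starting from $\mu$, with test measure $\rho=\nu$:
\[
\frac{1+\lambda\tau}{2\tau}W_2^2(\nu,\mu') \le \frac{1}{2\tau}W_2^2(\nu,\mu) + \F(\nu)-\F(\mu') - \frac{1}{2\tau}W_2^2(\mu',\mu).
\]
Dividing by $1+\lambda\tau$ and substituting into the first inequality gives
\[
(1+\lambda\tau)W_2^2(\mu',\nu') \le \frac{1}{1+\lambda\tau}W_2^2(\mu,\nu) + \frac{2\tau}{1+\lambda\tau}\bigl(\F(\nu)-\F(\mu')\bigr) + 2\tau\bigl(\F(\mu')-\F(\nu')\bigr) - W_2^2(\nu',\nu).
\]
Dividing by $1+\lambda\tau$ produces the coefficient $(1+\lambda\tau)^{-2}$ in front of $W_2^2(\mu,\nu)$.

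\emph{The energy terms.} These are where the main difficulty lies. The stated remainder is $R(\tau)=2\tau(\F(\mu)-\F(\mu'))$, which involves only the $\mu$-trajectory, whereas the chaining above leaves terms in $\F(\nu)$, $\F(\mu')$ and $\F(\nu')$. To control them I will use the minimality of $\nu'$ in the JKO problem from $\nu$, tested with the competitor $\nu$ itself, which gives $\F(\nu')+\frac{1}{2\tau}W_2^2(\nu,\nu')\le\F(\nu)$, together with the analogous inequality for $\mu$.

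A cleaner alternative may avoid this bookkeeping. Take the test measure $\rho=\nu'$ in the EVI from $\mu$, and then bound $W_2^2(\nu',\mu)$ by the EVI from $\nu$ with test measure $\mu$. This yields
\[
(1+\lambda\tau)W_2^2(\nu',\mu)\le W_2^2(\mu,\nu)+2\tau\bigl(\F(\mu)-\F(\nu')\bigr),
\]
and adding the two inequalities makes the $\F(\nu')$ terms cancel up to the factor $\frac{1}{1+\lambda\tau}$. For $\lambda\ge0$ the leftover $\F(\nu')$ term has coefficient $1-\frac{1}{1+\lambda\tau}\ge0$ and must be absorbed. To do so I will use minimality of $\mu'$ tested against $\nu'$, or I will discard nonpositive squared-distance terms. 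For $\lambda<0$, the requirement $\tau<1/\lambda_-$ keeps $1+\lambda\tau>0$, so that all the divisions above are legitimate.

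I expect this precise sign bookkeeping to be the main obstacle, and I will choose whichever of the two pairings of test measures leaves exactly $2\tau(\F(\mu)-\F(\mu'))$ with factor $\frac{1}{1+\lambda\tau}$. The form of the statement, with this remainder depending only on $\mu$, indicates that the second pairing is the intended one.
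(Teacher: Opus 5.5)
Your second pairing of test measures is the right one: it is exactly the pair of discrete EVIs the paper uses (the step from $\mu$ tested at $\nu'$, and the step from $\nu$ tested at $\mu$). The gap is in how you combine them. Dividing the second EVI by $1+\lambda\tau$ before substituting destroys the cancellation of $\F(\nu')$. Keeping every term, your chained inequality is
\[
(1+\lambda\tau)W_2^2(\mu',\nu')\le\frac{W_2^2(\mu,\nu)}{1+\lambda\tau}+2\tau\bigl(\F(\mu)-\F(\mu')\bigr)+\frac{2\lambda\tau^2}{1+\lambda\tau}\bigl(\F(\nu')-\F(\mu)\bigr)-W_2^2(\mu,\mu')-\frac{W_2^2(\nu,\nu')}{1+\lambda\tau}.
\]
The statement, multiplied by $1+\lambda\tau$, is this inequality with its last three terms deleted. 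The term $\frac{2\lambda\tau^2}{1+\lambda\tau}\bigl(\F(\nu')-\F(\mu)\bigr)$ has no sign for any $\lambda\neq0$. This includes $\lambda<0$, where you only check that the divisions are legitimate. Your remedies do not remove it. Minimality of $\mu'$ tested against $\nu'$ gives a \emph{lower} bound on $\F(\nu')$, which is the wrong direction when $\lambda>0$. When $\lambda<0$ it leaves extra positive multiples of $\F(\mu)-\F(\mu')$ and of $W_2^2(\mu,\nu')$ that the statement does not allow. Discarding nonpositive squared distances can never absorb a positive term. In fact, the term $-W_2^2(\nu,\nu')$ you discard is exactly what is needed. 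The same leftover, with $\mu$ and $\nu$ exchanged, spoils your first pairing.

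What is missing is a metric ingredient, not an energetic one. Add the two EVIs as they stand, without rescaling. Then $\F(\nu')$ and $\frac{1}{2\tau}W_2^2(\mu,\nu')$ cancel exactly, and you get
\[
(1+\lambda\tau)\frac{W_2^2(\mu',\nu')}{2\tau}-\frac{W_2^2(\mu,\nu)}{2\tau}\le\F(\mu)-\F(\mu')-\frac{W_2^2(\mu,\mu')}{2\tau}-\frac{W_2^2(\nu,\nu')}{2\tau}-\frac{\lambda}{2}W_2^2(\mu,\nu').
\]
The paper then uses the squared triangle inequality $\frac{\lambda\tau}{1+\lambda\tau}W_2^2(\mu,\nu)\le\lambda\tau W_2^2(\mu,\nu')+W_2^2(\nu,\nu')$ (Lemma~\ref{squared-triangle inequality}). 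For $\lambda>0$ it follows from $W_2(\mu,\nu)\le W_2(\mu,\nu')+W_2(\nu',\nu)$ and $(a+b)^2\le pa^2+p^*b^2$ with $p=1+\lambda\tau$. For $\lambda<0$ it follows from $W_2(\mu,\nu')\le W_2(\mu,\nu)+W_2(\nu,\nu')$ with $p=(1+\lambda\tau)^{-1}$, which is where $\tau<1/\lambda_-$ is needed. This bounds the last two terms by $-\frac{\lambda}{1+\lambda\tau}\frac{W_2^2(\mu,\nu)}{2}$, turning the coefficient of $\frac{W_2^2(\mu,\nu)}{2\tau}$ into $\frac{1}{1+\lambda\tau}$. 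Dropping $W_2^2(\mu,\mu')$ and multiplying by $\frac{2\tau}{1+\lambda\tau}$ gives the statement exactly.

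For $\lambda>0$ your chained route can be repaired, but only as follows. You must keep $-\frac{W_2^2(\nu,\nu')}{1+\lambda\tau}$ and use $\F(\mu)\ge\F(\mu')$. You must also bound $\F(\nu')-\F(\mu')\le\frac{1}{\tau}W_2(\mu',\nu')W_2(\nu',\nu)-\frac{\lambda}{2}W_2^2(\mu',\nu')$ via a third EVI (from $\nu$, tested at $\mu'$) together with the triangle and Young inequalities. That is again a metric step of the same type.
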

\begin{remark}
In virtue of Theorem~\ref{welldefined JKO}, existence of $J^0_\tau(\mu)$ and $J^0_\tau(\nu)$ is guaranteed as soon as $\F(\mu)< + \infty$ and $\mu$ and $\nu$ are absolutely continuous.
\end{remark}
Applying this theorem to our case, with $\mu = J^0_{k,\tau}(\mu_0)$ and $\nu = J^\alpha_{k,\tau}(\mu_0)$, since $\mathcal F$ is $\lambda$-convex along generalized geodesics, we find:
\begin{multline}
\label{eq:bound (I)}
\text{(I)}^2 \leq \left(\frac{1}{1+\lambda\tau} \right)^2W_2^2(J_{k,\tau}^0(\mu_0), J_{k,\tau}^\alpha(\mu_0)) + \frac{R_k(\tau)}{1+\lambda\tau}, \\ \mbox{where} \quad R_k(\tau)=2\tau\left(\mathcal{F}\left(J_{k,\tau}^0\left(\mu_0\right)\right)-\mathcal{F}\left(J_{k+1,\tau}^0\left(\mu_0\right)\right)\right).
\end{multline}

For the reader's convenience, let us reprove Theorem~\ref{contraction AGS}. 
\begin{proof}[Proof of Theorem~\ref{contraction AGS}]
Taking $\mu=\mu$ and $\rho=J_\tau^0(\nu)$ in the discrete E.V.I of Theorem \ref{discrete E.V.I}, we obtain following bound:
\[
\frac{1}{2\tau} \left( W_2^2(J_\tau^0(\nu), J_\tau^0(\mu)) - W_2^2(J_\tau^0(\nu), \mu) \right)
\leq \mathcal{F}(J_\tau^0(\nu)) - \mathcal{F}(J_\tau^0(\mu)) - \frac{1}{2\tau} W_2^2(J_\tau^0(\mu), \mu)
{- \frac{\lambda}{2} W_2^2(J_\tau^0(\nu), J_\tau^0(\mu))}.
\]
Doing the same for $\mu=\nu$ and $\rho=\mu$ we get:
\[
\frac{1}{2\tau} \left( W_2^2(\mu, J_\tau^0(\nu)) - W_2^2(\mu, \nu) \right)
\leq \mathcal{F}(\mu) - \mathcal{F}(J_\tau^0(\nu)) - \frac{1}{2\tau} W_2^2(J_\tau^0(\nu), \nu)
{- \frac{\lambda}{2} W_2^2(\mu, J_\tau^0(\nu))}.
\]
Summing these two inequalities, we find, up to rearranging the terms:
\begin{multline}
\label{eq:proof_contraction_intermediary_step}
(1+\lambda\tau)\frac{W_2^2(J_\tau^0(\nu), J_\tau^0(\mu))}{2\tau}-\frac{W_2^2(\mu, \nu)}{2\tau}\\\leq \F(\mu)-\F(J_\tau^0(\mu))-\frac{1}{2\tau}W_2^2(J_\tau^0(\mu),\mu)- \frac{1}{2\tau} W_2^2(J_\tau^0(\nu), \nu)
{- \frac{\lambda}{2} W_2^2(\mu, J_\tau^0(\nu))}.
\end{multline}
We easily conclude using the following lemma.
\end{proof}
\begin{lemma}\label{squared-triangle inequality}
    For all $\mu,\nu,\rho\in\mathcal{P}_2(\R^d)$ and for all $\lambda \in \R$ and $\tau < \frac{1}{\lambda_-}$, the following inequality holds:
    \[
    \frac{\lambda\tau}{1+\lambda\tau}{W_2^2(\mu,\nu)}\leq{\lambda\tau}W_2^2(\mu,\rho)+{W_2^2(\nu,\rho)}
    \]
\end{lemma}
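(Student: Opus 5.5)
The plan is to derive the inequality from the triangle inequality for $W_2$ combined with the elementary weighted Young (Cauchy--Schwarz) bound
\[
(x+y)^2\leq \Big(\frac1p+\frac1q\Big)\big(p\,x^2+q\,y^2\big),\qquad x,y\geq 0,\ p,q>0.
\]
The assumption $\tau<\frac{1}{\lambda_-}$ only serves to guarantee $1+\lambda\tau>0$, so that the factor $\frac{\lambda\tau}{1+\lambda\tau}$ makes sense. We write $a:=W_2(\mu,\rho)$, $b:=W_2(\nu,\rho)$ and $c:=W_2(\mu,\nu)$, and split according to the sign of $\lambda$.

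\emph{Case $\lambda=0$.} The left-hand side vanishes and the right-hand side is $W_2^2(\nu,\rho)\geq 0$, so there is nothing to prove.

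\emph{Case $\lambda>0$.} The triangle inequality gives $c\leq a+b$. We apply the weighted bound with $p=\lambda\tau$ and $q=1$:
\[
c^2\leq (a+b)^2\leq \Big(\frac{1}{\lambda\tau}+1\Big)\big(\lambda\tau\, a^2+b^2\big)=\frac{1+\lambda\tau}{\lambda\tau}\big(\lambda\tau\, a^2+b^2\big).
\]
Multiplying by the positive number $\frac{\lambda\tau}{1+\lambda\tau}$ yields exactly the claim.

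\emph{Case $\lambda<0$.} Set $s:=-\lambda\tau$, so that $s\in(0,1)$ by the assumption on $\tau$. After moving terms, the claimed inequality is equivalent to
\[
s\,a^2\leq b^2+\frac{s}{1-s}\,c^2 .
\]
Here we use the triangle inequality in the other direction, $a\leq b+c$. We then apply the weighted bound with $p=1$ and $q=\frac{s}{1-s}$. In that case $\frac1p+\frac1q=\frac1s$, so
\[
s\,a^2\leq s\,(b+c)^2\leq s\cdot\frac1s\Big(b^2+\frac{s}{1-s}c^2\Big)=b^2+\frac{s}{1-s}\,c^2,
\]
which is the claim.

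The only real obstacle is choosing the weights $p,q$ so that the constants match exactly. The choices above are forced by requiring that the coefficient of each squared distance on the right-hand side come out as prescribed in the statement. Everything else is the triangle inequality for $W_2$.
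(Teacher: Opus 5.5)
Your proof is correct and follows the paper's argument: you make the same case split on the sign of $\lambda$, use the triangle inequality $W_2(\mu,\nu)\le W_2(\mu,\rho)+W_2(\rho,\nu)$ when $\lambda>0$ and $W_2(\mu,\rho)\le W_2(\mu,\nu)+W_2(\nu,\rho)$ when $\lambda<0$, and then apply a weighted Young inequality. Your bound $(x+y)^2\le(\frac1p+\frac1q)(px^2+qy^2)$ is a reparametrization of the paper's $(a+b)^2\le Pa^2+P^*b^2$ with conjugate exponents, and your weights reproduce exactly the paper's choices $P=1+\lambda\tau$ (for $\lambda>0$) and $P=\frac{1}{1+\lambda\tau}$ (for $\lambda<0$), so the intermediate inequalities coincide (with $\tau>0$ implicitly assumed, as in the paper).
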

Applying this lemma to $\rho = J^0_\tau(\nu)$, we find
    \[
    \frac{\lambda\tau}{1+\lambda\tau}\frac{W_2^2(\mu,\nu)}{2\tau}\leq\frac{\lambda}{2}W_2^2(\mu,J_\tau^0(\nu))+\frac{W_2^2(\nu,J_\tau^0(\nu))}{2\tau},
    \]
which, plugged into~\eqref{eq:proof_contraction_intermediary_step}, provides
\[
(1+\lambda\tau)\frac{W_2^2(J_\tau^0(\nu), J_\tau^0(\mu))}{2\tau}-\frac{W_2^2(\mu, \nu)}{2\tau}\leq \F(\mu)-\F(J_\tau^0(\mu))-\frac{1}{2\tau}W_2^2(J_\tau^0(\mu),\mu)-\frac{\lambda\tau}{1+\lambda\tau}\frac{W_2^2(\mu,\nu)}{2\tau}.
\]
Forgetting the nonnegative term $W_2^2(J^0_\tau(\mu),\mu)$ and rearranging the terms leads to Theorem~\ref{contraction AGS}.

Let us close this part of the proof with the proof of Lemma~\ref{squared-triangle inequality} (the case $\lambda<0$ can be found in~\cite{ambrosio2005gradient}). 

\begin{proof}[Proof of lemma \ref{squared-triangle inequality}]
    For $\lambda=0$ there is nothing to show. Otherwise let us distinguish the cases $\lambda>0$ and $\lambda<0$. \\
    \underline{\textbf{Case $\lambda>0$.}} The triangle inequality gives that : $W_2(\mu,\nu)\leq W_2(\mu,\rho)+W_2(\rho,\nu)$. We will use the following classic inequality $(a+b)^2\leq pa^2+p^*b^2$ where $a,b\in\R$, $p\in(1,+\infty)$ and $\frac{1}{p}+\frac{1}{p^*}=1$. Since $\lambda>0$ then $1+\lambda\tau>1$, so we can apply the inequality for $p=1+\lambda\tau$ and $p^*=\frac{1+\lambda\tau}{\lambda\tau}$. We obtain:
    \[
    W_2^2(\mu,\nu)\leq (1+\lambda\tau)W_2^2(\mu,\rho)+\frac{1+\lambda\tau}{\lambda\tau}W_2^2(\rho,\nu).
    \]
    Multiplying by $\frac{\lambda\tau}{1+\lambda\tau}>0$, we get the lemma for $\lambda>0$.\\
    \underline{\textbf{Case $\lambda<0$.}}  The triangle inequality gives that : $W_2(\mu,\rho)\leq W_2(\mu,\nu)+W_2(\nu,\rho)$ since $\frac{-1}{\tau}<\lambda<0$, then $\frac{1}{1+\lambda\tau}>1$ so as previously, we can apply the classic inequality for $p=\frac{1}{1+\lambda\tau}$ and $p^*=\frac{-1}{\lambda\tau}$ and obtain:
    \[
    W_2^2(\mu,\rho)\leq \frac{1}{1+\lambda\tau}W_2^2(\mu,\nu)+\frac{-1}{\lambda\tau}W_2^2(\nu,\rho).
    \]
    Multiplying by $\lambda\tau<0$, we get:
    \[
    \lambda\tau W_2^2(\mu,\rho)\geq \frac{\lambda\tau}{1+\lambda\tau}W_2^2(\mu,\nu)-W_2^2(\nu,\rho),
    \]
    which is the lemma for $\lambda<0$.
\end{proof}

\subsection{Bounding Term (II)} 
Let us now estimate term (II) which is the main novelty of the proof.
In this section, we want to show the following bound:
    \begin{equation}
        \label{eq:bound_II}
    \text{(II)}^2 \leq \frac{\Tilde{R}_k(\tau,\alpha)}{1+\lambda\tau}\quad \mbox{where} \quad \Tilde{R}_k(\tau,\alpha)={K\alpha \tau^2 } + {\alpha}\tau\left({H( J_{k,\tau}^\alpha(\mu_0))-H(J_{k+1,\tau}^\alpha(\mu_0))}\right).
    \end{equation}
In order to lighten the notations, let us denote:
\[
\mu = J_{k,\tau}^\alpha(\mu_0), \quad \nu^0 = J_\tau^0(\mu), \quad \nu^\alpha = J_\tau^\alpha(\mu).
\]
The first step consists in applying the discrete E.V.I of Theorem \ref{discrete E.V.I} to $\mu$ and $\nu^\alpha$, leading thanks to the $\lambda$-convexity of $\mathcal F$ to:
   \[
\frac{1}{2\tau}W_2^2(\nu^0, \nu^\alpha) \leq \frac{1}{1 + \lambda \tau} \left( \frac{W_2^2(\mu, \nu^\alpha)}{2\tau} + \mathcal{F}(\nu^\alpha) - \left( \frac{W_2^2(\mu, \nu^0)}{2\tau} + \mathcal{F}(\nu^0) \right) \right).
\]
From Proposition~\ref{below bound sch by W-2}, we have for all $\mu,\nu \in \mathcal P_2(\R^d)$,
    \[
\frac{\Sch^{\alpha\tau}(\mu,\nu)}{\tau} \geq \frac{\alpha}{2} (H(\mu) + H(\nu))+\frac{W_2^2(\mu,\nu)}{2\tau}.
\]
Therefore, defining the cost associated with the JKO scheme and the entropic JKO scheme as
    \[
C(\tau, \alpha) = \frac{\Sch^{\alpha\tau}}{\tau}(\mu, \nu^\alpha) + \mathcal{F}(\nu^\alpha)
\quad \text{and} \quad
C(\tau, 0) = \frac{W_2^2}{2\tau}(\mu, \nu^0) + \mathcal{F}(\nu^0),
\]
we find
\[
\frac{1}{2\tau}W_2^2(\nu^0, \nu^\alpha) \leq \frac{1}{1 + \lambda \tau} \left(  C(\tau, \alpha)-C(\tau, 0)-\alpha\frac{H(\mu)+H(\nu^\alpha)}{2}\right).
\]
Then, the last step consists in proving the following bound between the different costs:
\begin{equation}
\label{eq:comparison_costs}
C(\tau, \alpha) - C(\tau, 0) \leq \alpha H(\mu) + K\frac{\alpha}{2}\tau.
\end{equation}
Indeed, plugging this inequality into the previous line directly leads to
$$ \frac{1}{2\tau}W_2^2(\nu^0, \nu)\leq \frac{K\alpha \tau }{2(1+\lambda\tau)} + \frac{\alpha}{1+\lambda\tau}\frac{H(\mu)-H(\nu^\alpha)}{2}, $$
which is a rewriting of equation~\eqref{eq:bound_II}.

Our last task is to prove inequality~\eqref{eq:comparison_costs}.
The argument to compare the two costs is to construct a competitor of the entropic problem using the minimizer of the non entropic one; for this, we can follow the idea suggested by the heuristic remark made in Subsection \ref{comments hyp}, that starting from the same measure $\mu$ the solution of the gradient flow and the regularized gradient flow $\rho^0$ and $\rho^\alpha$ verify 
\begin{equation*}
\frac{\diff }{\diff t}\left(\rho^0*\sigma_{\alpha t}\right)\Big|_{t=0}=\frac{\diff }{\diff t}\rho^\alpha\Big|_{t=0}.
\end{equation*}
Hence, being $\nu^0$ an approximation of $\rho^0(\tau)$ and $\nu^\alpha$ an approximation of $\rho^\alpha(\tau)$ we expect $\nu^\alpha$ to be close to $\nu^0*\sigma_{\alpha \tau}$. So let us consider this last measure as a competitor for the entropic JKO scheme.

Let $(\rho, c)$ be the geodesic and its associated velocity between $\mu$ and $\nu^0$, defined in Definition \ref{Sch Benamou Brenier}. Define:
\[
\tilde{\rho}_t = \rho_t * \sigma_{\alpha t}, \quad \tilde{c}_t = \frac{(\rho_tc_t) * \sigma_{\alpha t}}{\tilde{\rho}_t}\quad\text{and}\quad\tilde m_t:=\tilde\rho\tilde c=(\rho_tc_t) * \sigma_{\alpha t}.
\]
Then:
\[
\partial_t \tilde{\rho}_t + \operatorname{div}(\tilde{\rho}\tilde{c}_t) = \frac{\alpha}{2} \Delta \tilde{\rho}_t.
\]
Hence, it is a competitor in the formulation of the Schrodinger cost from Definition \ref{Sch Benamou Brenier}. Therefore,
\[
C(\tau, \alpha) \leq \alpha H(\mu) + \int_0^\tau \int \frac{|\tilde{c}_t|^2}{2\tau} \, \diff\tilde{\rho}_t \diff t + \mathcal{F}(\nu^0 * \sigma_{\alpha\tau}).
\]
Using the convexity of $J: (\rho, m) \mapsto \int \frac{|m|^2}{2\rho}$ and Jensen’s inequality:
\[
J(\tilde{\rho}, \tilde{m}) \leq J(\rho, m) = \frac{W_2^2}{2\tau}(\mu, \nu^0).
\]
Therefore:
\[
C(\tau, \alpha) \leq \alpha H(\mu) + C(\tau, 0) + \mathcal{F}(\nu^0 * \sigma_{\alpha\tau}) - \mathcal{F}(\nu^0).
\]
Since $\F$ verifies Hypothesis~\ref{hypothesis}, in particular the last point implies that:
\[
C(\tau, \alpha) \leq \alpha H(\mu) + C(\tau, 0) + K\frac{\alpha}{2}\tau,
\]
which is nothing but inequality~\eqref{eq:comparison_costs}.

We are now in position to prove Theorem \ref{the theorem}.

\subsection{Conclusion of the result}
In view  of equations~\eqref{eq:bound (I)} and ~\eqref{eq:bound_II} we have almost enough to conclude. The last ingredient is the following technical proposition.

\begin{proposition}[Squared discrete Gronwall lemma]\label{square discrete gronwall}
    Let $\lambda\in\R$, $\tau<\frac{1}{\lambda_-}$, and $(a_k)$ and $(b_k)$ be two non-negative sequences. If $(u_k)$ is a sequence verifying the following inequality:
    \begin{equation}\label{eq: squared disc grow hypo}
    u_0 = 0 \qquad \mbox{and} \qquad u_{k+1}\leq \sqrt{\left(\frac{u_k}{1+\lambda\tau}\right)^2 + a_{k+1}}+b_{k+1},
    \end{equation}
    then for all $n\in \N$ we have:
    \begin{equation*}
    u_n (1+\lambda\tau)^{n}\leq \sqrt{\sum_{k=1}^n \left(1+\lambda\tau\right)^{2k} a_k}+ \sum_{k=1}^n \left(1+\lambda\tau\right)^{k} b_k.
    \end{equation*}
\end{proposition}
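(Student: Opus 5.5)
Set $q:=1+\lambda\tau>0$ (positive because $\tau<1/\lambda_-$) and $v_k:=q^k u_k$. The aim is a one-step recursion for $v_k$ that can be summed by induction. Multiplying the hypothesis~\eqref{eq: squared disc grow hypo} by $q^{k+1}$ gives
\[
v_{k+1}\le \sqrt{q^{2k}u_k^2+q^{2(k+1)}a_{k+1}}+q^{k+1}b_{k+1}=\sqrt{v_k^2+A_{k+1}}+B_{k+1},
\]
where $A_k:=q^{2k}a_k\ge0$ and $B_k:=q^kb_k\ge0$. The claim then reads
\[
v_n\le \sqrt{\textstyle\sum_{k=1}^n A_k}+\sum_{k=1}^n B_k=:S_n+T_n,
\]
and I will prove it by induction on $n$.

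For $n=0$ both sides vanish since $u_0=0$. For the inductive step, assume $v_k\le S_k+T_k$. Because $v_k$ could in principle be negative, I use $v_k^2\le (v_k)_+^2\le (S_k+T_k)^2$; this holds because the right-hand side is nonnegative and dominates $(v_k)_+$. It follows that
\[
v_{k+1}\le \sqrt{(S_k+T_k)^2+A_{k+1}}+B_{k+1}.
\]
It remains to show $\sqrt{(S_k+T_k)^2+A_{k+1}}\le \sqrt{S_k^2+A_{k+1}}+T_k=S_{k+1}+T_k$. Adding $B_{k+1}$ then gives $v_{k+1}\le S_{k+1}+T_{k+1}$.

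This inequality is the only real point of the proof. It is the triangle inequality in $\mathbb R^2$ for the vectors $(S_k,\sqrt{A_{k+1}})$ and $(T_k,0)$. Equivalently, square both sides: the difference between them is $2T_k\bigl(\sqrt{S_k^2+A_{k+1}}-S_k\bigr)\ge0$, using $T_k\ge0$.

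Dividing the final bound by $q^n$ recovers the stated inequality.
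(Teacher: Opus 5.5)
Your argument is correct in substance and is essentially the paper's. The paper also rescales by $(1+\lambda\tau)^k$ to reduce to $\lambda=0$. It then compares $u_k$ with the sequence defined by equality in the recursion and telescopes $(v_k-B_k)^2$. The inequality it uses, $\sqrt{v^2+a}-B\le\sqrt{(v-B)^2+a}$ for $B\ge0$, is exactly your triangle inequality in $\mathbb{R}^2$ written with $v=S_k+T_k$, $B=T_k$. Your direct induction on the target bound simply avoids the auxiliary comparison sequence.

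One step is wrong, though: $v_k^2\le (v_k)_+^2$ fails whenever $v_k<0$. This cannot be patched, because the proposition is false for sign-changing sequences. Take $\lambda=0$, $a_k=b_k=0$, $u_1=-1$, $u_2=1$. The hypothesis holds at both steps, but the conclusion $u_2\le 0$ fails.

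You should instead assume $u_k\ge 0$. This is implicit in the statement: the paper's ``easy induction'' $u_k\le v_k$ tacitly uses it as well. It also holds in the application, where $u_k$ is a Wasserstein distance. With it, $0\le v_k\le S_k+T_k$ gives $v_k^2\le (S_k+T_k)^2$ directly, and the rest of your proof goes through unchanged.
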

\begin{proof}
    At step $k\in\N$, multiplying inequality \eqref{eq: squared disc grow hypo} by $(1+\lambda\tau)^{2(k+1)}$ we obtain:
    $$    (1+\lambda\tau)^{(k+1)}u_{k+1}\leq \sqrt{\left({(1+\lambda\tau)^{k}u_k}\right)^2 + (1+\lambda\tau)^{2(k+1)}a_{k+1}}+(1+\lambda\tau)^{(k+1)}b_{k+1}.
    $$
    Up to replacing $u_k$ by $(1+\lambda\tau)^{k}u_k$, $a_{k+1}$ by $(1+\lambda\tau)^{2(k+1)}a_{k+1}$ and $b_{k+1}$ by $(1+\lambda\tau)^{k+1}b_{k+1}$, we can assume that $\lambda=0$. Now let us introduce for all $n\in\N$, $A_n=\sum\limits_{k=1}^na_k$, $B_n=\sum\limits_{k=1}^nb_k$ and $(v_{k})$ the sequence defined by $v_0=0$ and the following iterative scheme:
    \[
    \forall k\in\N, \quad v_{k+1}= \sqrt{v_k^2 + a_{k+1}}+b_{k+1}.
    \]
    An easy induction shows that for all $k \in \N$, $u_k\leq v_k$. Moreover, for all $k\in\N$;
    \begin{align*}
    \left(v_{k+1}-B_{k+1}\right)^2&=\left(\sqrt{v_k^2 + a_{k+1}}+b_{k+1}-B_{k+1}\right)^2\\
    &=\left(\sqrt{v_k^2 + a_{k+1}}-B_{k}\right)^2=v_k^2 + a_{k+1}-2\sqrt{v_k^2 + a_{k+1}}B_k+B_k^2.
    \end{align*}
    As this stage we simply use
    \[
    -2\sqrt{v_k^2 + a_{k+1}}B_k\leq -2v_kB_k
    \]
    to find 
    \[
    \left(v_{k+1}-B_{k+1}\right)^2\leq v_k^2 -2v_kB_k+B_k^2+ a_{k+1}=\left(v_{k}-B_{k}\right)^2+ a_{k+1}.
    \]
    Summing these inequalities, for all $n\in\N^*$,
    \[
    \left(v_{n}-B_{n}\right)^2\leq A_n.
    \]
    Thus,
    \[
    v_{n}-B_{n}\leq|v_{n}-B_{n}|\leq \sqrt{A_n}
    \]
    and so $u_n\leq v_n\leq B_n+\sqrt{A_n}$, as claimed.
\end{proof}
Let us now use this proposition to conclude the proof of Theorem~\ref{the theorem}. 
Putting the inequalities obtained in equations~\eqref{eq:bound (I)} and~\eqref{eq:bound_II} into~\eqref{eq:splitting (I) and (II)}, we get:
\begin{align*}
W_2^2(J_{k+1,\tau}^0(\mu_0), J_{k+1,\tau}^\alpha(\mu_0)) &\leq \left(\text{(I)}+\text{(II)}\right)^2\\
&\leq \left(\sqrt{\left(\frac{W_2(J_{k,\tau}^0(\mu_0), J_{k,\tau}^\alpha(\mu_0))}{1+\lambda\tau}\right)^2+\frac{R_k(\tau)}{1+\lambda\tau}}+\sqrt{\frac{\Tilde{R}_k(\alpha,\tau)}{1+\lambda\tau}}\right)^2 .
\end{align*}
Then, applying Lemma \ref{square discrete gronwall} with, for all $k \in \N$:
\[
u_k=W_2(J_{k,\tau}^0(\mu_0), J_{k,\tau}^\alpha(\mu_0)), \quad a_{k+1}=\frac{R_k(\tau)}{1+\lambda\tau} \quad \text{and}\quad b_{k+1}=\sqrt{\frac{\Tilde{R}_k(\alpha,\tau)}{1+\lambda\tau}} ,
\]
we obtain for all $n\in\N^*$:
\begin{equation}\label{eq:bound bf CS}
(1+\lambda\tau)^{n}W_2(J_{n,\tau}^0(\mu_0), J_{n,\tau}^\alpha(\mu_0))\leq \sqrt{\sum_{k=0}^{n-1} \left(1+\lambda\tau\right)^{2k} \frac{R_k(\tau)}{1+\lambda\tau}}+ \sum_{k=0}^{n-1}\left(1+\lambda\tau\right)^{k} \sqrt{\frac{\Tilde{R}_k(\alpha,\tau)}{1+\lambda\tau}}.
\end{equation}
From now on, we fix $n\in \N$. Using the Cauchy-Schwartz inequality, we deduce the following bound:
\begin{equation}\label{eq: bond bet scheme after CS}
(1+\lambda\tau)^{n}W_2(J_{n,\tau}^0(\mu_0), J_{n,\tau}^\alpha(\mu_0))\leq \sqrt{\sum_{k=0}^{n-1} \left(1+\lambda\tau\right)^{2k} \frac{R_k(\tau)}{1+\lambda\tau}}+\sqrt{\frac{1}{1+\lambda\tau}\sum_{k=0}^{n-1}\left(1+\lambda\tau\right)^{2k}\sum_{k=0}^{n-1} \Tilde{R}_k(\alpha,\tau)}.
\end{equation}
From now on, we will treat the cases $\lambda=0$ and $\lambda\neq0$ separately.\\
\underline{\textbf{Case $\lambda=0$}}.
In this case, equation~\eqref{eq: bond bet scheme after CS} rewrites as:
\[
W_2(J_{n,\tau}^0(\mu_0), J_{n,\tau}^\alpha(\mu_0))\leq  \sqrt{\sum_{k=0}^{n-1} R_k(\tau)}+\sqrt{n\sum_{k=0}^{n-1} \Tilde{R}_k(\alpha,\tau)},
\]
where, by definitions of $R_k(\tau)$ and $\Tilde{R}_k(\alpha,\tau)$ in equations~\eqref{eq:bound (I)} and~\eqref{eq:bound_II},
\[
\sum_{k=0}^{n-1}R_k(\tau)=2\tau(\F(\mu_0)-\F(J_{n,\tau}^0(\mu_0))\quad\text{and}\quad \sum_{k=0}^{n-1} \Tilde{R}_k(\alpha,\tau)=\alpha\tau\left(H(\mu_0)-H(J_{n,\tau}^\alpha(\mu_0))+K n\tau\right).
\]
Finally,
\[
W_2(J_{n,\tau}^0(\mu_0), J_{n,\tau}^\alpha(\mu_0))\leq \sqrt{2\tau(\F(\mu_0)-\F(J_{n,\tau}^0(\mu_0))}+\sqrt{n\tau\alpha\left(H(\mu_0)-H(J_{n,\tau}^\alpha(\mu_0))+K n\tau\right)},
\]
which is the desired bound when $\lambda=0$.\\
\underline{\textbf{Case $\lambda\neq0$}}.
Using that for all $k\leq n-1$, $$(1+\lambda\tau)^{2k}\leq\max\{1,(1+\lambda\tau)^{2(n-1)}\} \quad \mbox{and} \quad \sum\limits_{k=0}^{n-1}\left(1+\lambda\tau\right)^{2k}=\frac{(1+\lambda\tau)^{2n}-1}{2\lambda\tau(1+\frac{\lambda\tau}{2})},$$ it follows:
\begin{multline*}
(1+\lambda\tau)^{n}W_2(J_{n,\tau}^0(\mu_0), J_{n,\tau}^\alpha(\mu_0))\\
\leq \sqrt{\frac{\max\{1,(1+\lambda\tau)^{2(n-1)}\}}{1+\lambda\tau}} \sqrt{\sum_{k=0}^{n-1}R_k(\tau)}+\sqrt{\frac{(1+\lambda\tau)^{2n}-1}{2\lambda\tau}}\sqrt{\sum_{k=0}^{n-1} \frac{\Tilde{R}_k(\alpha,\tau)}{(1+\lambda\tau)(1+\frac{\lambda\tau}{2})} },
\end{multline*}
where the following still holds true:
\[
\sum_{k=0}^{n-1}R_k(\tau)=2\tau(\F(\mu_0)-\F(J_{n,\tau}^0(\mu_0))\quad\text{and}\quad \sum_{k=0}^{n-1} \Tilde{R}_k(\alpha,\tau)=\alpha\tau\left(H(\mu_0)-H(J_{n,\tau}^\alpha(\mu_0))+K n\tau\right).
\]
Finally,
\begin{multline*}
W_2(J_{n,\tau}^0(\mu_0), J_{n,\tau}^\alpha(\mu_0)) 
\leq \sqrt{
    2  \frac{ \max\{(1+\lambda\tau)^{-2n+2},\, 1\} }{ (1+\lambda\tau)^3 }
}  \sqrt{\tau} 
\sqrt{ \mathcal{F}(\mu_0) - \mathcal{F}(J_{n,\tau}^0(\mu_0)) } \\
\quad + \sqrt{ 
    \frac{1 - (1+\lambda\tau)^{-2n}}{2\lambda} 
}  
\sqrt{ 
    \frac{ \alpha \left( H(\mu_0) - H(J_{n,\tau}^\alpha(\mu_0)) + K n \tau \right) }
         { (1+\lambda\tau)\left(1 + \frac{\lambda\tau}{2} \right) } 
}.
\end{multline*}
Now, observe the following identity:
\begin{equation*}
    \max\{(1+\lambda\tau)^{-2n+2},1\}=(1-\lambda_-\tau)^{-2n+2}.
\end{equation*} 
Second, if $\tau<\frac{1}{2\lambda_-}$, then we have:
\[
(1-\lambda_-\tau)^2\leq 1\quad \text{and} \quad\frac{1}{1+\lambda\tau}\leq 1+4\lambda_-\tau.
\]
Finally, the Mean value theorem provides:
\[
\frac{1}{\sqrt{ (1+\lambda\tau)(1+\frac{\lambda\tau}{2})} }\leq 1+3\lambda_-\tau.
\]
So we obtain:
\begin{multline*}
  W_2(J_{n,\tau}^0(\mu_0), J_{n,\tau}^\alpha(\mu_0))\leq \sqrt2(1+4\lambda_-\tau)^{\frac{3}{2}} (1-\lambda_-\tau)^{-n}\sqrt{\tau}\sqrt{(\F(\mu_0)-\F(J_{n,\tau}^0(\mu_0))}\\
+(1+3\lambda_-\tau)\sqrt{\frac{1-(1+\lambda\tau)^{-2n}}{2\lambda}}\sqrt{\alpha\left(H(\mu_0)-H(J_{n,\tau}^\alpha(\mu_0))+K n\tau\right)}.
\end{multline*}
With this, we conclude the proof of the main theorem.

\section{Proof of Corollary \ref{cv rate entropic JKO}}
\label{sec:proof_cor}
The purpose of this section is to prove Corollary~\ref{cv rate entropic JKO}. The main idea is to apply the bound found in Theorem~\ref{the theorem} and the following convergence rates depending on the value of $\lambda$, proved by Ambrosio, Gigli and Savaré in \cite[Theorems 4.0.7, 4.0.9 and 4.0.10]{ambrosio2005gradient}:

\noindent\textbf{(i) If $\lambda=0$.}
For all $t>0$, we have for all $n\in\mathbb N^*$:
\begin{equation*}
W_2^2\!\left(\rho^0(t),\,J_{n,t/n}^0(\mu_0)\right)
\le
\frac{t}{n}\Big(\mathcal F(\mu_0)-\mathcal F(J^0_{t/n}(\mu_0))\Big).
\end{equation*}

\medskip
\noindent\textbf{(ii) If $\lambda<0$.}
For all $t>0$, we have for all $n \in \N^*$ such that $\tau=t/n<(-\lambda)^{-1}$ (or equivalently $n>|\lambda|t$):
\begin{multline}\label{eq:lambdaneg}
W_2^2\!\left(\rho^0(t),\,J_{n,t/n}^0(\mu_0)\right)
\le
c_n(t)\,\frac{t}{n}\,
\Bigl(\mathcal F(\mu_0)-\inf_{n' > - \lambda t}\inf_{k \leq n'}\mathcal F (J^0_{k,t/n'}(\mu_0)\Bigr)\,
e^{-2\lambda t},\\
\mbox{where }
c_n(t):=\left(1+\sqrt{\tfrac{4}{3}|\lambda|\,\tfrac{t}{n}}\right)^2 .
\end{multline}

\medskip
\noindent\textbf{(iii) If $\lambda>0$.}
For all $\theta>0$, let us define
\[
\lambda_{\theta}:=\frac{\ln\!\bigl(1+\theta \lambda\bigr)}{\theta}.
\]
Then for all $t>0$, we have for all $n\in\mathbb N^*$:
\begin{multline*}
W_2^2\!\left(\rho^0(t),\,J_{n,t/n}^0(\mu_0)\right)
\le
c_n(t)\,\frac{t}{n}\,
\Bigl(\mathcal F(\mu_0)-\inf_{\mathcal P_2(\mathbb R^d)}\mathcal F\Bigr)\,
e^{-2\lambda_{t/n} t},
\\ \mbox{where }
c_n(t):=\left(1+\lambda\tfrac{t}{n}\right)\left(1+\sqrt{2\lambda t}\right)^4 .
\end{multline*}
\begin{remark}
\label{rem:below_bound_F}
    In fact, in~\cite{ambrosio2005gradient}, when $\lambda < 0$, the estimate~\eqref{eq:lambdaneg} is written with $\inf \mathcal F$ in place of 
    \begin{equation*}
         \inf_{n' > - \lambda t}\inf_{k \leq n'} \mathcal F (J^0_{k, t/n'}(\mu_0)).
    \end{equation*}
    This could be a problem since when $\lambda \leq 0$, $\F$ is not necessarily below bounded. But for a given $t\geq 0$, let us call
    \begin{equation*}
        M_t:=  \inf_{n>\lambda_-t}\inf_{k\leq n} \F(J_{k,t/n}^0(\mu_0)).
    \end{equation*}
    Using iteratively Proposition~\ref{prop:lower bound along JKO implies wlog F lower bounded} from the appendix, it appears that replacing the functional $\mathcal F$ by $\F^{M_t}:=\max\{\F,M_t\}$ does not affect the points $(J^0_{k,t/n}(\mu_0))_{0 \leq k \leq n}$ reached by the JKO scheme up to step $k=n$. Letting $n$ tend to $+\infty$, $\rho^0(t)$ is therefore the evaluation at time $t$ of the gradient flow of both functionals $\F$ and $\F^{M_t}$ starting from $\mu_0$, and our bound~\eqref{eq:lambdaneg} follows from using $\F^{M_t}$ instead of $\F$ in the right hand side.
\end{remark}

By comparing the bound that we want to prove in Corollary~\ref{cv rate entropic JKO} with the bounds obtained in Theorem~\ref{the theorem} and the bounds just stated, we can see that we only need to derive estimates to show that the following quantities:
\begin{equation*}
    \F(\mu_0) - \F(J_{k,t/n}^0(\mu_0)) \qquad \mbox{and} \qquad H(\mu_0) -H(J_{n,t/n}^\alpha(\mu_0))
\end{equation*}
do not tend to $+ \infty$ as $n \to + \infty$.
This is the purpose of the two next propositions, which easily imply Corollary~\ref{cv rate entropic JKO}.

 \begin{proposition}\label{prop:lower bound F alonf jko scheme}
       Let $\F$ be $\lambda$-convex along generalized geodesics and $W_2$-l.s.c. Let $\mu_0\in \mathcal P_2(\R^d)$ be absolutely continuous and such that $\F(\mu_0) < + \infty$. There exists $c$ only depending on $\mu_0$ and $\F$, such that for all $t>0,$ all $n>4\lambda_-t$, and all $k\leq n$,
       \begin{itemize}
           \item if $\lambda=0$,
           \[
       \F(\mu_0)-\F(J_{k,t/n}^0(\mu_0))\leq c\left(1+\frac kn t\right),
       \]
       \item if $\lambda<0$,
       \[
       \F(\mu_0)-\F(J_{k,t/n}^0(\mu_0))\leq c\left(\frac{n}{n+2\lambda t}\right)^k,
       \]
       \item if $\lambda>0$,
       \[
       \F(\mu_0)-\F(J_{k,t/n}^0(\mu_0))\leq c.
       \]
       \end{itemize}
       
   \end{proposition}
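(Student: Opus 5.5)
The plan is to bound $\F$ from below along the scheme by comparing it with a fixed reference measure. The comparison uses the discrete E.V.I.\ of Theorem~\ref{discrete E.V.I}, and the growth of the distance to the reference point is controlled step by step.

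Fix $\rho:=\mu_0$ as the reference point. Write $\tau=t/n$, $\mu_k:=J^0_{k,\tau}(\mu_0)$ and $d_k:=W_2^2(\mu_0,\mu_k)$. Applying Theorem~\ref{discrete E.V.I} at step $k$ with base $\mu_k$ and test measure $\rho=\mu_0$ gives
\[
(1+\lambda\tau)\,d_{k+1}-d_k\le 2\tau\bigl(\F(\mu_0)-\F(\mu_{k+1})\bigr)-W_2^2(\mu_{k+1},\mu_k).
\]
This inequality alone only gives an upper bound on $d_{k+1}$ in terms of $\F(\mu_0)-\F(\mu_{k+1})$, which is the wrong direction. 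I therefore need an independent lower bound on $\F$ in terms of the distance. By $\lambda$-convexity and lower semicontinuity, the JKO step from $\mu_0$ exists (Theorem~\ref{welldefined JKO}). The resulting inequality
\[
\F(\nu)\ge \F(\bar\mu)-\frac{W_2^2(\mu_0,\nu)}{2\tau_0}-C
\]
for a fixed $\tau_0<1/\lambda_-$ and all $\nu$ (the Moreau--Yosida value being finite) gives a quadratic lower bound
\[
\F(\nu)\ge -A-B\,W_2^2(\mu_0,\nu),
\]
with $A,B$ depending only on $\mu_0$ and $\F$. One can choose $B$ arbitrarily close to $\lambda_-/2$ by taking $\tau_0$ close to $1/\lambda_-$. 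When $\lambda\ge0$, any $\tau_0$ works, so $B$ can be taken small; I will fix $\tau_0=1$, say.

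Plugging this lower bound into the E.V.I.\ recursion yields
\[
(1+\lambda\tau)\,d_{k+1}\le d_k+2\tau\bigl(\F(\mu_0)+A\bigr)+2\tau B\,d_{k+1}.
\]
I then iterate this linear recursion, a discrete Gronwall argument, to bound $d_k$, and finally feed the bound back into $\F(\mu_0)-\F(\mu_k)\le \F(\mu_0)+A+B\,d_k$.

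The cases go as follows.
\begin{itemize}
\item For $\lambda=0$, choosing $B$ small (so $2\tau B$ is negligible, or absorbed using $\tau\le t$) gives $d_k\lesssim k\tau=\frac kn t$ up to constants, hence the bound $c(1+\frac kn t)$.
\item For $\lambda<0$, take $B$ with $2B<2\lambda_-$ so that the coefficient in front of $d_{k+1}$ is at least $1+2\lambda\tau=(n+2\lambda t)/n$; this is positive because $n>4\lambda_-t$. Iterating gives growth $\bigl(\frac{n}{n+2\lambda t}\bigr)^k$. This is the main obstacle: tuning $\tau_0$ (hence $B$) so that the factor is exactly of this form, and absorbing the additive terms into the geometric sum, which is bounded by a constant times the same power.
\item For $\lambda>0$, $\F$ is bounded below by the quadratic bound with $B$ small, and $d_k$ stays bounded since the contraction factor $1+\lambda\tau-2\tau B>1$. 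Alternatively, one can use directly that a $\lambda$-convex l.s.c.\ functional with $\lambda>0$ has a minimizer, so $\inf\F>-\infty$ and the constant $c=\F(\mu_0)-\inf\F$ works.
\end{itemize}
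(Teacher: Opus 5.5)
Starting from the discrete E.V.I.\ with test measure $\mu_0$ is the right idea, and it is also the paper's starting point. The problem is that the lower bound you feed into it is too weak, so the argument fails for $\lambda=0$ and for $\lambda<0$.

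The Moreau--Yosida minorant $\F(\nu)\ge -A-B\,W_2^2(\mu_0,\nu)$ always has $B=\frac1{2\tau_0}>\frac{\lambda_-}{2}$ strictly. When $\lambda=0$ it still has $B>0$: a $0$-convex functional such as $\rho\mapsto\int x_1\diff\rho$ is unbounded below, so $B=0$ is not available. Your recursion therefore reads $\bigl(1-(\lambda_-+2B)\tau\bigr)d_{k+1}\le d_k+2\tau(\F(\mu_0)+A)$. Iterating it gives a bound of order $\bigl(1-(\lambda_-+2B)\tau\bigr)^{-k}\ge e^{(\lambda_-+2B)\frac kn t}$.

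\textbf{Case $\lambda=0$.} The bound is exponential in $\frac kn t$. The factor $(1-2B\tau)^{-1}$ is compounded $k$ times and cannot be ``absorbed'', so you do not get $d_k\lesssim\frac kn t$ with constants independent of $t$.

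\textbf{Case $\lambda<0$.} A coefficient $\ge 1+2\lambda\tau$ would require $2B\le\lambda_-$. Your condition $2B<2\lambda_-$ is a slip: it only gives a coefficient $>1+3\lambda\tau$. And $2B\le\lambda_-$ is exactly what this minorant cannot give. Moreover, $A\to+\infty$ as $\tau_0\uparrow1/\lambda_-$; for $\F=\frac\lambda2\int|x|^2\diff\rho$ the best constant is $A=\frac{\lambda_-}{2(1+\lambda\tau_0)}\int|x|^2\diff\mu_0$. For any fixed $\tau_0$, your bound exceeds $\bigl(\frac n{n+2\lambda t}\bigr)^k$ by a factor of order $e^{(2B-\lambda_-)\frac kn t}$, which is unbounded in $t$. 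Letting $\tau_0$ depend on $t$ would require quantitative control of how $A$ blows up as $B\downarrow\lambda_-/2$, and that is precisely the missing information.

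What you need is a minorant that is \emph{linear} in $W_2$ and carries the \emph{exact} quadratic coefficient:
\[
\F(\rho)\ge\F(\mu_0)-c_0-c_1W_2(\rho,\mu_0)+\frac\lambda2W_2^2(\rho,\mu_0).
\]
This comes from the Hilbertian lifting. Take $X_0$ with $X_0{_\#}\mathbb P=\mu_0$. Then $Y\mapsto F(Y)-\frac\lambda2\|Y-X_0\|^2$ is convex and strongly l.s.c.\ on the closed convex set $C(X_0)$. By Hahn--Banach it admits a continuous affine minorant. Finally, $W_2(\rho,\mu_0)=\|Y^{X_0}_\rho-X_0\|_{L^2}$.

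Inserting this into the E.V.I.\ makes $\lambda$ appear twice. For $u_k=W_2(\mu_0,J^0_{k,\tau}(\mu_0))$ it gives
\[
(1+2\lambda\tau)u_{k+1}^2-2\tau c_1u_{k+1}-\bigl(2\tau c_0+u_k^2\bigr)\le0 .
\]
Bound $u_{k+1}$ by the larger root of this quadratic. Then apply the squared discrete Gronwall lemma (Proposition~\ref{square discrete gronwall}) with $1+\lambda\tau$ replaced by $\sqrt{1+2\lambda\tau}$. This gives $u_k\lesssim\sqrt{k\tau}+k\tau$ for $\lambda=0$ and $u_k^2\lesssim(1+2\lambda\tau)^{-k}$ for $\lambda<0$. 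The linear term $c_1u$ is what keeps the growth polynomial when $\lambda=0$. Plugging these bounds back into the same minorant yields the statement.

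Your treatment of $\lambda>0$ is fine. You can use $\inf\F>-\infty$ directly, or your recursion provided $\tau_0>1/\lambda$ so that $2B<\lambda$; with $\tau_0=1$ the recursion only works when $\lambda>1$.
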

   \begin{proposition}\label{prop:lower bound H along entropic jko scheme}
    Let $\mathcal F$ satisfy Hypothesis~\ref{hypothesis}. Let $\mu_0$ be such that $\F(\mu_0)< + \infty$ and $H(\mu_0)< + \infty$. Then for all $\alpha_0>0$, there exists $C$ only depending on $d,\mu_0,\F,\alpha_0, K$ such that for every $t>0$, $\alpha\leq\alpha_0$ and $n>16\lambda_-t$,    
    \begin{itemize}
        \item if $\lambda=0$,
        \[
    H(\mu_0) -H(J_{n,t/n}^\alpha(\mu_0))\leq C(1 + \ln( t)),
    \]
    \item if $\lambda<0$,
    \[
    H(\mu_0) -H(J_{n,t/n}^\alpha(\mu_0))\leq C(1+t),
    \]
    \item if $\lambda>0$,
    \[
    H(\mu_0) -H(J_{n,t/n}^\alpha(\mu_0))\leq C.
    \]
    \end{itemize}
    \end{proposition}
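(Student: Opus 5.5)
The plan is to reduce everything to an upper bound on the second moment of $J^\alpha_{n,t/n}(\mu_0)$. By Proposition~\ref{below bound ent},
\[
-H(J^\alpha_{n,t/n}(\mu_0)) \leq \tfrac d2\ln\!\big(\tfrac 2d\pi e\big)+\tfrac d2 \ln m_n^2, \qquad m_n^2:=\int|x|^2\diff J^\alpha_{n,t/n}(\mu_0).
\]
So it suffices to show that $m_n^2$ grows at most polynomially in $t$ when $\lambda=0$, at most like $e^{ct}$ when $\lambda<0$, and stays bounded when $\lambda>0$. The logarithm then produces exactly the three regimes $C(1+\ln t)$, $C(1+t)$ and $C$.

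To bound $m_n$, I will use the triangle inequality through $\delta_0$:
\[
m_n \leq W_2\big(J^\alpha_{n,\tau}(\mu_0),J^0_{n,\tau}(\mu_0)\big)+W_2\big(J^0_{n,\tau}(\mu_0),\mu_0\big)+\Big(\int|x|^2\diff\mu_0\Big)^{1/2}, \qquad \tau=t/n .
\]
\begin{itemize}
\item[(a)] \emph{Classical scheme term.} I bound $W_2(J^0_{n,\tau}(\mu_0),\mu_0)$ by the standard energy estimate. Take $\rho=J^0_{k,\tau}(\mu_0)$ as competitor at step $k+1$ (or use the discrete E.V.I of Theorem~\ref{discrete E.V.I} with $\rho$ equal to the previous iterate). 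This gives $\sum_k W_2^2(J^0_{k},J^0_{k+1})/(2\tau)\leq \F(\mu_0)-\F(J^0_{n})$, up to $(1+\lambda\tau)$ factors when $\lambda<0$. Cauchy--Schwarz then yields $W_2^2(J^0_{n},\mu_0)\leq 2t\,(\F(\mu_0)-\inf_k\F(J^0_{k}))$. Proposition~\ref{prop:lower bound F alonf jko scheme} controls this by $c\,t(1+t)$ if $\lambda=0$, by $c\,t\,e^{c' t}$ if $\lambda<0$ (since $(n/(n+2\lambda t))^k\leq e^{c|\lambda| t}$ when $n>16\lambda_-t$), and by $c\,t$ if $\lambda>0$. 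In the case $\lambda>0$, to get a bound uniform in $t$, I would instead use that the iterates stay in a fixed sublevel and that $\lambda$-convexity makes these sublevels bounded in $W_2$, via Proposition~\ref{level set bounded} applied around the minimizer.
\item[(b)] \emph{Comparison term.} I bound $W_2(J^\alpha_{n},J^0_{n})$ with Theorem~\ref{the theorem}. Its $\F$ part is controlled as in (a). Its entropic part is $\sqrt{\alpha\,\Lambda(t)\,(H(\mu_0)-H(J^\alpha_n)+Kt)}$, where $\Lambda(t)$ is $t$ for $\lambda=0$, or the factor $\frac{1-(1+\lambda\tau)^{-2n}}{2\lambda}$ otherwise, the latter being bounded by $e^{ct}$ for $\lambda<0$ and by a constant for $\lambda>0$ under $n>16\lambda_-t$. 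I use $\alpha\leq\alpha_0$ throughout.
\end{itemize}

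Inserting the entropy lower bound into (b) gives a self-consistent inequality of the form
\[
m_n \leq A(t)+\sqrt{\alpha_0\Lambda(t)\big(H(\mu_0)+Kt+c_d+d\ln m_n\big)}.
\]
Because $\sqrt{\ln m}$ is sublinear, this inequality forces $m_n\leq C\,(A(t)+\sqrt{\Lambda(t)(1+t)}+1)$. Concretely, I will use $\ln m\leq m/(2\varepsilon)+\ln\varepsilon$ together with Young's inequality to absorb the $m_n$ on the right-hand side. Taking logarithms then gives the three cases.

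The main obstacle is this absorption step, combined with tracking how the constants depend on $t$, especially for $\lambda<0$. One also has to make sure that Theorem~\ref{the theorem} applies, which needs $\tau\leq 1/(2\lambda_-)$; this is guaranteed by $n>16\lambda_-t$. A further point is that in the case $\lambda>0$ both $A(t)$ and $\Lambda(t)$ must be bounded uniformly in $t$, which relies on the boundedness of sublevels rather than on the crude estimate of (a).
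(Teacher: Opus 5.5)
Your route differs from the paper's, and it works for $\lambda\le 0$. The paper never invokes Theorem~\ref{the theorem} here. It bounds $W_2(J^\alpha_{n,\tau}(\mu_0),\mu_0)$ directly inside the entropic scheme (Proposition~\ref{prop:upper wasserstein alon ent JKo scheme}), using:
\begin{itemize}
\item telescoping of $W_2^2(J^\alpha_{i,\tau}(\mu_0),\mu_0)$ and Cauchy--Schwarz;
\item the one-step energy inequality obtained by testing $J^\alpha_{i-1,\tau}(\mu_0)*\sigma_{\alpha\tau}$;
\item the lower bound on $\F+\frac\alpha2 H$ of Proposition~\ref{prop:lower bound on F by the second moment};
\item two discrete Gr\"onwall-type lemmas.
\end{itemize}
You instead pass through the classical scheme and recycle Theorem~\ref{the theorem}, which is proved independently, so there is no circularity. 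You then close with a logarithmic absorption.

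For $\lambda\le0$ the absorption is sound. If $m_n\ge1$, then $\sqrt{\alpha_0\Lambda d\ln m_n}\le \frac{m_n}{2}+\frac{\alpha_0\Lambda d}{2}$. This gives $m_n\le C(1+t)$ for $\lambda=0$ and $m_n\le Ce^{ct}$ for $\lambda<0$, hence the first two cases. Your argument is shorter and avoids the ad hoc Gr\"onwall lemmas; the paper's stays within the entropic scheme and yields a direct $W_2$ bound on its iterates.

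Both arguments actually give $C(1+\ln(1+t))$ when $\lambda=0$. The literal $C(1+\ln t)$ cannot hold for all $t>0$ with a fixed $C$. Take $\F\equiv0$ and $\mu_0$ the centered Gaussian of variance $a$: then $H(\mu_0)-H(J^\alpha_{n,t/n}(\mu_0))=\frac d2\ln(1+\alpha t/a)>0$. That is a defect of the statement, not of your proof.

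The step that fails is the case $\lambda>0$. Even granting that $A(t)$ and $\Lambda(t)$ are bounded, the entropic factor of Theorem~\ref{the theorem} contains $Kn\tau=Kt$. Your self-consistent inequality then reads $m_n\le C+\sqrt{C(1+\ln m_n+Kt)}$. This only yields $m_n\le C(1+\sqrt{K_+t})$, and therefore $H(\mu_0)-H(J^\alpha_{n,t/n}(\mu_0))\le C(1+\ln(1+t))$, not a constant independent of $t$.

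This drift is built into Theorem~\ref{the theorem}: it comes from using $\nu^0*\sigma_{\alpha\tau}$ as competitor at every step. Moreover $K>0$ is the normal situation when $\lambda>0$; in the toy model of Section~\ref{Sec:optimality in alpha}, $K=\lambda d$. So more careful constant-tracking will not remove it.

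The paper does not supply the missing ingredient either. Its proof of Proposition~\ref{prop:upper wasserstein alon ent JKo scheme} is written only for $\lambda=0$ and $\lambda<0$. The energy inequality it relies on carries the same drift $K\frac{\alpha}{2}k\tau$, so adapting it to $\lambda>0$ also gives a bound that grows with $t$. A $t$-uniform bound for $\lambda>0$ needs an extra ingredient that neither argument provides.

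A minor point on your step (a) for $\lambda>0$: the right tool is Proposition~\ref{prop:lower bound on F by the wasserstein distance}, not Proposition~\ref{level set bounded}, which is non-quantitative. Proposition~\ref{prop:lower bound on F by the wasserstein distance} gives $\frac\lambda2 W_2^2-c_1W_2-c_0\le \F(J^0_{n,\tau}(\mu_0))-\F(\mu_0)\le 0$ with $W_2=W_2(J^0_{n,\tau}(\mu_0),\mu_0)$.
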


 We will now prove these propositions. Both of them can be deduced from an upper bound on the Wasserstein distances along our schemes. 
    
    Firstly, we prove Proposition \ref{prop:lower bound F alonf jko scheme}. The starting point is the following proposition, which relates an upper bound on the Wasserstein distance to a lower bound on $\F$. In the Hilbertian framework developed in Subsection~\ref{general cnx geo explain}, this proposition follows from the $\lambda$-convexity of $\F$ along generalized geodesics and the Hahn-Banach theorem applied in $L^2$, and we decided to skip the proof.
\begin{proposition}\label{prop:lower bound on F by the wasserstein distance}
    Let $\F$ be $\lambda$-convex along generalized geodesics and $W_2$-l.s.c, and let $\mu_0 \in \mathcal P_2(\R^d)$ be such that $\F(\mu_0)< + \infty$. Then there exist $c_0,c_1>0$ depending only on $\mu_0$ and $\F$ such that for all $\rho\in\mathcal P_2(\R^d)$,
    \[
        \F(\rho)\geq \F(\mu_0)-c_0-c_1W_2(\rho,\mu_0)+\frac\lambda2W_2^2(\rho,\mu_0).
    \]
    \end{proposition}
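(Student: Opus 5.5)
The plan is to avoid the Hilbertian/Hahn--Banach route and instead read the estimate off a single step of the classical JKO scheme started at $\mu_0$, using the discrete E.V.I. of Theorem~\ref{discrete E.V.I}. Fix once and for all a time step $\tau_0>0$ with $\tau_0<\frac{1}{\lambda_-}$ (say $\tau_0=\frac{1}{2\lambda_-}$, or $\tau_0=1$ if $\lambda\geq 0$). Let $\nu:=J^0_{\tau_0}(\mu_0)$.

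Existence of $\nu$ is given by Theorem~\ref{welldefined JKO} when $\mu_0\ll\Leb$, which is the case in our applications. In general one invokes~\cite{ambrosio2005gradient}. Since $\nu$ is a minimizer and $\mu_0$ is a competitor,
\[
\frac{W_2^2(\mu_0,\nu)}{2\tau_0}+\F(\nu)\le \F(\mu_0)<+\infty .
\]
In particular $\F(\nu)$ is a finite real number (it is not $-\infty$ because $\F$ takes values in $\R\cup\{+\infty\}$). Set $d:=W_2(\mu_0,\nu)$. Both $\F(\nu)$ and $d$ depend only on $\mu_0$, $\F$ and the fixed choice of $\tau_0$.

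Next, apply Theorem~\ref{discrete E.V.I} with base point $\mu_0$ and an arbitrary $\rho\in\mathcal P_2(\R^d)$. Dropping the nonpositive term $-\frac{1}{2\tau_0}W_2^2(\nu,\mu_0)$ and rearranging gives
\[
\F(\rho)\ge \F(\nu)+\frac{1+\lambda\tau_0}{2\tau_0}W_2^2(\rho,\nu)-\frac{1}{2\tau_0}W_2^2(\rho,\mu_0).
\]
Write $W:=W_2(\rho,\mu_0)$. The triangle inequality gives $W_2(\rho,\nu)\ge |W-d|$. Since $1+\lambda\tau_0>0$, we may replace $W_2^2(\rho,\nu)$ by $(W-d)^2$. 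Expanding then yields
\[
\F(\rho)\ge \F(\nu)+\frac{\lambda}{2}W^2-\frac{(1+\lambda\tau_0)d}{\tau_0}\,W+\frac{(1+\lambda\tau_0)d^2}{2\tau_0}.
\]
The last term is nonnegative, so it can be dropped. We then conclude with
\[
c_0:=\max\{\F(\mu_0)-\F(\nu),\,1\}, \qquad c_1:=\max\Bigl\{\tfrac{(1+\lambda\tau_0)d}{\tau_0},\,1\Bigr\}.
\]
Taking the maxima with $1$ only enforces strict positivity of the constants, and enlarging $c_0,c_1$ only weakens the inequality. Both constants depend only on $\mu_0$ and $\F$, as required.

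The only delicate point is the existence of the JKO step $\nu$ when $\mu_0$ is not absolutely continuous, since the paper's own existence result (Theorem~\ref{welldefined JKO}) assumes $\mu_0\ll\Leb$. I would handle this by citing the general existence result of~\cite{ambrosio2005gradient} for functionals that are $\lambda$-convex along generalized geodesics. Alternatively, I would restrict to absolutely continuous $\mu_0$, which covers every use in Propositions~\ref{prop:lower bound F alonf jko scheme} and~\ref{prop:lower bound H along entropic jko scheme}, where $H(\mu_0)<+\infty$ or $\mu_0\ll\Leb$ is assumed. Everything else is the algebra above.
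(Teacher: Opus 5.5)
Your proof is correct, and it takes a genuinely different route from the one the paper indicates. The paper skips the proof, saying only that it follows from the Hilbertian lift of Subsection~\ref{general cnx geo explain} and the Hahn--Banach theorem.

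\textbf{The paper's route.} One takes $X_0\in\mathcal H$ with law $\mu_0\ll\Leb$. The map $Y\mapsto F(Y)-\frac{\lambda}{2}\|Y-X_0\|_{L^2}^2$ is then convex and strongly l.s.c.\ on the closed convex set $C(X_0)$. Hahn--Banach gives a continuous affine minorant $a+\langle U,Y-X_0\rangle$ with $a\le\F(\mu_0)$. Evaluating it at the point $Y^{X_0}_\rho\in C(X_0)$ with $\|Y^{X_0}_\rho-X_0\|_{L^2}=W_2(\rho,\mu_0)$ gives the claim with $c_1=\|U\|_{L^2}$ and $c_0=\F(\mu_0)-a$. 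This is pure convex analysis and needs no minimization step. However, the constants are not explicit, and with the lift as developed in the paper it only covers absolutely continuous $\mu_0$.

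\textbf{Your route.} You stay in $\mathcal P_2(\R^d)$ and use one JKO step together with the discrete E.V.I. This is the metric analogue of the fact that a proximal point furnishes a subgradient, hence an affine minorant, of a convex l.s.c.\ function. It buys three things:
\begin{enumerate}
\item It uses only results already stated in the paper, Theorems~\ref{welldefined JKO} and~\ref{discrete E.V.I}. Neither of these relies on the proposition, so there is no circularity.
\item It gives explicit constants, up to your maxima with $1$: $c_0=\F(\mu_0)-\F(J^0_{\tau_0}(\mu_0))$ and $c_1=(1+\lambda\tau_0)W_2(\mu_0,J^0_{\tau_0}(\mu_0))/\tau_0$.
\item It reaches general $\mu_0$ once you accept the existence result and E.V.I.\ of \cite{ambrosio2005gradient} for non absolutely continuous base points. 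The paper implicitly does the same when stating Theorem~\ref{discrete E.V.I}.
\end{enumerate}

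Your algebra checks out. The coefficient $1+\lambda\tau_0>0$ justifies replacing $W_2^2(\rho,\nu)$ by $\bigl(W_2(\rho,\mu_0)-W_2(\mu_0,\nu)\bigr)^2$. The quadratic terms combine to $\frac{\lambda}{2}W_2^2(\rho,\mu_0)$. The term you drop, $\frac{1+\lambda\tau_0}{2\tau_0}W_2^2(\mu_0,\nu)$, is nonnegative.
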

    
We will show Proposition \ref{prop:lower bound F alonf jko scheme} by using the discrete E.V.I of Theorem~\ref{discrete E.V.I}, Proposition~\ref{square discrete gronwall} and the previous Proposition~\ref{prop:lower bound on F by the wasserstein distance}.
  
   \begin{proof}[Proof of Proposition \ref{prop:lower bound F alonf jko scheme}]
   \noindent The case $\lambda>0$ is a direct consequence of Proposition~\ref{prop:lower bound on F by the wasserstein distance}.
   In the following we are assuming $\lambda\leq0$.
   
   For now, consider any $0<\tau<\frac{1}{4\lambda_-}$ and any $k\in\N$. (Ultimately, we will obviously choose $\tau=\frac tn$ and $k\leq n$, but this is not necessary for the following and will simplify the notation).
   
   Applying the discrete E.V.I of Theorem \ref{discrete E.V.I} to $\rho=\mu_0$ and $\mu=J_{k,\tau}^0(\mu_0)$ provides, forgetting the nonnegative term $W_2^2(J_{k,\tau}(\mu_0),J_{k+1,\tau}(\mu_0))$:
    \[    \frac1{2\tau}\left(W_2^2\left(\mu_0,J_{k+1,\tau}^0(\mu_0)\right)-W_2^2(\mu_0,J_{k,\tau}^0(\mu_0)\right)\leq \F(\mu_0)-\F(J_{k+1,\tau}^0(\mu_0))-\frac\lambda2W_2^2(\mu_0,J_{k+1,\tau}^0(\mu_0)).
    \]
    Proposition \ref{prop:lower bound on F by the wasserstein distance} gives:
    \[    \frac1{2\tau}\left(W_2^2\left(\mu_0,J_{k+1,\tau}^0(\mu_0)\right)-W_2^2(\mu_0,J_{k,\tau}^0(\mu_0)\right)\leq c_0+c_1W_2(\mu_0,J_{k+1,\tau}^0(\mu_0))-\lambda W_2^2(\mu_0,J_{k+1,\tau}^0(\mu_0)).
    \]
    Then, rearranging the terms, 
    \[    (1+2\lambda\tau)W_2^2(\mu_0,J_{k+1,\tau}^0(\mu_0))-2\tau c_1 W_2(\mu_0,J_{k+1,\tau}^0(\mu_0))-\left(2\tau c_0+W_2^2(\mu_0,J_{k,\tau}^0(\mu_0)\right)\leq0.
    \]
    Letting $u_k=\frac{W_2(\mu_0,J_{k,\tau}^0(\mu_0))}{\max\{\sqrt{c_0},c_1\}}$, then 
    \[    (1+2\lambda\tau)u_k^2-2\tau  u_k-\left(2\tau +u_{k-1}^2\right)\leq0.
    \]
    Remind that we chose $\tau$ small enough to obtain $1+2\lambda\tau>\frac{1}{2}>0$, which implies that $u_k$ is between the two square roots of the polynomial: $(1+2\lambda\tau)X^2-2\tau X-\left(2\tau +u_{k-1}\right)$. It follows:
    \[
    u_{k+1}\leq \frac{\tau}{1+2\lambda\tau}+\sqrt{\frac{\tau^2}{(1+2\lambda\tau)^2}+\frac{2\tau}{1+2\lambda\tau}+\frac{u_k^2}{1+2\lambda\tau}}.
    \]
    Since, $1+2\lambda\tau>0$, we can replace in Proposition \ref{square discrete gronwall}, $1+\lambda\tau$ by $\sqrt{1+2\lambda\tau}$, and obtain for all $k\in\N$,
    \[  u_k\sqrt{1+2\lambda\tau}^k\leq\sqrt{\sum_{i=1}^k(1+2\lambda\tau)^i\left(\frac{\tau^2}{(1+2\lambda\tau)^2}+\frac{2\tau}{1+2\lambda\tau}\right)}+\frac{\tau}{1+2\lambda\tau}\sum_{i=1}^k\sqrt{1+2\lambda\tau}^i.
    \]
    Dividing by $\sqrt{1+2\lambda\tau}^k$ and making the change of variable $i=k-i$ in the sums, we obtain:
    \begin{equation}\label{eq:bound wasserstein distance for lower F before computing the sums}
    u_k \leq\sqrt{\sum_{i=0}^{k-1}(1+2\lambda\tau)^{-i}\left(\frac{\tau^2}{(1+2\lambda\tau)^2}+\frac{2\tau}{1+2\lambda\tau}\right)}+\frac{\tau}{1+2\lambda\tau}\sum_{i=0}^{k-1}\sqrt{1+2\lambda\tau}^{-i}.
    \end{equation}
    In order to continue the discussion, we need to distinguish the case $\lambda<0$ and $\lambda=0$.\\
    \underline{\textbf{Case $\lambda<0$}}. Computing the geometric sums leads to
    \[
    u_k\leq \sqrt{\frac{1}{-2\lambda}\left[\left(\frac{1}{1+2\lambda\tau}\right)^k-1\right]}\sqrt{\frac{\tau}{1+2\lambda\tau}+{2}}+\left[\left(\frac{1}{\sqrt{1+2\lambda\tau}}\right)^k-1\right]\frac{\tau}{\sqrt{1+2\lambda\tau}-(1+2\lambda\tau)}.
    \]
    Using that $1+2\lambda\tau>\frac12$ and forgetting the $-1$, we obtain:
    \[
    u_k\leq \sqrt{\left(\frac{1}{1+2\lambda\tau}\right)^k}\left(\sqrt{\frac{1+\tau}{|\lambda|}}+\frac{\tau}{\sqrt{1+2\lambda\tau}(1-\sqrt{1+2\lambda\tau})}\right).
    \]
    But for every $x\in[-\frac{1}{2},0],$ $1-\sqrt{1+x}\geq \frac1{\sqrt6}|x|$, so that
    \[
    u_k\leq \sqrt{\left(\frac{1}{1+2\lambda\tau}\right)^k}\left(\sqrt{\frac{1+\tau}{|\lambda|}}+\frac{\sqrt3}{|\lambda|}\right).
    \]
    Thus, $W_2^2(\mu_0,J_{n,\tau}^0(\mu_0))\leq C\left(\frac{1}{1+2\lambda\tau}\right)^k$
    where $C$ depends only on $\F$ and $\mu_0$. Using Proposition \ref{prop:lower bound on F by the wasserstein distance}, we obtain that:
    \[
    \F(\mu_0)-\F(J_{k,\tau}^0(\mu_0))\leq c_0+c_1W_2(\mu_0,J_{k,\tau}^0(\mu_0))-\frac\lambda2W_2^2(\mu_0,J_{k,\tau}^0(\mu_0)).
    \]
    As $c_1W_2(\mu_0,J_{k,\tau}^0(\mu_0))\leq \frac{c_1^2}{2}+\frac12W_2^2(\mu_0,J_{k,\tau}^0(\mu_0))$ and $1\leq\left(\frac{1}{1+2\lambda\tau}\right)^k$, there exists a real number still called $C$ depending only on $\mu_0$ and {$\F$}, such that:
    \[
    \F(\mu_0)-\F(J_{k,\tau}^0(\mu_0))\leq C \left(\frac{1}{1+2\lambda\tau}\right)^k.
    \]
    Taking $\tau=\frac{t}{n}$, we conclude the lemma for $\lambda<0$.
    \\
    \underline{\textbf{Case $\lambda=0$.}} This time, equation~\eqref{eq:bound wasserstein distance for lower F before computing the sums} leads to
    \[
    u_k\leq \sqrt{k\tau^2+2k\tau}+k\tau\leq \sqrt{2k\tau}+2k\tau.
    \]
    Thus, there exists $C$ only depending on $\mu_0$ and $\F$ such that
    \[
    W_2(\mu_0,J_{k,\tau}^0(\mu_0))\leq C(\sqrt{k\tau}+k\tau).
    \]
    Using Proposition \ref{prop:lower bound on F by the wasserstein distance}, we obtain that:
    \[
    \F(\mu_0)-\F(J_{k,\tau}^0(\mu_0))\leq c_0+c_1W_2(\mu_0,J_{k,\tau}^0(\mu_0)).
    \]
    It follows that there exists a real number still called $C$, depending only on $\mu_0$ and $\F$, such that:
    \[
    \F(\mu_0)-\F(J_{n,\tau}^0(\mu_0))\leq C (1+n\tau).
    \]
     We conclude the proof of the proposition by taking $\tau=\frac{t}{n}$.\end{proof}
The next step is to prove Proposition \ref{prop:lower bound H along entropic jko scheme} by obtaining a lower bound on $H$ along the entropic JKO scheme. In view of Proposition~\ref{below bound ent} and of the following observation, holding for all $\mu,\rho \in \mathcal P_2(\R^d)$: 
\begin{equation*}
\sqrt{\int|x|^2\diff \rho(x)}=W_2(\rho,\delta_0)\leq W_2(\rho,\mu)+W_2(\mu,\delta_0),
\end{equation*}
we see that a below bound on the entropy along the entropic JKO scheme is equivalent to an upper bound on the Wasserstein distance between the initial measure and the iterates of the scheme. Such an estimate is proved in Proposition~\ref{prop:upper wasserstein alon ent JKo scheme} below.

Unfortunately, we are not aware of an analogue of the discrete E.V.I for the entropic JKO scheme. Therefore, it will not be possible to obtain an upper bound on the Wasserstein distance simply by adapting the previous argument to the entropic JKO scheme. However, we can straightforwardly adapt the argument in \cite[lemma 3.2.2]{ambrosio2005gradient} designed to obtain a bound on the Wasserstein distance along the JKO scheme for functionals for which the discrete E.V.I is not available. We will need the following consequence of Proposition~\ref{prop:lower bound on F by the wasserstein distance}:
\begin{proposition}\label{prop:lower bound on F by the second moment}
    Let $\mathcal F$ be $\lambda$-convex along generalized geodesics, $W_2$-l.s.c, and $\mu_0$ be such that $\F(\mu_0)$ and 
    $H(\mu_0)$ are finite. 
            \begin{itemize}
            \item If $\lambda<0$, for all $\alpha_0\in\R_+$, there exists $c_2>0$ only depending only on $\mu_0,\F,\alpha_0,d$ such that for all $\alpha\leq\alpha_0$ and $\rho \in \mathcal P_2(\R^d)$,
    \[
    \F(\rho)+\frac\alpha 2H(\rho)\geq\F(\mu_0)+\frac\alpha 2H(\mu_0)- c_2+\lambda W_2^2(\rho,\mu_0).
    \]
    \item If $\lambda=0$, there exists $c_2,c_3>0$ only depending on $\mu_0,\F,\alpha_0,d$ such that for all $\alpha\leq\alpha_0$ and $\rho \in \mathcal P_2(\R^d)$,
    \[
    \F(\rho)+\frac\alpha 2H(\rho)\geq\F(\mu_0)+\frac\alpha 2H(\mu_0)- c_2-c_3W_2(\rho,\mu_0).
    \]
        \end{itemize}
\end{proposition}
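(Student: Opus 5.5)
The plan is to combine Proposition~\ref{prop:lower bound on F by the wasserstein distance}, applied to $\F$, with a lower bound on $\frac\alpha2 H(\rho)$ coming from Proposition~\ref{below bound ent}. Both lower bounds are expressed through $W_2(\rho,\mu_0)$.

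\textbf{Step 1: lower bound on the entropy.} Write $m_0:=W_2(\mu_0,\delta_0)$. By the triangle inequality,
\[
\int|x|^2\diff\rho=W_2^2(\rho,\delta_0)\leq (W_2(\rho,\mu_0)+m_0)^2 .
\]
Proposition~\ref{below bound ent} then gives
\[
H(\rho)\geq -\tfrac d2\ln\!\left(\tfrac2d\pi e\right)-d\ln\!\left(W_2(\rho,\mu_0)+m_0\right).
\]
Using $\ln s\leq s$ (or $\ln s \le \varepsilon s^2 + C_\varepsilon$ where a quadratic term is wanted), this turns into an affine or quadratic lower bound in $W_2(\rho,\mu_0)$. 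Subtracting the finite constant $H(\mu_0)$ and multiplying by $\frac\alpha2\leq\frac{\alpha_0}2$ gives
\[
\tfrac\alpha2 H(\rho)\geq \tfrac\alpha2 H(\mu_0)-C(\alpha_0,d,\mu_0)-\tfrac{\alpha_0 d}{2}\ln_+\!\big(W_2(\rho,\mu_0)+m_0\big).
\]
The sign of $\alpha$ only enters through the bound $\alpha\le\alpha_0$, applied to nonpositive terms, so all constants are uniform in $\alpha$. One point needs care: if $m_0=0$, replace $m_0$ by $m_0+1$ so the logarithm stays bounded below.

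\textbf{Step 2: combine with the bound on $\F$.} Proposition~\ref{prop:lower bound on F by the wasserstein distance} gives
\[
\F(\rho)\geq\F(\mu_0)-c_0-c_1W+\tfrac\lambda2W^2, \qquad W:=W_2(\rho,\mu_0).
\]
\emph{Case $\lambda=0$.} Bound the logarithm by $\ln_+(W+m_0)\leq W+m_0$. Adding the two inequalities gives the claim with $c_2=c_0+C+\frac{\alpha_0 d}{2}m_0$ and $c_3=c_1+\frac{\alpha_0 d}2$.

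\emph{Case $\lambda<0$.} The target keeps the coefficient $\lambda W^2$, i.e.\ twice $\frac\lambda2W^2$. This leaves a spare $-\frac{|\lambda|}{2}W^2$ to absorb the sublinear terms. I will use Young's inequality
\[
c_1W\leq \tfrac{|\lambda|}{4}W^2+\tfrac{c_1^2}{|\lambda|},
\]
together with
\[
\tfrac{\alpha_0 d}{2}\ln_+(W+m_0)\leq \tfrac{|\lambda|}4 W^2+C'(\lambda,\alpha_0,d,m_0),
\]
which holds since the logarithm grows slower than any quadratic. Summing gives the claim with $c_2$ gathering all constants. These constants depend on $\lambda$, which is part of the data of $\F$, as allowed.

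The only delicate step is checking that every constant is independent of $\alpha\leq\alpha_0$ and of $\rho$, and handling the case $H(\rho)=+\infty$. The latter is trivial, because the inequality then holds automatically.
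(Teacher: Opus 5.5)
Your proof is correct. Its overall structure matches the paper's: you add a lower bound for $\F$ to one for $\frac\alpha2 H$, and when $\lambda<0$ you absorb the sublinear terms into the spare $\frac\lambda2 W_2^2(\rho,\mu_0)$ by Young's inequality.

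The difference is in how you bound the entropy. The paper applies Proposition~\ref{prop:lower bound on F by the wasserstein distance} a second time, to $H$ itself, which is $0$-convex along generalized geodesics and $W_2$-l.s.c. This gives the affine bound $H(\rho)\geq H(\mu_0)-\tilde c_0-\tilde c_1W_2(\rho,\mu_0)$. You instead use the explicit Gaussian-comparison bound of Proposition~\ref{below bound ent}, together with $\int|x|^2\diff\rho\leq (W_2(\rho,\mu_0)+m_0)^2$.

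The paper's route is more uniform. The same lemma serves both functionals, and only a linear term has to be absorbed. Your route is self-contained for the entropy part: Proposition~\ref{below bound ent} is proved in the paper, whereas the proof of Proposition~\ref{prop:lower bound on F by the wasserstein distance} is skipped. You still need that proposition for $\F$ itself. Your constants from the entropy part are explicit, depending only on $d$, $H(\mu_0)$ and the second moment of $\mu_0$. You also get a logarithmic rather than linear loss in $W_2(\rho,\mu_0)$. The cost is a little bookkeeping with $\ln_+$.

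Two small remarks. First, $m_0>0$ automatically because $H(\mu_0)<+\infty$ forces $\mu_0\ll\Leb$, so your patch for $m_0=0$ is unnecessary. Second, say explicitly that $\alpha\geq 0$ is what allows you to multiply the entropy bound by $\frac\alpha2$; your sentence about ``the sign of $\alpha$'' obscures this.
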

\begin{proof}
    Let $\rho\in\mathcal P_2(\R^d)$, then because of Proposition~\ref{prop:lower bound on F by the wasserstein distance}, there exist $c_0,c_1$ depending only on $\mu_0,\F$ such that for all $\rho\in\mathcal P_2(\R^d)$,
    \[
        \F(\rho)\geq \F(\mu_0) -c_0-c_1W_2(\rho,\mu_0)+\frac\lambda2W_2^2(\rho,\mu_0).
    \]
    Also, because of Proposition~\ref{prop:lower bound on F by the wasserstein distance} applied to $H$, there exist $\tilde c_0,\tilde c_1$ depending only on $\mu_0,d$ such that for all $\rho\in\mathcal P_2(\R^d)$,
    \[
        H(\rho)\geq H(\mu_0) -\tilde c_0-\tilde c_1W_2(\rho,\mu_0).
    \]
    Combining these two inequalities, we obtain that 
    \[
    \F(\rho)+\frac{\alpha}{2}H(\rho)\geq \F(\mu_0)+\frac{\alpha}{2}H(\mu_0) -\left(c_0+\frac{\alpha}{2}\tilde c_0\right)-\left(c_1+\frac{\alpha}{2}\tilde c_1\right)W_2(\rho,\mu_0)+\frac\lambda2W_2^2(\rho,\mu_0).
    \]
    If $\lambda=0$, there is nothing more to show. If $\lambda<0$, the inequality 
    \begin{equation*}
    \left(c_1+\frac{\alpha}{2}\tilde c_1\right)W_2(\rho,\mu_0)\leq \frac{1}{-2\lambda}\left(c_1+\frac{\alpha}{2}\tilde c_1\right)^2 -\frac{\lambda}{2}W_2^2(\rho,\mu_0)
    \end{equation*}
    concludes the proof.
\end{proof}
Now, we have all the ingredients to obtain a bound on the Wasserstein distance, and so to conclude the proof of Proposition~\ref{prop:lower bound H along entropic jko scheme}.
\begin{proposition}\label{prop:upper wasserstein alon ent JKo scheme}
    In the context of Proposition~\ref{prop:lower bound H along entropic jko scheme}, for all $\alpha_0>0$, there exists $c$ only depending on $d,\mu_0,\F,\alpha_0$ such that for all $t>0$, every $n>16\lambda_-t$ and for all $\alpha\leq \alpha_0$,
    \begin{itemize}
        \item if $\lambda=0$,
        \[
    W_2^2(J_{n,t/n}^\alpha(\mu_0),\mu_0)\leq  c \left(\sqrt{t}+t+K\alpha t^2\right),
    \]
    \item if $\lambda<0$,
    \[
    W_2^2(J_{n,t/n}^\alpha(\mu_0),\mu_0)\leq  c(1+K\alpha t) \left(\frac{n}{n-8|\lambda|t}\right)^n,
    \]
    \item if $\lambda>0$,
    \[
    W_2^2(J_{n,t/n}^\alpha(\mu_0),\mu_0)\leq  c.
    \]
    \end{itemize}
    \end{proposition}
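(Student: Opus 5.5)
The plan is to avoid any entropic EVI. Instead, I would derive a one-step energy inequality for the functional $\mathcal G_\alpha:=\F+\frac\alpha2 H$ along the entropic scheme, sum it, and then close the estimate with Proposition~\ref{prop:lower bound on F by the second moment}. Write $\tau=t/n$, $\mu_k:=J^\alpha_{k,\tau}(\mu_0)$ and $d_k:=W_2(\mu_k,\mu_0)$.

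\textbf{Step 1 (one-step inequality).} Compare $\mu_{k+1}$ with the competitor $\mu_k*\sigma_{\alpha\tau}$. Take the coupling $\gamma(x,y)=\mu_k(x)\,\sigma_{\alpha\tau}(y-x)$ in Definition~\ref{def:sch cost}. Since $\gamma$ has density $\mu_k(x)$ relative to $R_{\alpha\tau}$, we get $H(\gamma\|R_{\alpha\tau})=H(\mu_k)$, so $\Sch^{\alpha\tau}(\mu_k,\mu_k*\sigma_{\alpha\tau})/\tau\le \alpha H(\mu_k)$. Hypothesis~\ref{hypothesis}(3) gives $\F(\mu_k*\sigma_{\alpha\tau})\le \F(\mu_k)+K\alpha\tau/2$. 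On the other side, Proposition~\ref{below bound sch by W-2} bounds $\Sch^{\alpha\tau}(\mu_k,\mu_{k+1})/\tau$ from below by $\frac\alpha2(H(\mu_k)+H(\mu_{k+1}))+W_2^2(\mu_k,\mu_{k+1})/2\tau$. Minimality of $\mu_{k+1}$ then yields
\[
\frac{W_2^2(\mu_k,\mu_{k+1})}{2\tau}+\mathcal G_\alpha(\mu_{k+1})\le \mathcal G_\alpha(\mu_k)+\frac{K\alpha\tau}{2}.
\]

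\textbf{Step 2 (summation).} Summing over $k<m$ gives $\sum_{k<m}W_2^2(\mu_k,\mu_{k+1})\le 2\tau\big(\mathcal G_\alpha(\mu_0)-\mathcal G_\alpha(\mu_m)\big)+K\alpha\tau^2 m$. By the triangle inequality and Cauchy--Schwarz,
\[
d_m^2\le m\sum_{k<m}W_2^2(\mu_k,\mu_{k+1})\le 2m\tau\big(\mathcal G_\alpha(\mu_0)-\mathcal G_\alpha(\mu_m)\big)+K\alpha (m\tau)^2 .
\]

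\textbf{Step 3 (closing with Proposition~\ref{prop:lower bound on F by the second moment}).} Here the argument splits according to the sign of $\lambda$.

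If $\lambda=0$, we have $\mathcal G_\alpha(\mu_0)-\mathcal G_\alpha(\mu_n)\le c_2+c_3d_n$. This gives the quadratic inequality $d_n^2-2tc_3d_n-(2tc_2+K\alpha t^2)\le0$, hence $d_n\le 2tc_3+\sqrt{2tc_2+K\alpha t^2}$.

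If $\lambda<0$, we have $\mathcal G_\alpha(\mu_0)-\mathcal G_\alpha(\mu_m)\le c_2+|\lambda|d_m^2$. Applied directly at $m=n$, the term $2t|\lambda|d_n^2$ can only be absorbed for small $t$. So I would instead run Steps 1--2 on blocks and write a per-step recursion of the same shape as in the proof of Proposition~\ref{prop:lower bound F alonf jko scheme}. Concretely, combine $d_{k+1}^2\le (1+\varepsilon)d_k^2+(1+\varepsilon^{-1})W_2^2(\mu_k,\mu_{k+1})$ with Step 1, choose $\varepsilon\sim|\lambda|\tau$, and invoke the squared discrete Gronwall lemma (Proposition~\ref{square discrete gronwall}). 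Each step then costs a factor $(1-8|\lambda|\tau)^{-1}$, and the condition $n>16\lambda_-t$ keeps this factor below $2$. This produces the factor $\big(\tfrac{n}{n-8|\lambda|t}\big)^n$, multiplied by constants and by $(1+K\alpha t)$.

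If $\lambda>0$, I would use instead the bound of Proposition~\ref{prop:lower bound on F by the wasserstein distance} together with the strong convexity of the term $\frac\lambda2 W_2^2(\rho,\mu_0)$. This makes $\mathcal G_\alpha$ coercive, so $d_n^2$ stays bounded uniformly in $t$.

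The main obstacle is matching the exact $\lambda=0$ form. The estimate above gives $d_n\le c(\sqrt t+t+\sqrt{K\alpha}\,t)$. Squaring this produces a $t^2$ term, which $\sqrt t+t$ cannot control. So to reach $c(\sqrt t+t+K\alpha t^2)$ in the squared form, I would first try to sharpen Step 3 by replacing the linear lower bound $-c_3d_n$ with a sublinear one. The natural source is the Gaussian-type bound on $H$ from Proposition~\ref{below bound ent}, whose growth in the second moment is only logarithmic. If that fails for the $\F$ part, I would record the estimate as a bound on $d_n$ itself, which is what Proposition~\ref{prop:lower bound H along entropic jko scheme} actually needs, since it uses $d_n$ only inside a logarithm.
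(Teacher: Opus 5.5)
\textbf{Verdict.} The proof is complete for $\lambda=0$, where you also correctly spot that the statement itself is wrong. It has a genuine gap for $\lambda<0$, and the $t$-uniform claim for $\lambda>0$ is not established.

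\textbf{Case $\lambda=0$.} Your Step~1 is exactly the paper's one-step inequality. Your route (the crude bound $d_m^2\le m\sum_{k<m}W_2^2(\mu_k,\mu_{k+1})$ followed by a quadratic inequality) is a valid and shorter substitute for the paper's induction lemma. Your ``main obstacle'' is a defect of the statement, not of your method. The paper's own argument ends with $W_2(\mu_n,\mu_0)\le m_n(t+\sqrt t)$, where $m_n$ is of order $\sqrt{1+K\alpha t}$, which is a bound on $W_2$. The squared version is false. Take $\F(\rho)=\int x_1\,\diff\rho$, so $\lambda=0$ and $K=0$. Each entropic step is then $\mu_{k+1}=(\mu_k*\sigma_{\alpha\tau})(\cdot+\tau e_1)$, so the mean moves by $t$ and $W_2^2(\mu_n,\mu_0)\ge t^2$. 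Recording the bound on $d_n$, as you propose, is the right fix, and it is all that Proposition~\ref{prop:lower bound H along entropic jko scheme} needs.

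\textbf{Case $\lambda<0$: the per-step recursion does not close.} With $\varepsilon=c|\lambda|\tau$, the increment $\mathcal G_\alpha(\mu_k)-\mathcal G_\alpha(\mu_{k+1})$ enters with coefficient $(1+\varepsilon^{-1})2\tau\approx 2/(c|\lambda|)$, which is of order one.
\begin{itemize}
\item Neither Step~1 nor Proposition~\ref{prop:lower bound on F by the second moment} controls an individual increment. They only control the telescoped quantity $\mathcal G_\alpha(\mu_0)-\mathcal G_\alpha(\mu_m)$.
\item Bounding the increments one at a time costs an $O(1)$ additive error per step. It also produces a per-step amplification factor $(1+c|\lambda|\tau)/(1-2/c)$ bounded away from $1$, which blows up after $n=t/\tau$ steps.
\item Proposition~\ref{square discrete gronwall} cannot repair this. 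Its output is a weighted sum of the sources, and here that weighted sum is again controlled only through the $d_k$'s.
\end{itemize}
So the per-step factor $(1-8|\lambda|\tau)^{-1}$ is asserted rather than derived.

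\textbf{The missing idea.} The paper uses the Ambrosio--Gigli--Savar\'e inequality $\tfrac12(d_i^2-d_{i-1}^2)\le W_2(\mu_{i-1},\mu_i)\,d_i$. Summing it and applying Cauchy--Schwarz gives
\[
\tfrac12 d_k^2\le\Big(\sum_{i\le k}W_2^2(\mu_{i-1},\mu_i)\Big)^{1/2}\Big(\sum_{i\le k}d_i^2\Big)^{1/2}.
\]
The summed energy is now paired with $\tau\sum_i d_i^2$ instead of being multiplied by $m$. Insert the summed form of your Step~1 and the bound $\mathcal G_\alpha(\mu_0)-\mathcal G_\alpha(\mu_k)\le c_2+|\lambda|d_k^2$. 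Then apply Young's inequality with weight $1/(2|\lambda|)$ to get
\[
d_k^2\le|\lambda|^{-1}\big(c_2+K\alpha k\tau/2\big)+8|\lambda|\tau\sum_{i\le k}d_i^2 .
\]
The \emph{linear} Lemma~\ref{lem:discrte Grownwall} then gives exactly the factor $(1-8|\lambda|\tau)^{-n}$.

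Two other repairs would also close the argument:
\begin{itemize}
\item Abel summation of your weighted recursion.
\item A block version, with blocks of duration of order $1/|\lambda|$ and the near-monotonicity $\mathcal G_\alpha(\mu_j)\le\mathcal G_\alpha(\mu_0)+K\alpha j\tau/2$. This gives a worse exponent than the stated one.
\end{itemize}

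\textbf{Case $\lambda>0$: coercivity does not give a $t$-uniform bound.} The only upper bound on the energy you have is $\mathcal G_\alpha(\mu_n)\le\mathcal G_\alpha(\mu_0)+K\alpha t/2$. In this regime $K>0$ is typical; for example $V=\lambda|x|^2/2$ gives $K=\lambda d$. Your argument therefore yields only $d_n^2\le c(1+K\alpha t)$. The paper's proof does not treat $\lambda>0$ at all: it splits only into $\lambda=0$ and $\lambda<0$. A uniform bound would need an extra argument that neither your proposal nor the paper supplies.
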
   
    \begin{proof}
         Consider any $0<\tau<\frac{1}{4\lambda_-}$ and any $k\in\N$. (Ultimately, we will obviously choose $\tau=\frac tn$ and $k\leq n$, but this is not necessary for the following and will simplify the notations.) We have for all $\alpha>0$,
        \[\frac{1}{2} W_2^2(J_{k,\tau}^\alpha(\mu_0),\mu_0) =\frac12\sum_{i=1}^k\left[W_2^2(J_{i,\tau}^\alpha(\mu_0),\mu_0)-W_2^2(J_{i-1,\tau}^\alpha(\mu_0),\mu_0)\right],\]
        while for all $1\leq i\leq k$, 
        $$\frac12 \left( W_2^2(J_{i,\tau}^\alpha(\mu_0),\mu_0)-W_2^2(J_{i-1,\tau}^\alpha(\mu_0),\mu_0)\right) \leq W_2(J_{i,\tau}^\alpha(\mu_0),J_{i-1,\tau}^\alpha(\mu_0))W_2(J_{i,\tau}^\alpha(\mu_0),\mu_0).$$
        Indeed, this inequality is trivial if $W_2(J_{i-1,\tau}^\alpha(\mu_0),\mu_0)\geq W_2(J_{i,\tau}^\alpha(\mu_0),\mu_0)$ and otherwise:
        \begin{align*}
        \frac12(W_2^2(J_{i,\tau}^\alpha(\mu_0),\mu_0)&-W_2^2(J_{i-1,\tau}^\alpha(\mu_0),\mu_0))\\&=\left(W_2(J_{i,\tau}^\alpha(\mu_0),\mu_0)-W_2(J_{i-1,\tau}^\alpha(\mu_0),\mu_0)\right)\left(\frac{W_2(J_{i,\tau}^\alpha(\mu_0),\mu_0)+W_2(J_{i-1,\tau}^\alpha(\mu_0),\mu_0)}{2}\right)\\
        &\leq W_2(J_{i,\tau}^\alpha(\mu_0),J_{i-1,\tau}^\alpha(\mu_0))W_2(J_{i,\tau}^\alpha(\mu_0),\mu_0).
        \end{align*}
        So we end up with
        \[
            \frac{1}{2}W_2^2(J_{k,\tau}^\alpha(\mu_0),\mu_0)\leq\sum_{i=1}^k W_2(J_{i,\tau}^\alpha(\mu_0),J_{i-1,\tau}^\alpha(\mu_0))W_2(J_{i,\tau}^\alpha(\mu_0),\mu_0).\]
            Using the Cauchy-Schwartz inequality, we obtain
            \begin{equation}\label{eq:second moment leq sum W_2^2}
            \frac{1}{2}W_2^2(J_{k,\tau}^\alpha(\mu_0),\mu_0)\leq \sqrt{\sum_{i=1}^k W_2^2(J_{i,\tau}^\alpha(\mu_0),J_{i-1,\tau}^\alpha(\mu_0))}\sqrt{\sum_{i=1}^kW_2^2(J_{i,\tau}^\alpha(\mu_0),\mu_0)}.
        \end{equation}
        Moreover, using Proposition~\ref{below bound sch by W-2}, for all $i = 1\dots, k$,
    \begin{equation}\label{eq: w_2 leq sch *alpha h}
    W_2^2(J_{i,\tau}^\alpha(\mu_0),J_{i-1,\tau}^\alpha(\mu_0))\leq 2\tau\left(\frac{\Sch^{\alpha\tau}(J_{i,\tau}^\alpha(\mu_0),J_{i-1,\tau}^\alpha(\mu_0))}{\tau}-\alpha\frac{H(J_{i,\tau}^\alpha(\mu_0))+H(J_{i-1,\tau}^\alpha(\mu_0))}{2}\right).
    \end{equation}
    By definition of $J_{i,\tau}^\alpha(\mu_0)$ in \eqref{Ent JKO}, testing as a competitor $J^\alpha_{i-1,\tau}(\mu_0) * \sigma_{\alpha \tau}$,
    \begin{equation}\label{eq:sch one step standard inequality}
    \frac{\Sch^{\alpha\tau}(J_{i,\tau}^\alpha(\mu_0),J_{i-1,\tau}^\alpha(\mu_0))}{\tau} +\F(J_{i-1,\tau}^\alpha(\mu_0))\leq \frac{\Sch(J_{i-1,\tau}^\alpha(\mu_0)*\sigma_{\alpha\tau},J_{i-1,\tau}^\alpha(\mu_0))}{\tau}+\F(J_{i-1,\tau}^\alpha(\mu_0)*\sigma_{\alpha\tau}).
    \end{equation}
    Since $\F$ verifies the third point of Hypothesis \ref{hypothesis}, then
    \begin{equation}\label{eq: hyp 1.3 apllycation}
    \F(J_{i-1,\tau}^\alpha(\mu_0)*\sigma_{\alpha\tau})\leq\F(J_{i-1,\tau}^\alpha(\mu_0))+K\frac{\alpha\tau}{2}.
    \end{equation}
    Finally, as an easy consequence of Proposition~\ref{Sch Benamou Brenier},
    \begin{equation}\label{eq:sch leq H}
    \frac{\Sch(J_{i-1,\tau}^\alpha(\mu_0)*\sigma_{\alpha\tau},J_{i-1,\tau}^\alpha(\mu_0))}{\tau}\leq \alpha H(J_{i-1,\tau}^\alpha(\mu_0))
    \end{equation}
    Gathering equations \eqref{eq: w_2 leq sch *alpha h}, \eqref{eq:sch one step standard inequality}, \eqref{eq: hyp 1.3 apllycation} and \eqref{eq:sch leq H}, we obtain:
    \begin{multline*}
     W_2^2(J_{i,\tau}^\alpha(\mu_0),J_{i-1,\tau}^\alpha(\mu_0)) \\ \leq 2\tau\left(\F(J_{i-1,\tau}^\alpha(\mu_0))-\F(J_{i,\tau}^\alpha(\mu_0))+\frac{\alpha}{2} H(J_{i-1,\tau}^\alpha(\mu_0))-\frac{\alpha}{2}H(J_{i,\tau}^\alpha(\mu_0))+K\frac{\alpha}{2}\tau\right).
    \end{multline*}
    Plugging this into equation~\eqref{eq:second moment leq sum W_2^2}, we obtain:
    \begin{multline*}
        \frac{1}{2}W_2^2(J_{k,\tau}^\alpha(\mu_0),\mu_0) \\
        \leq \sqrt{2\tau\left(\F(\mu_0)+\frac\alpha 2H(\mu_0)-\F(J_{k,\tau}^\alpha(\mu_0))-\frac \alpha2H(J_{k,\tau}^\alpha(\mu_0))+K\frac\alpha 2k\tau\right)}\sqrt{\sum_{i=1}^kW_2^2(J_{i,\tau}^\alpha(\mu_0),\mu_0)}.
    \end{multline*}
    From now on, we will need to distinguish the cases $\lambda=0$ and $\lambda<0$.\\
    \underline{\textbf{Case $\lambda=0$.}}
    Using Proposition~\ref{prop:lower bound on F by the second moment}, there exists $c_2,c_3$ only depending on $\mu_0,d,\F,\alpha_0$ such that for all $\alpha\leq\alpha_0$:
    \begin{equation*}
        \frac{1}{2}W_2^2(J_{k,\tau}^\alpha(\mu_0),\mu_0)\leq \sqrt{2\tau\left(c_2+c_3 W_2(J_{k,\tau}^\alpha(\mu_0),\mu_0)+K\frac\alpha2k\tau\right)}\sqrt{\sum_{i=1}^kW_2^2(J_{i,\tau}^\alpha(\mu_0),\mu_0)}.
    \end{equation*}
Now, consider the following lemma.
\begin{lemma}\label{lem:bound iterative sequence}
   Let $\tau\in\R_+$ Let $(w_n)_n\in\R^\N$ such that, $w_0=0$ and for all $n\in\N^*$, $$ w_n^4\leq \tau(1+w_n)\left(\sum_{k=1}^{n}w_k^2\right).$$
    Then for all $n\in\N$, $$w_n\leq \sqrt{n\tau} +n\tau.$$
\end{lemma}
Let us show how this lemma allows to conclude, and postpone its proof to the end of the section. Define $\tilde w_k=W_2(J_{k,\tau}^\alpha(\mu_0),\mu_0)$, $k \in \N^*$. Then, for all $k \in \N^*$, 
    \[
    \tilde w_k^4\leq \tau(8c_2+K4\alpha k\tau)+c_1\tilde w_k)\left(\sum_{i=1}^k \tilde w_i^2\right)
    \]
    Thus, calling $m_k=\max\left\{\sqrt{8c_2+K4\alpha k\tau},c_1\right\}$ and $w_k=\frac{\tilde w_k}{m_k}$, we get:
    \[
    w_k^4\leq \tau\left(\frac{8c_2+K4\alpha k\tau}{m_k^2}+\frac{c_1}{m_k}\frac{\tilde w_k}{m_k}\right)\left(\sum_{i=1}^k\frac{\tilde w_i^2}{m_k^2}\right),
    \]
    and thus,
    \[
    w_k^4\leq \tau\left(\frac{8c_2+K4\alpha k\tau}{m_k^2}+\frac{c_1}{m_k}w_k\right)\left(\sum_{i=1}^kw_i^2\right).
    \]
    Since $\frac{8c_2+K4\alpha k\tau}{m_k^2}\leq 1$ and $\frac{c_1}{m_k}\leq 1$, we obtain
     \[
    w_k^4\leq \tau\left(1+w_k\right)\left(\sum_{i=1}^kw_i^2\right).
    \]
    Then, applying Lemma \ref{lem:bound iterative sequence} we obtain $\tilde w_k\leq m_k(k\tau+\sqrt{k\tau})$, which concludes the proposition for $\lambda=0$.
    \\
    \underline{\textbf{Case $\lambda<0$.}}Using Proposition~\ref{prop:lower bound on F by the second moment}, there exists a constant $c_2$ only depending on $\mu_0,\F,\alpha_0,d$ such that for all $\alpha \leq \alpha_0$:
    \begin{equation}\label{eq:relation inductive second moment}
        \frac{1}{2}W_2^2(J_{k,\tau}^\alpha(\mu_0),\mu_0)\leq \sqrt{2\tau\left(c_2-\lambda W_2^2(J_{k,\tau}^\alpha(\mu_0),\mu_0)+K\frac\alpha2k\tau\right)}\sqrt{\sum_{i=1}^kW_2^2(J_{i,\tau}^\alpha(\mu_0),\mu_0)}.
    \end{equation}
    We will use the following lemma, already used in \cite{ambrosio2005gradient}:
    \begin{lemma}\label{lem:discrte Grownwall}
    Consider a non negative sequence $(w_n)_{n \in \N^*}$ and two positive numbers $C_0,C_1$ with $C_1<1$, such that for all $n \in \N^*$,
        \[
        w_n^2\leq C_0+C_1\sum_{k=1}^nw_k^2.
        \]
        Then for all $n \in \N^*$, $$w_n^2\leq C_0\left(\frac{1}{1-C_1}\right)^{n}.$$
    \end{lemma}

    Once again, we postpone the proof of this lemma to the end of this section.
    From equation~\eqref{eq:relation inductive second moment} and the inequality $ab\leq\frac{\varepsilon a} {4\tau}+\frac{\tau b}{\varepsilon}$, it follows that,
    \[
    \frac{1}{2}W_2^2(J_{n,\tau}^\alpha(\mu_0),\mu_0)\leq {\frac\varepsilon2\left(c_2+K\frac{\alpha}{2}k\tau-\lambda W_2^2(J_{n,\tau}^\alpha(\mu_0),\mu_0)\right)}+\frac\tau\varepsilon{\sum_{k=1}^nW_2^2(J_{k,\tau}^\alpha(\mu_0),\mu_0)}.
    \]
    Thus
    \[    (1+\varepsilon\lambda)W_2^2(J_{n,\tau}^\alpha(\mu_0),\mu_0)\leq \varepsilon\left(c_2+K\frac{\alpha}{2}k\tau\right)+\frac{2\tau}\varepsilon{\sum_{k=1}^nW_2^2(J_{k,\tau}^\alpha(\mu_0),\mu_0)},
    \]
    and hence
    \[
    W_2^2(J_{n,\tau}^\alpha(\mu_0),\mu_0)\leq {\frac\varepsilon{(1+\varepsilon\lambda)}\left(c_2+K\frac{\alpha}{2}k\tau\right)}+\frac{2\tau}{\varepsilon(1+\varepsilon\lambda)}{\sum_{k=1}^nW_2^2(J_{k,\tau}^\alpha(\mu_0),\mu_0)}.
    \]
    Taking $\varepsilon=-\frac1{2\lambda}=\frac{1}{2|\lambda|}$, we obtain that
    \[
    W_2^2(J_{n,\tau}^\alpha(\mu_0),\mu_0)\leq \frac1{|\lambda|} \left(c_2+K\frac{\alpha}{2}k\tau\right)+8|\lambda|\tau{\sum_{k=1}^nW_2^2(J_{k,\tau}^\alpha(\mu_0),\mu_0)}.
    \]
    We can then apply the Lemma~\ref{lem:discrte Grownwall} and obtain 
    \[    W_2^2(J_{n,\tau}^\alpha(\mu_0),\mu_0)\leq\frac1{|\lambda|} \left(c_2+K\frac{\alpha}{2}k\tau\right)\left(\frac{1}{1-8|\lambda|\tau}\right)^n.
    \]
   Taking $\tau=\frac{t}{n}$ concludes the proof for $\lambda<0$.
    \end{proof}
Along the proof, we used Lemma~\ref{lem:bound iterative sequence} and Lemma~\ref{lem:discrte Grownwall}, that we prove now.
\begin{proof}[Proof of Lemma~\ref{lem:bound iterative sequence}]
 We do the proof by induction. For $n=0$ the result is trivial. We assume now that the result holds for all $k\leq n-1$. Then using the induction hypothesis, we find
    \begin{align*}     
    \sum_{k=1}^{n-1}w_k^2&\leq\sum_{k=1}^{n-1}\left(k\tau+\sqrt{k\tau}\right)^2\leq \sum_{k=1}^{n-1}\left(n\tau+\sqrt{n\tau}\right)^2\\
    &\leq (n-1)(\sqrt{n\tau}+n\tau)^2.
    \end{align*}
    Substituting this into the inductive formula verified by $w_n$, yields to:
    \[
    w_n^4\leq\tau(1+w_n)\left(w_n^2+(n-1)(n\tau+\sqrt{n\tau})^2\right)= (n-1)\tau(n\tau+\sqrt{n\tau})^2+(n-1)\tau(n\tau+\sqrt{n\tau})^2w_n+\tau w_n^2+\tau w_n^3.
    \]
    Now, either $w_n\leq n\tau+\sqrt{n\tau}$ and there is nothing more to show, or
    \begin{align*}
    w_n&\leq (n-1)\tau(n\tau+\sqrt{n\tau})^2\frac{1}{w_n^3}+(n-1)\tau(n\tau+\sqrt{n\tau})^2\frac{1}{w_n^2}+\tau \frac{1}{w_n}+\tau\\
    &\leq \frac{(n-1)\tau}{n\tau+\sqrt{n\tau}} +(n-1)\tau+\frac{\tau}{n\tau+\sqrt{n\tau}}+\tau\\
    &=n\tau+\frac{n\tau}{n\tau+\sqrt{n\tau}}\leq n\tau+\sqrt{n\tau}.
    \end{align*}
    This shows that $w_n\leq n\tau+\sqrt{n\tau}$ and concludes the proof.
    \end{proof}

        \begin{proof}[Proof of Lemma~\ref{lem:discrte Grownwall}]
        Consider the sequence $(u_n)_{n \in \N^*}$ defined inductively for all $n \in \N^*$ by
        \[
        (1-C_1)u_n= C_0+C_1\sum_{k=1}^{n-1} u_k.
        \]
        Then an easy induction shows that for all $n \in \N^*$, $w_n^2\leq u_n$. Moreover, if we let $U_0=0$ and for all $n \in \N^*$, $U_n=\sum_{k=1}^nu_k$, then for all $n \in \N^*$, 
        \begin{equation*}
        U_n-U_{n-1}=u_n=C_0+C_1\sum_{k=1}^nu_k=C_0+C_1U_n.
        \end{equation*}
        Solving this iterative scheme yields:
        \[
U_n = \frac{C_0}{C_1}\left(\left(\frac{1}{1-C_1}\right)^n - 1\right).
\]
Thus, we can deduce the expression of $u_n$
\[
u_n=U_n-U_{n-1}=C_0\left(\frac{1}{1-C_1}\right)^{n}.
\] This concludes the proof of the lemma.
    \end{proof}
    
\section{Optimality in $\alpha$ of the inequalities}\label{Sec:optimality in alpha}
The purpose of this section is to investigate the optimality of the bounds obtained in Theorems \ref{the theorem} and \ref{bound continuous case}, and to explicit the link between the two. Our main conclusions are:
\begin{enumerate}
    \item The first inequalities in formulas \eqref{eq:bound continuous} and \eqref{eq:bound continuous lambda} are sharp. At least, there are models for which the inequalities are in fact equalities.
    \item Similarly, we will exhibit a model for which one of the inequalities obtained along the proof of Theorem~\ref{the theorem}, that we reproduce at equations \eqref{eq:precise bound lambda=0}, \eqref{eq:precise bound lambda neq 0} of Theorem~\ref{thm:precise bound} below, coincides with the first inequalities in~\eqref{eq:bound continuous} and~\eqref{eq:bound continuous lambda} up to a term going to $0$ when $\tau$ goes to $0$. Remarkably, this implies that the lack of optimality in $\alpha$ of Theorem \ref{the theorem} is neither due to our splitting argument~\eqref{eq:splitting (I) and (II)}, nor to the suboptimality of the competitor built in the proof of equation~\eqref{eq:comparison_costs}, nor of the use of squared discrete Grönwall lemma, Proposition~\ref{square discrete gronwall}.
    \item In fact, we expect these two bounds (the first inequalities in equation \eqref{eq:bound continuous} and \eqref{eq:bound continuous lambda} on the one hand, and equations \eqref{eq:precise bound lambda=0}, \eqref{eq:precise bound lambda neq 0} of Theorem~\ref{thm:precise bound} on the other hand) to always correspond to each other; letting $\tau\to0$ in equations \eqref{eq:precise bound lambda=0}, \eqref{eq:precise bound lambda neq 0}, we show that we recover formally the first inequalities of equations~\eqref{eq:bound continuous} and~\eqref{eq:bound continuous lambda} in the general case. 
\end{enumerate} 
Our toy model, set once for all in the whole section, consists in considering the very simple energy, corresponding to a parameter $\lambda \in \R$:
\begin{equation}
\label{eq:linear_F}
\F:\rho\in\mathcal P_2(\R^d)\rightarrow\int V\diff\rho, \quad V:x\in\R^d\rightarrow \lambda\frac{|x|^2}{2}.
\end{equation}
Note that this choice of $\mathcal F$ fulfills points (2) and (3) of Hypothesis~\ref{hypothesis} with the same value of $\lambda$ and $K = \lambda d$.

With this choice of $\F$ and centered gaussian initial conditions, we observe that the iterates of the JKO and entropic JKO schemes, as well as the solution of equations \eqref{eq:gradient_flow} and \eqref{eq:gradient_flow_reg}, which rewrite in this case
\begin{equation}\label{eq: gradient flows for V}
\partial_t\rho-\operatorname{div}(\rho\nabla V)=0\quad\text{and}\quad\partial_t\rho-\operatorname{div}(\rho\nabla V)=\frac{\alpha}{2}\Delta\rho
\end{equation}
respectively, remain centered gaussian. Furthermore, the variance of these Gaussians can be computed explicitly in each case, enabling us to prove our claims. 

In the next subsection, we state the evolution of the variance of the Gaussian solutions along the JKO and entropic JKO schemes, and of the corresponding limiting PDEs~\eqref{eq: gradient flows for V}. In Subsection \ref{optimality continuous level}, we prove the optimality of the first inequality in equation~\eqref{eq:bound continuous} and \eqref{eq:bound continuous lambda}. In Subsection \ref{Optimality in alpha at the discrete level}, we prove that, in our gaussian settings, equations \eqref{eq:precise bound lambda=0}, \eqref{eq:precise bound lambda neq 0} of Theorem~\ref{thm:precise bound} converge to the first inequality in equation~\eqref{eq:bound continuous} and~\eqref{eq:bound continuous lambda}. We conclude this section with Subsection~\ref{remark formal computation optimality in alpha}, where we remark that this convergence is in fact formally expected in general. 
\subsection{Preliminaries}
 In the computations to come, we will need a more convenient notation for the variance of our gaussian. So let us replace the term $\sigma_t$ in this section with the following definition.
\begin{definition}
    We note for all $s\in\R^d$, identifying a measure with its density with respect to the Lebesgue measure:
    \begin{equation}
    \label{eq:formula_gaussian}
    \mathcal N(s):=\frac{1}{\sqrt{2\pi s}^d}e^{-\frac{|x|^2}{2s}}=\sigma_s.
    \end{equation}
\end{definition}
The following quantities of interest are easy to compute.
\begin{proposition}\label{prop:computation fisher entropy wassertein gaussian}
    Let $s,u \in \R_+^*$ with $s \geq u$, $\rho=\mathcal N(s)$ and $\mu=\mathcal N(u)$. We have 
\begin{gather}\label{eq:expression fisher gaussian}
    \int_{\R^d}|\nabla\ln(\rho)|^2\diff\rho=\frac{d}{s},\\
   \label{eq:expression wasserstein gaussian}
        W_2(\rho,\mu)=\sqrt d\left(\sqrt{s}-\sqrt{u}\right),\\
    \label{eq:computaion entropy gaussian}
    H(\rho)=-\frac{d}{2}\Big(1 + \ln(2\pi s)\Big).
    \end{gather}
\end{proposition}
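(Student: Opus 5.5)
The three identities are direct computations with the explicit density \eqref{eq:formula_gaussian}. I would verify them one at a time, the Wasserstein formula being the only one that needs an argument beyond calculus.

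\textbf{Fisher information and entropy.} For $\rho=\mathcal N(s)$ we have
\[
\ln\rho(x)=-\frac{|x|^2}{2s}-\frac d2\ln(2\pi s),
\qquad\text{so}\qquad
\nabla\ln\rho(x)=-\frac{x}{s}.
\]
Under $\rho$, $\int|x|^2\diff\rho=ds$. Hence
\[
\int|\nabla\ln\rho|^2\diff\rho=\frac{ds}{s^2}=\frac ds,
\]
which is \eqref{eq:expression fisher gaussian}. Integrating the expression for $\ln\rho$ against $\rho$ gives
\[
H(\rho)=-\frac{ds}{2s}-\frac d2\ln(2\pi s)=-\frac d2\bigl(1+\ln(2\pi s)\bigr),
\]
which is \eqref{eq:computaion entropy gaussian}.

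\textbf{Wasserstein distance.} I would exhibit the optimal map from $\mu=\mathcal N(u)$ to $\rho=\mathcal N(s)$. Take the linear map $T(x)=\sqrt{s/u}\,x$.

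\emph{Step 1: $T$ is admissible.} If $X\sim\mathcal N(u)$, then $\sqrt{s/u}\,X$ is centered Gaussian with covariance $s\,I_d$. So $T_{\#}\mu=\rho$.

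\emph{Step 2: $T$ is optimal.} $T$ is the gradient of the convex function $x\mapsto\frac12\sqrt{s/u}\,|x|^2$. Since $\mu\ll\Leb$, Brenier's theorem (paragraph~\ref{par:primal}) says this map is the unique optimal one.

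\emph{Step 3: the cost.}
\[
W_2^2(\rho,\mu)=\int\bigl|x-\sqrt{s/u}\,x\bigr|^2\diff\mu=\Bigl(1-\sqrt{s/u}\Bigr)^2 du=d\bigl(\sqrt s-\sqrt u\bigr)^2.
\]
Since $s\ge u$, taking the square root gives $\sqrt d(\sqrt s-\sqrt u)$, which is \eqref{eq:expression wasserstein gaussian}.

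\textbf{Main obstacle.} There is essentially none. The only point needing care is justifying optimality, and invoking the converse of Brenier's theorem (gradients of convex functions are optimal) handles it. Alternatively, one can get the lower bound $W_2\ge\sqrt d\,|\sqrt s-\sqrt u|$ from the standard moment inequality $W_2(\mu,\rho)\ge\bigl|\,\|x\|_{L^2(\mu)}-\|y\|_{L^2(\rho)}\bigr|$, which follows from the triangle inequality in $L^2$. The map $T$ attains this bound, so it is optimal.
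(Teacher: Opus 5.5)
Your proof is correct and follows the paper's argument: you compute $\nabla\ln\rho=-x/s$ and use $\int|x|^2\diff\rho=ds$ for the Fisher information and the entropy, and for $W_2$ you exhibit a linear gradient-of-convex map and invoke Brenier. The paper uses $x\mapsto\sqrt{u/s}\,x$ from $\rho$ to $\mu$ rather than your $x\mapsto\sqrt{s/u}\,x$ from $\mu$ to $\rho$, which makes no difference, and your moment-inequality remark is a valid, more elementary way to certify optimality.
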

\begin{proof}
    \noindent \underline{First formula}. If $\rho=\mathcal N(s)$, then in in view of~\eqref{eq:formula_gaussian} $\ln(\rho)=-\frac{|x|^2}{2s}-\frac{d}{2}\ln(2\pi s)$ and $\nabla\ln(\rho)=-\frac{x}{s}$. Then, the Fisher information satisfies:
    \[
    \int |\nabla\ln(\rho)|^2\diff\rho=\int_{\R^d}\frac{|x|^2}{s^2}\diff\rho=\frac{ds}{s^2}=\frac{d}{s}.
    \]

\noindent \underline{Second formula}.
   Consider $T:x\mapsto\sqrt{\frac{u}{s}}x$. Then $T{_\#}\rho=\mu$ and $T$ is the gradient of the convex function $x\mapsto\sqrt{\frac{u}{s}}\frac{|x|^2}{2}$. Thus $T$ is the Brenier map, see Subsection~\ref{Wasser}. Therefore, the Wasserstein distance is equal to:
   \begin{align*}
   W_2(\rho,\mu)^2&=\int|x-T(x)|^2\diff\rho(x)=\int\left|x-\sqrt{\frac{u}{s}}x\right|^2\diff\rho=\left(1-\sqrt{\frac{u}{s}}\right)^2\int|x|^2\diff\rho\\
   &=\left(1-\sqrt{\frac{u}{s}}\right)^2ds=d\left(\sqrt s-\sqrt{u}\right)^2.
   \end{align*}

\noindent \underline{Third formula}.
    Once again $\ln(\rho)=-\frac{d}{2}\ln(2\pi s)-\frac{|x|^2}{2s}$. Therefore:
    \[
    H(\rho)=\int \left( -\frac{d}{2}\ln(2\pi s)-\frac{|x|^2}{2s}\right)\diff \rho(x)=-\frac{d}{2}\ln(2\pi s)-\frac{sd}{2s},   \]
    as anounced.
\end{proof}
As mentioned previously, it is possible to compute the solution of the PDEs and of the different schemes explicitly in this setting. In the case of the PDEs, the formulas write as follows.
\begin{proposition}\label{explicit computation continuous}
    Let $a \geq 0$, $\mu_0=\mathcal N(a)$ and $\alpha >0$. The solutions $\rho^0$ and $\rho^\alpha$ of equation~\eqref{eq: gradient flows for V} starting from $\mu_0$, satisfy for all $t \geq 0$:
    \begin{equation*}
 \rho^0(t)=
 \left\{ \begin{aligned}
 &\mathcal{N}\left(a \right), &&\mbox{if }\lambda = 0,\\
 &\mathcal{N}\left(a e^{-2\lambda t}\right),  && \mbox{if }\lambda \neq 0,
 \end{aligned} \right.
 \qquad \mbox{and} \qquad 
 \rho^\alpha(t) =
 \left\{ \begin{aligned}
 &\mathcal{N}\left(a +\alpha t\right), &&\mbox{if }\lambda = 0,\\
 &\mathcal{N}\left(a e^{-2\lambda t}+\alpha\frac{1-e^{-2\lambda t}}{2\lambda}\right),  && \mbox{if }\lambda \neq 0.
 \end{aligned} \right.
\end{equation*} 
\end{proposition}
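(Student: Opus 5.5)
The plan is an ansatz: both PDEs in~\eqref{eq: gradient flows for V} preserve centered Gaussians, so we look for solutions of the form $\rho(t)=\mathcal N(s(t))$ and reduce each PDE to a scalar ODE for the variance $s(t)$. Uniqueness of solutions, for instance as the unique EVI gradient flow of Theorem~\ref{E.V.I}, applied to $\F$ and to $\F+\frac\alpha2 H$, which is again $\lambda$-convex along generalized geodesics, then shows that these Gaussian curves are \emph{the} solutions starting from $\mu_0=\mathcal N(a)$. The degenerate case $a=0$, where $\mu_0=\delta_0$, is handled the same way: the variance is identically $0$ for $\rho^0$, and it is continuous at $0^+$ for $\rho^\alpha$.

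The key computation is to substitute $\rho=\mathcal N(s(t))$ into $\partial_t\rho=\operatorname{div}(\rho\lambda x)+\frac{\beta}{2}\Delta\rho$, with $\beta\in\{0,\alpha\}$. From~\eqref{eq:formula_gaussian},
$$\partial_t\rho=\dot s\,\rho\Big(\frac{|x|^2}{2s^2}-\frac{d}{2s}\Big),\qquad \operatorname{div}(\rho x)=\rho\Big(d-\frac{|x|^2}{s}\Big),\qquad \Delta\rho=\rho\Big(\frac{|x|^2}{s^2}-\frac ds\Big).$$
Hence $\operatorname{div}(\rho x)=-2s\,\partial_s\rho$ and $\Delta\rho=2\partial_s\rho$, so the PDE becomes $\dot s\,\partial_s\rho=(-2\lambda s+\beta)\partial_s\rho$. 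It therefore suffices to solve the linear ODE
$$\dot s=-2\lambda s+\beta,\qquad s(0)=a.$$

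Solving this ODE gives the stated variances. For $\lambda=0$ we get $s=a+\beta t$. For $\lambda\neq0$ we get $s=ae^{-2\lambda t}+\beta\frac{1-e^{-2\lambda t}}{2\lambda}$. Taking $\beta=0$ yields $\rho^0$ and $\beta=\alpha$ yields $\rho^\alpha$. In each case $s(t)>0$ for $t>0$ whenever $a>0$ or $\beta>0$, since $\frac{1-e^{-2\lambda t}}{2\lambda}>0$, so the ansatz stays valid.

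The only real obstacle is justifying that these smooth Gaussian curves coincide with $\rho^0,\rho^\alpha$ as defined in the paper. For $\rho^0$ this follows from Theorem~\ref{E.V.I} once we check that a smooth classical solution with Gaussian tails satisfies the EVI; for $\rho^\alpha$ we invoke uniqueness of the corresponding Fokker--Planck equation. Alternatively, the Gaussian solutions are directly verifiable through the explicit Ornstein--Uhlenbeck representation $X_t=e^{-\lambda t}X_0+\sqrt{\beta}\int_0^te^{-\lambda(t-r)}\diff B_r$, which gives the same variance formula.
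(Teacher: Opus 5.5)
Your proposal is correct and follows the paper's approach. The paper omits the proof, saying it only consists in plugging the Gaussian formulas into the PDEs~\eqref{eq: gradient flows for V}, which is exactly your reduction to the ODE $\dot s=-2\lambda s+\beta$, $s(0)=a$; your extra identification step, via EVI uniqueness or the Ornstein--Uhlenbeck representation, is a legitimate addition that the paper leaves implicit.
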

We omit the proof of this proposition which only consists in plugging the different formulas into the PDEs~\eqref{eq: gradient flows for V}. The iterates of each schemes can also be computed, thanks to the following proposition.
\begin{proposition}\label{explicit computation discrete}
    Let $a\geq 0$, $\mu=\mathcal N(a)$, $\tau<\frac{1}{\lambda_-}$ and $\alpha >0$. Then, we have 
        \begin{equation*} 
        J_\tau^0(\mu)=  \left\{ \begin{aligned}
 &\mathcal{N}\left(a \right), &&\mbox{if }\lambda = 0,\\
 & \mathcal{N}\left(\frac{a}{(1+\lambda\tau)^2}\right),  && \mbox{if }\lambda \neq 0,
 \end{aligned} \right.
        \qquad \mbox{and} \qquad
            J_\tau^\alpha(\mu)=
             \left\{ \begin{aligned}
 &\mathcal{N}\left(a + \alpha \tau \right), &&\mbox{if }\lambda = 0,\\
 &  \mathcal{N}\left(\frac{a}{(1+\lambda\tau)^2}+\frac{\alpha\tau}{1+\lambda\tau}\right),  && \mbox{if }\lambda \neq 0.
 \end{aligned} \right.            
        \end{equation*}
        Consequently, the iterates satisfy for all $k \in \N$:
        \begin{gather*} 
        J_{k,\tau}^0(\mu)=  \left\{ \begin{aligned}
 &\mathcal{N}\left(a \right), &&\mbox{if }\lambda = 0,\\
 & \mathcal{N}\left(\frac{a}{(1+\lambda\tau)^{2k}}\right),  && \mbox{if }\lambda \neq 0,
 \end{aligned} \right.
      \\
            J_{k,\tau}^\alpha(\mu)=
             \left\{ \begin{aligned}
 &\mathcal{N}\left(a + k \alpha \tau \right), &&\mbox{if }\lambda = 0,\\
 &  \mathcal{N}\left(\frac{a}{(1+\lambda\tau)^{2k}}+\frac{\alpha}{\lambda}\left(1-\frac{1}{(1+\lambda\tau)^{2k}}\right)\frac{1+\lambda\tau}{2+\lambda\tau}\right),  && \mbox{if }\lambda \neq 0.
 \end{aligned} \right.            
        \end{gather*}
\end{proposition}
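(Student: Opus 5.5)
The plan is to compute one step of each scheme explicitly, and then obtain the iterates by induction on $k$. In both cases the minimization can be reduced to a pointwise problem in $x$, because $V(y)=\lambda|y|^2/2$ combines with the quadratic transport cost into a quadratic in $y$.

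\textbf{Classical step.} Take $X\in\mathcal H$ with $X{_\#}\mathbb P=\mu=\mathcal N(a)$. By Theorem~\ref{equiv scheme}, $J^0_\tau(\mu)$ is the law of the minimizer of
\[
Y\mapsto \mathbb E\Big[\frac{|X-Y|^2}{2\tau}+\lambda\frac{|Y|^2}{2}\Big].
\]
Since $1+\lambda\tau>0$, the integrand is strictly convex in $Y$ for each fixed value of $X$. It is minimized exactly at $Y=X/(1+\lambda\tau)$, so this $Y$ is the unique global minimizer over $\mathcal H$. Its law is $\mathcal N\big(a/(1+\lambda\tau)^2\big)$, which reduces to $\mathcal N(a)$ when $\lambda=0$.

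\textbf{Entropic step.} Here I would assume $a>0$, since otherwise $H(\mu)=+\infty$ and the Schrödinger cost is infinite. By Definition~\ref{def:sch cost}, minimizing $\Sch^{\alpha\tau}(\mu,\nu)/\tau+\F(\nu)$ over $\nu$ is the same as minimizing, over all couplings $\gamma$ whose first marginal is $\mu$ (second marginal free),
\[
\int c\,\diff\gamma+\alpha H(\gamma),\qquad c(x,y)=\frac{|x-y|^2}{2\tau}+\lambda\frac{|y|^2}{2},
\]
up to an additive constant. The minimizing $\nu$ is then the second marginal of the optimal $\gamma$.

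To solve this, I disintegrate $\gamma=\mu\otimes\gamma^x$ and use Proposition~\ref{prop:addivity of entropy}, which gives $H(\gamma)=H(\mu)+\int H(\gamma^x)\,\diff\mu(x)$. The problem then splits into independent problems for $\mu$-a.e.\ $x$:
\[
\text{minimize } \int c(x,\cdot)\,\diff\gamma^x+\alpha H(\gamma^x).
\]
By the Gibbs variational principle (equivalently, nonnegativity of relative entropy), each of these has the unique minimizer $\gamma^x\propto e^{-c(x,\cdot)/\alpha}$. Completing the square,
\[
c(x,y)=\frac{1+\lambda\tau}{2\tau}\Big|y-\frac{x}{1+\lambda\tau}\Big|^2+(\text{terms in } x \text{ only}).
\]
Hence $\gamma^x$ is the Gaussian with mean $x/(1+\lambda\tau)$ and variance $\alpha\tau/(1+\lambda\tau)$ per coordinate. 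Its second marginal is the law of $X/(1+\lambda\tau)+G$ with $G$ an independent Gaussian, namely $\mathcal N\big(a/(1+\lambda\tau)^2+\alpha\tau/(1+\lambda\tau)\big)$, as claimed. The per-$x$ uniqueness gives uniqueness of $\gamma$, and hence of $J^\alpha_\tau(\mu)$.

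\textbf{Iterates.} Write the variance after $k$ steps as $s_k$, with $q=(1+\lambda\tau)^{-2}$. The recursion is
\[
s_k=q\,s_{k-1}+\frac{\alpha\tau}{1+\lambda\tau}\quad (0 \text{ for the classical scheme}),\qquad s_0=a.
\]
For $\lambda\neq0$, the fixed point of the entropic recursion is
\[
\frac{\alpha\tau}{(1+\lambda\tau)(1-q)}=\frac{\alpha(1+\lambda\tau)}{\lambda(2+\lambda\tau)}.
\]
Summing the geometric series then gives the stated formula, with factor $\frac{\alpha}{\lambda}\big(1-(1+\lambda\tau)^{-2k}\big)\frac{1+\lambda\tau}{2+\lambda\tau}$. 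For $\lambda=0$ the recursion gives $s_k=a+k\alpha\tau$ directly.

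The only delicate point is justifying the reduction of the entropic JKO problem to the one-marginal-constrained Gibbs problem. This rests on the fact that the infimum over $\nu$ and the infimum over $\gamma\in\Pi(\mu,\nu)$ can be merged into a single infimum over $\gamma$ with fixed first marginal. All finiteness conditions hold here: the costs are quadratic and every measure involved is Gaussian.
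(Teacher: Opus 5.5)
Your proof is correct and is essentially the paper's argument. The paper likewise merges the infimum over $\nu$ and over couplings into a single minimization over plans with first marginal $\mu$ and minimizes pointwise in $x$ for the classical step. For the entropic step it disintegrates, uses additivity of the entropy and the Gibbs (Jensen) argument, and completes the square around $x/(1+\lambda\tau)$, exactly as you do.

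The differences are cosmetic. You phrase the classical step through the Hilbertian lifting of Theorem~\ref{equiv scheme}, which as stated needs $\mu\ll\Leb$, so the trivial case $a=0$, $\mu=\delta_0$, should be handled directly. You read off the second marginal as a sum of independent Gaussians rather than by a second completion of the square. You also write out the geometric-series recursion for the iterates, which the paper leaves implicit, and you correctly flag that the entropic step requires $a>0$, which the paper uses tacitly.
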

The formulas for the iterative scheme can easily be deduced from those obtained for one step. Therefore, we only prove the two first formulas.
\begin{proof}
First, we show the formula for the JKO scheme.
    We have:
    \begin{align*}
    \inf_{\rho\in\mathcal P_2(\R^d)}\frac{W_2^2(\mu,\rho)}{2\tau}+\lambda\int \frac{|x|^2}{2}\diff\rho&=\inf_{\substack{\gamma\in\mathcal P_2(\R^d\times\R^d)\\
\pi_1{_\#}\gamma=\mu}}\int \frac{|x-y|^2+\lambda\tau|y|^2}{2\tau}\diff\gamma(x,y)\\
&\geq \int\inf_{z\in\R^d} \frac{|x-z|^2+\lambda\tau|z|^2}{2\tau} \diff\mu(x).
    \end{align*}
    Since for all $x \in \R^d$, $\argmin_z \frac{|x-z|^2+\lambda\tau|z|^2}{2\tau}=\frac{x}{1+\lambda\tau}$, the last inequality is an equality if and only if for $\gamma$ almost every $(x,y)$, there holds $y=\frac{x}{1+\lambda\tau}$. Therefore, the only minimizer on the right-hand side of the first line is $\gamma=(I_d,\frac{1}{1+\lambda\tau}I_d){_\#}\mu$ . In particular, $J_\tau^0(\mu)=\frac{1}{1+\lambda\tau}I_d{_\#}\mu$, and so $J_\tau^0(\mu)=\mathcal N\left(\frac{a}{(1+\lambda\tau)^2}\right)$.
    
    Now, we prove the formula for the entropic JKO scheme. Here as well, $J^\alpha_\tau(\mu)$ is the second marginal of the minimizer of a minimization problem (see Definition~\ref{def:sch cost}), which is:
    \[   \inf_{\substack{\gamma\in\mathcal P_2(\R^d\times\R^d)\\
\pi_1{_\#}\gamma=\mu}} \alpha H(\gamma\|R_{\alpha\tau})+\frac{\lambda}{2}\int|y|^2\diff\gamma(x,y).\]
Using the additivity of the entropy, (see Proposition~\ref{prop:addivity of entropy}, slightly adapted to the case when $R$ is not a probability measure), for each $\gamma\in\mathcal{P}_2(\R^d\times\R^d)$ of first marginal $\mu$, calling $(\nu^x)_x$ the family, well defined for $\mu$ almost every $x$, obtained by disintegrating $\gamma$ with respect to the first projection, we find: 
\begin{align*}\alpha H(\gamma\|R_{\alpha\tau})+\frac{\lambda}{2}\int|y|^2\diff\gamma(x,y)&= \alpha H(\mu)+\int \left(H(\nu^x\|R_{\alpha\tau}^x)+\int\frac{\lambda}{2}|y|^2\diff\nu^x(y)\right)\diff\mu(x)\\
&\geq \alpha H(\mu)+\int \inf_{\rho\in\mathcal P_2(\R^d)}  \left\{H(\rho\|R_{\alpha\tau}^x)+\int\frac{\lambda}{2}|y|^2\diff\rho(y)\right\}\diff\mu(x)
    \end{align*}
    where for all $x,y \in \R^d$,
    $R_{\alpha\tau}^x(y)=\frac{1}{\sqrt{2\pi\alpha\tau}^d}\exp\left(-\frac{|x-y|^2}{2\alpha\tau}\right).$
    Let $x\in\R^d$, then
    \[
    \alpha H(\rho\|R_{\alpha\tau}^x)+\frac{\lambda}{2}\int|y|^2\diff\rho=\alpha\int\ln\left(\frac{\rho}{e^{-\lambda\frac{|y|^2}{2\alpha}}R_{\alpha\tau}^x}\right)\diff\rho.
    \]
By the Jensen inequality, this quantity is minimized for $\rho =: \rho^x $ of the form
\begin{equation*}
    \rho^x(y) = \frac{1}{Z^x} \exp\left(-\lambda\frac{|y|^2}{2\alpha}\right) R_{\alpha\tau}^x(y) = \frac{1}{Z^x} \exp\left(-\frac{(1+\lambda\tau)\left|y-\frac{x}{1+\lambda\tau}\right|^2}{2\alpha\tau}\right) , \qquad y \in \R^d,
\end{equation*}
where $Z^x$ is a normalizing constant allowed to change in each equality, and where we used the identity $|x-y|^2+\lambda\tau|y|^2=(1+\lambda\tau)\left|y-\frac{x}{1+\lambda\tau}\right|^2+\frac{\lambda\tau}{1+\lambda\tau}|x|^2$, holding for all $x,y \in \R^d$. Note that in the last identity, $Z^x$ is in fact independent of $x$.

 Thus, $\gamma$ is a minimizer if and only if for $\mu$ almost every $x$, $\nu^x=\rho^x$. In particular, $J^\alpha_\tau(\mu)=\int \rho^x\diff\mu(x)$. But for all $y \in \R^d$,
    \[
    J^\alpha_\tau(\mu)(y)=\frac{1}{Z}\int \exp\left(-\frac{(1+\lambda\tau)\left|y-\frac{x}{1+\lambda\tau}\right|^2}{2\alpha\tau}\right)\exp\left(-\frac{|x|^2}{2a}\right)\diff x,
    \]
    where $Z$ is a normalizing constant allowed to change in the following computations.
    Since for all $x,y\in \R^d$,
    \begin{equation*}
    a(1+\lambda\tau)\left|y-\frac{x}{1+\lambda\tau}\right|^2+\alpha\tau|x|^2=\left(\frac{a}{1+\lambda\tau}+\alpha\tau\right)\left|x-\frac{a(1+\lambda\tau)}{a+(1+\lambda\tau)\alpha\tau}y\right|^2+\frac{a(1+\lambda\tau)^2\alpha\tau}{a+(1+\lambda\tau)\alpha\tau}|y|^2,
    \end{equation*}
    we find
 \[
    J^\alpha_\tau(\mu)(y)=\frac{1}{Z}\int \exp\left(-\frac{(\frac{a}{1+\lambda\tau}+\alpha\tau)|x-\frac{a(1+\lambda\tau)}{a+(1+\lambda\tau)\alpha\tau}y|^2}{2\alpha\tau a}\right)\diff x\times\exp\left(-\frac{(1+\lambda\tau)^2}{2(a+(1+\lambda\tau)\alpha\tau)}|y|^2\right).
    \]
The integral in this last formula is independent of $y$, and hence:
    \[
    J_\tau^\alpha(\mu)(y)=\frac{1}{Z}\exp\left(-\frac{(1+\lambda\tau)^2}{2(a+(1+\lambda\tau)\alpha\tau)}|y|^2\right).
    \]
    That is, $J_\tau^\alpha(\mu)=\mathcal N\left(\frac{a}{(1+\lambda\tau)^2}+\frac{\alpha\tau}{1+\lambda\tau}\right)$, as announced.
\end{proof}
\subsection{Optimality in $\alpha$ at the continuous level}\label{optimality continuous level}

The purpose of this subsection is to compare the solutions of the two continuous equations~\eqref{eq: gradient flows for V}, when $V$ is defined as in~\eqref{eq:linear_F} for some $\lambda \in \R$ and $\alpha>0$, starting from the same initial measure $\mu_0=\mathcal N(a)$, where $a>0$.
\begin{proposition}
    Let $\mu_0=\mathcal N(a)$ and $\rho^0,\rho^\alpha$ be the associated solutions of~\eqref{eq: gradient flows for V} given by Proposition~\ref{explicit computation continuous}. We have for all $t \geq 0$:
    \[
    W_2(\rho^0(t),\rho^\alpha(t))=\frac{\alpha}{2}\int_0^t e^{\lambda (s-t)} \sqrt{\int |\nabla\ln(\rho^\alpha_s)|^2\mathrm{d}\rho^\alpha_s}\mathrm{d}s.
    \]
\end{proposition}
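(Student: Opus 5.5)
This is a direct computation: evaluate both sides with the explicit formulas of Propositions~\ref{explicit computation continuous} and~\ref{prop:computation fisher entropy wassertein gaussian}, and check that they coincide. Write $s^0(t)$ and $s^\alpha(t)$ for the variances of $\rho^0(t)$ and $\rho^\alpha(t)$.

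\textbf{Left-hand side.} For $\lambda\neq 0$,
\[
s^0(t)=ae^{-2\lambda t},\qquad s^\alpha(t)=ae^{-2\lambda t}+\alpha\frac{1-e^{-2\lambda t}}{2\lambda}.
\]
For $\lambda=0$ these become $s^0=a$ and $s^\alpha=a+\alpha t$. Note that $\frac{1-e^{-2\lambda t}}{2\lambda}\ge 0$ for either sign of $\lambda$, so $s^\alpha\ge s^0$. Formula~\eqref{eq:expression wasserstein gaussian} therefore applies and gives
\[
W_2(\rho^0(t),\rho^\alpha(t))=\sqrt d\left(\sqrt{s^\alpha(t)}-\sqrt{s^0(t)}\right).
\]

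\textbf{Right-hand side.} By~\eqref{eq:expression fisher gaussian}, $\sqrt{\int|\nabla\ln\rho^\alpha_s|^2\diff\rho^\alpha_s}=\sqrt{d/s^\alpha(s)}$. It thus suffices to show
\[
\sqrt{s^\alpha(t)}-\sqrt{s^0(t)}=\frac{\alpha}{2}\int_0^t e^{\lambda(s-t)}\,\frac{\diff s}{\sqrt{s^\alpha(s)}}.
\]

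\textbf{Key step.} Rescale by $e^{\lambda t}$ and set $g(t):=e^{\lambda t}\sqrt{s^\alpha(t)}$. Since $e^{\lambda t}\sqrt{s^0(t)}=\sqrt a$ is constant, the claim becomes
\[
g(t)-g(0)=\frac{\alpha}{2}\int_0^t \frac{e^{\lambda s}}{\sqrt{s^\alpha(s)}}\,\diff s,
\]
using $g(0)=\sqrt a$. This follows by differentiation. The ODE $\dot s^\alpha=-2\lambda s^\alpha+\alpha$ holds directly from the explicit formula, and it is also the variance equation for~\eqref{eq: gradient flows for V}. Hence
\[
g'(t)=e^{\lambda t}\left(\lambda\sqrt{s^\alpha}+\frac{-2\lambda s^\alpha+\alpha}{2\sqrt{s^\alpha}}\right)=\frac{\alpha}{2}\,\frac{e^{\lambda t}}{\sqrt{s^\alpha(t)}}.
\]
Integrating from $0$ to $t$ gives the identity. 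Multiplying by $e^{-\lambda t}\sqrt d$ yields the statement. The case $\lambda=0$ is the same computation with $e^{\lambda t}=1$.

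The only step needing care is this algebraic cancellation together with the sign condition $s^\alpha\ge s^0$, which lets us use~\eqref{eq:expression wasserstein gaussian} without absolute values. There is no genuine obstacle. If $a=0$, then $\rho^0$ is a Dirac mass: formula~\eqref{eq:expression wasserstein gaussian} still holds in the limit, and $1/\sqrt{s^\alpha}$ is integrable near $0$, so the same identity holds by continuity in $a$.
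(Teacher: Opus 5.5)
Your proposal is correct and follows the same route as the paper: both sides are evaluated from the explicit Gaussian formulas of Propositions~\ref{explicit computation continuous} and~\ref{prop:computation fisher entropy wassertein gaussian} and then compared. The paper computes the time integral through the substitutions $u=e^{\lambda s}$ and $w=a+\alpha\frac{u^2-1}{2\lambda}$, whereas you check directly, using the variance ODE $\dot s^\alpha=\alpha-2\lambda s^\alpha$, that $e^{\lambda t}\sqrt{s^\alpha(t)}$ is an antiderivative; this is the same computation but treats $\lambda=0$ and $\lambda\neq0$ together, and your check that $s^\alpha\ge s^0$ justifies the use of~\eqref{eq:expression wasserstein gaussian}, which the paper leaves implicit.
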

\begin{proof}
Let us compute both sides of this equality explicitly, treating the case $\lambda = 0$ and $\lambda \neq 0$ separately.

\noindent \underline{\textbf{Case $\lambda=0$}}. For all $s \geq 0$, using Proposition~\ref{explicit computation continuous} and equation~\eqref{eq:expression fisher gaussian} of Proposition~\ref{prop:computation fisher entropy wassertein gaussian}, we obtain
\begin{equation*}
\sqrt{\int|\nabla\ln(\rho_s^\alpha)|^2\diff\rho_s^\alpha}=\frac{\sqrt{d}}{\sqrt{a +\alpha s}},
\end{equation*}
and hence for all $t \geq 0$
\begin{equation*}
\frac{\alpha}{2}\int_0^t \sqrt{\int|\nabla\ln(\rho_s^\alpha)|^2\diff\rho_s^\alpha}\diff s
=\frac{\sqrt{d}\alpha}{2}\int_0^t \frac{1}{\sqrt{a +\alpha s}}\diff s = \sqrt{d}\left(\sqrt{a+\alpha t}-\sqrt{a}\right).
\end{equation*}
Using Proposition~\ref{explicit computation continuous} and equation~\eqref{eq:expression wasserstein gaussian} of Proposition~\ref{prop:computation fisher entropy wassertein gaussian}, for all $t \geq 0$, we observe that this quantity coincides with the Wasserstein distance between the solutions $
W_2(\rho_t^\alpha,\rho_t^0)$, 
as anounced.

\noindent \underline{\textbf{Case $\lambda\neq0$}}. This time, for all $s \geq 0$, we find
\begin{equation*}
\sqrt{\int|\nabla\ln(\rho_s^\alpha)|^2\diff\rho_s^\alpha}=\frac{\sqrt{d}}{\sqrt{a e^{-2\lambda s}+\alpha\frac{1-e^{-2\lambda s}}{2\lambda}}}.
\end{equation*}
Thus, for all $t\geq 0$, we have:
\begin{equation*}
J(t):=\int_0^t e^{\lambda s}\,\sqrt{\int|\nabla\ln(\rho_s^\alpha)|^2\diff\rho_s^\alpha}\diff s
=\sqrt{d}\int_0^t \frac{e^{\lambda s}}{\sqrt{a e^{-2\lambda s}+\alpha\frac{1-e^{-2\lambda s}}{2\lambda}}}\diff s.
\end{equation*}
As $\lambda\neq0$, changing the variables according to $u=e^{\lambda s}$ leads to
\[
J(t)=\frac{\sqrt{d}}{\lambda}\int_{1}^{e^{\lambda t}}
\frac{\diff u}{\sqrt{a u^{-2}+\alpha\frac{1-u^{-2}}{2\lambda}}}
=\frac{\sqrt{d}}{\lambda}\int_{1}^{e^{\lambda t}}\frac{u\diff u}{\sqrt{a +\alpha\frac{u^2-1}{2\lambda}}}.
\]
Changing once again the variables according to $w=a +\alpha\frac{u^2-1}{2\lambda}$, we end up with
\[
J(t)=\frac{\sqrt{d}}{\alpha}
\int_{a}^{a + \alpha \frac{e^{2\lambda t} - 1}{2 \lambda}}\frac{\diff w}{\sqrt{w}}
=\frac{2\sqrt{d}}{\alpha}\left(\sqrt{a+\alpha\frac{e^{2\lambda t}-1}{2\lambda}}-\sqrt{a}\right).
\]
All in all,
\begin{equation*}
\frac{\alpha}{2}\int_0^t e^{\lambda (s-t)} \sqrt{\int |\nabla\ln(\rho^\alpha_s)|^2\mathrm{d}\rho^\alpha_s}\mathrm{d}s= e^{-\lambda t} J(t) =\sqrt{d}\left(\sqrt{ae^{-2\lambda t}+\alpha\frac{1-e^{-2\lambda t}}{2\lambda}}-\sqrt{ae^{-2\lambda t}}\right).
\end{equation*}
Once again, using Proposition~\ref{explicit computation continuous} and equation~\eqref{eq:expression wasserstein gaussian} of Proposition~\ref{prop:computation fisher entropy wassertein gaussian},  we observe that this quantity coincides with the Wasserstein distance $W_2(\rho_t^\alpha,\rho_t^0)$ between the solutions at time $t \geq 0$,
and the result follows.
\end{proof}

\subsection{Optimality in $\alpha$ at the discrete level}\label{Optimality in alpha at the discrete level}
When proving Theorem~\ref{the theorem}, we always neglected the Fisher information term of Proposition~\ref{below bound sch by W-2}.
If we keep this term, if we do not use the third point of Hypothesis~\ref{hypothesis} and finally if we apply the Cauchy-Schwartz inequality in the estimate corresponding to~\eqref{eq:bound bf CS} only to the term which does not depend on $\alpha$, we obtain the following refinement of estimate~\eqref{eq: bond bet scheme after CS}.
\begin{theorem}[A more precise bound]\label{thm:precise bound}
Let $\mathcal{F}$ satisfy Hypothesis~\ref{hypothesis}. Let $\mu_0 \in \mathcal P_2(\R^d)$ be such that $\F(\mu_0) < + \infty$ and $H(\mu_0)< + \infty$. Let $n\geq0$ and $\tau<\frac{1}{\lambda_-}$. Then for all $k \in \N$, the iterates $J_{k,\tau}^0(\mu_0)$ and $J_{k,\tau}^\alpha(\mu_0)$ are well defined and satisfy:
\begin{itemize}
    \item if $\lambda=0$,
\begin{equation}\label{eq:precise bound lambda=0}
W_2\big(J_{n,\tau}^0(\mu_0),\, J_{n,\tau}^\alpha(\mu_0)\big)
\leq \sqrt{2\tau\big(\mathcal{F}(\mu_0) - \mathcal{F}(J_{n,\tau}^0(\mu_0))\big)}
   + \sum_{k=0}^{n-1} \sqrt{\tau}\,\sqrt{\mathcal{R}_k(\alpha,\tau)},
\end{equation}
\item if $\lambda\neq0$ and $\tau \leq\frac{1}{2\lambda_-} $, 
\begin{multline}\label{eq:precise bound lambda neq 0}
  W_2(J_{n,\tau}^0(\mu_0), J_{n,\tau}^\alpha(\mu_0)) \leq \sqrt2(1+4\lambda_-\tau)^{\frac32} (1-\lambda_-\tau)^{-n}\sqrt{\tau}\sqrt{\F(\mu_0)-\F(J_{n,\tau}^0(\mu_0))} \\ + \sum_{k=0}^{n-1}\left(1+\lambda\tau\right)^{k-n} \sqrt{\tau}\sqrt{\frac{\mathcal{R}_k(\alpha,\tau)}{1+\lambda\tau}},
\end{multline}
\end{itemize}
where in both cases,
\begin{multline}
\label{eq:def_Rk}
\mathcal{R}_k(\alpha,\tau)=2\F(J_\tau^0(J_{k,\tau}^\alpha(\mu_0))*\sigma_{\alpha\tau})-2\F(J_\tau^0(J_{k,\tau}^\alpha(\mu_0))) \\+\alpha \big[ H(J_{k,\tau}^\alpha(\mu_0))-H(J_{k+1,\tau}^\alpha(\mu_0))\big]-\frac{\alpha^2}{4}\int_0^\tau\int|\nabla\ln(\bar \rho^\alpha_{\tau}(t))|^2\diff\bar\rho^\alpha_{\tau}(t)\mathrm{d}t,
\end{multline}
and $\bar \rho^\alpha_{\tau}$ is the curve whose position at time $k\tau$ is $J_{k,\tau}^\alpha(\mu_0)$ for all $k \in \N$, and interpolating between these timesteps along the solutions of the dynamic Schr\"odinger problem, defined in Definition \ref{Sch Benamou Brenier}.
\end{theorem}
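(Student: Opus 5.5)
We follow the proof of Theorem~\ref{the theorem} step by step, changing only two places: the bound on term (II), and the final Cauchy--Schwarz. Existence of the iterates is already given by Theorem~\ref{welldefined JKO} and Theorem~\ref{existence Ent JKO}, together with Proposition~\ref{prop:linear increasing of entropy along JKO alpha neq0} to propagate absolute continuity. The splitting~\eqref{eq:splitting (I) and (II)} and the bound~\eqref{eq:bound (I)} on term (I) from Theorem~\ref{contraction AGS} are reused verbatim.

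For term (II), write $\mu=J^\alpha_{k,\tau}(\mu_0)$, $\nu^0=J^0_\tau(\mu)$ and $\nu^\alpha=J^\alpha_\tau(\mu)$.

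1. The discrete E.V.I of Theorem~\ref{discrete E.V.I} gives
\[
\frac{W_2^2(\nu^0,\nu^\alpha)}{2\tau}\le\frac{1}{1+\lambda\tau}\Big(\frac{W_2^2(\mu,\nu^\alpha)}{2\tau}+\F(\nu^\alpha)-C(\tau,0)\Big).
\]
2. We use the first, sharper inequality of Proposition~\ref{below bound sch by W-2}, keeping the Fisher term:
\[
\frac{W_2^2(\mu,\nu^\alpha)}{2\tau}\le \frac{\Sch^{\alpha\tau}(\mu,\nu^\alpha)}{\tau}-\frac\alpha2\big(H(\mu)+H(\nu^\alpha)\big)-\frac{\alpha^2}{8}\int_0^\tau\!\!\int|\nabla\ln\rho^\alpha_t|^2\diff\rho^\alpha_t\diff t.
\]
Here $\rho^\alpha$ is the Schrödinger interpolation between $\mu$ and $\nu^\alpha$. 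After a time shift by $k\tau$, it is exactly the piece of $\bar\rho^\alpha_\tau$ on $[k\tau,(k+1)\tau]$. We will read the Fisher integral in~\eqref{eq:def_Rk} as running over this step; this identification needs a quick check.
3. The competitor argument, with $\nu^0*\sigma_{\alpha\tau}$ and the heat-smoothed geodesic, gives
\[
C(\tau,\alpha)\le C(\tau,0)+\alpha H(\mu)+\F(\nu^0*\sigma_{\alpha\tau})-\F(\nu^0).
\]
Unlike the main proof, we stop here and do not apply point (3) of Hypothesis~\ref{hypothesis}.

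Combining steps 1--3 gives
\[
\mathrm{(II)}^2\le \frac{2\tau}{1+\lambda\tau}\Big(\F(\nu^0*\sigma_{\alpha\tau})-\F(\nu^0)+\tfrac\alpha2\big(H(\mu)-H(\nu^\alpha)\big)-\tfrac{\alpha^2}{8}\textstyle\int\!\!\int|\nabla\ln\rho^\alpha|^2\Big)=\frac{\tau\,\mathcal R_k(\alpha,\tau)}{1+\lambda\tau}.
\]
This also shows that $\mathcal R_k\ge0$, which is needed to take square roots.

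Next we apply Proposition~\ref{square discrete gronwall} with $a_{k+1}=R_k(\tau)/(1+\lambda\tau)$ and $b_{k+1}=\sqrt{\tau\mathcal R_k/(1+\lambda\tau)}$. Dividing by $(1+\lambda\tau)^n$ gives the analogue of~\eqref{eq:bound bf CS}.

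1. For the $\alpha$-part, we simply keep the sum $\sum_k(1+\lambda\tau)^{k-n}\sqrt{\tau\mathcal R_k/(1+\lambda\tau)}$. When $\lambda=0$ it reduces to $\sum_k\sqrt\tau\sqrt{\mathcal R_k}$.
2. For the $R_k$-part, we proceed exactly as in the main proof. When $\lambda=0$, the terms telescope to $2\tau(\F(\mu_0)-\F(J^0_{n,\tau}(\mu_0)))$. When $\lambda\ne0$, we use $(1+\lambda\tau)^{2k}\le(1-\lambda_-\tau)^{-2n+2}$ together with the elementary bounds valid for $\tau\le\frac1{2\lambda_-}$. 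This yields the same first term $\sqrt2(1+4\lambda_-\tau)^{3/2}(1-\lambda_-\tau)^{-n}\sqrt\tau\sqrt{\cdots}$.

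The argument is essentially bookkeeping. The only delicate point is identifying the Fisher-information term with the correct piece of the curve $\bar\rho^\alpha_\tau$, in its time orientation and normalization. We also have to make sure that each $\mathcal R_k$ is finite and nonnegative so that the square roots make sense. Both follow from the chain of inequalities above, since $\mathrm{(II)}^2\ge0$.
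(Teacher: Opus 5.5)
Your strategy is exactly the paper's, which sketches it in the sentence before the statement: keep the Fisher term of Proposition~\ref{below bound sch by W-2}, stop before invoking point (3) of Hypothesis~\ref{hypothesis}, and do not apply Cauchy--Schwarz to the $\alpha$-part. Everything up to $\mathrm{(II)}^2\le \tau\,\mathcal R_k(\alpha,\tau)/(1+\lambda\tau)$ is correct. This includes reading the Fisher integral as running over $[k\tau,(k+1)\tau]$, as in~\eqref{eq:identity_Rk}, and the nonnegativity of $\mathcal R_k$.

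The step that fails is the Gr\"onwall bookkeeping. Proposition~\ref{square discrete gronwall} yields $\sum_{k=1}^{n}(1+\lambda\tau)^k b_k=\sum_{k=0}^{n-1}(1+\lambda\tau)^{k+1}b_{k+1}$. After dividing by $(1+\lambda\tau)^n$, the weights in the $\alpha$-sum are therefore $(1+\lambda\tau)^{k+1-n}$, not $(1+\lambda\tau)^{k-n}$. Already for $n=1$ you only get $u_1=\mathrm{(II)}\le b_1$, with no extra factor $(1+\lambda\tau)^{-1}$. This off-by-one shift is inherited from the paper's display~\eqref{eq:bound bf CS}.

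The shift is harmless in three places:
\begin{enumerate}
\item For $\lambda=0$ nothing changes.
\item For $\lambda<0$ the stated bound follows a fortiori, since then $(1+\lambda\tau)^{k+1-n}\le(1+\lambda\tau)^{k-n}$.
\item The first term is fine for every $\lambda$. With the correct weights you need $(1+\lambda\tau)^{2k+1-2n}\le(1-\lambda_-\tau)^{-2n}$, which holds. Your intermediate claim $(1+\lambda\tau)^{2k}\le(1-\lambda_-\tau)^{-2n+2}$ is false for $\lambda>0$ as written, but it is not what you need.
\end{enumerate}

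For $\lambda>0$, however, the $\alpha$-sum you claim is smaller by a factor $1+\lambda\tau$ than what the argument gives, and~\eqref{eq:precise bound lambda neq 0} as stated is false. Take the toy model~\eqref{eq:linear_F} with $d=1$, $\lambda=1$ (so $K=1$), $\mu_0=\mathcal N(1)$, $n=1$, $\tau=0.3$, and $\alpha$ chosen so that $\alpha\tau(1+\lambda\tau)=100$.
\begin{enumerate}
\item By Proposition~\ref{explicit computation discrete}, $J^0_\tau(\mu_0)=\mathcal N(1/1.69)$ and $J^\alpha_\tau(\mu_0)=\mathcal N(101/1.69)$. So the left-hand side is $(\sqrt{101}-1)/1.3\approx 6.961$.
\item By~\eqref{eq:identity_Rk}, $\mathcal R_0=\frac{\alpha}{4}\ln 101+\alpha(0.3-\ln 1.3)\approx 305.5$.
\item The right-hand side is then $\sqrt{0.6\,(0.5-0.5/1.69)}+\frac{1}{1.3}\sqrt{0.3\,\mathcal R_0/1.3}\approx 0.350+6.459=6.809$, which is smaller than $6.961$.
\end{enumerate}
What your argument, and the paper's, actually proves is~\eqref{eq:precise bound lambda neq 0} with $(1+\lambda\tau)^{k-n}$ replaced by $(1+\lambda\tau)^{k+1-n}$. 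With $\tau=t/n$ the two sums differ by the factor $1+\lambda t/n\to1$, so this correction does not affect Theorem~\ref{optimality discrete cororlary}.
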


The main result of this subsection asserts that inequalities~\eqref{eq:precise bound lambda=0} and~\eqref{eq:precise bound lambda neq 0} are optimal in the following sense: in our toy model where $\mathcal F$ is given by~\eqref{eq:linear_F}, inequalities~\eqref{eq:precise bound lambda=0} and~\eqref{eq:precise bound lambda neq 0} are equalities up to a term converging to $0$ as $\tau$ to $0$. 

From now on, we assume that $\mathcal F$ is given by~\eqref{eq:linear_F}, and we fix a value of $\alpha >0$. We recall that for this $\mathcal F$, as soon as $\tau < 1/\lambda_-$, both schemes are well defined, and $\mathcal F$ satisfies the second and third point of Hypothesis~\ref{hypothesis}. In particular, this model falls in the framework of Theorem~\ref{thm:precise bound}, with the same value of $\lambda$ and $K = \lambda d$. The case where $\lambda = 0$ is trivial since the JKO scheme is stationary and the entropic JKO scheme follows the heat flow, so we focus on the case $\lambda \neq 0$. Our main result is:

\begin{theorem}\label{optimality discrete cororlary}
    Let $\mu_0=\mathcal N(a)$. For all $n \in \N$ and $t \geq 0$, we have
    \[
    W_2(J_{n,t/n}^0(\mu_0),J_{n,t/n}^\alpha(\mu_0))= \sum_{k=0}^{n-1}\left(1+\lambda\frac tn\right)^{k-n} \sqrt{\frac tn}\sqrt{\frac{\mathcal{R}_k(\alpha,\frac tn)}{1+\lambda\frac tn}} + \underset{n\to +\infty}{o}(1).
    \]   
\end{theorem}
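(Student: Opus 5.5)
The plan is to compute both sides of the identity explicitly, using the Gaussian formulas of Propositions~\ref{prop:computation fisher entropy wassertein gaussian}, \ref{explicit computation continuous} and~\ref{explicit computation discrete}. Both sides should then converge, as $n\to+\infty$, to the same quantity $\frac{\alpha}{2}\int_0^t e^{\lambda(s-t)}\sqrt{\int|\nabla\ln\rho^\alpha_s|^2\diff\rho^\alpha_s}\diff s$, and the difference of two quantities with a common limit is $o(1)$. Throughout, set $\tau=t/n$. Let $s^0_k:=a(1+\lambda\tau)^{-2k}$ and let $s_k$ be the variance of $J^\alpha_{k,\tau}(\mu_0)$ given by Proposition~\ref{explicit computation discrete}.

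\textbf{Left-hand side.} Since $s_k\geq s_k^0$, formula~\eqref{eq:expression wasserstein gaussian} gives $W_2(J^0_{n,\tau}(\mu_0),J^\alpha_{n,\tau}(\mu_0))=\sqrt d(\sqrt{s_n}-\sqrt{s^0_n})$. Using $(1+\lambda t/n)^{-2n}\to e^{-2\lambda t}$ and $\frac{1+\lambda\tau}{2+\lambda\tau}\to\frac12$, this converges to $\sqrt d\big(\sqrt{ae^{-2\lambda t}+\alpha\frac{1-e^{-2\lambda t}}{2\lambda}}-\sqrt{ae^{-2\lambda t}}\big)$. By the proposition of Subsection~\ref{optimality continuous level}, this limit equals the continuous integral above.

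\textbf{Right-hand side.} I compute the three pieces of $\mathcal R_k$ in~\eqref{eq:def_Rk}.
\begin{itemize}
\item The $\mathcal F$ part is exact. For $V=\lambda|x|^2/2$, convolution by $\sigma_{\alpha\tau}$ adds $d\alpha\tau$ to the second moment, so $2\F(\nu*\sigma_{\alpha\tau})-2\F(\nu)=\lambda d\alpha\tau$ whatever $\nu$ is.
\item By~\eqref{eq:computaion entropy gaussian}, the entropy part equals $\frac{\alpha d}{2}\ln(s_{k+1}/s_k)$. From the one-step recursion $s_{k+1}=\frac{s_k}{(1+\lambda\tau)^2}+\frac{\alpha\tau}{1+\lambda\tau}$, one gets $\ln(s_{k+1}/s_k)=\tau(-2\lambda+\alpha/s_k)+O(\tau^2)$.
\item The Fisher part is $\frac{\alpha^2}{4}\int_{k\tau}^{(k+1)\tau}\frac{d}{v(r)}\diff r$, where $v(r)$ is the variance of the (Gaussian) Schr\"odinger interpolation between $\mathcal N(s_k)$ and $\mathcal N(s_{k+1})$.
\end{itemize}
Granting $v(r)=s_k+O(\tau)$, the $\lambda$-terms cancel and I obtain
\[
\mathcal R_k=\tfrac{\alpha^2 d}{2s_k}\tau-\tfrac{\alpha^2 d}{4s_k}\tau+O(\tau^2)=\tfrac{\alpha^2 d}{4s_k}\tau+O(\tau^2).
\]
The constants in the $O(\tau^2)$ are uniform in $k\leq n$, because for $n$ large the variances $s_k$ stay in a compact subset of $(0,+\infty)$ depending only on $a,\alpha,\lambda,t$. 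Since the leading coefficient $\frac{\alpha^2d}{4s_k}$ is bounded below, taking square roots gives
\[
\sqrt{\tau\,\mathcal R_k/(1+\lambda\tau)}=\tau\,\tfrac{\alpha}{2}\sqrt{d/s_k}+O(\tau^2),
\]
again uniformly in $k$. Summing over $k$, the $n$ error terms contribute $O(n\tau^2)=O(t^2/n)\to0$. What remains is a Riemann sum of $\frac{\alpha}{2}e^{\lambda(s-t)}\sqrt{d/s^\alpha(s)}$, where $s^\alpha(s)=ae^{-2\lambda s}+\alpha\frac{1-e^{-2\lambda s}}{2\lambda}$ is the variance of $\rho^\alpha_s$. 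Indeed $(1+\lambda\tau)^{k-n}\to e^{\lambda(s-t)}$ and $s_k\to s^\alpha(k\tau)$ uniformly for $k\tau=s$. The Riemann sum therefore converges to the same continuous integral, which concludes.

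\textbf{Main obstacle.} The delicate step is controlling $v(r)$ along the dynamic Schr\"odinger interpolation of Proposition~\ref{Sch Benamou Brenier}. I would first use the explicit Gaussian form of the static Schr\"odinger plan between $\mathcal N(s_k)$ and $\mathcal N(s_{k+1})$, which is a Gaussian coupling computable as in the proof of Proposition~\ref{explicit computation discrete}. Its bridge marginals are then $\mathcal N(v(r))$ with $v(r)$ an explicit rational-type expression in $r$, $s_k$, $s_{k+1}$ and $\alpha\tau$. From that formula I would check directly that $|v(r)-s_k|\leq C\tau$ for $r\in[k\tau,(k+1)\tau]$.

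If this explicit computation is too heavy, a fallback is to bound the second moment of the bridge marginals, which gives an upper bound on $v$ (enough for a lower bound on the Fisher integral), and to combine it with a lower bound on $v$ obtained by comparison with the heat flow started at $\mathcal N(s_k)$. This is enough to pin the Fisher integral to $\frac{d\tau}{s_k}(1+O(\tau))$, which is all the argument above uses.
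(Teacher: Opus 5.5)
Your proposal is correct, and its skeleton is the paper's. The left-hand side tends to $W_2(\rho^0(t),\rho^\alpha(t))$, which equals the continuous Fisher integral by the proposition of Subsection~\ref{optimality continuous level}, and the right-hand side is shown to converge to that same integral.

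The difference is in how you handle $\mathcal R_k$. The paper expands nothing until the very end. It uses the optimality system of the dynamic Schr\"odinger problem, together with the terminal condition $\varphi^\alpha_\tau|_{\theta=(k+1)\tau}=-\frac{\lambda}{2}|x|^2$ from the Euler--Lagrange equation of the entropic JKO step (quoted from~\cite{baradat2025usingsinkhornjkoscheme}). This reduces the interpolation to the ODEs $\dot{\bar a}_\theta=\alpha-2\lambda_\theta\bar a_\theta$ and $\dot\lambda_\theta=\lambda_\theta^2$. From these it gets two exact formulas:
\begin{itemize}
\item the identity $\mathcal R_k=\frac{\alpha^2}{4}\int_{k\tau}^{(k+1)\tau}\int|\nabla\ln\bar\rho^\alpha_\tau(\theta)|^2\diff\bar\rho^\alpha_\tau(\theta)\diff\theta+d\alpha\bigl(\lambda\tau-\ln(1+\lambda\tau)\bigr)$;
\item the closed form $\frac{d}{\alpha}\ln\bigl(1+\frac{\alpha\tau(1+\lambda\tau)}{a^\tau_k}\bigr)$ for the time-integrated Fisher information, where $a^\tau_k$ is your $s_k$.
\end{itemize}
It then concludes by dominated convergence.

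You instead get the $\F$ term exactly and expand the entropy increment directly from the variance recursion. You then need only the zeroth-order estimate $v(r)=s_k+O(\tau)$ on the bridge variance, and summing uniform $O(\tau^2)$ errors leaves a Riemann sum. Your route avoids the optimality-condition machinery and, with constants tracked, gives an $O(1/n)$ remainder. The paper's route gives exact formulas. In particular, it shows that $\mathcal R_k$ exceeds the Fisher term by the explicit nonnegative $O(\tau^2)$ quantity $d\alpha(\lambda\tau-\ln(1+\lambda\tau))$, which is the structure discussed in Subsection~\ref{remark formal computation optimality in alpha}.

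Your ``main obstacle'' closes quickly with your first plan. You can drop the heat-flow fallback, which does not work as stated: the bridge carries a drift, and its variance decreases when $s_{k+1}<s_k$.
\begin{itemize}
\item The bridge marginal at relative time $u\in[0,1]$ is the law of $(1-u)X_0+uX_1$, plus an independent Brownian-bridge fluctuation of variance $\alpha\tau u(1-u)$ per coordinate.
\item Here $(X_0,X_1)$ follows the static entropic plan between $\mathcal N(s_k)$ and $\mathcal N(s_{k+1})$. Its cross-covariance is $c\,I_d$ with $c=\frac{1}{2}\bigl(\sqrt{4s_ks_{k+1}+\alpha^2\tau^2}-\alpha\tau\bigr)\in\bigl[\sqrt{s_ks_{k+1}}-\frac{\alpha\tau}{2},\sqrt{s_ks_{k+1}}\bigr]$.
\item Hence $v(r)\in\bigl[g(u)^2,g(u)^2+\frac{\alpha\tau}{4}\bigr]$ with $g(u)=(1-u)\sqrt{s_k}+u\sqrt{s_{k+1}}$, which gives $v(r)=s_k+O(\tau)$ uniformly in $k$.
\end{itemize}
Alternatively, the paper's ODE gives the same estimate in one line, since $\lambda_\theta$ is bounded.

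Your uniformity claims require $a>0$. This is implicit in the statement, since $H(\mu_0)$ must be finite.
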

\begin{proof}
    Our proof relies on the previous section. First, comparing Propositions~\ref{explicit computation continuous} and~\ref{explicit computation discrete}, we have for all $t \geq 0$ (fixed for the whole proof)
    \begin{equation*}
       \lim_{n \to + \infty} W_2(J_{n,t/n}^0(\mu_0),J_{n,t/n}^\alpha(\mu_0)) = W_2(\rho^0(t), \rho^\alpha(t)).
    \end{equation*}
    Therefore, in view of Proposition~\ref{optimality continuous level}, we just need to prove that
    \begin{equation}
    \label{eq:cv_Fisher}
        \lim_{n \to + \infty}\sum_{k=0}^{n-1}\left(1+\lambda\frac tn\right)^{k-n} \sqrt{\frac tn}\sqrt{\frac{\mathcal{R}_k(\alpha,\frac tn)}{1+\lambda\frac tn}} = \frac{\alpha}{2}\int_0^t e^{\lambda (s-t)} \sqrt{\int |\nabla\ln(\rho^\alpha_s)|^2\mathrm{d}\rho^\alpha_s}\mathrm{d}s.
    \end{equation}

    Let us artificially write the left-hand side as an integral. For all $n\in \N$ (sufficiently large to ensure that the schemes are well defined),
    \begin{equation*}
    \sum_{k=0}^{n-1}\left(1+\lambda\frac tn\right)^{k-n} \sqrt{\frac tn}\sqrt{\frac{\mathcal{R}_k(\alpha,\frac tn)}{1+\lambda\frac tn}} = \frac{1}{\sqrt{1 + \lambda\frac{t}{n}}}\int_0^t \left( 1 + \lambda \frac{t}{n} \right)^{ \lfloor\frac{sn}{t} \rfloor- n} \sqrt{\frac{\mathcal{R}_{\lfloor\frac{sn}{t} \rfloor}(\alpha,\frac tn)}{t/n}}\diff s.
    \end{equation*}
    As $n \to + \infty$, $\sqrt{1 + \lambda t/n} \to 1$, and for all $s \in [0,t]$ the quantity
    \begin{equation*}
        \left( 1 + \lambda \frac{t}{n} \right)^{ \lfloor\frac{sn}{t} \rfloor- n} 
    \end{equation*}
    is bounded uniformly in $s \in [0,t]$ and $n$ sufficiently large, and converges pointwise towards $e^{\lambda(s-t)}$. Hence, to prove~\eqref{eq:cv_Fisher}, by the dominated convergence theorem, we just need to prove that for all $s \in [0,t]$,
    \begin{equation}
    \label{eq:cv_to_prove_opt}
       \lim_{n \to + \infty}  \sqrt{\frac{\mathcal{R}_{\lfloor\frac{sn}{t} \rfloor}(\alpha,\frac tn)}{t/n}} = \frac{\alpha}{2} \sqrt{\int |\nabla\ln(\rho^\alpha_s)|^2\mathrm{d}\rho^\alpha_s},
    \end{equation}
and that the quantity in the left-hand side is bounded uniformly in $s \in [0,t]$ and $n$ sufficiently large. Otherwise stated, we just need to prove that, 
\begin{equation*}
    \lim_{\tau \to 0} \frac{\mathcal{R}_{\lfloor\frac{s}{\tau} \rfloor}(\alpha,\tau)}{\tau} = \frac{\alpha^2}{4} \int |\nabla\ln(\rho^\alpha_s)|^2\mathrm{d}\rho^\alpha_s,
\end{equation*}
and that the quantity in the left-hand side is bounded uniformly in $s \in [0, t]$ and $\tau$ sufficiently small.

    To that aim, we first prove the following identity, holding for all $\tau < 1/\lambda_-$ and $k \in \N$, providing a link between the quantities $\mathcal R_k$ defined in~\eqref{eq:def_Rk} and integrals in time of the Fisher information:
    \begin{equation}
    \label{eq:identity_Rk}
\mathcal{R}_k(\alpha, \tau ) = \frac{\alpha^2}{4}\int_{k\tau}^{(k+1)\tau}\int|\nabla\ln(\bar \rho^\alpha_{\tau}(\theta))|^2\diff\bar\rho^\alpha_{\tau}(\theta)\mathrm{d}\theta + d \alpha \lambda \tau - d\alpha \ln \left(1 + \lambda \tau\right),
    \end{equation}
    where $\bar \rho^\alpha_{\tau}$ is the curve whose value at time $k\tau$ is $J^\alpha_{k,\tau}(\mu_0)$, and interpolating between these timepoints along the solution of the dynamic formulation of the Schr\"odinger problem, as in Proposition~\ref{Sch Benamou Brenier}.
    
     Let $k \in \N$ and $\tau < 1/\lambda_-$. First, we have
    \begin{equation}
    \label{eq:diff_F}
        2\F(J_\tau^0(J_{k,\tau}^\alpha(\mu_0))*\sigma_{\alpha\tau})-2\F(J_\tau^0(J_{k,\tau}^\alpha(\mu_0)))=\lambda d\alpha \tau.
    \end{equation}
    Indeed, in view of Propositions~\ref{explicit computation continuous} and~\ref{explicit computation discrete}, $J_\tau^0(J_{k,\tau}^\alpha(\mu_0))$ is a centered Gaussian. Call $\tilde a$ its variance. Then $J_{k,\tau}^\alpha(\mu_0))*\sigma_{\alpha\tau}$ is the centered Gaussian of variance $\bar a + \alpha \tau$. Our claim therefore follows from the definition~\eqref{eq:linear_F} of $\mathcal F$. 
Second, it is well known that between the times $k \tau$ and $(k+1)\tau$, $\bar \rho_{\tau}^\alpha$ solves
\begin{equation*}
   \left\{
   \begin{gathered}
   \partial_\theta \bar \rho^\alpha_\tau + \mathrm{div}(\bar \rho^\alpha_\tau \nabla \varphi^\alpha_\tau) = \frac{\alpha}{2} \Delta \bar \rho^\alpha_\tau ,\\
   \bar \rho^\alpha_\tau|_{\theta= k\tau} = J^\alpha_{k,\tau}(\mu_0),
   \end{gathered}
   \right.
\quad \mbox{where } \varphi^\alpha_\tau \mbox{ solves} \quad
   \left\{
   \begin{gathered}
   \partial_\theta  \varphi^\alpha_\tau + \frac{1}{2}|\nabla  \varphi^\alpha_\tau|^2 + \frac{\alpha}{2} \Delta \varphi^\alpha_\tau = 0 ,\\
    \varphi^\alpha_\tau|_{\theta= (k+1)\tau} = - \frac{\lambda}{2}|x|^2.
   \end{gathered}
   \right.
\end{equation*}
(The PDEs are the optimality conditions of the dynamic Schr\"odinger problem, and the terminal condition on $\bar \varphi^\alpha_\tau$ is the optimality condition of the entropic JKO scheme, see~\cite{baradat2025usingsinkhornjkoscheme}.) These equations are solved if and only if for all $\theta \in [k\tau, (k+1)\tau]$,
\begin{equation}
\label{eq:explicit_rho_phi}
\bar \rho^\alpha_\tau(\theta) = \mathcal N(\bar a_\theta) \qquad \mbox{and} \qquad \varphi(\theta,x) = d_\theta - \frac{\lambda_\theta}{2}|x|^2, \quad x \in \R^d,
\end{equation}
for some parameters $(\bar a_\theta)$, $(\lambda_\theta)$ and $(d_\theta)$ solving the following ODEs:
\begin{equation}
    \label{eq:ODEs}
    \left\{ 
    \begin{gathered}
\dot {\bar a}_\theta = \alpha - 2 \lambda_\theta \bar a_\theta, \\
\bar a_{k\tau} = a^\tau_k ,
\end{gathered}
    \right.
    \qquad \left\{
\begin{gathered}
    \dot \lambda_\theta = \lambda_\theta^2\\
    \lambda_{(k+1)\tau} = \lambda,
\end{gathered}
    \right.\qquad 
    \left\{
    \begin{gathered}
    \dot d_\theta = \frac{\alpha d}{2} \lambda_\theta,\\
    d_\tau = 0,
    \end{gathered}
    \right.
\end{equation}
and where in view of Proposition~\ref{explicit computation discrete},
\begin{equation*}
    a^\tau_k=\frac{a}{(1+\lambda\tau)^{2k}}+\left(1-\frac{1}{(1+\lambda\tau)^{2k}}\right)\left(\frac{\alpha}{\lambda}\frac{1+\lambda\tau}{2+\lambda\tau}\right).
\end{equation*}
With these equations and notations at hand, quick computations ensure
\begin{align}
   \notag \alpha \big[ H(J_{k,\tau}^\alpha(\mu_0))-H(J_{k+1,\tau}^\alpha(\mu_0))\big] &= - \alpha \int_{k\tau}^{(k+1) \tau} \frac{\diff }{\diff \theta} H(\bar \rho^\alpha_\tau(\theta)) \diff \theta\\
    \notag &= \alpha \int_{k\tau}^{(k+1) \tau} \frac{\diff }{\diff \theta} \Big( \frac{d}{2}(1 + \ln (2 \pi \bar a_\theta)) \Big) \diff \theta\\
    \notag &= \frac{\alpha d}{2} \int_{k\tau}^{(k+1)\tau} \frac{\dot \bar{a}_\theta}{\bar a_\theta} \diff \theta\\
    \notag &=  \frac{\alpha^2}{2} \int_{k\tau}^{(k+1)\tau} \frac{d}{\bar a_\theta} \diff \theta - \alpha d \int_{k\tau}^{(k+1)\tau} \lambda_\theta \diff \theta\\
    \label{eq:diff_H}&= \frac{\alpha^2}{2} \int_{k\tau}^{(k+1)\tau} \int |\nabla \ln \bar \rho^\alpha_\tau(\theta)|^2\diff \bar \rho^\alpha_\tau(\theta)\diff \theta - \alpha d \ln (1+\lambda \tau),
\end{align}
where we used formulas~\eqref{eq:computaion entropy gaussian}, \eqref{eq:expression fisher gaussian}, \eqref{eq:explicit_rho_phi} and~\eqref{eq:ODEs}. In particular, we used the following consequence on the ODE solved by $(\lambda_\theta)$ in~\eqref{eq:ODEs}:
\begin{equation}
\label{eq:integral_lambda}
\int_{k\tau}^{(k+1)\tau} \lambda_\theta \diff \theta = \ln(1+\lambda \tau).
\end{equation}
Formula~\eqref{eq:identity_Rk} follows from plugging~\eqref{eq:diff_H} and~\eqref{eq:diff_F} in the definition~\eqref{eq:def_Rk} of $\mathcal R_k$.

    Comparing~\eqref{eq:cv_to_prove_opt} and~\eqref{eq:identity_Rk}, we see that it remains to prove that for all $s \in [0,t]$,
    \begin{equation}
    \label{eq:cv_fisher_time_s}
        \lim_{\tau \to 0} \frac{1}{\tau} \int_{\lfloor\frac{s}{\tau} \rfloor\tau}^{(\lfloor\frac{s}{\tau} \rfloor+1)\tau}\int\left|\nabla\ln(\bar \rho^\alpha_{\tau}(\theta))\right|^2\diff\bar\rho^\alpha_{\tau}(\theta) \diff \theta = \int |\nabla\ln(\rho^\alpha_s)|^2\mathrm{d}\rho^\alpha_s,
    \end{equation}
    and that the quantity in the left-hand side is bounded uniformly in $s \in [0,t]$ and $\tau$ sufficiently small. On the right-hand side of~\eqref{eq:cv_fisher_time_s}, Propositions~\ref{explicit computation continuous} and~\ref{prop:computation fisher entropy wassertein gaussian} imply for all $s \in [0,t]$:
    \begin{equation*}
        \int |\nabla\ln(\rho^\alpha_s)|^2\mathrm{d}\rho^\alpha_s = \frac{d}{a_s}, \qquad \mbox{where} \qquad a_s := ae^{-2\lambda s} + \alpha \frac{1-e^{-2\lambda s}}{2\lambda}.
    \end{equation*}
    Concerning the left-hand side of~\eqref{eq:cv_fisher_time_s}, for all $k \in \N$, using the notations of~\eqref{eq:explicit_rho_phi} and~\eqref{eq:ODEs},
    \begin{equation*}
        \frac{1}{\tau} \int_{k\tau}^{(k+1)\tau}\int\left|\nabla\ln(\bar \rho^\alpha_{\tau}(\theta))\right|^2\diff\bar\rho^\alpha_{\tau}(\theta) \diff \theta = \frac{d}{\tau} \int_{k\tau}^{(k+1)\tau} \frac{\diff \theta}{\bar a_\theta}.
    \end{equation*}
    But for all $ \theta \geq 0$, in view of~\eqref{eq:ODEs},
    \begin{equation*}
        \frac{1}{\bar a_\theta} = \frac{1}{\alpha} \frac{\diff }{\diff \theta} \ln \bar a_\theta + \frac{2}{\alpha} \lambda_\theta.
    \end{equation*}
    Therefore, using~\eqref{eq:integral_lambda}, we find
    \begin{align*}
         \frac{1}{\tau} \int_{k\tau}^{(k+1)\tau}\int\left|\nabla\ln(\bar \rho^\alpha_{\tau}(\theta))\right|^2\diff\bar\rho^\alpha_{\tau}(\theta) \diff \theta &= \frac{d}{\tau \alpha} \left( \ln \frac{a^\tau_{k+1}}{a^\tau_k} + 2 \ln (1 + \lambda \tau)\right)\\
         &= \frac{d}{\tau \alpha} \ln \left( \frac{(1 + \lambda \tau)^2a_{k+1}^\tau}{a_k^\tau}\right)\\
         &= \frac{d}{\tau \alpha} \ln \left( 1 + \frac{\alpha \tau (1 + \lambda \tau)}{a_k^\tau} \right),
    \end{align*}
    where in the last line, we used the following induction relation stated in Proposition~\ref{explicit computation discrete}:
    \begin{equation*}
        a_{k+1}^\tau = \frac{a_k^\tau}{(1 + \lambda\tau)^2} + \frac{\alpha \tau}{1 + \lambda \tau}.
    \end{equation*}
    Therefore, expanding the logarithm, the conclusion follows from the easy fact that for all $s \in [0,t]$,
    \begin{equation*}
        \lim_{\tau \to 0}a_{\lfloor \frac{s}{\tau} \rfloor}^\tau = a_s,
    \end{equation*}
    and that the left-hand side is bounded from below uniformly in $s \in [0,t]$ and $\tau$ sufficiently small. 
\end{proof}

\subsection{A formal remark on the optimality in $\alpha$}\label{remark formal computation optimality in alpha}
As we saw the previous section, our proof of optimality in our toy model relies on the fact that for this model, the quantity:
        \begin{equation}
        \label{eq:dif_F_H}
        \alpha \big[ H(J_{k,\tau}^\alpha(\mu_0))-H(J_{k+1,\tau}^\alpha(\mu_0))\big]+2\F(J_\tau^0(J_{k,\tau}^\alpha(\mu_0))*\sigma_{\alpha\tau})-2\F(J_\tau^0(J_{k,\tau}^\alpha(\mu_0))),
        \end{equation}
which appears in the definition~\eqref{eq:def_Rk} of $\mathcal R_k$ in Theorem~\ref{thm:precise bound}, is equal to
        \begin{equation}
        \label{eq:Fischer+O}
        \frac{\alpha^2}{2}\int_{k\tau}^{(k+1)\tau}\int|\nabla\ln(\bar \rho^\alpha_{\tau}(s))|^2\diff\bar\rho^\alpha_{\tau}(s)\mathrm{d}s+O_{\tau \to 0}(\tau^2).\end{equation}
Indeed, this fact implied
\begin{equation*}
\mathcal R_k(\alpha, \tau) = \frac{\alpha^2}{4}\int_{k\tau}^{(k+1)\tau}\int|\nabla\ln(\bar \rho^\alpha_{\tau}(s))|^2\diff\bar\rho^\alpha_{\tau}(s)\mathrm{d}s+O_{\tau \to 0}(\tau^2),
\end{equation*}
so that the right hand side in~\eqref{eq:precise bound lambda=0} and~\eqref{eq:precise bound lambda neq 0} are reminiscent of the first inequalities of~\eqref{eq:bound continuous} and~\eqref{eq:bound continuous lambda}.
        
        The fact that the quantity in \eqref{eq:dif_F_H} is~\eqref{eq:Fischer+O} can be obtained formally.
        Indeed, using Proposition~\ref{Sch Benamou Brenier} there exists a pair $(\rho_t,\varphi_t)$ such that $\rho_0=J_{k,\tau}^\alpha(\mu_0)$, $\rho_\tau=J_{k+1,\tau}^\alpha(\mu_0)$ and $\partial_t\rho+\operatorname{div}(\rho\nabla\varphi)=\frac{\alpha}{2}\Delta\rho$. Then, computing the derivative of the entropy along this interpolation, we obtain that:
        \[
        \frac{\diff}{\diff s}H(\rho_s)=\int \ln(\rho_s)\partial_s\rho_s=-\int \ln(\rho_s)\operatorname{div}(\rho_s\nabla\varphi_s)+\frac{\alpha}{2}\int \ln(\rho_s)\Delta\rho_s,
        \]
        and then, integrating by parts,
        \[
        \frac{\diff}{\diff s}H(\rho_s)=\int \nabla\varphi_s\cdot\nabla\ln(\rho_s)\rho_s-\frac{\alpha}{2}\int \nabla\ln(\rho_s)\cdot\nabla\rho_s=\int \nabla\varphi_s\cdot\nabla\rho_s-\frac{\alpha}{2}\int |\nabla\ln(\rho_s)|^2\diff\rho_s.
        \]
        Therefore, integrating between times $0$ and $\tau$, we find
        \begin{equation*}
            \alpha \big[ H(J_{k,\tau}^\alpha(\mu_0))-H(J_{k+1,\tau}^\alpha(\mu_0))\big] = \frac{\alpha^2}{2} \int_0^\tau \int  |\nabla\ln(\rho_s)|^2\diff\rho_s \diff s - \alpha \int_0^\tau \int \nabla\rho_s\cdot\nabla\varphi_s \diff s. 
        \end{equation*}
        On the other hand, concerning the second term, deriving along the heat flow, we find:
        \[        2\F(J_\tau^0(J_{k,\tau}^\alpha(\mu_0))*\sigma_{\alpha\tau})-2\F(J_\tau^0(J_{k,\tau}^\alpha(\mu_0)))=\alpha\int_0^\tau \Delta \frac{\delta \F}{\delta \rho}(J_\tau^0(J_{k,\tau}^\alpha(\mu_0))*\sigma_{\alpha s})\diff (J_\tau^0(J_{k,\tau}^\alpha(\mu_0))*\sigma_{\alpha s}) \diff s,
        \]
        so that integrating by parts,
        \[        2\F(J_\tau^0(J_{k,\tau}^\alpha(\mu_0))*\sigma_{\alpha\tau})-2\F(J_\tau^0(J_{k,\tau}^\alpha(\mu_0)))=-\alpha\int_0^\tau \nabla \frac{\delta \F}{\delta \rho}(J_\tau^0(J_{k,\tau}^\alpha(\mu_0))*\sigma_{\alpha s})\cdot\nabla (J_\tau^0(J_{k,\tau}^\alpha(\mu_0))*\sigma_{\alpha s})\diff s.
        \]
        Combining both identities, we obtain that the quantity in~\eqref{eq:dif_F_H} equals
        \begin{equation*}
        \frac{\alpha^2}{2}\int_0^\tau\int |\nabla\ln(\rho_s)|^2\diff\rho_s\diff s -\alpha\int_0^\tau \hspace{-5pt}\int \left[\nabla\varphi_s \cdot\nabla\rho_s+ \nabla \frac{\delta \F}{\delta \rho}(J_\tau^0(J_{k,\tau}^\alpha(\mu_0))*\sigma_{\alpha s})\cdot\nabla (J_\tau^0(J_{k,\tau}^\alpha(\mu_0))*\sigma_{\alpha s})\right] \diff s.
        \end{equation*}
        Moreover, the Euler Lagrange equation of \eqref{Ent JKO} is, see \cite{baradat2025usingsinkhornjkoscheme}, 
        \[
        \varphi_\tau=-\frac{\delta\F}{\delta\rho}(J_{k+1,\tau}^\alpha(\mu_0)).
        \]
        Hence, since $J_\tau^0(J_{k,\tau}^\alpha(\mu_0))*\sigma_{\alpha\tau}$ should be close to $J_{k+1,\tau}^\alpha(\mu_0)$, we expect to have for all $s \in [0,\tau]$
        \begin{equation*}
        \nabla \varphi_s \approx - \nabla \frac{\delta \F}{\delta \rho}(J_\tau^0(J_{k,\tau}^\alpha(\mu_0))*\sigma_{\alpha s}).
        \end{equation*}
        Lastly, all the densities are close to each other, as they are all close to $J^\alpha_{k,\tau}(\mu_0)$. 
        
        All in all, we expect the last integral above to be at least $o(\tau)$ as $\tau \to 0$, and maybe even a $O(\tau^2)$, as announced. The crucial point to prove this asymptotics rigorously is to establish the convergence of the integral in time of the Fischer information of the different curves involved at $\tau >0$ towards the one of the limiting curve. This convergence is necessary to compare the right hand side in~\eqref{eq:precise bound lambda=0} and~\eqref{eq:precise bound lambda neq 0} with~\eqref{eq:bound continuous} and~\eqref{eq:bound continuous lambda}. We have mathematical reasons to believe that it would be sufficient as well.
    \appendix
    \section*{Appendix}
    The purpose of this Appendix is to first prove Theorem~\ref{bound continuous case}, and then Proposition~\ref{prop:lower bound along JKO implies wlog F lower bounded} that we used in Section~\ref{sec:proof_cor}, see Remark~\ref{rem:below_bound_F}. During the proof of Theorem~1.4 we also establish that under Hypothesis~\ref{hypothesis}, the entropy is increasing at most linearly along the JKO scheme. We used this fact in Subsection~\ref{general cnx geo explain} to ensure that the densities are always absolutely continuous with respect to the Lebesgue measure.

    \section{Proof of Theorem~\ref{bound continuous case}}
    
\textbf{Sketch of proof:} To prove Theorem~\ref{bound continuous case}, we must establish two inequalities. First, we need to show a bound on the Fisher information, and second, we need to show that the Wasserstein distance between our gradient flows is smaller than the Fisher information.
\begin{itemize}
    \item For the first one, we will first establish the inequality at the JKO level, and then take the limit using the lower semicontinuity of the entropy and the Fisher information. This inequality will also ensure that the Fisher information is finite.
    \item For the second one, we will compute the derivative of the square of the Wasserstein distance between our gradient flows by differentiating the dual formulation of the Wasserstein distance (see Subsection~\ref{subsec:wasserstein dual formulation}) thanks to the envelope theorem and the PDEs that our densities solve. The $\lambda$-convexity of $\F$  implies that some terms should cancel out, leaving only those that can be bounded by the Fisher information.
\end{itemize}
We start by proving the estimate for Fisher information by showing an analogous inequality for one step of the JKO scheme for the functional $\F+\frac{\alpha}{2}H$. We will only use the convexity on $\F$ to ensure the existence of minimizers at each stage of the JKO scheme, and to guarantee that the scheme converges towards the gradient flow.
\begin{proposition}\label{prop:linear increasing of entropy along JKO alpha neq0}
    If $\F$ satisfies Hypothesis~\ref{hypothesis}, then for every $\tau<\frac{1}{\lambda_-}$, for every $\alpha\geq 0$, for every $\mu$ with $H(\mu)< + \infty$ and $\F(\mu)<+\infty$, then
    \begin{equation} \tag{JKO+H}\label{eq:JKO+H}\mathcal{J}_\tau^\alpha(\mu):=\argmin_{\rho\in\mathcal{P}_2(\R^d)}\left\{\frac{W_2^2(\mu,\rho)}{2\tau}+\F(\rho)+\frac{\alpha}{2}H(\rho)\right\}
    \end{equation}
    is well defined. Moreover,
    \begin{equation*}
        H(\mathcal{J}_\tau^\alpha(\mu))-H(\mu)+\frac{\alpha\tau}{2}\int|\nabla\ln(\mathcal{J}_\tau^\alpha(\mu))|^2\diff\mathcal{J}_\tau^\alpha(\mu)\leq K\tau.
    \end{equation*}
    In particular $H(\mathcal J^\alpha_\tau(\mu))<+\infty$.     
\end{proposition}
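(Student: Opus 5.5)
Existence comes first: $\F+\frac{\alpha}{2}H$ inherits all the properties needed to run the existence machinery for the classical JKO scheme. The entropy is $W_2$-l.s.c.\ (indeed l.s.c.\ for narrow convergence with bounded second moments) and is convex along generalized geodesics (McCann's condition, Proposition~\ref{Prop:proof classic verify}). Hence $\F+\frac{\alpha}{2}H$ is $\lambda$-convex along generalized geodesics and l.s.c. Since $\mu$ has finite entropy, $\mu\ll\Leb$, so Theorem~\ref{welldefined JKO} applies with competitor $\nu=\mu$ and gives existence and uniqueness of $\nu:=\mathcal J^\alpha_\tau(\mu)$. The minimizer automatically has $H(\nu)<+\infty$, because the minimal value is finite and $\F(\nu)$ is bounded below by Proposition~\ref{prop:lower bound on F by the wasserstein distance}.

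For the entropy estimate I would use the flow interchange technique of Matthes--McCann--Savaré: perturb the minimizer $\nu$ by the heat flow, $\nu_s:=\nu*\sigma_s$, and compare costs. Minimality gives, for every $s>0$,
\[
\frac{W_2^2(\mu,\nu)-W_2^2(\mu,\nu_s)}{2\tau}\le \F(\nu_s)-\F(\nu)+\frac{\alpha}{2}\big(H(\nu_s)-H(\nu)\big)\le K\frac{s}{2}+\frac{\alpha}{2}\big(H(\nu_s)-H(\nu)\big),
\]
where the second inequality uses point (3) of Hypothesis~\ref{hypothesis}. The left-hand side is controlled through the EVI of the heat flow, which is the Wasserstein gradient flow of $\frac12 H$ and is $0$-convex:
\[
\frac{1}{2}\big(W_2^2(\nu_s,\mu)-W_2^2(\nu,\mu)\big)\le \frac{s}{2}\big(H(\mu)-H(\nu_s)\big).
\]
Substituting this, dividing by $s$, and letting $s\downarrow0$ yields
\[
H(\nu)-H(\mu)\le K\tau+\alpha\tau\lim_{s\to0}\frac{H(\nu_s)-H(\nu)}{s}.
\]
By de Bruijn's identity, $\frac{d}{ds}H(\nu_s)=-\frac12\int|\nabla\ln\nu_s|^2\diff\nu_s$. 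By lower semicontinuity of the Fisher information along $s\to0$, the limsup of the difference quotient is at most $-\frac12\int|\nabla\ln\nu|^2\diff\nu$; if that integral is infinite, the inequality still forces it to be finite. This gives
\[
H(\nu)-H(\mu)+\frac{\alpha\tau}{2}\int|\nabla\ln\nu|^2\diff\nu\le K\tau,
\]
which is the claim. In the EVI we may take $H(\mu)-H(\nu_s)\to H(\mu)-H(\nu)$, using continuity of $s\mapsto H(\nu_s)$ at $0$ for finite-entropy $\nu$ with finite second moment, so the rearrangement closes.

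The main obstacle is justifying these limits rigorously. Three points need care: the EVI for the heat flow from an arbitrary finite-entropy point, which is standard from \cite{ambrosio2005gradient} and uses only $0$-convexity of $H$; the identity $\int_0^s\frac12 I(\nu_r)\,dr=H(\nu)-H(\nu_s)$ together with lower semicontinuity of $I$, where I would first establish the inequality in integrated form and then use lower semicontinuity of $I$ under narrow convergence; and the fact that point (3) of Hypothesis~\ref{hypothesis} is stated with the exact heat semigroup $\sigma_s$, which matches the convolution used here.
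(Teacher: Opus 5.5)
Your proposal is correct and follows the paper's proof essentially step by step. Existence comes from Theorem~\ref{welldefined JKO} applied to $\F+\frac{\alpha}{2}H$. The estimate comes from testing the minimizer against $\mathcal J^\alpha_\tau(\mu)*\sigma_s$, using point (3) of Hypothesis~\ref{hypothesis} and the EVI of the heat flow as the gradient flow of $\frac12 H$, and letting $s\to0$ with the entropy dissipation $\frac12\int|\nabla\ln\nu|^2\diff\nu$.

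The only differences are cosmetic. The paper keeps the time average $\frac1s\int_0^s H(\nu*\sigma_u)\,\diff u$ from the integrated EVI and uses right-continuity, whereas you bound it by $H(\nu_s)$ via monotonicity. You also justify the Fisher-information limit through de Bruijn's identity and lower semicontinuity, which is equally valid.
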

\begin{proof}
    Since $\F+\frac{\alpha}{2}H$ is $\lambda$-convex along generalized geodesics $\mathcal{J}_\tau^\alpha(\mu)$ is well defined as a consequence of Theorem~\ref{welldefined JKO}. By optimality of $\mathcal{J}_\tau^\alpha(\mu)$ in equation~\eqref{eq:JKO+H}, for all $s>0$,
    \[
    \frac{W_2^2(\mathcal{J}_\tau^\alpha(\mu),\mu)}{2\tau}+\F(\mathcal{J}_\tau^\alpha(\mu))+\frac{\alpha}{2}H(\mathcal{J}_\tau^\alpha(\mu))\leq \frac{W_2^2(\mathcal{J}_\tau^\alpha(\mu)*\sigma_s,\mu)}{2\tau}+\F(\mathcal{J}_\tau^\alpha(\mu)*\sigma_s)+\frac{\alpha}{2}H(\mathcal{J}_\tau^\alpha(\mu)*\sigma_s),
    \]
    so that 
    \begin{equation}\label{eq:optimality for entropy upper boundalpha neq0}
    \frac{W_2^2(\mathcal{J}_\tau^\alpha(\mu),\mu)}{2s}-\frac{W_2^2(\mathcal{J}_\tau^\alpha(\mu)*\sigma_s,\mu)}{2s}\leq\frac{\tau}{s}(\F(\mathcal{J}_\tau^\alpha(\mu)*\sigma_s)-\F(\mathcal{J}_\tau^\alpha(\mu))+\frac{\alpha\tau}{2s}(H(\mathcal{J}_\tau^\alpha(\mu)*\sigma_s)-H(\mathcal{J}_\tau^\alpha(\mu)).
    \end{equation}
    On the one hand, the third point of Hypothesis~\ref{hypothesis} implies:
    \begin{equation}\label{eq:hypothesis 3 for upper bound entropyalpha neq0}
    \frac{\tau}{s}(\F(\mathcal J_\tau^0(\mu)*\sigma_s)-\F(\mathcal J_\tau^0(\mu))\leq K\frac{\tau}{2}.
    \end{equation}
    On the other hand, as $u \mapsto \rho*\sigma_u$ is the gradient flow of $\frac{1}{2}H$, which is convex along generalized geodesics, the E.V.I.\ of Theorem \ref{E.V.I} provides for all $u\geq 0$:
    \[
    \frac12\frac{\diff}{\diff u}W_2^2(\mathcal J_\tau^0(\mu)*\sigma_u,\mu)\leq \frac12H(\mu)-\frac12H(\mathcal J_\tau^0(\mu)*\sigma_u).
    \]
    Therefore,
    \begin{equation}
        \label{eq:e.v.i for upper bound entropyalpha neq0}
    \begin{aligned}
    \frac{W_2^2(\mathcal{J}_\tau^\alpha(\mu),\mu)}{2s}-\frac{W_2^2(\mathcal{J}_\tau^\alpha(\mu)*\sigma_s,\mu)}{2s}&=-\int_0^s\frac1{2s}\frac{\diff}{\diff u}W_2^2(\mathcal{J}_\tau^\alpha(\mu)*\sigma_u,\mu)\\
    &\geq \frac1{2s}\int_0^sH(\mathcal{J}_\tau^\alpha(\mu)*\sigma_u)\diff u-\frac12H(\mu).
    \end{aligned}    
    \end{equation}
    Combining the equations \eqref{eq:optimality for entropy upper boundalpha neq0}, \eqref{eq:hypothesis 3 for upper bound entropyalpha neq0} and \eqref{eq:e.v.i for upper bound entropyalpha neq0} we obtain for all $s\in\R_+$:
    \begin{equation}\label{eq:combined eq for upper bound entropyalpha neq0}    \frac1{2s}\int_0^sH(\mathcal{J}_\tau^\alpha(\mu)*\sigma_u)\diff u-\frac12H(\mu)+\frac{\alpha\tau}{2s}(H(\mathcal{J}_\tau^\alpha(\mu))-H(\mathcal{J}_\tau^\alpha(\mu)*\sigma_s)\leq K\frac\tau2.
    \end{equation}
    Moreover, the function $s\mapsto H(\mathcal{J}_\tau^\alpha(\mu)*\sigma_s)$ is nonincreasing and lower semicontinuous, hence right-continuous. Therefore, 
    $$
\lim_{s\to0}\frac1{s}\int_0^sH(\mathcal{J}_\tau^\alpha(\mu)*\sigma_u)\diff u=H(\mathcal{J}_\tau^\alpha(\mu)).
    $$
        Finally, for every $\nu\in\mathcal{P}_2(\R^d)$ such that $H(\nu)<+\infty$, we have the following equality in $\R_+\cup\{+\infty\}$:
    \begin{equation*}
    \lim_{s\to 0}\frac{H(\nu)-H(\nu*\sigma_s)}{s}=\frac12\int|\nabla\ln(\nu)|^2\diff\nu,
    \end{equation*}
Consequently, we conclude the proof by sending $s$ to $0$ in equation \eqref{eq:combined eq for upper bound entropyalpha neq0}.
\end{proof}
Sending $\tau$ to $0$ lets us deduce a similar inequality at the continuous level.
   \begin{proposition}\label{bound on Fisher}
Let $\F$ satisfy Hypothesis~\ref{hypothesis}, and let $(\rho_t^\alpha)$ be the solution to the regularized gradient flow~\eqref{eq:gradient_flow_reg} associated with a parameter $\alpha>0$, starting from $\mu_0 \in \mathcal P_2(\R^d)$, a measure such that both $H(\mu_0)< + \infty$ and $\mathcal F(\mu_0) < + \infty$. For all $t \geq 0$, if $\lambda = 0$, then
    \[
    \frac{\alpha}{2}\left(\int_0^t \sqrt{\int |\nabla\ln(\rho^\alpha)|^2\mathrm{d}\rho^\alpha}\mathrm{d}s\right)^2\leq t (H(\mu_0)-H(\rho^\alpha_t)+K t),
    \]
    while if $\lambda\neq0$,
    \[
    \frac{\alpha}{2}\left(\int_0^t e^{\lambda s} \sqrt{\int |\nabla\ln(\rho^\alpha_s)|^2\mathrm{d}\rho^\alpha_s}\mathrm{d}s\right)^2\leq \frac{e^{2\lambda t}-1}{2\lambda} (H(\mu_0)-H(\rho^\alpha_t)+Kt).
    \]    
\end{proposition}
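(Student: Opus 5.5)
We will obtain Proposition~\ref{bound on Fisher} by passing to the limit $\tau\to0$ in a discrete analogue built from Proposition~\ref{prop:linear increasing of entropy along JKO alpha neq0}. We fix $t>0$, set $\tau=t/n$, and iterate the scheme \eqref{eq:JKO+H}: $\rho_k:=(\mathcal J^\alpha_\tau)^{\circ k}(\mu_0)$. Since $\F+\frac\alpha2 H$ is $\lambda$-convex along generalized geodesics and l.s.c., the theory of \cite{ambrosio2005gradient} (as in \eqref{eq:def_rho0}) shows that the piecewise constant interpolation $\bar\rho_n(s):=\rho_{\lceil s/\tau\rceil}$ converges in $W_2$, for every $s\in[0,t]$, to the gradient flow of $\F+\frac\alpha2H$, i.e.\ to $\rho^\alpha_s$. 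Each step of Proposition~\ref{prop:linear increasing of entropy along JKO alpha neq0} gives
\[
\frac{\alpha\tau}{2}I(\rho_{k+1})\le H(\rho_k)-H(\rho_{k+1})+K\tau,\qquad I(\rho):=\int|\nabla\ln\rho|^2\diff\rho .
\]
Summing over $k=0,\dots,n-1$ and telescoping yields
\[
\frac{\alpha}{2}\int_0^t I(\bar\rho_n(s))\diff s\le H(\mu_0)-H(\rho_n)+Kt .
\]

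The next step is Cauchy--Schwarz with the weight $e^{\lambda s}$ (the weight is $1$ when $\lambda=0$):
\[
\Big(\int_0^t e^{\lambda s}\sqrt{I}\Big)^2\le \int_0^t e^{2\lambda s}\diff s\int_0^t I .
\]
Here $\int_0^te^{2\lambda s}\diff s=\frac{e^{2\lambda t}-1}{2\lambda}$, and it equals $t$ when $\lambda=0$. Combining this with the summed inequality gives, at the discrete level,
\[
\frac\alpha2\Big(\int_0^t e^{\lambda s}\sqrt{I(\bar\rho_n(s))}\diff s\Big)^2\le \frac{e^{2\lambda t}-1}{2\lambda}\big(H(\mu_0)-H(\rho_n)+Kt\big).
\]
It would also be consistent to take the geometric weights $(1+\lambda\tau)^{k}$ and pass to the limit afterwards, but the continuous weight applied to the piecewise constant curve is simpler.

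Finally, we let $n\to\infty$. On the right-hand side, $\rho_n\to\rho^\alpha_t$ narrowly with bounded second moments. Since $H$ is l.s.c.\ under this convergence (by Proposition~\ref{below bound ent} and tightness), we get $\limsup_n(-H(\rho_n))\le -H(\rho^\alpha_t)$. On the left-hand side we need $\liminf_n\int_0^t e^{\lambda s}\sqrt{I(\bar\rho_n(s))}\diff s\ge\int_0^te^{\lambda s}\sqrt{I(\rho^\alpha_s)}\diff s$. This follows from Fatou's lemma together with the narrow lower semicontinuity of the Fisher information $I$, which is convex and l.s.c., being representable as $\sup_{\xi\in C_c^\infty}\int(-2\operatorname{div}\xi-|\xi|^2)\diff\rho$, and from the pointwise-in-$s$ convergence $\bar\rho_n(s)\to\rho^\alpha_s$. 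Since $\sqrt{\cdot}$ is nondecreasing and continuous, lower semicontinuity passes to $\sqrt I$.

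The main obstacle is justifying that this convergence holds for every $s$ and that the limit is the solution $\rho^\alpha$ of \eqref{eq:gradient_flow_reg}, i.e.\ identifying the gradient flow of $\F+\frac\alpha2H$ with the (regular) PDE solution. We would invoke the AGS convergence and uniqueness results (Theorem~\ref{E.V.I}), applied to $\F+\frac\alpha2H$. A secondary point is checking that $I(\rho_{k+1})<\infty$, which holds because each step bound makes it finite once the entropies are finite, as guaranteed inductively by Proposition~\ref{prop:linear increasing of entropy along JKO alpha neq0}.
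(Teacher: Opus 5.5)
Your proposal is correct and follows the paper's proof. You iterate Proposition~\ref{prop:linear increasing of entropy along JKO alpha neq0} along the scheme for $\F+\frac\alpha2H$, telescope, apply Cauchy--Schwarz with weight $e^{\lambda s}$, and pass to the limit $n\to\infty$ using the convergence of the scheme to $\rho^\alpha$ and the lower semicontinuity of the entropy and the Fisher information. The only difference is the order of two steps: the paper passes to the limit in $\int_0^t I$ and applies Cauchy--Schwarz to the limit curve, whereas you apply Cauchy--Schwarz at the discrete level and pass to the limit in $\int_0^t e^{\lambda s}\sqrt{I}$, which Fatou's lemma justifies equally well.
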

\begin{proof}
    Let $t \geq 0$. By the Cauchy-Schwartz inequality we have:
    \[
    \left(\int_0^t e^{\lambda s} \sqrt{\int |\nabla\ln(\rho^\alpha_s)|^2\mathrm{d}\rho^\alpha_s}\mathrm{d}s\right)^2\leq \int_0^t e^{2\lambda s}\mathrm{d}s\int_0^t \int |\nabla\ln(\rho^\alpha_s)|^2\mathrm{d}\rho^\alpha_s\mathrm{d}s,
    \]
    where
    \[
    \int_0^t e^{2\lambda s}\mathrm{d}s=\left\{\begin{aligned}
        &t, && \text{if $\lambda=0$},\\
        &\frac{e^{2\lambda t}-1}{2\lambda}, &&\text{if $\lambda\neq0$.}
    \end{aligned}\right.
    \]
    Now, using iteratively Proposition~\ref{prop:linear increasing of entropy along JKO alpha neq0} we find:
    \[
    H(\mathcal{J}_{n,t/n}^\alpha(\mu_0))-H(\mu_0)+\frac{\alpha\tau}{2}\sum_{k=1}^n\int|\nabla\ln(\mathcal{J}_{k,t/n}^\alpha(\mu_0))|^2\diff \mathcal{J}_{k,t/n}^\alpha(\mu_0)\leq Kt,
    \]
    or otherwise stated
    \[
    H(\mathcal{J}_{n,t/n}^\alpha(\mu_0))-H(\mu_0)+\frac{\alpha}{2}\int_0^t\int|\nabla\ln(\mathcal{J}_{\lceil\frac{ns}{t}\rceil,t/n}^\alpha(\mu_0))|^2\diff \mathcal{J}_{\lceil\frac{ns}{t}\rceil,t/n}^\alpha(\mu_0)\diff s\leq Kt.
    \]
    But for all $s \geq 0$, $\mathcal{J}_{\lceil\frac{ns}{t}\rceil,t/n}^\alpha(\mu_0)\xrightarrow[n\to+\infty]{W_2}\rho^\alpha(s)$, so the result is a consequence of the lower semicontinuity of the entropy and of the Fisher information.
    \end{proof}
    We will now establish the main inequality of this section by estimating the Wasserstein distance along a geodesics between $\rho^0(t)$ and $\rho^\alpha(t)$ for each $t\geq 0$. We restrict ourserleves to the case where $\mathcal F$ is of the form~\eqref{eq:explicit_F}.
\begin{proposition}\label{app:fanch}
     Let $\mathcal F$ be of the form~\eqref{eq:explicit_F}. We assume that:
    \begin{itemize}
        \item The PDEs~\eqref{eq:gradient_flow} and~\eqref{eq:gradient_flow_reg} admit regular solutions $\rho^0 $ and $\rho^\alpha$ for all times $t\geq 0$,
        \item $\mathcal F$ is $\lambda$-geodesically convex for some $\lambda \in \R$.
    \end{itemize}
    Then, for all $t>0$, if $\lambda=0$, then
        \begin{equation*}           W_2(\rho^0(t),\rho^\alpha(t))\leq\frac{\alpha}{2}\int_0^t \sqrt{\int |\nabla\ln(\rho^\alpha)|^2\mathrm{d}\rho^\alpha}\mathrm{d}s,
        \end{equation*}
    while if $\lambda\neq0$, then
    \begin{equation*}
        W_2(\rho^0(t),\rho^\alpha(t))\leq\frac{\alpha}{2}\int_0^t e^{\lambda (s-t)} \sqrt{\int |\nabla\ln(\rho^\alpha)|^2\mathrm{d}\rho^\alpha}\mathrm{d}s.
    \end{equation*} 
\end{proposition}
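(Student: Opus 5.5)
We differentiate $t\mapsto \frac12 W_2^2(\rho^0(t),\rho^\alpha(t))$ and close the estimate with a Gronwall argument in $W_2$ itself, as announced in the sketch of proof. Fix $t$ and let $(\varphi_t,\psi_t)$ be Kantorovich potentials from $\rho^0(t)$ to $\rho^\alpha(t)$, so $\frac12W_2^2=\int\varphi_t\diff\rho^0(t)+\int\psi_t\diff\rho^\alpha(t)$. Since the solutions are regular, the envelope theorem (derivative of the dual supremum in the measures only) gives
\[
\frac{\diff}{\diff t}\frac12 W_2^2(\rho^0,\rho^\alpha)=\int\varphi_t\,\partial_t\rho^0+\int\psi_t\,\partial_t\rho^\alpha .
\]
We insert the PDEs~\eqref{eq:gradient_flow} and~\eqref{eq:gradient_flow_reg} and integrate by parts. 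This yields
\[
-\int\nabla\varphi_t\cdot\nabla\tfrac{\delta\F}{\delta\rho}(\rho^0)\diff\rho^0-\int\nabla\psi_t\cdot\nabla\tfrac{\delta\F}{\delta\rho}(\rho^\alpha)\diff\rho^\alpha-\frac\alpha2\int\nabla\psi_t\cdot\nabla\ln\rho^\alpha\diff\rho^\alpha .
\]

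\textbf{Convexity terms.} By Remark~\ref{rem:derivative at time 0 and 1 along geodesics}, the first two terms are $\frac{\diff}{\diff s}\F(\rho_s)|_{s=0}$ and $-\frac{\diff}{\diff s}\F(\rho_s)|_{s=1}$, where $(\rho_s)$ is the geodesic from $\rho^0(t)$ to $\rho^\alpha(t)$. By $\lambda$-geodesic convexity, $s\mapsto\F(\rho_s)-\frac\lambda2 s^2W_2^2$ is convex. Hence the slope at $1$ minus the slope at $0$ is at least $\lambda W_2^2$, so the two terms together are at most $-\lambda W_2^2(\rho^0,\rho^\alpha)$. The rigorous version of this step is the main point to check: we need enough regularity of $\F$ of the form~\eqref{eq:explicit_F} along geodesics to identify these slopes, and since the solutions are assumed regular we expect this to be standard. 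We use the same identification with the first variation.

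\textbf{Diffusion term.} By Cauchy--Schwarz and Proposition~\ref{prop:link_primal_dual} applied in the direction $\rho^\alpha\to\rho^0$, we have $\int|\nabla\psi_t|^2\diff\rho^\alpha=W_2^2$. Therefore the last term is at most $\frac\alpha2 W_2\sqrt{I(\rho^\alpha_t)}$, where $I(\rho)=\int|\nabla\ln\rho|^2\diff\rho$. Writing $w(t)=W_2(\rho^0(t),\rho^\alpha(t))$, we obtain
\[
w\dot w\le-\lambda w^2+\frac\alpha2 w\sqrt{I(\rho^\alpha_t)},
\]
so wherever $w>0$,
\[
\frac{\diff}{\diff t}\big(e^{\lambda t}w\big)\le\frac\alpha2e^{\lambda t}\sqrt{I(\rho^\alpha_t)} .
\]
To handle the points where $w=0$, we work with $\sqrt{w^2+\varepsilon}$ and let $\varepsilon\to0$. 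Since $w(0)=0$, integrating from $0$ to $t$ gives the first inequalities of~\eqref{eq:bound continuous} and~\eqref{eq:bound continuous lambda}; the case $\lambda=0$ is included.

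\textbf{Second inequalities of Theorem~\ref{bound continuous case}.} These follow from Proposition~\ref{bound on Fisher}. Squaring its statement and multiplying by $e^{-2\lambda t}$ gives
\[
\Big(\frac\alpha2\int_0^te^{\lambda(s-t)}\sqrt I\Big)^2\le\frac\alpha2\,\frac{1-e^{-2\lambda t}}{2\lambda}\,\big(H(\mu_0)-H(\rho^\alpha_t)+Kt\big),
\]
which is the required bound (with $t$ in place of $\frac{1-e^{-2\lambda t}}{2\lambda}$ when $\lambda=0$).

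\textbf{Main obstacle.} We expect the hardest step to be justifying the envelope-theorem differentiation and the slope identification. For regular solutions, we would first try to reduce to the standard result of \cite{ambrosio2005gradient} on the derivative of $W_2^2$ along absolutely continuous curves.
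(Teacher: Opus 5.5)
Your proposal is correct and follows the paper's proof essentially step by step. The common steps are: the envelope theorem applied to the dual formulation, insertion of the two PDEs with integration by parts, and the $\lambda$-convexity slope inequality $f'(1)-f'(0)\geq\lambda W_2^2$ along the geodesic (which the paper isolates as a separate lemma). They are followed by Cauchy--Schwarz using $\int|\nabla\psi_t|^2\,\mathrm{d}\rho^\alpha_t=W_2^2$ and Gr\"onwall. Your $\sqrt{w^2+\varepsilon}$ regularization is a small extra piece of care that the paper skips when it divides directly by $W_2(\rho^0_t,\rho^\alpha_t)$. Your last paragraph on the second inequalities of the theorem is correct but goes beyond what this proposition asks.
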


 As in the introduction, in the case where $\F$ is of the form~\eqref{eq:explicit_F}, and in a coherent manner with respect to the theory of Wasserstein gradient flows~\cite{santambrogio2015optimal}, we define for all $\rho \in \mathcal P_2(\R^d)$ absolutely continuous with respect to the Lebesgue measure:
    \begin{equation*}
    \frac{\partial \mathcal F}{\partial \rho} (\rho) := V + W \ast \rho + f'(\rho).
\end{equation*}
In that way, as soon as the curve $t \mapsto \rho_t$ is sufficiently regular, we have
 \begin{equation}
 \label{eq:diff_Frho}
    \frac{\diff }{\diff t} \mathcal F(\rho_t) =\int \frac{\delta\F}{\delta\rho}(\rho_t)\partial_t\rho_t.
   \end{equation}
   In fact, the computations in the proof of Proposition~\ref{app:fanch} are justified thanks to the following remark.
\begin{remark}
    Since $\rho^0$ is the flow of $\F$ and $\rho^\alpha$ is the flow of $\F+\frac{\alpha}{2}H$, for all $t \geq 0$, the following integrals are finite (see \cite[Chapter 10]{ambrosio2005gradient}):
    \[
    \int_0^t\int\left|\nabla\frac{\delta\F}{\delta\rho}(\rho^0_s)\right|^2\diff\rho^0_s\diff s<+\infty,\qquad\int_0^t\int\left|\nabla\frac{\delta\F}{\delta\rho}(\rho^\alpha_s)+\frac\alpha2\nabla\ln(\rho^\alpha_s)\right|^2\diff\rho^\alpha_s\diff s<+\infty.
    \]
    Moreover, we have shown in Proposition~\ref{bound on Fisher} that
    $$    \int_0^t\int\left|\nabla\ln(\rho^\alpha_s)\right|^2\diff\rho^\alpha_s\diff s<+\infty.
    $$
    Therefore, by the triangle inequality in $L^2(\diff s \otimes \rho^\alpha_s)$, we also have
    \[
    \int_0^t\int\left|\nabla\frac{\delta\F}{\delta\rho}(\rho^\alpha_s)\right|^2\diff\rho^\alpha_s\diff s<+\infty.
    \]
    Thus, for almost every $s \geq 0$,
    \[
    \int\left|\nabla\frac{\delta\F}{\delta\rho}(\rho^0_s)\right|^2\diff\rho^0_s<+\infty,\qquad\int\left|\nabla\frac{\delta\F}{\delta\rho}(\rho^\alpha_s)\right|^2\diff\rho^\alpha_s<+\infty \quad\text{and}\quad\int\left|\nabla\ln(\rho^\alpha_s)\right|^2\diff\rho^\alpha_s<+\infty.
    \]
\end{remark}
In the proof of Proposition~\ref{app:fanch}, we will need the following Lemma.
\begin{lemma}\label{F cnx ine along geo}
    Under the assumption of Proposition~\ref{app:fanch}. For all $\mu,\nu\in\mathcal{P}_2(\R^d)$ such that $\F(\mu)<+\infty$ and $\F(\nu)<+\infty$, denoting by $\varphi$ the Kantorovich potential from $\mu$ to $\nu$ and $\psi$ the Kantorovich potential from $\nu$ to $\mu$ then
    \[
    \int \nabla\frac{\delta \F}{\delta\rho}(\mu)\nabla\varphi\mu+\int \nabla\frac{\delta \F}{\delta\rho}(\nu)\nabla\psi\nu \geq \lambda W^2_2(\mu,\nu).
    \]
    \end{lemma}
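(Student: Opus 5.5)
The plan is to write the $\lambda$-convexity of $\F$ along the Wasserstein geodesic joining $\mu$ and $\nu$ at both endpoints, differentiate in time, and add the two resulting first-order inequalities.

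Let $(\rho_t)_{t\in[0,1]}$ be the geodesic from $\mu$ to $\nu$. By the remark following Definition~\ref{def:convexity along generalized geodesic}, convexity along generalized geodesics with base point $\mu$ contains convexity along this geodesic. Hence, for all $t\in[0,1]$,
\[
\F(\rho_t)\le (1-t)\F(\mu)+t\F(\nu)-\frac{\lambda}{2}t(1-t)W_2^2(\mu,\nu).
\]
Subtracting $\F(\mu)$, dividing by $t$ and letting $t\to 0^+$ gives
\[
\left.\frac{\diff}{\diff t}\F(\rho_t)\right|_{t=0^+}\le \F(\nu)-\F(\mu)-\frac{\lambda}{2}W_2^2(\mu,\nu).
\]
Symmetrically, subtracting $\F(\nu)$, dividing by $1-t$ and letting $t\to1^-$ gives
\[
-\left.\frac{\diff}{\diff t}\F(\rho_t)\right|_{t=1^-}\le \F(\mu)-\F(\nu)-\frac{\lambda}{2}W_2^2(\mu,\nu).
\]

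Next, the endpoint derivatives are identified through formula~\eqref{eq:diff_Frho} and Remark~\ref{rem:derivative at time 0 and 1 along geodesics}. At $t=0$ we have $\partial_t\rho_t=-\operatorname{div}(\mu\nabla\varphi)$, so integrating by parts,
\[
\left.\frac{\diff}{\diff t}\F(\rho_t)\right|_{t=0}=\int\frac{\delta\F}{\delta\rho}(\mu)\,\partial_t\rho_t\Big|_{t=0}=-\int\nabla\frac{\delta\F}{\delta\rho}(\mu)\cdot\nabla\varphi\,\diff\mu .
\]
Similarly, since $T=I_d-\nabla\varphi$ and symmetrically $\nu$ is moved toward $\mu$ by $I_d-\nabla\psi$, the derivative at $t=1$ is
\[
\left.\frac{\diff}{\diff t}\F(\rho_t)\right|_{t=1}=\int\nabla\frac{\delta\F}{\delta\rho}(\nu)\cdot\nabla\psi\,\diff\nu .
\]
Plugging these into the two inequalities and adding them, the terms $\F(\nu)-\F(\mu)$ cancel. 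What remains is
\[
\int\nabla\frac{\delta\F}{\delta\rho}(\mu)\cdot\nabla\varphi\,\diff\mu+\int\nabla\frac{\delta\F}{\delta\rho}(\nu)\cdot\nabla\psi\,\diff\nu\ge\lambda W_2^2(\mu,\nu),
\]
which is the claim.

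The main obstacle is justifying the endpoint differentiation, that is, the chain rule~\eqref{eq:diff_Frho} at $t=0$ and $t=1$ for $\F$ of the form~\eqref{eq:explicit_F}. Since we are in the regular setting of Proposition~\ref{app:fanch}, I would argue term by term. For $V$ and $W$, differentiate $\int V(x-t\nabla\varphi(x))\diff\mu(x)$, and the analogous double integral for $W$, under the integral sign, using the $C^{1,1}$ growth bounds~\eqref{eq:growth_V} and $\nabla\varphi\in L^2(\mu)$. For the internal energy, use the standard first-variation formula along $(I_d-t\nabla\varphi)_{\#}\mu$, as in \cite[Chapter 10]{ambrosio2005gradient}. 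This formula requires $\nabla\frac{\delta\F}{\delta\rho}(\mu)\in L^2(\mu)$, which is exactly the finiteness established in the preceding remark, for almost every time. At these points I would only claim one-sided derivatives as inequalities where needed, and that suffices for the argument above.
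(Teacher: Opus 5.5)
Your proof is correct and is essentially the paper's argument: the paper uses that $f(s)=\F(\rho_s)$ is $\lambda W_2^2(\mu,\nu)$-convex along the geodesic to get $f'(0)\le f'(1)-\lambda W_2^2(\mu,\nu)$, which is exactly what you get by adding your two endpoint chord inequalities, and it then identifies $f'(0)$ and $f'(1)$ through Remark~\ref{rem:derivative at time 0 and 1 along geodesics}, as you do. The only cosmetic point is that $\lambda$-geodesic convexity is directly a hypothesis of Proposition~\ref{app:fanch}, so you do not need to pass through convexity along generalized geodesics.
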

    \begin{proof}
        Let us consider $(\rho_s,v_s)$ the Wasserstein geodesics from $\mu$ to $\nu$. Then the map
        \(        f:[0,1]\in s\rightarrow\F(\rho_s)
        \) is $\lambda W_2^2(\mu,\nu)$ convex, in particular 
        $$
        f'(0)\leq f'(1)-\lambda W_2^2(\mu,\nu).
        $$
        Bur for all $s\in[0,1]$, in view of~\eqref{eq:diff_Frho},
        \[
        f'(s)=\frac{\diff}{\diff s}\F(\rho_s)=\int\frac{\delta\F}{\delta\rho}(\rho_s)\partial_s\rho_s.
        \]
        As a consequence of Remark~\ref{rem:derivative at time 0 and 1 along geodesics}, we find
        \[
    f'(0) = -\int \nabla\frac{\delta \F}{\delta\rho}(\mu)\nabla\varphi\mu \qquad \mbox{and}\qquad f'(1) = \int \nabla\frac{\delta \F}{\delta\rho}(\nu)\nabla\psi\nu,
        \]
        and hence
        \[
        -\int \nabla\frac{\delta \F}{\delta\rho}(\mu)\nabla\varphi\mu\leq \int \nabla\frac{\delta \F}{\delta\rho}(\nu)\nabla\psi\nu - \lambda W^2_2(\mu,\nu),
        \]
        as announced. 
    \end{proof}
    We are now ready to prove Proposition~\ref{app:fanch}.
\begin{proof}[Proof of Proposition~\ref{app:fanch}]

To prove Proposition~\ref{app:fanch}, the idea is to differentiate the dual formulation of the squared Wasserstein distance between our solutions. By Subsection~\ref{subsec:wasserstein dual formulation}, we have for all $t \geq 0$
\[
\frac{W_2^2(\rho^0_t,\rho^\alpha_t)}{2} = \max_{\substack{\varphi, \psi \\ \varphi \oplus \psi \leq \frac{|x - y|^2}{2}}} \int \varphi\, \mathrm{d}\rho^0_t + \int \psi\, \mathrm{d}\rho^\alpha_t.
\]
Let us call $(\varphi_t,\psi_t)$ a pair of maximizers. Applying the envelope theorem, we have for all $t \geq 0$
\[
\frac{\diff }{\diff t}\left(\frac{W_2^2(\rho^0_t,\rho^\alpha_t)}{2}\right)
= \int \varphi_t\, \partial_t\rho^0_t + \int \psi_t\, \partial_t\rho^\alpha_t.
\]
Using the PDEs satisfied by $\rho^0_t$ and $\rho^\alpha_t$, we obtain:
\begin{align*}
\frac{\diff }{\diff t}\left(\frac{W_2^2(\rho^0_t,\rho^\alpha_t)}{2}\right)
&= \int \varphi_t\, \mathrm{div}\left(\rho^0_t \nabla \frac{\delta \mathcal{F}}{\delta \rho}(\rho^0_t)\right) 
+ \int \psi_t\, \mathrm{div}\left(\rho^\alpha_t \nabla \frac{\delta \mathcal{F}}{\delta \rho}(\rho^\alpha_t)\right) 
+ \frac{\alpha}{2} \int \psi_t\, \Delta \rho^\alpha_t \\
&= - \int \nabla \varphi_t \cdot \nabla \frac{\delta \mathcal{F}}{\delta \rho}(\rho^0_t)\, \rho^0_t 
- \int \nabla \psi_t \cdot \nabla \frac{\delta \mathcal{F}}{\delta \rho}(\rho^\alpha_t)\, \rho^\alpha_t 
- \frac{\alpha}{2} \int \nabla \psi_t \cdot \nabla \rho^\alpha_t.
\end{align*}
Then, by applying Lemma~\ref{F cnx ine along geo}, we get the bound:
\begin{equation}\label{eq:derivative wasserstein leq fisher*wasserstein}
\frac{\diff }{\diff t}\left(\frac{W_2^2(\rho^0_t,\rho^\alpha_t)}{2}\right) \leq -\lambda W_2^2(\rho^0_t,\rho^\alpha_t) - \frac{\alpha}{2} \int \nabla \psi_t \cdot \nabla \rho^\alpha_t.
\end{equation}
Using the identity $\nabla \rho^\alpha_t = \rho^\alpha_t \nabla \ln \rho^\alpha_t$ and the Cauchy--Schwarz inequality in $L^2(\rho^\alpha_t)$, we get:
\[
- \frac{\alpha}{2} \int \nabla \psi_t \cdot \nabla \rho^\alpha_t 
= - \frac{\alpha}{2} \int \nabla \psi_t \cdot \nabla \ln(\rho^\alpha_t)\, \mathrm{d}\rho^\alpha_t 
\leq \frac{\alpha}{2} W_2(\rho^0_t,\rho^\alpha_t) \sqrt{\int |\nabla \ln(\rho^\alpha_t)|^2\, \mathrm{d}\rho^\alpha_t}.
\]
After dividing both sides by $W_2(\rho^0_t,\rho^\alpha_t)$ in~\eqref{eq:derivative wasserstein leq fisher*wasserstein}, we obtain:
\[
\frac{\diff }{\diff t} W_2(\rho^0_t,\rho^\alpha_t) \leq -\lambda W_2(\rho^0_t,\rho^\alpha_t) 
+ \frac{\alpha}{{2}} \sqrt{\int |\nabla \ln(\rho^\alpha_t)|^2\, \mathrm{d}\rho^\alpha_t}.
\]
Therefore, the results follows from applying Grönwall's lemma.
\end{proof}

\section{A truncation argument for the small values of $\F$}
In Section~\ref{sec:proof_cor}, we used the fact that replacing $\F$ by $\max\{\F,M\}$ for a sufficiently small value of $M$ does not affect the first iterates of the JKO scheme, see Remark~\ref{rem:below_bound_F}. This is the content of the following proposition.

\begin{proposition}\label{prop:lower bound along JKO implies wlog F lower bounded}
    Let $\F:\mathcal P_2(\R^d)\to\R$, $\tau>0$ and $\mu\in\mathcal P_2(\R^d)$. Let us assume that the functional
    \begin{equation*}
        \frac{W_2^2(\mu,\rho)}{2\tau}+\F(\rho)
    \end{equation*}
    admits at least one minimizer, and that there exists $M \in \R$ such that each of these minimizers $\nu$ satisfy $\F(\nu)\geq M$. Then, calling $\F^M: \rho \mapsto \max\{M,\F(\rho)\}$, we have    
    $$
    \argmin_{\rho\in\mathcal P_2(\R^d)}\left\{\frac{W_2^2(\mu,\rho)}{2\tau}+\F(\rho)\right\}=\argmin_{\rho\in\mathcal P_2(\R^d)}\left\{\frac{W_2^2(\mu,\rho)}{2\tau}+\F^M(\rho)\right\}.
    $$
\end{proposition}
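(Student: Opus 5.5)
The plan is to prove the two inclusions separately, comparing values. We write
\[
\Phi(\rho):=\frac{W_2^2(\mu,\rho)}{2\tau}+\F(\rho),\qquad \Phi^M(\rho):=\frac{W_2^2(\mu,\rho)}{2\tau}+\F^M(\rho).
\]
Since $\F^M\geq\F$, we have $\Phi^M\geq\Phi$ pointwise, and $\Phi^M=\Phi$ wherever $\F\geq M$.

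First, let $\nu$ be a minimizer of $\Phi$. By assumption $\F(\nu)\geq M$, so $\Phi^M(\nu)=\Phi(\nu)$. For any $\rho$, we get $\Phi^M(\rho)\geq\Phi(\rho)\geq\Phi(\nu)=\Phi^M(\nu)$. Hence $\nu$ minimizes $\Phi^M$, and moreover $\inf\Phi^M=\inf\Phi=\Phi(\nu)$. This gives the inclusion $\subseteq$.

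Conversely, let $\rho$ be a minimizer of $\Phi^M$, so that $\Phi^M(\rho)=\inf\Phi=:m$. Since $\Phi(\rho)\leq\Phi^M(\rho)=m\leq\Phi(\rho)$, we get $\Phi(\rho)=m$. Thus $\rho$ minimizes $\Phi$, and this gives the inclusion $\supseteq$.

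Both steps use only the existence of at least one minimizer of $\Phi$ and the hypothesis $\F(\nu)\geq M$ on its minimizers. There is no real obstacle here; the only point requiring care is to establish the equality of the two infima before running the reverse inclusion.
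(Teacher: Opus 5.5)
Your proof is correct and follows the paper's argument essentially step for step. Both first show that any minimizer of the original functional also minimizes the truncated one, which gives equality of the two infima. Both then use $\Phi\le\Phi^M$ together with that equality of infima to obtain the reverse inclusion.
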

\begin{proof}
First, as $\F \leq \F^M$, we have
\begin{equation}
\label{eq:ineg_min}
    \min_{\rho\in\mathcal P_2(\R^d)}\left\{\frac{W_2^2(\mu,\rho)}{2\tau}+\F(\rho)\right\} \leq \inf_{\rho\in\mathcal P_2(\R^d)}\left\{\frac{W_2^2(\mu,\rho)}{2\tau}+\F^M(\rho)\right\}.
\end{equation}
But by assumption, if $\nu$ is any minimizer in the left-hand side, $\F^M(\nu) = \F(\nu)$, so that
    \[
    \frac{W_2^2(\mu,\nu)}{2\tau}+\F(\nu)=\frac{W_2^2(\mu,\nu)}{2\tau}+\F^M(\nu)\geq \inf_{\rho\in\mathcal P_2(\R^d)}\left\{\frac{W_2^2(\mu,\rho)}{2\tau}+\F^M(\rho)\right\}.
    \]
So the inequality in~\eqref{eq:ineg_min} is in fact an equality, and the minimizers in the left-hand side are also minimizers in the right-hand side. As a consequence, if $\nu$ is any minimizer in the right-hand side of~\eqref{eq:ineg_min}, we have
 \[
    \frac{W_2^2(\mu,\nu)}{2\tau}+\F^M(\nu)=\min_{\rho \in\mathcal P_2(\R^d)}\left\{\frac{W_2^2(\mu,\rho)}{2\tau}+\F(\rho)\right\}\leq \frac{W_2^2(\mu,\nu)}{2\tau}+\F(\nu)\leq \frac{W_2^2(\mu,\nu)}{2\tau}+\F^M(\nu).
    \]
    Therefore, all the inequalities are in fact equalities, and $\nu$ is also a minimizer in the left-hand side of~\eqref{eq:ineg_min}, which concludes the proof.
\end{proof}
 \bibliographystyle{plain}
\bibliography{refs}
\end{document}